\documentclass[10pt]{amsart}
\usepackage{mathtools,appendix,graphicx}
\usepackage{hyperref,cleveref}
\usepackage{tabularx}
\usepackage{appendix}
\usepackage{algorithm, algorithmic}
\usepackage{amsaddr}
\usepackage{multirow}
\usepackage{subcaption}
\usepackage[margin=1.2in]{geometry}
\usepackage[left]{lineno}

\newtheorem{thm}{Theorem}
\newtheorem{lma}[thm]{Lemma}
\newtheorem{cor}[thm]{Corollary}
\newtheorem{asm}{Assumption}
\newtheorem{subasm}{A}
\newtheorem{defi}[thm]{Definition}
\newtheorem{rmk}[thm]{Remark}
\newtheorem{cond}[thm]{Condition}
\newtheorem*{exs}{Example}

\numberwithin{equation}{section}
\numberwithin{subasm}{asm}
\numberwithin{thm}{section}

\crefname{thm}{theorem}{theorems}
\crefname{lma}{lemma}{lemmas}
\crefname{equation}{equation}{equations}
\crefname{cor}{corollary}{corollaries}
\crefname{asm}{assumption}{assumptions}
\crefname{def}{definition}{definitions}

\begin{document}
\title[Multiaffine ADMM]{ADMM for Multiaffine Constrained Optimization}

\date{Oct 04, 2019.}
\author{Wenbo Gao$^{\dagger}$, Donald Goldfarb$^{\dagger}$, and Frank E. Curtis$^{\ddagger}$}
\begin{abstract}
We expand the scope of the alternating direction method of multipliers (ADMM). Specifically, we show that ADMM, when employed to solve problems with multiaffine constraints that satisfy certain verifiable assumptions, converges to the set of constrained stationary points if the penalty parameter in the augmented Lagrangian is sufficiently large. When the Kurdyka-\L{}ojasiewicz (K-\L{}) property holds, this is strengthened to convergence to a single constrained stationary point. Our analysis applies under assumptions that we have endeavored to make as weak as possible. It applies to problems that involve nonconvex and/or nonsmooth objective terms, in addition to the multiaffine constraints that can involve multiple (three or more) blocks of variables. To illustrate the applicability of our results, we describe examples including nonnegative matrix factorization, sparse learning, risk parity portfolio selection, nonconvex formulations of convex problems, and neural network training. In each case, our ADMM approach encounters only subproblems that have closed-form solutions.
\end{abstract}

\thanks{$^\dagger$ Department of Industrial Engineering and Operations Research, Columbia University. Research of this author was supported in part by NSF Grant CCF-1527809.}
\thanks{$^{\ddagger}$ Department of Industrial and Systems Engineering, Lehigh University. Research of this author was supported in part by DOE Grant DE-SC0010615 and NSF Grant CCF-1618717.}
\email{\texttt{wg2279@columbia.edu, goldfarb@columbia.edu, frank.e.curtis@gmail.com}}
\subjclass[2010]{90C26, 90C30}
\maketitle

\newcommand{\RR}{\mathbb{R}}
\newcommand{\QQ}{\mathbb{Q}}
\newcommand{\BB}{\mathcal{B}}
\newcommand{\MM}{\mathcal{M}}
\newcommand{\NN}{\mathcal{N}}
\renewcommand{\AA}{\mathcal{A}}
\newcommand{\DD}{\mathcal{D}}
\newcommand{\UU}{\mathcal{U}}

\newcommand{\lb}{\lbrack}
\newcommand{\rb}{\rbrack}
\newcommand{\la}{\langle}
\newcommand{\ra}{\rangle}
\newcommand{\Sym}{\Sigma}
\newcommand{\VV}{\mathcal{V}}
\newcommand{\SymG}{\mathcal{S}}
\newcommand{\FF}{\mathcal{F}}
\newcommand{\II}{\mathcal{I}}
\newcommand{\JJ}{\mathcal{J}}
\newcommand{\GG}{\mathcal{G}}
\newcommand{\Lag}{\mathcal{L}}
\newcommand{\XX}{\mathcal{X}}
\newcommand{\YY}{\mathcal{Y}}
\newcommand{\WW}{\mathcal{W}}
\newcommand{\CC}{\mathcal{C}}
\newcommand{\TT}{\mathcal{T}}
\newcommand{\ZZ}{\mathcal{Z}_>}
\newcommand{\ZZZ}{\mathcal{Z}}

\newcommand{\bump}{\hspace{1.5em}}
\newcommand{\ull}{\underline{\lambda}}
\newcommand\argmin{\opn{argmin}}

\newcommand\opn[1]{\operatorname{#1}}
\newcommand\wh[1]{{\widehat{#1}}}
\newcommand\wt[1]{{\widetilde{#1}}}
\newcommand\ov[1]{\overline{#1}}
\newcommand\ud[1]{\underline{#1}}

\newcommand\ux[2]{{#1}^{#2}}
\newcommand\bv[2]{{\begin{pmatrix} #1 \\ #2 \end{pmatrix}}}

\DeclarePairedDelimiter{\floor}{\lfloor}{\rfloor}

\section{Introduction}\label{sec:introduction}
The \emph{alternating direction method of multipliers} (ADMM) is an iterative method which, in its original form, solves linearly-constrained separable optimization problems with the following structure:
$$
(P0) \bump
\left\{
\begin{array}{rll}
\inf\limits_{x,y} &f(x) + g(y) \\
& Ax + By - b= 0.
\end{array}
\right.$$
The \emph{augmented Lagrangian} $\Lag$ of the problem $(P0)$, for some \emph{penalty parameter} $\rho > 0$, is defined to be
$$\Lag(x,y,w) = f(x) + g(y) + \la w, Ax + By - b\ra + \frac{\rho}{2}\|Ax + By - b\|^2.$$
In iteration $k$, with the iterate $(\ux{x}{k},\ux{y}{k},\ux{w}{k})$, ADMM takes the following steps:
\begin{enumerate}
\item Minimize $\Lag(x, \ux{y}{k}, \ux{w}{k})$ with respect to $x$ to obtain $\ux{x}{k+1}$.
\item Minimize $\Lag(\ux{x}{k+1},y,\ux{w}{k})$ with respect to $y$ to obtain $\ux{y}{k+1}$.
\item Set $\ux{w}{k+1} \gets \ux{w}{k} + \rho(A\ux{x}{k+1} + B\ux{y}{k+1} - b)$.
\end{enumerate}

ADMM was first proposed \cite{GM1976,GM1975} for solving variational problems, and was subsequently applied to convex optimization problems with two blocks as in $(P0)$. Several techniques can be used to analyze this case, including an operator-splitting approach \cite{EB1992,LM}. The survey articles \cite{EY2015,BPCPE2011} provide convergence proofs from several viewpoints, and discuss numerous applications of ADMM. More recently, there has been considerable interest in extending ADMM convergence guarantees when solving problems with \emph{multiple blocks} and \emph{nonconvex} objective functions. ADMM directly extends to the problem 
\begin{equation*}\label{multiblocklinearadmm}
(P1) \bump
\left\{
\begin{array}{rll}
\inf\limits_{x_1,x_2,\ldots,x_n} &f_1(x_1) + f_2(x_2) + \ldots + f_n(x_n) \\
& A_1x_1 + A_2x_2 + \ldots + A_nx_n - b= 0
\end{array}
\right.\end{equation*}
by minimizing $\Lag(x_1,\ldots,x_n,w)$ with respect to $x_1,x_2,\ldots,x_n$ successively. The multiblock problem turns out to be significantly different from the classical 2-block problem, even when the objective function is convex; for example, \cite{ADMM_COUNTER} exhibits an example with $n = 3$ blocks and $f_1,f_2,f_3 \equiv 0$ for which ADMM diverges for any value of $\rho$. Under certain conditions, the unmodified 3-block ADMM does converge. In \cite{LMZ2018}, it is shown that if $f_3$ is strongly convex with condition number $\kappa \in \lb 1, 1.0798)$ (among other assumptions), then 3-block ADMM is globally convergent. If $f_1,\ldots,f_n$ are all strongly convex, and $\rho > 0$ is sufficiently \emph{small}, then \cite{HY2012} shows that multiblock ADMM is convergent. Other works along these lines include \cite{LMZ2015SIOPT,LMZ2015,LST2015}.

In the absence of strong convexity, modified versions of ADMM have been proposed that can accommodate multiple blocks. In \cite{DY2017} a new type of 3-operator splitting is introduced that yields a convergent 3-block ADMM (see also \cite{R2018} for a proof that a `lifting-free' 3-operator extension of Douglas-Rachford splitting does not exist). Convergence guarantees for multiblock ADMM can also be achieved through variants such as proximal ADMM, majorized ADMM, linearized ADMM \cite{STY2015,LST2016,DLPY2017,CST2017MP,LLL2015,BN2018arxiv}, and proximal Jacobi ADMM \cite{DLPY2017,WS2017JCAM,SS2018JAMC}.

ADMM has also been extended to problems with \emph{nonconvex} objective functions. In \cite{HLR2016}, it is proved that ADMM converges when the problem $(P1)$ is either a nonconvex \emph{consensus} or \emph{sharing} problem, and \cite{WYZ2019JSC} proves convergence under more general conditions on $f_1,\ldots,f_n$ and $A_1,\ldots,A_n$. Proximal ADMM schemes for nonconvex, nonsmooth problems are considered in \cite{LP2015,ZMZ2017,JLMZ2019,BN2018arxiv}. More references on nonconvex ADMM, and comparisons of the assumptions used, can be found in \cite{WYZ2019JSC}.

In all of the work mentioned above, the system of constraints $C(x_1,\ldots,x_n) = 0$ is assumed to be linear. Consequently, when all variables other than $x_i$ have fixed values, $C(x_1,\ldots,x_n)$ becomes an \emph{affine} function of $x_i$. However, this holds for more general constraints $C(\cdot)$ in the much larger class of \emph{multiaffine} maps (see \Cref{sec:multiaffinedef}). Thus, it seems reasonable to expect that ADMM would behave similarly when the constraints $C(x_1,\ldots,x_n) = 0$ are permitted to be multiaffine. To be precise, consider a more general problem than $(P1)$ of the form
$$
(P2) \bump
\left\{
\begin{array}{rll}
\inf\limits_{x_1,x_2,\ldots,x_n} & f(x_1,\ldots,x_n) \\
& C(x_1,\ldots,x_n) = 0.
\end{array}
\right.$$
The augmented Lagrangian for $(P2)$ is 
$$\Lag(x_1,\ldots,x_n,w) = f(x_1,\ldots,x_n) + \la w, C(x_1,\ldots,x_n)\ra + \frac{\rho}{2}\|C(x_1,\ldots,x_n)\|^2,$$ and ADMM for solving this problem is specified in \Cref{alg:admm}.

\begin{algorithm}[ht]
	\caption{ADMM}
	\label{alg:admm}
	\begin{algorithmic}
		\STATE{\textbf{Input:} $(x_1^0,\ldots,x_n^0), w^0, \rho$}
		\FOR{$k = 0, 1, 2, \ldots$}
		\FOR{$i = 1,\ldots,n$}
		\STATE Compute $\ux{x}{k+1}_i \in \argmin_{x_i} \Lag(\ux{x}{k+1}_1,\ldots, \ux{x}{k+1}_{i-1}, x_i, \ux{x}{k}_{i+1},\ldots,\ux{x}{k}_{n}, \ux{w}{k})$
		\ENDFOR
		\STATE $\ux{w}{k+1} \leftarrow \ux{w}{k} + \rho C(\ux{x}{k+1}_1,\ldots,\ux{x}{k+1}_n)$
		\ENDFOR
	\end{algorithmic}
\end{algorithm}

While many problems can be modeled with multiaffine constraints, existing work on ADMM for solving multiaffine constrained problems appears to be limited. Boyd et al.~\cite{BPCPE2011} propose solving the nonnegative matrix factorization problem formulated as a problem with biaffine constraints, i.e.,
\begin{equation*}\label{originalnmf}
(\text{NMF1}) \bump
\left\{
\begin{array}{rll}
\inf\limits_{Z,X,Y} & \frac{1}{2}\|Z - B\|^2  \\
& Z = XY, X \geq 0, Y \geq 0,
\end{array}
\right.
\end{equation*}
by applying ADMM with alternating minimization on the blocks $Y$ and $(X,Z)$. The convergence of ADMM employed to solve the (NMF1) problem appears to have been an open question until a proof was given in \cite{HCWSH2016}\footnote{\cite{HCWSH2016} shows that every limit point of ADMM for the problem (NMF) is a constrained stationary point, but does not show that such limit points necessarily exist.}. A method derived from ADMM has also been proposed for optimizing a biaffine model for training deep neural networks \cite{TBXSPG2016}. For general nonlinear constraints, a framework for ``monitored'' Lagrangian-based multiplier methods was studied in \cite{BST2018MOR}.

In this paper, we establish the convergence of ADMM for a broad class of problems with multiaffine constraints. Our assumptions are similar to those used in \cite{WYZ2019JSC} for nonconvex ADMM; in particular, we do not make any assumption about the iterates generated by the algorithm. Hence, these results extend the applicability of ADMM to a larger class of problems which naturally have multiaffine constraints. Moreover, we prove several results about ADMM in \Cref{sec:general} that hold in even more generality, and thus may be useful for analyzing ADMM beyond the setting considered here.

\subsection{Organization of this paper}
In \Cref{sec:multiaffinedef}, we define multilinear and multiaffine maps, and specify the precise structure of the problems that we consider. In \Cref{sec:examples}, we provide several examples of problems that can be formulated with multiaffine constraints. In \Cref{sec:mainresults}, we state our assumptions and main results (i.e., \Cref{main,main2,mainkl}). In \Cref{sec:prelim}, we present a collection of necessary technical material. In \Cref{sec:general}, we prove several results about ADMM that hold under weak conditions on the objective function and constraints. Finally, in \Cref{sec:convergence}, we complete the proof of our main convergence theorems (\Cref{main,main2,mainkl}), by applying the general techniques developed in \Cref{sec:general}. \Cref{apdx:technical} contains proofs of technical lemmas. \Cref{apdx:nnet} presents an alternative biaffine formulation for deep neural network training. \Cref{apdx:tables} organizes the major assumptions in tabular form. \Cref{apdx:simple} presents additional formulations of problems where all ADMM subproblems have closed-form solutions. \Cref{apdx:numeric} presents several numerical experiments.

\subsection{Notation and Definitions}
We consider only finite-dimensional real vector spaces. The symbols $\mathbb{E}, \mathbb{E}_1,\ldots,\mathbb{E}_n$ denote finite-dimensional Hilbert spaces, equipped with inner products $\la \cdot, \cdot \ra$. By default, we use the standard inner product on $\RR^n$ and the trace inner product $\la X,Y \ra = \opn{Tr}(Y^TX)$ on the matrix space. Unless otherwise specified, the norm $\|\cdot\|$ is always the induced norm of the inner product. When $A$ is a matrix or linear map, $\|A\|_{op}$ denotes the $L_2$ operator norm, and $\|A\|_*$ denotes the nuclear norm (the sum of the singular values of $A$). Fixed bases are assumed, so we freely use various properties of a linear map $A$ that depend on its representation (such as $\|A\|_{op}$), and view $A$ as a matrix as required.

For $f: \RR^n \rightarrow \RR \cup \{\infty\}$, the \emph{effective domain} $\opn{dom}(f)$ is the set $\{x : f(x) < \infty\}$. The image of a function $f$ is denoted by $\opn{Im}(f)$.  Similarly, when $A$ is a linear map represented by a matrix, $\opn{Im}(A)$ is the column space of $A$. We use $\opn{Null}(A)$ to denote the null space of $A$. The orthogonal complement of a linear subspace $U$ is denoted $U^\perp$.

To distinguish the derivatives of \emph{smooth} (i.e., continuously differentiable) functions from subgradients, we use the notation $\nabla_X$ for partial differentiation with respect to $X$, and reserve the symbol $\partial$ for the set of \emph{general subgradients} (\Cref{sec:gensubgradref}); hence, the use of $\nabla f$ serves as a reminder that $f$ is assumed to be smooth. A function $f$ is \emph{Lipschitz differentiable} if it is differentiable and its gradient is Lipschitz continuous. 

When $\XX$ is a tuple of variables $\XX = (X_0,\ldots,X_n)$, we write $\XX_{\neq \ell}$ for $(X_i : i \neq \ell)$. Similarly, $\XX_{>\ell}$ and $\XX_{<\ell}$ represent $(X_i: i > \ell)$ and $(X_i: i < \ell)$ respectively.

We use the term \emph{constrained stationary point} for a point satisfying necessary first-order optimality conditions; this is a generalization of the Karush-Kuhn-Tucker (KKT) necessary conditions to nonsmooth problems. For the problem $\min_x \{f(x): C(x) = 0\}$, where $C$ is smooth and $f$ possesses general subgradients, $x^\ast$ is a constrained stationary point if $C(x^\ast) = 0$ and there exists $w^\ast$ with $0 \in \partial f(x^\ast) + \nabla C(x^\ast)^Tw^\ast$.

\section{Multiaffine Constrained Problems}\label{sec:multiaffinedef}

The central objects of this paper are multilinear and multiaffine maps, which generalize linear and affine maps.

\begin{defi}
A map $\MM: \mathbb{E}_1 \oplus \ldots \oplus \mathbb{E}_n \rightarrow \mathbb{E}$ is \emph{multilinear} if, for all $i \leq n$ and all points $(\ov{X}_1,\ldots,\ov{X}_{i-1},\ov{X}_{i+1},\ldots,\ov{X}_n) \in \bigoplus_{j \neq i} E_j$, the map $\MM_i: \mathbb{E}_i \rightarrow \mathbb{E}$ given by
$$X_i \mapsto \MM(\ov{X}_1,\ldots, \ov{X}_{i-1},X_i,\ov{X}_{i+1},\ldots,\ov{X}_n)$$ is \emph{linear}. Similarly, $\MM$ is \emph{multiaffine} if the map $\MM_i$ is \emph{affine} for all $i$ and all points of $\bigoplus_{j \neq i} \mathbb{E}_j$. In particular, when $n = 2$, we say that $\MM$ is \emph{bilinear}/\emph{biaffine}.
\end{defi}

We consider the convergence of ADMM for problems of the form:
$$
(P) \bump 
\left\{
\begin{array}{rll}
\displaystyle \inf_{\XX,\ZZZ} &\phi(\XX,\ZZZ) \\
& A(\XX,Z_0) + Q(\ZZ) = 0,
\end{array}
\right.
$$
where $\XX = (X_0,\ldots,X_n)$, $\ZZZ = (Z_0, \ZZ)$, $\ZZ = (Z_1,Z_2)$,
$$\label{problemP}
\begin{aligned}
  \phi(\XX,\ZZZ) &= f(\XX) + \psi(\ZZZ) \\
  \text{and}\ \ 
  A(\XX,Z_0) + Q(\ZZ) &= \begin{bmatrix} A_1(\XX,Z_0) + Q_1(Z_1) \\ A_2(\XX) + Q_2(Z_2) \end{bmatrix}
\end{aligned}
$$
with $A_1$ and $A_2$ being multiaffine maps and $Q_1$ and $Q_2$ being linear maps. 
The augmented Lagrangian $\Lag(\XX,\ZZZ,\WW)$, with penalty parameter $\rho > 0$, is given by $$
\Lag(\XX,\ZZZ,\WW) = \phi(\XX,\ZZZ) + \la \WW, A(\XX,Z_0) + Q(\ZZ) \ra + \frac{\rho}{2}\|A(\XX,Z_0) + Q(\ZZ)\|^2,$$
where $\WW = (W_1,W_2)$ are Lagrange multipliers.

We prove that \Cref{alg:admm} converges to a constrained stationary point under certain assumptions on $\phi,A$, and $Q$, which are described in \Cref{sec:mainresults}. Moreover, since the constraints are nonlinear, there is a question of constraint qualifications, which we address in \Cref{crcqlma}.

We adopt the following notation in the context of ADMM. The variables in the $k$-th iteration are denoted $\ux{\XX}{k}, \ux{\ZZZ}{k}, \ux{\WW}{k}$ (with $\ux{X}{k}_i, \ux{Z}{k}_i, \ux{W}{k}_i$ for the $i$-th variable in each component). When analyzing a single iteration, the index $k$ is omitted, and we write $X = \ux{X}{k}$ and $X^+ = \ux{X}{k+1}$. Similarly, we write $\ux{\Lag}{k} = \Lag(\ux{\XX}{k}, \ux{\ZZZ}{k}, \ux{\WW}{k})$ and will refer to $\Lag = \ux{\Lag}{k}$ and $\Lag^+ = \ux{\Lag}{k+1}$ for values within a single iteration.

\section{Examples of Applications}\label{sec:examples}

In this section, we describe several problems with multiaffine constraints, and show how they can be formulated and solved by ADMM. Many important applications of ADMM involve introducing auxiliary variables so that all subproblems have closed-form solutions; we describe several such reformulations in \cref{apdx:simple} that have this property.

\subsection{Representation Learning}\label{sec:dl}
Given a matrix $B$ of data, it is often desirable to represent $B$ in the form $B = X \ast Y$, where $\ast$ is a bilinear map and the matrices $X,Y$ have some desirable properties. Two important applications follow:
\begin{enumerate}
\item \emph{Nonnegative matrix factorization} (NMF) \cite{LS1999Nature,LS2000NIPS} expresses $B$ as a product of nonnegative matrices $X \geq 0, Y \geq 0$.
\item \emph{Inexact dictionary learning} (DL) \cite{MBPS2010JMLR} expresses every element of $B$ as a sparse combination of \emph{atoms} from a \emph{dictionary} $X$. It is typically formulated as
\begin{align*}
(\text{DL}) \bump
\left\{
\begin{array}{rll}
\inf\limits_{X,Y}& \iota_S(X) + \|Y\|_1 + \frac{\mu}{2}\|XY - B\|^2, \\
\end{array}
\right.
\end{align*}
where $\iota_S$ is the indicator function for the set $S$ of matrices whose columns have unit $L_2$ norm, and here $\|Y\|_1$ is the entrywise 1-norm $\sum_{i,j} |Y_{ij}|$. The parameter $\mu$ is an input that sets the balance between trying to recover $B$ with high fidelity versus finding $Y$ with high sparsity.
\end{enumerate}

Problems of this type can be modeled with bilinear constraints. As already mentioned in \Cref{sec:introduction}, \cite{BPCPE2011,HCWSH2016} propose the bilinear formulation (NMF1) for nonnegative matrix factorization. The inexact dictionary learning problem can similarly be formulated as:
\begin{equation*}
(\text{DL1}) \bump
\left\{
\begin{array}{rll}
\inf\limits_{Z,X,Y} & \iota_S(X) + \|Y\|_1 + \frac{1}{2}\|Z - B\|^2  \\
& Z = XY.
\end{array}
\right.
\end{equation*}

Other variants of dictionary learning such as \emph{convolutional dictionary learning} (CDL), that cannot readily be handled by the method in \cite{MBPS2010JMLR}, have a biaffine formulation which is nearly identical to (DL1), and can be solved using ADMM. For more information on dictionary learning, see \cite{EA2006IEEETRIMP,SQW2017IEEETR1,SQW2017IEEETR2,MBPS2010JMLR,WZLP2012CVPR}.

\subsection{Non-Convex Reformulations of Convex Problems}
Recently, various low-rank matrix and tensor recovery problems have been shown to be efficiently solvable by applying first-order methods to nonconvex reformulations of them.  For example, the convex \emph{Robust Principal Component Analysis} (RPCA) \cite{HE2004BI,CLMW2011JACM} problem
$$(\text{RPCA1}) \bump 
\left\{
\begin{array}{rll}
\inf\limits_{L,S} & \|L\|_* + \lambda \|S\|_1  \\
& L + S = B
\end{array} \right.
$$
can be reformulated as the biaffine problem
$$(\text{RPCA2}) \bump 
\left\{
\begin{array}{rll}
\inf\limits_{U,V,S} & \frac{1}{2}(\|U\|_F^2 + \|V\|_F^2)  + \lambda \|S\|_1  \\
& UV^T + S = B\\
& U \in \RR^{m \times k}, V \in \RR^{n \times n}, S \in \RR^{m \times n}
\end{array} \right.
$$
as long as $k \geq \opn{rank}(L^\ast)$, where $L^\ast$ is an optimal solution of (RPCA1). See \cite{DBA} for a proof of this, and applications of the factorization $UV^T$ to other problems. This is also related to the \emph{Burer-Monteiro approach} \cite{BM2003MP} for semidefinite programming. We remark that (RPCA2) does not satisfy all the assumptions needed for the convergence of ADMM (see \Cref{aobj} and \Cref{tightness}), so slack variables must be added.

\subsection{Max-Cut}
Given a graph $G = (V,E)$ and edge weights $w \in \RR^E$, the (weighted) maximum cut problem is to find a subset $U \subseteq V$ so that $\sum\limits_{u \in U, v \notin U} w_{uv}$ is maximized. This problem is well-known to be NP-hard \cite{K1972CCC}. An approximation algorithm using semidefinite programming can be shown to achieve an approximation ratio of roughly $0.878$ \cite{GW1995JACM}. Applying the Burer-Monteiro approach \cite{BM2003MP} to the max-cut semidefinite program \cite{GW1995JACM} with a rank-one constraint, and introducing a slack variable (see \Cref{afinalblockim}), we obtain the problem
$$(\text{MC1})  \bump 
\left\{
\begin{array}{rll}
\sup\limits_{Z,x,y,s} & \frac{1}{2} \sum\limits_{uv \in E} w_{uv}(1 - Z_{uv}) + \frac{\mu_1}{2} \sum\limits_{u \in V} (Z_{uu} - 1)^2 + \frac{\mu_2}{2} \|s\|^2 \\
& Z = xy^T, \ x - y = s.
\end{array} \right.
$$
It is easy to verify that all subproblems have very simple closed-form solutions.

\subsection{Risk Parity Portfolio Selection}
Given assets indexed by $\{1,\ldots,n\}$, the goal of risk parity portfolio selection is to construct a portfolio weighting $x \in \RR^n$ in which every asset contributes an equal amount of risk. This can be formulated with quadratic constraints; see \cite{BS2015} for details. The feasibility problem in \cite{BS2015} is
$$
(\text{RP}) \bump
\left\{
\begin{array}{l}
x_i(\Sigma x)_i = x_j(\Sigma x)_j \bump \forall i,j\\
a \leq x \leq b, \bump x_1 + \ldots + x_n = 1
\end{array} \right.$$
where $\Sigma$ is the (positive semidefinite) \emph{covariance matrix} of the asset returns, and $a$ and $b$ contain lower and upper bounds on the weights, respectively. The authors in \cite{BS2015} introduce a variable $y = x$ and solve (RP) using ADMM by replacing the quadratic risk-parity constraint by a \emph{fourth-order} penalty function $f(x,y,\theta) = \sum_{i=1}^n (x_i(\Sigma y)_i - \theta)^2$. To rewrite this problem with a bilinear constraint, let $\circ$ denote the Hadamard product $(x \circ y)_i = x_iy_i$ and let $P$ be the matrix $\begin{pmatrix} 0 & 0 \\ e_{n-1} & -I_{n-1} \end{pmatrix}$, where $e_n$ is the all-ones vector of length $n$. Let $X$ be the set of permissible portfolio weights $X = \{ x \in \RR^n: a \leq x \leq b \} \cap \{ x \in \RR^n: e_n^Tx = 1\}$, and let $\iota_X$ be its indicator function. Then we obtain the problem
$$
(\text{RP}1) \bump
\left\{
\begin{array}{rll}
\inf\limits_{x,y,z,s} &  \iota_X(x) + \frac{\mu}{2}(\|z\|^2 + \|s\|^2) \\
& P(x \circ y) = z\\
& y - \Sigma x = s
\end{array}
\right.$$
where we have introduced a slack variable $s$ (see \Cref{afinalblockim}).

\subsection{Training Neural Networks}
An alternating minimization approach is proposed in \cite{TBXSPG2016} for training deep neural networks. By decoupling the linear and nonlinear elements of the network, the backpropagation required to compute the gradient of the network is replaced by a series of subproblems which are easy to solve and readily parallelized. For a network with $L$ layers, let $X_\ell$ be the matrix of edge weights for $1 \leq \ell \leq L$, and let $a_\ell$ be the output of the $\ell$-th layer for $0 \leq \ell \leq L - 1$. Deep neural networks are defined by the structure $a_\ell = h(X_\ell a_{\ell-1})$, where $h(\cdot)$ is an \emph{activation function}, which is often taken to be the rectified linear unit (ReLU) $h(z) = \max\{z,0\}$. The splitting used in \cite{TBXSPG2016} introduces new variables $z_\ell$ for $1 \leq \ell \leq L$ so that the network layers are no longer directly connected, but are instead coupled through the relations $z_\ell = X_\ell a_{\ell-1}$ and $a_\ell = h(z_\ell)$.

Let $E(\cdot,\cdot)$ be an error function, and $R$ a regularization function on the weights. Given a matrix of labeled training data $(a_0,y)$, the learning problem is
$$
(\text{DNN}1) \bump
\left\{
\begin{array}{rll}
\inf\limits_{\{X_\ell\}, \{a_\ell\}, \{z_\ell\}} & E(z_L,y) + R(X_1,\ldots,X_L) \\
&z_\ell - X_\ell a_{\ell - 1} = 0 \text{ for } 1 \leq \ell \leq L \\
& a_\ell - h(z_\ell) = 0 \text{ for } 1 \leq \ell \leq L - 1.
\end{array}
\right.$$

The algorithm proposed in \cite{TBXSPG2016} does not include any regularization $R(\cdot)$, and replaces \emph{both} sets of constraints by quadratic penalty terms in the objective, while maintaining Lagrange multipliers only for the final constraint $z_L = W_La_{L-1}$. However, since all of the equations $z_\ell = X_\ell a_{\ell - 1}$ are biaffine, we can include them in a biaffine formulation of the problem:
$$
(\text{DNN}2) \bump
\left\{
\begin{array}{rll}
\inf\limits_{\{X_\ell\}, \{a_\ell\}, \{z_\ell\}} & E(z_L,y) + R(X_1,\ldots,X_L) + \frac{\mu}{2}\sum\limits_{\ell=1}^{L-1} (a_\ell - h(z_\ell))^2 \\
&z_\ell - X_\ell a_{\ell - 1} = 0  \text{ for } 1 \leq \ell \leq L.
\end{array}
\right.$$

To adhere to our convergence theory, it would be necessary to apply smoothing (such as Nesterov's technique \cite{N2005}) when $h(z)$ is nonsmooth, as is the ReLU. Alternatively, the ReLU can be replaced by an approximation using nonnegativity constraints (see \Cref{apdx:nnet}). In practice \cite[\S7]{TBXSPG2016}, using the ReLU directly yields simple closed-form solutions, and appears to perform well experimentally. However, no proof of the convergence of the algorithm in \cite{TBXSPG2016} is provided.

\section{Main Results}\label{sec:mainresults}
In this section, we state our assumptions and main results. We will show that ADMM (\Cref{alg:admm_mult}) applied to solve a multiaffine constrained problem of the form $(P)$ (refer to page~\pageref{problemP}) produces a bounded sequence $\{(\ux{X}{k}, \ux{\ZZZ}{k})\}_{k=0}^\infty$, and that every limit point $(\XX^\ast, \ZZZ^\ast)$ is a constrained stationary point. While there are fairly general conditions under which $\ZZZ^\ast$ satisfies first-order optimality conditions (see \Cref{baseasm} and the corresponding discussion in \Cref{tightness} of tightness), the situation with $\XX^\ast$ is more complicated because of the many possible structures of multiaffine maps. Accordingly, we divide the convergence proof into two results. Under one broad set of assumptions, we prove that limit points exist, are feasible, and that $\ZZZ^\ast$ is a blockwise constrained stationary point for the problem with $\XX$ fixed at $\XX^\ast$ (\Cref{main}). Then, we present a set of easily-verifiable conditions under which $(\XX^\ast,\ZZZ^\ast)$ is also a constrained stationary point (\Cref{main2}). If the augmented Lagrangian has additional geometric properties (namely, the Kurdyka-\L{}ojasiewicz property (\Cref{sub:klproperty})), then $\{(\ux{X}{k}, \ux{\ZZZ}{k})\}_{k=0}^\infty$ converges to a single limit point $(\XX^\ast, \ZZZ^\ast)$ (\Cref{mainkl}).

\begin{algorithm}
	\caption{ADMM}
	\label{alg:admm_mult}
	\begin{algorithmic}
		\STATE{\textbf{Input:} $(\ux{X}{0}_0,\ldots,\ux{X}{0}_n), (\ux{Z}{0}_0, \ux{Z}{0}_1,\ux{Z}{0}_2),(\ux{W}{0}_1,\ux{W}{0}_2), \rho$}
		\FOR{$k = 0, 1, 2, \ldots$}
		\FOR{$i = 0,\ldots,n$}
		\STATE Compute $\ux{X}{k+1}_i \in \argmin_{X_i} \Lag(\ux{X}{k+1}_0,\ldots, \ux{X}{k+1}_{i-1}, X_i, \ux{X}{k}_{i+1},\ldots,\ux{X}{k}_{n}, \ux{\ZZZ}{k}, \ux{\WW}{k})$
		\ENDFOR
		\STATE Compute $\ux{\ZZZ}{k+1} \in \argmin_\ZZZ \Lag(\ux{\XX}{k+1}, \ZZZ, \ux{\WW}{k})$
		\STATE $\ux{\WW}{k+1} \leftarrow \ux{\WW}{k} + \rho(A(\ux{\XX}{k+1}, \ux{Z_0}{k+1}) + Q(\ux{\ZZ}{k+1}))$
		\ENDFOR
	\end{algorithmic}
\end{algorithm}

\subsection{Assumptions and Main Results}

We consider two sets of assumption for our analysis. We provide intuition and further discussion of them in \Cref{tightness}. (See \Cref{sec:prelim} for definitions related to convexity and differentiability.)


\begin{asm}\label{baseasm}
Solving problem $(P)$ (refer to page~\pageref{problemP}), the following hold.
\begin{subasm}\label{aRegularity}
For sufficiently large $\rho$, every ADMM subproblem attains its optimal value.
\end{subasm}
\begin{subasm}\label{afinalblockim}
$\opn{Im}(Q) \supseteq \opn{Im}(A)$.
\end{subasm}
\begin{subasm}\label{aobj}
The following statements regarding the objective function $\phi$ and $Q_2$ hold:
\begin{enumerate}
\item $\phi$ is coercive on the feasible region $\Omega = \{ (\XX,\ZZZ) : A(\XX,Z_0) + Q(\ZZ) = 0\}$.
\item $\psi(\ZZZ)$ can be written in the form
$$\psi(\ZZZ) = h(Z_0) + g_1(Z_S) + g_2(Z_2)$$
where
\begin{enumerate}
\item $h$ is proper, convex, and lower semicontinuous.
\item $Z_S$ represents either $Z_1$ or $(Z_0, Z_1)$ and $g_1$ is $(m_1,M_1)$-strongly convex. That is, either $g_1(Z_1)$ is a strongly convex function of $Z_1$ or $g_1(Z_0,Z_1)$ is a strongly convex function of $(Z_0,Z_1)$.
\item $g_2$ is $M_2$-Lipschitz differentiable.
\end{enumerate}
\item $Q_2$ is injective.
\end{enumerate}
\end{subasm}
\end{asm}

While \Cref{baseasm} may appear to be complicated, it is no stronger than the conditions used in analyzing nonconvex, \emph{linearly}-constrained ADMM. A detailed comparison is given in \Cref{tightness}.

Under \Cref{baseasm}, 
\Cref{alg:admm_mult} produces a sequence which has limit points, and every limit point $(\ux{\XX}{\ast}, \ux{\ZZZ}{\ast})$ is feasible with $\ux{\ZZZ}{\ast}$ a constrained stationary point for problem $(P)$ with $\XX$ fixed to $\XX^\ast$.
\begin{thm}\label{main}
Suppose that \Cref{baseasm} holds. For sufficiently large $\rho$, the sequence $\{(\ux{\XX}{k}, \ux{\ZZZ}{k}, \ux{\WW}{k})\}_{k=0}^\infty$ produced by ADMM is bounded, and therefore has limit points. Every limit point $(\XX^\ast, \ZZZ^\ast, \WW^\ast)$ satisfies $A(\XX^\ast,Z_0^\ast) + Q(\ZZ^\ast) = 0$. There exists a sequence $\ux{v}{k} \in \partial_\ZZZ \Lag(\ux{\XX}{k}, \ux{\ZZZ}{k}, \ux{\WW}{k})$ such that $\ux{v}{k} \rightarrow 0$, and thus
\begin{equation}\label{eq:zfoc}
0 \in \partial_\ZZZ \psi(\ZZZ^\ast) + C_{\XX^\ast}^T\WW^\ast
\end{equation}
where $C_{\XX^\ast}$ is the linear map $\ZZZ \mapsto A(\XX^\ast,Z_0) + Q(\ZZ)$ and $C_{\XX^\ast}^T$ is its adjoint. That is, $\ZZZ^\ast$ is a constrained stationary point for the problem
$$\min_{\ZZZ}\ \{ \psi(\ZZZ) : A(\XX^\ast,Z_0) + Q(\ZZ) = 0\}.$$
\end{thm}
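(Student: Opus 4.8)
The plan is to run the familiar Lyapunov argument for nonconvex ADMM, using the augmented Lagrangian $\ux{\Lag}{k}=\Lag(\ux{\XX}{k},\ux{\ZZZ}{k},\ux{\WW}{k})$ as the potential; the multiaffine (rather than merely linear) nature of the constraints enters only in controlling the dual sequence. First I would establish a \emph{sufficient decrease} inequality. Chaining the optimality of the successive block subproblems gives $\Lag(\ux{\XX}{k+1},\ux{\ZZZ}{k},\ux{\WW}{k})\le\ux{\Lag}{k}$, and for $\rho$ large enough that the penalty term dominates the nonconvexity of $g_2$ the $\ZZZ$-subproblem is strongly convex in $\ZZZ$ (strong convexity of $g_1$ on $Z_S$, convexity of $h$, injectivity of $Q_2$), so the $\ZZZ$-step gains an extra $c\|\ux{\ZZZ}{k+1}-\ux{\ZZZ}{k}\|^2$ with $c>0$. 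Since $\Lag$ is affine in $\WW$ and the residual equals $\rho^{-1}(\ux{\WW}{k+1}-\ux{\WW}{k})$, the dual step raises $\Lag$ by exactly $\rho^{-1}\|\ux{\WW}{k+1}-\ux{\WW}{k}\|^2$. Altogether,
\[
\ux{\Lag}{k}-\ux{\Lag}{k+1}\ \ge\ c\,\|\ux{\ZZZ}{k+1}-\ux{\ZZZ}{k}\|^2\ -\ \tfrac{1}{\rho}\|\ux{\WW}{k+1}-\ux{\WW}{k}\|^2 ,
\]
and everything rests on dominating the last term by primal movement.

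The heart of the proof, and where I expect the real work, is bounding $\|\ux{\WW}{k+1}-\ux{\WW}{k}\|$. The optimality condition of the $\ZZZ$-subproblem, after folding in the dual update via $C_{\ux{\XX}{k+1}}\ux{\ZZZ}{k+1}=\rho^{-1}(\ux{\WW}{k+1}-\ux{\WW}{k})$, reads $-C_{\ux{\XX}{k+1}}^T\ux{\WW}{k+1}\in\partial\psi(\ux{\ZZZ}{k+1})$; its $(Z_1,Z_2)$-components give $Q_1^T\ux{W}{k+1}_1=-\nabla_{Z_1}g_1(\ux{Z}{k+1}_S)$ and $Q_2^T\ux{W}{k+1}_2=-\nabla g_2(\ux{Z}{k+1}_2)$. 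Differencing these across consecutive iterations, the right-hand sides are controlled by $M_1\|\ux{Z}{k+1}_S-\ux{Z}{k}_S\|$ and $M_2\|\ux{Z}{k+1}_2-\ux{Z}{k}_2\|$ via the Lipschitz gradients. On the other hand, \Cref{afinalblockim} forces each increment $\ux{\WW}{k+1}-\ux{\WW}{k}=\rho(A(\ux{\XX}{k+1},\ux{Z_0}{k+1})+Q(\ux{\ZZ}{k+1}))$ to lie in $\opn{Im}(Q)$, and $Q^T$ is bounded below on $\opn{Im}(Q)$ (whose orthogonal complement is its kernel); combining gives $\|\ux{\WW}{k+1}-\ux{\WW}{k}\|\le C\|\ux{\ZZZ}{k+1}-\ux{\ZZZ}{k}\|$ with $C$ independent of $k$. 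The delicate part is the bookkeeping — matching which block of $\WW$ is pinned down by which summand of $\psi=h(Z_0)+g_1(Z_S)+g_2(Z_2)$, and checking that $\opn{Im}(A)\subseteq\opn{Im}(Q)$ genuinely covers the whole residual — which is precisely why \Cref{baseasm} takes the form it does. Choosing $\rho$ so that $c>C^2/\rho$, the inequality above becomes $\ux{\Lag}{k}-\ux{\Lag}{k+1}\ge(c-C^2/\rho)\|\ux{\ZZZ}{k+1}-\ux{\ZZZ}{k}\|^2\ge0$.

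Third, I would convert monotonicity into boundedness and asymptotic regularity. Using the $\ZZZ$-optimality once more to substitute for the term $\la\ux{\WW}{k},A(\ux{\XX}{k},\ux{Z_0}{k})+Q(\ux{\ZZ}{k})\ra$ appearing in $\ux{\Lag}{k}$, one rewrites $\ux{\Lag}{k}$ as $\phi(\ux{\XX}{k},\ux{\ZZZ}{k})$ plus a remainder bounded below uniformly in $k$ (again using $\opn{Im}(A)\subseteq\opn{Im}(Q)$, injectivity of $Q_2$, and the Lipschitz/strong-convexity hypotheses); since $\ux{\Lag}{k}\le\ux{\Lag}{0}$, the value $\phi(\ux{\XX}{k},\ux{\ZZZ}{k})$ is bounded above, and because the residual tends to $0$ the iterates eventually lie in any fixed neighborhood of $\Omega$, on which $\phi$ is coercive (\Cref{aobj}). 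This gives boundedness of $\{(\ux{\XX}{k},\ux{\ZZZ}{k})\}$, hence of $\{\ux{\WW}{k}\}$ (via the mechanism of the previous paragraph), hence $\{\ux{\Lag}{k}\}$ is bounded below and therefore convergent. Telescoping the decrease inequality yields $\sum_k\|\ux{\ZZZ}{k+1}-\ux{\ZZZ}{k}\|^2<\infty$, so $\ux{\ZZZ}{k+1}-\ux{\ZZZ}{k}\to0$, and then the bound of the previous paragraph gives $\ux{\WW}{k+1}-\ux{\WW}{k}\to0$, i.e. $A(\ux{\XX}{k+1},\ux{Z_0}{k+1})+Q(\ux{\ZZ}{k+1})\to0$.

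Finally I would pass to the limit along a convergent subsequence $(\ux{\XX}{k_j},\ux{\ZZZ}{k_j},\ux{\WW}{k_j})\to(\XX^\ast,\ZZZ^\ast,\WW^\ast)$. Feasibility $A(\XX^\ast,Z_0^\ast)+Q(\ZZ^\ast)=0$ is immediate from continuity since the residual vanishes. For the stationarity statement, put $\ux{v}{k}:=C_{\ux{\XX}{k}}^T(\ux{\WW}{k}-\ux{\WW}{k-1})$; rewriting the $\ZZZ$-subproblem optimality of the preceding iteration as above shows $\ux{v}{k}\in\partial_\ZZZ\Lag(\ux{\XX}{k},\ux{\ZZZ}{k},\ux{\WW}{k})$, and $\ux{v}{k}\to0$ because $\ux{\WW}{k}-\ux{\WW}{k-1}\to0$ while $C_{\ux{\XX}{k}}$ stays bounded. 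Since $\partial_\ZZZ\Lag(\ux{\XX}{k},\ux{\ZZZ}{k},\ux{\WW}{k})=\partial\psi(\ux{\ZZZ}{k})+C_{\ux{\XX}{k}}^T\ux{\WW}{k}+\rho\,C_{\ux{\XX}{k}}^T\big(A(\ux{\XX}{k},\ux{Z_0}{k})+Q(\ux{\ZZ}{k})\big)$ and the last summand vanishes in the limit, the element $\ux{v}{k}-C_{\ux{\XX}{k}}^T\ux{\WW}{k}-\rho\,C_{\ux{\XX}{k}}^T(\cdots)\in\partial\psi(\ux{\ZZZ}{k})$ converges to $-C_{\XX^\ast}^T\WW^\ast$. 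Combining lower semicontinuity of $\psi$ with the reverse bound obtained by testing the $\ZZZ$-subproblem optimality against the fixed point $\ZZZ^\ast$ (and using feasibility of $\ZZZ^\ast$ together with convergence of the smooth and bilinear terms) gives $\psi(\ux{\ZZZ}{k_j})\to\psi(\ZZZ^\ast)$, so outer semicontinuity (the closed-graph property) of the general subdifferential yields $-C_{\XX^\ast}^T\WW^\ast\in\partial_\ZZZ\psi(\ZZZ^\ast)$, i.e. \eqref{eq:zfoc}; since $A(\XX^\ast,Z_0^\ast)+Q(\ZZ^\ast)=0$, this is exactly the first-order condition for $\ZZZ^\ast$ to be a constrained stationary point of $\min_\ZZZ\{\psi(\ZZZ):A(\XX^\ast,Z_0)+Q(\ZZ)=0\}$.
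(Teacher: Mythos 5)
Your overall strategy is the right one and, in outline, the same as the paper's: a decrease estimate for the $\ZZZ$-step, control of the dual increment through the first-order conditions $Q_i^T\ux{W}{k}_i=-\nabla_{Z_i}\psi(\ux{\ZZZ}{k})$ together with $\opn{Im}(Q)\supseteq\opn{Im}(A)$ and $\lambda_{++}$-type bounds (this is exactly \Cref{winc}), and the final limit passage using $\ux{v}{k}=C_{\ux{\XX}{k}}^T(\ux{\WW}{k}-\ux{\WW}{k-1})\in\partial_\ZZZ\Lag(\ux{\XX}{k},\ux{\ZZZ}{k},\ux{\WW}{k})$, which is precisely how the paper produces the vanishing subgradients. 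The genuine gap is in your third step, which is the crux of the theorem: boundedness of the iterates and a lower bound for $\{\ux{\Lag}{k}\}$. As written the argument is circular: you invoke ``the residual tends to $0$'' to place the iterates near $\Omega$, but residual $\to 0$ is only obtained at the end of that same paragraph by telescoping the decrease inequality, which requires $\{\ux{\Lag}{k}\}$ bounded below, which you derive from the boundedness of $\ux{\WW}{k}$, which you derive from the boundedness of the primal iterates you are trying to establish. Moreover, even granting that the iterates eventually lie in a fixed neighborhood of $\Omega$, \Cref{aobj} asserts coercivity of $\phi$ only \emph{on} $\Omega$, and coercivity on $\Omega$ does not extend to any neighborhood of it (take $\phi(x,y)=x^2\max\{0,1-x^2y^2\}$ with $\Omega=\{y=0\}$: coercive on $\Omega$, yet $\phi(k,1/k)=0$); in the paper's intended applications $\phi$ is coercive only because of the constraints. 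Finally, the ``remainder bounded below uniformly in $k$'' that you want for $\la\ux{\WW}{k},r^k\ra+\frac{\rho}{2}\|r^k\|^2$ is not available before boundedness either, since $Q^T\ux{\WW}{k}=-\nabla_\ZZ\psi(\ux{\ZZZ}{k})$ is only bounded once $\ux{\ZZZ}{k}$ is.

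What is missing is the paper's feasible companion-point construction (\Cref{ideal}, \Cref{lagbdgeneric}, \Cref{varbdgeneric}): for each $k$ one builds an \emph{exactly} feasible point $(\ux{\XX}{k},\ux{Z}{k}_0,\wh{Z}_1^k,\wh{Z}_2^k)$ whose distance to $(\ux{Z}{k}_1,\ux{Z}{k}_2)$ is bounded by a fixed multiple of the residual --- injectivity of $Q_2$ gives $\wh{Z}_2^k$, and the translation argument for minimizers of the strongly convex $g_1$ over translated affine sets (\Cref{cvxargmin}, \Cref{bicvxargmin}) gives $\wh{Z}_1^k$ --- and then the first-order condition for $\ZZZ^+$ plus Lipschitz differentiability of $g_1,g_2$ yields $\ux{\Lag}{k}\geq \phi(\text{companion})+\tfrac{1}{2}(\rho-M\zeta)\|r^k\|^2$. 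Coercivity on $\Omega$ is applied to the feasible companions, and boundedness of the actual iterates then follows from $\|\wh{\ZZ}^k-\ux{\ZZ}{k}\|^2\leq\zeta\|r^k\|^2$ with $\|r^k\|$ bounded; only after this do telescoping, $\|\ux{\WW}{k+1}-\ux{\WW}{k}\|\to0$, and feasibility of limit points follow. A secondary inaccuracy: your claim that the $\ZZZ$-subproblem is strongly convex in all of $\ZZZ$, hence a decrease $c\|\ZZZ^+-\ZZZ\|^2$ and $\|\ux{\ZZZ}{k+1}-\ux{\ZZZ}{k}\|\to0$, is not justified by \Cref{baseasm}: when $Z_S=Z_1$ nothing forces strong convexity in $Z_0$ ($h$ is merely convex and $A_{Z_0,\XX}$ need not be injective). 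The correct decrease (\Cref{yzdecr}) is only in $(Z_S,Z_2)$, which suffices because the dual increment is controlled by $(Z_S,Z_2)$ alone (\Cref{winc}); control of $Z_0$ is obtained only later, under \Cref{asm2}.
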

\begin{rmk}
Let $\sigma := \lambda_{min}(Q_2^TQ_2)$ \footnote{See \Cref{sec:distandtr} for the definition of $\lambda_{min}$ and $\lambda_{++}$.} and $\kappa_1 := \frac{M_1}{m_1}$. One can check that it suffices to choose $\rho$ so that
\begin{equation}\label{eq:rhoexplicitbd}
\frac{\sigma \rho}{2} - \frac{M_2^2}{\sigma \rho} > \frac{M_2}{2} \bump \text{ and } \bump \rho > \max \left\{
\frac{2M_1 \kappa_1}{\lambda_{++}(Q_1^TQ_1)},
\frac{1}{2}(M_1 + M_2) \max
\left\{
\sigma^{-1},
\frac{(1+2\kappa_1)^2}{\lambda_{++}(Q_1^TQ_1)}
\right\}
\right\}.
\end{equation}
\end{rmk}
Note that \Cref{baseasm} makes very few assumptions about $f(\XX)$ and the map $A$ as a function of $\XX$, other than that $A$ is multiaffine. In \Cref{sec:general}, we develop general techniques for proving that $(\XX^\ast,\ZZZ^\ast)$ is a constrained stationary point. We now present an easily checkable set of conditions, that ensure that the requirements for those techniques are satisfied.

\begin{asm}\label{asm2}
Solving problem $(P)$, \Cref{baseasm} and the following hold.
\begin{subasm}\label{anicefx}
The function $f(\XX)$ splits into
$$f(\XX) = F(X_0,\ldots,X_n) + \sum_{i=0}^n f_i(X_i)$$
where $F$ is $M_F$-Lipschitz differentiable, the functions $f_0$, $f_1$, $\ldots$, and $f_n$ are proper and lower semicontinuous, and each $f_i$ is continuous on $\opn{dom}(f_i)$.
\end{subasm}
\begin{subasm}\label{axainj}
For each $1 \leq \ell \leq n$,\footnote{Note that we have deliberately excluded $\ell = 0$. \Cref{axainj} is not required to hold for $X_0$.} at least one of the following two conditions\footnote{That is, either (1a) and (1b) hold, or (2a) and (2b) hold.} holds:
\begin{enumerate}
\item 
\begin{enumerate}
\item $F(X_0,\ldots,X_n)$ is independent of $X_\ell$.
\item $f_\ell(X_\ell)$ satisfies a strengthened convexity condition (\Cref{defscc}).
\end{enumerate}
\item 
\begin{enumerate}
\item Viewing $A(\XX,Z_0) + Q(\ZZ) = 0$ as a system of constraints
\footnote{As an illustrative example, a problem may be formulated with constraints $X_0X_1 + Z_1 = 0, X_0 + P_1(X_1) + Z_2 = 0, X_0X_2 + Z_3 = 0, P_2(X_2) + Z_4 = 0$, where $P_1, P_2$ are injective linear maps. The notation $A(\XX,Z_0) + Q(\ZZ)$ denotes the concatenation of these equations, which can also be seen naturally as a system of four constraints. In this case, the indices $r(\ell) \in \{1,2,3,4\}$, and \Cref{axainj}(2a) is satisfied by the second constraint $X_0 + P_1(X_1) + Z_2 = 0$ for the variables $X_0, X_1$ (i.e. $r(0) = r(1) = 2$ and $R_0 = I, R_1 = P_1$), and by the fourth constraint $P_2(X_2) + Z_4 = 0$ for $X_2$.
}
, there exists an index $r(\ell)$ such that in the $r(\ell)$-th constraint,
$$A_{r(\ell)}(\XX,Z_0) = R_\ell(X_\ell) + A'_\ell (\XX_{\neq \ell}, Z_0)$$
for an injective linear map $R_\ell$ and a multiaffine map $A'_\ell$. In other words, the only term in $A_{r(\ell)}$ that involves $X_\ell$ is an injective linear map $R_\ell(X_\ell)$.
\item $f_\ell$ is either convex or $M_\ell$-Lipschitz differentiable.
\end{enumerate}
\end{enumerate}
\end{subasm}

\begin{subasm}\label{azainj}
At least one of the following holds for $Z_0$:
\begin{enumerate}
\item $h(Z_0)$ satisfies a strengthened convexity condition (\Cref{defscc}).
\item $Z_0 \in Z_S$, so $g_1(Z_S)$ is a strongly convex function of $Z_0$ and $Z_1$.
\item Viewing $A(\XX,Z_0) + Q(\ZZ) = 0$ as a system of constraints, there exists an index $r(0)$ such that $A_{r(0)}(\XX,Z_0) = R_0(Z_0) + A'_0(\XX)$ for an injective linear map $R_0$ and multiaffine map $A_0'$.
\end{enumerate}
\end{subasm}
\end{asm}

With these additional assumptions on $f$ and $A$, we have that every limit point $(\XX^\ast, \ZZZ^\ast)$ is a constrained stationary point of problem $(P)$.

\begin{thm}\label{main2}
Suppose that \Cref{asm2} holds (and hence, \Cref{baseasm} and \Cref{main}). Then for sufficiently large $\rho$, there exists a sequence $\ux{v}{k} \in \partial \Lag(\ux{\XX}{k}, \ux{\ZZZ}{k}, \ux{\WW}{k})$ with $\ux{v}{k} \rightarrow 0$, and thus every limit point $(\XX^\ast, \ZZZ^\ast)$ is a constrained stationary point of problem $(P)$. Thus, in addition to \eqref{eq:zfoc}, $\XX^\ast$ satisfies, for each $0 \leq i \leq n$,
\begin{equation}\label{eq:xfoc}
0 \in \nabla_{X_i}F(\XX^\ast) + \partial_{X_i} f_i(X^\ast_i) + A_{X_i,( \XX^\ast_{\neq i}, Z_0^\ast)}^T\WW^\ast
\end{equation}
where $A_{X_i,( \XX^\ast_{\neq i}, Z_0^\ast)}$ is the $X_i$-linear term of $\XX \mapsto A(\XX,Z_0)$ evaluated at $(\XX_{\neq i}^\ast, Z_0^\ast)$ (see \Cref{linearterm}) and $A_{X_i,( \XX^\ast_{\neq i}, Z_0^\ast)}^T$ is its adjoint.
That is, for each $0 \leq i \leq n$, $X^\ast_i$ is a constrained stationary point for the problem 
$$\min_{X_i}\ \{ F(\XX_{\neq i}^\ast, X_i) + f_i(X_i) : A(\XX_{\neq i}^\ast, X_i,Z_0^\ast) + Q(\ZZ^\ast) = 0\}.$$
\end{thm}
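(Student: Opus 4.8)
The plan is to bootstrap from \Cref{main}, which under \Cref{baseasm} alone already provides that $\{(\ux{\XX}{k},\ux{\ZZZ}{k},\ux{\WW}{k})\}$ is bounded, that every limit point is feasible, and that there is a sequence $\ux{v}{k}_{\ZZZ}\in\partial_\ZZZ\Lag(\ux{\XX}{k},\ux{\ZZZ}{k},\ux{\WW}{k})$ with $\ux{v}{k}_{\ZZZ}\to 0$ (equation \eqref{eq:zfoc}); its proof also supplies a sufficient-decrease estimate for $\ux{\Lag}{k}$ together with the fact that $\ux{\Lag}{k}$ is bounded below, which I would reuse. What remains is to produce the $\XX$-components of a vanishing subgradient sequence and then pass to a limit. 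I would proceed in three stages: (i) show the relevant per-block successive differences tend to zero; (ii) extract from the ADMM subproblem optimality conditions a full sequence $\ux{v}{k}\in\partial\Lag$ with $\ux{v}{k}\to 0$; and (iii) take limits along a convergent subsequence and unpack $0\in\partial\Lag(\XX^\ast,\ZZZ^\ast,\WW^\ast)$ into \eqref{eq:xfoc}.

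For stage (i): the sufficient-decrease estimate already forces $\|\ux{Z}{k+1}_1-\ux{Z}{k}_1\|$, $\|\ux{Z}{k+1}_2-\ux{Z}{k}_2\|$, and $\|\ux{\WW}{k+1}-\ux{\WW}{k}\|$ to be square-summable, hence to vanish; \Cref{axainj} and \Cref{azainj} are exactly what is needed to adjoin $\|\ux{X}{k+1}_\ell-\ux{X}{k}_\ell\|\to 0$ for $1\le\ell\le n$ and $\|\ux{Z}{k+1}_0-\ux{Z}{k}_0\|\to 0$. The mechanism is the same in each branch: I would show that the $X_\ell$-subproblem (resp.\ the $Z_0$-subproblem), as a function of that single block with the others frozen, is strongly convex with a modulus bounded below uniformly in $k$; then optimality of the previous iterate for that subproblem forces a drop in $\ux{\Lag}{k}$ of at least (modulus)$\,\times\,\|\ux{X}{k+1}_\ell-\ux{X}{k}_\ell\|^2$, and summing over $k$ (using boundedness below of $\ux{\Lag}{k}$) yields square-summability, hence convergence to zero. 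The uniform strong convexity is furnished by $f_\ell$ (resp.\ $h$) satisfying the strengthened convexity condition of \Cref{defscc} in case~(1) of \Cref{axainj} (resp.\ \Cref{azainj}), or — in case~(2) — by the penalty contribution $\tfrac{\rho}{2}\|R_\ell(X_\ell)+\cdots\|^2$, which is $\rho\lambda_{\min}(R_\ell^TR_\ell)$-strongly convex since $R_\ell$ is injective; this dominates the at-worst $-M_\ell$ curvature of a merely Lipschitz-differentiable $f_\ell$ once $\rho$ exceeds the threshold in the Remark, and is harmless when $f_\ell$ is convex. The exclusion of $\ell=0$ from \Cref{axainj} is accounted for by stage (ii).

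For stage (ii): for each $0\le i\le n$, the optimality condition for the $X_i$-subproblem of \Cref{alg:admm_mult}, with the splitting $f=F+\sum_i f_i$ of \Cref{anicefx}, reads
\[
0\in \partial f_i(\ux{X}{k+1}_i)+\nabla_{X_i}F(\ux{X}{k+1}_0,\ldots,\ux{X}{k+1}_i,\ux{X}{k}_{i+1},\ldots,\ux{X}{k}_n)+A_{X_i,(\cdot)}^T\,\omega_i^k,
\]
where $\omega_i^k=\ux{\WW}{k}+\rho\bigl(A(\XX,Z_0)+Q(\ZZ)\bigr)$ and the trailing $A_{X_i,(\cdot)}$ are both evaluated at the blocks updated so far in iteration $k$. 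I would take $\ux{v}{k}_{X_i}$ to be this subgradient element with every argument advanced to the synchronized iterate $(\ux{\XX}{k+1},\ux{\ZZZ}{k+1},\ux{\WW}{k+1})$; then $\ux{v}{k}_{X_i}$ equals $\nabla_{X_i}F(\ux{\XX}{k+1})-\nabla_{X_i}F(\ux{X}{k+1}_0,\ldots,\ux{X}{k+1}_i,\ux{X}{k}_{i+1},\ldots,\ux{X}{k}_n)$ plus the difference of two adjoint-of-linear-term terms. The first piece is bounded by $M_F\|\ux{X}{k+1}_{>i}-\ux{X}{k}_{>i}\|$; the second involves only the change in the evaluation point of $A_{X_i,(\cdot)}$ — which, since $A$ is multiaffine so its block-linear terms are Lipschitz on the bounded region containing the iterates, is controlled by $(\|\ux{X}{k+1}_{>i}-\ux{X}{k}_{>i}\|+\|\ux{Z}{k+1}_0-\ux{Z}{k}_0\|)$ times bounded constants — together with the change from $\omega_i^k$ to $\ux{\WW}{k+1}$, which telescopes into $\|\ux{\WW}{k+1}-\ux{\WW}{k}\|+\rho(\|\ux{X}{k+1}_{>i}-\ux{X}{k}_{>i}\|+\|\ux{\ZZZ}{k+1}-\ux{\ZZZ}{k}\|)$ times bounded constants. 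Crucially $X_i$'s own successive difference never appears; this is why \Cref{axainj} is not needed for $X_0$. By stage (i) all of these vanish, so $\ux{v}{k}_{X_i}\to 0$; combining with the $\ZZZ$-component of \Cref{main} and the $\WW$-component (which is the residual $\rho^{-1}(\ux{\WW}{k+1}-\ux{\WW}{k})\to 0$), after reindexing to the synchronized iterate, gives $\ux{v}{k}\in\partial\Lag(\ux{\XX}{k+1},\ux{\ZZZ}{k+1},\ux{\WW}{k+1})$ with $\ux{v}{k}\to 0$.

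For stage (iii): along a subsequence converging to a limit point $(\XX^\ast,\ZZZ^\ast,\WW^\ast)$, I would invoke outer semicontinuity of $\partial\Lag$, which additionally requires $\Lag(\ux{\XX}{k},\ux{\ZZZ}{k},\ux{\WW}{k})\to\Lag(\XX^\ast,\ZZZ^\ast,\WW^\ast)$: value convergence of the smooth terms is automatic, lower semicontinuity gives the $\liminf$ for the nonsmooth $f_i$, and the matching $\limsup$ follows either from continuity of $f_i$ on $\opn{dom}(f_i)$ (\Cref{anicefx}) or from the usual device of substituting $X_i^\ast$ into the $X_i$-subproblem solved by $\ux{X}{k+1}_i$, and likewise for the $\ZZZ$-terms as in \Cref{main}. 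Passing to the limit in $\ux{v}{k}\in\partial\Lag\to 0$ yields $0\in\partial\Lag(\XX^\ast,\ZZZ^\ast,\WW^\ast)$, whose $X_i$-block is exactly \eqref{eq:xfoc}; with \eqref{eq:zfoc} and feasibility from \Cref{main}, $(\XX^\ast,\ZZZ^\ast)$ is a constrained stationary point of $(P)$. The main obstacle is stage (i): the branch-by-branch verification that every single-block subproblem is strongly convex with a modulus uniform in $k$ — especially the combination in \Cref{axainj}(2), in which $f_\ell$ may be nonconvex, so that all of the required convexity must come from the quadratic penalty $\tfrac{\rho}{2}\|R_\ell(X_\ell)+\cdots\|^2$ and survives only for $\rho$ above an explicit threshold — and then converting that strong convexity into a quadratic lower bound on the $\ux{\Lag}{k}$-decrease consistent with the estimates already established for \Cref{main}. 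A secondary point is the uniform boundedness of the operator-norm constants in stage (ii), resting on the iterates lying in a bounded set and on the Lipschitz behavior of a multiaffine map's block-linear terms on bounded sets.
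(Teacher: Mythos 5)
Your overall architecture is the same as the paper's: first show the per-block successive differences vanish (the paper's \Cref{xconverges}), then assemble a vanishing full subgradient sequence from the subproblem optimality conditions re-evaluated at the synchronized iterate (the paper's \Cref{innerblockgeneric,limitfocconditionsgeneric}), and pass to the limit (\Cref{limitfocgeneric}); your stages (ii)--(iii), including the observation that $X_i$'s own difference never appears so that \Cref{axainj} is not needed for $X_0$, match the paper's proof. The genuine problem is in stage (i), in the branch \Cref{azainj}(3). There is no ``$Z_0$-subproblem with the other blocks frozen'': $Z_0$ is updated \emph{jointly} with $(Z_1,Z_2)$ as the final block, and in that joint subproblem the only term controlling $Z_0$ is the penalty $\tfrac{\rho}{2}\|A_0'(\XX^+)+R_0(Z_0)+Q_r(Z_1)\|^2$, which is strongly convex only in the combination $R_0(Z_0)+Q_r(Z_1)$ and not in $Z_0$ (nor in $(Z_0,Z_1)$): it is constant along directions $(d_0,d_1)$ with $R_0(d_0)+Q_r(d_1)=0$. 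So the uniform modulus you posit does not exist in this branch, and your mechanism only yields $\|R_0(Z_0-Z_0^+)+Q_r(Z_1-Z_1^+)\|\rightarrow 0$. The missing step (which the paper supplies in \Cref{xconverges}) is to combine this with $\|Z_1-Z_1^+\|\rightarrow 0$, already known from \Cref{lastconverges} since $Z_1\subseteq Z_S$, and then use injectivity of $R_0$ to conclude $\|Z_0-Z_0^+\|\rightarrow 0$; one also needs the remark that the $\ZZZ$-update decrease itself tends to $0$ because the only increases of $\ux{\Lag}{k}$ (the dual steps) vanish. As written, your argument fails precisely in this case, which \Cref{asm2} allows.

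A second, more minor inaccuracy: in the \Cref{axainj}(1)/\Cref{azainj}(1) branches the strengthened convexity condition of \Cref{defscc} is \emph{not} strong convexity (e.g.\ $\|x\|_2^3$ satisfies it), so you do not get a uniform quadratic modulus or square-summability there. The correct statement, and the paper's, is that the per-block decrease is bounded below by $\Delta(\|X_\ell-X_\ell^+\|)$ with $\Delta$ merely $0$-forcing; summability of the decreases gives $\Delta(\|X_\ell-X_\ell^+\|)\rightarrow 0$, and the $0$-forcing property (not square-summability) gives $\|X_\ell-X_\ell^+\|\rightarrow 0$. This suffices for \Cref{main2}; the quadratic form of the bound only becomes essential for \Cref{mainkl}. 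Relatedly, in case (2) the modulus of the penalty must absorb $M_F$ as well as $\mu_\ell$ (as in the threshold $\rho>\lambda_{\min}^{-1}(R_\ell^TR_\ell)(\mu_\ell+M_F)$ that you cite), since $F$ may depend on $X_\ell$ there. With the \Cref{azainj}(3) repair and these corrections, your proposal coincides with the paper's proof.
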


\begin{rmk}
One can check that it suffices to choose $\rho$ so that, in addition to \eqref{eq:rhoexplicitbd}, we have $\rho > \max\{ \lambda_{min}^{-1}(R_\ell^TR_\ell)(\mu_\ell + M_F) \}$, where the maximum is taken over all $\ell$ for which \Cref{axainj}(2)  holds, and
$$\mu_\ell = \left\{ \begin{array}{cl} 0 & \text{if $f_\ell$ convex } \\ M_\ell & \text{if $f_\ell$ nonconvex, Lipschitz differentiable. } \end{array} \right.$$
\end{rmk}

It is well-known that when the augmented Lagrangian has a geometric property known as the \emph{Kurdyka-\L{}ojasiewicz (K-\L{})} property (see \Cref{sub:klproperty}), which is the case for many optimization problems that occur in practice, then results such as \Cref{main2} can typically be strengthened because the limit point is unique.

\begin{thm}\label{mainkl}
Suppose that $\Lag(\XX,\ZZZ,\WW)$ is a K-\L{} function. Suppose that \Cref{asm2} holds, and furthermore, that \Cref{axainj}(2) holds for all $X_0, X_1,\ldots,X_n$\footnote{Note that $X_0$ is included here, unlike in \Cref{asm2}.}, and \Cref{azainj}(2) holds. Then for sufficiently large $\rho$, the sequence $\{(\ux{\XX}{k}, \ux{\ZZZ}{k}, \ux{\WW}{k})\}_{k=0}^\infty$ produced by ADMM converges to a unique constrained stationary point $(\XX^\ast, \ZZZ^\ast,\WW^\ast)$.
\end{thm}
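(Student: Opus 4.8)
The plan is to run the iterates through the standard Kurdyka--\L{}ojasiewicz (K--\L{}) descent scheme applied to the augmented Lagrangian $\Lag$, regarded as a function of the full state $\ux{U}{k} := (\ux{\XX}{k}, \ux{\ZZZ}{k}, \ux{\WW}{k})$. Three ingredients are needed: (H1) a sufficient-decrease estimate $\ux{\Lag}{k} - \ux{\Lag}{k+1} \geq a\,\|\ux{U}{k+1} - \ux{U}{k}\|^2$ for some $a > 0$; (H2) a relative-error bound $\|\ux{v}{k+1}\| \leq b\,\|\ux{U}{k+1} - \ux{U}{k}\|$ for some $\ux{v}{k+1} \in \partial\Lag(\ux{U}{k+1})$ and some $b > 0$; and (H3) a subsequence $\ux{U}{k_j} \to \ux{U}{\ast}$ along which $\ux{\Lag}{k_j} \to \Lag(\ux{U}{\ast})$. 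Granting (H1)--(H3) and the hypothesis that $\Lag$ is K--\L{}, the usual argument yields $\sum_k \|\ux{U}{k+1} - \ux{U}{k}\| < \infty$; hence $\{\ux{U}{k}\}$ is Cauchy and converges to a single point $(\XX^\ast, \ZZZ^\ast, \WW^\ast)$, which is a constrained stationary point by \Cref{main2}.

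Ingredients (H1), (H3), and boundedness of $\{\ux{U}{k}\}$ are already delivered by the analysis behind \Cref{main} and \Cref{main2}: that analysis shows $\ux{\Lag}{k}$ is nonincreasing with decrease bounded below by a positive combination of the squared block-increments $\|\ux{X}{k+1}_i - \ux{X}{k}_i\|^2$, $\|\ux{Z}{k+1}_i - \ux{Z}{k}_i\|^2$, $\|\ux{\WW}{k+1} - \ux{\WW}{k}\|^2$, and that $\Lag$ behaves continuously along a convergent subsequence. For (H2), I would reuse the construction in the proof of \Cref{main2} of $\ux{v}{k} \in \partial\Lag(\ux{\XX}{k},\ux{\ZZZ}{k},\ux{\WW}{k})$ with $\ux{v}{k}\to 0$, but observe that each summand of $\ux{v}{k}$ is, up to a constant, a block-increment from the previous step, so that in fact $\|\ux{v}{k}\| \leq b\,\|\ux{U}{k} - \ux{U}{k-1}\|$. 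This requires in particular that the dual increment $\ux{\WW}{k}-\ux{\WW}{k-1}$ be itself controlled by primal block-increments, which is precisely where \Cref{afinalblockim} ($\opn{Im}(Q)\supseteq\opn{Im}(A)$), the injectivity of $Q_2$, and the Lipschitz differentiability of $g_2$ enter, and is carried out in \Cref{sec:convergence}.

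The extra hypotheses in \Cref{mainkl} --- that \Cref{axainj}(2) holds for \emph{every} block $X_0,\ldots,X_n$ and that \Cref{azainj}(2) holds --- are exactly what make (H1) and (H2) hold \emph{uniformly in every block}. For a block covered only by \Cref{axainj}(1) (or by \Cref{azainj}(1),(3)), the decrease and the subgradient for that block are produced via strengthened convexity, which suffices to certify limit points but not to give a clean quadratic decrease together with a primal-increment bound on the corresponding piece of $\ux{v}{k}$. Requiring the injective-linear-term structure $A_{r(\ell)}(\XX,Z_0) = R_\ell(X_\ell) + A'_\ell(\XX_{\neq\ell},Z_0)$ for all $\ell$ means that for $\rho$ large the augmented penalty $\tfrac{\rho}{2}\|A(\XX,Z_0)+Q(\ZZ)\|^2$ is effectively strongly convex in each $X_\ell$ with modulus of order $\rho\,\lambda_{min}(R_\ell^TR_\ell)$; this is what yields (H1) and (H2) for that block simultaneously, and it is also why the threshold $\rho > \lambda_{min}^{-1}(R_\ell^TR_\ell)(\mu_\ell+M_F)$ from the remark after \Cref{main2} must now be enforced for all $\ell$ (and for $Z_0$).

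\emph{The main obstacle.} This lies in the uniform relative-error bound (H2), and within it the control of the dual increment: the update $\ux{\WW}{k+1}=\ux{\WW}{k}+\rho(A(\ux{\XX}{k+1},\ux{Z_0}{k+1})+Q(\ux{\ZZ}{k+1}))$ gives no smallness of $\|\ux{\WW}{k+1}-\ux{\WW}{k}\|$ directly, so one must instead express this increment through the first-order conditions of the $\ZZZ$-subproblem and absorb it --- which is exactly where the structural \Cref{afinalblockim} and the regularity of $Q_2$, $g_1$, $g_2$ are indispensable. Once that bound is in hand, assembling (H1)--(H3) and invoking the K--\L{} inequality is routine; the remaining care is purely bookkeeping --- choosing $\rho$ large enough to meet the thresholds of \Cref{main}, \Cref{main2}, and the new per-block requirements simultaneously --- together with checking that the limit furnished by the Cauchy property indeed coincides with the constrained stationary point of \Cref{main2}.
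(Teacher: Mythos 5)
Your proposal is correct and follows essentially the same route as the paper: the paper verifies the sufficient-decrease condition \textbf{H1} (via \Cref{lagdec}, \Cref{winc}, inequality \eqref{eq:xsuffdec} under \Cref{axainj}(2) for all blocks, and \Cref{azainj}(2) for $Z_0$) and the relative-error condition \textbf{H2} (via the subgradient from \Cref{generalinnerblock} together with boundedness from \Cref{allbded}), and then invokes the K-\L{} convergence theorem (\Cref{absconvergence}, i.e.\ the standard finite-length/Cauchy argument you sketch) plus \Cref{main2} for stationarity of the limit. Your only slips are cosmetic: the dual-increment control via \Cref{winc} is what makes \textbf{H1} (not just \textbf{H2}) work, and under \Cref{azainj}(2) the block $Z_0$ is handled by strong convexity of $g_1$ rather than by an injectivity threshold on $\rho$.
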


In \Cref{sec:general}, we develop general properties of ADMM that hold without relying on \Cref{baseasm} or \Cref{asm2}. In \Cref{sec:convergence}, the general results are combined with \Cref{baseasm} and then with \Cref{asm2} to prove \Cref{main} and \Cref{main2}, respectively. Finally, we prove \Cref{mainkl} assuming that the augmented Lagrangian is a K-\L{} function. The results of \Cref{sec:general} may also be useful for analyzing ADMM, since the assumptions required are weak.

\subsection{Discussion of Assumptions}\label{tightness}
\Cref{baseasm,asm2} are admittedly long and somewhat involved. In this section, we will discuss them in detail and explore the extent to which they are tight. Again, we wish to emphasize that despite the additional complexity of multiaffine constraints, the basic content of these assumptions is fundamentally the same as in the linear case. There is also a relation between \Cref{asm2} and proximal ADMM, by which \Cref{axainj}(2) can be viewed as introducing a proximal term. This is described in \Cref{subsub:discussprox}.

\subsubsection{Assumption \ref{aRegularity}}

This assumption is necessary for ADMM (\Cref{alg:admm_mult}) to be well-defined. We note that this can fail in surprising ways; for instance, the conditions used in \cite{BPCPE2011} are insufficient to guarantee that the ADMM subproblems have solutions. In \cite{CST2017}, an example is constructed which satisfies the conditions in \cite{BPCPE2011}, and yet the ADMM subproblem fails to attain its (finite) optimal value.

\subsubsection{Assumption \ref{afinalblockim}}\label{subsub:disc:afinalblockim}

The condition that $\opn{Im}(Q) \supseteq \opn{Im}(A)$ plays a crucial role at multiple points in our analysis because $\ZZ$, a subset of the \emph{final} block of variables, has a close relation to the dual variables $\WW$. It would greatly broaden the scope of ADMM, and simplify modeling, if this condition could be relaxed, but unfortunately this condition is tight for general problems. The following example demonstrates that ADMM is not globally convergent when \Cref{afinalblockim} does not hold, even if the objective function is strongly convex.

\begin{thm}\label{im_counterexample}
Consider the problem
$$\min_{x,y} \{ x^2 + y^2: xy = 1\}.$$
If the initial point is $(\ux{x}{0},0, \ux{w}{0})$, or if $\ux{w}{k} = \rho$ for some $k$, then the ADMM sequence satisfies $(\ux{x}{k}, \ux{y}{k}) \rightarrow (0,0)$ and $\ux{w}{k} \rightarrow -\infty$.
\end{thm}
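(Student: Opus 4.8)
The plan is to compute the ADMM iteration in closed form and read off the claimed behaviour; there is no real obstruction, only bookkeeping. Here the (unmodified, $Q$-free) augmented Lagrangian is
$$\Lag(x,y,w) = x^2 + y^2 + w(xy - 1) + \frac{\rho}{2}(xy-1)^2,$$
and since there is no $Q$-block we have $\opn{Im}(Q) = \{0\}$ while $\opn{Im}(A) = \RR$, so \Cref{afinalblockim} fails. First I would note that each subproblem is a coercive, strictly convex quadratic in a single variable, hence has a unique minimizer (so ADMM is well-defined and forced): collecting the $x$-dependent terms of $\Lag(x,y,w)$ gives $x^2\bigl(1 + \tfrac{\rho}{2}y^2\bigr) + (w-\rho)xy$, so
$$\ux{x}{k+1} = \frac{(\rho - \ux{w}{k})\,\ux{y}{k}}{2 + \rho(\ux{y}{k})^2},$$
and by symmetry $\ux{y}{k+1} = (\rho - \ux{w}{k})\,\ux{x}{k+1}\big/\bigl(2 + \rho(\ux{x}{k+1})^2\bigr)$, followed by the dual update $\ux{w}{k+1} = \ux{w}{k} + \rho(\ux{x}{k+1}\ux{y}{k+1} - 1)$.

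From these formulas I would extract three elementary implications: (i) if $\ux{y}{k} = 0$ then $\ux{x}{k+1} = 0$; (ii) if $\ux{w}{k} = \rho$ then $\ux{x}{k+1} = 0$, since the numerator vanishes regardless of $\ux{y}{k}$; and (iii) if $\ux{x}{k+1} = 0$ then $\ux{y}{k+1} = 0$ and $\ux{w}{k+1} = \ux{w}{k} - \rho$. Chaining (i) and (iii), once some iterate has its $y$-component equal to $0$, all subsequent $x$- and $y$-components equal $0$ and every dual update subtracts $\rho$.

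The two cases then follow immediately. If the initial point is $(\ux{x}{0}, 0, \ux{w}{0})$, then $\ux{y}{0} = 0$, so by induction $\ux{x}{k} = \ux{y}{k} = 0$ for all $k \geq 1$ and $\ux{w}{k} = \ux{w}{0} - k\rho \to -\infty$. If $\ux{w}{k} = \rho$ for some $k$, then (ii) gives $\ux{x}{k+1} = 0$, hence (iii) gives $\ux{y}{k+1} = 0$; from iteration $k+1$ onward the chaining applies, so $\ux{x}{j} = \ux{y}{j} = 0$ for all $j \geq k+1$ and $\ux{w}{j} = \ux{w}{k+1} - (j-k-1)\rho \to -\infty$.

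The only points requiring care — rather than a genuine difficulty — are confirming that the subproblems attain their minima (they are coercive quadratics) and that the update order ($x$ before $y$) is respected. It is worth remarking in closing that $(0,0)$ is infeasible, whereas the problem has exactly the two optimal solutions $(\pm1,\pm1)$ with optimal value $2$; thus the example shows that ADMM need not be globally convergent once \Cref{afinalblockim} is dropped, even when the objective is strongly convex.
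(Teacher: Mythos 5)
Your proposal is correct and follows essentially the same route as the paper: compute the single-variable quadratic subproblems in closed form, observe that $\ux{y}{k}=0$ or $\ux{w}{k}=\rho$ forces $\ux{x}{k+1}=0$, hence $\ux{y}{k+1}=0$, and then every dual update subtracts $\rho$. The extra remarks on well-posedness of the subproblems and the infeasibility of $(0,0)$ are fine but not needed beyond what the paper's argument already contains.
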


Even for \emph{linearly}-constrained, convex, multiblock problems, this condition\footnote{For linear constraints $A_1x_1 + \ldots + A_nx_n = b$, the equivalent statement is that $\opn{Im}(A_n) \supseteq \bigcup_{i=1}^{n-1} \opn{Im}(A_i)$.} is close to indispensable. When all the other assumptions except \Cref{afinalblockim} are satisfied, ADMM can still diverge if $\opn{Im}(Q) \not\supseteq \opn{Im}(A)$. In fact, \cite[Thm 3.1]{ADMM_COUNTER} exhibits a simple 3-block convex problem with objective function $\phi \equiv 0$ on which ADMM diverges for any $\rho$. This condition is used explicitly \cite{WYZ2019JSC,LP2015,JLMZ2019} and implicitly \cite{HLR2016} in other analyses of multiblock (nonconvex) ADMM.

\subsubsection{Assumption \ref{aobj}}\label{subsub:discussaobj}

This assumption posits that the entire objective function $\phi$ is coercive on the feasible region, and imposes several conditions on the term $\psi(\ZZZ)$ for the final block $\ZZZ$. 

Let us first consider the conditions on $\psi$. The block $\ZZZ$ is composed of three sub-blocks $Z_0, Z_1, Z_2$, and $\psi(\ZZZ)$ decomposes as $h(Z_0) + g_1(Z_S) + g_2(Z_2)$, where $Z_S$ represents either $Z_1$ or $(Z_0,Z_1)$. There is a distinction between $Z_0$ and $\ZZ = (Z_1,Z_2)$: namely, $Z_0$ may be coupled with the other variables $\XX$ in the nonlinear function $A$, whereas $\ZZ$ appears only in the linear function $Q(\ZZ)$ which satisfies $\opn{Im}(Q) \supseteq \opn{Im}(A)$.

To understand the purpose of this assumption, consider the following `abstracted' assumptions, which are implied by \Cref{aobj}:
\begin{description}
\item[M1] The objective is Lipschitz differentiable with respect to a `suitable' subset of $\ZZZ$.
\item[M2] ADMM yields sufficient decrease \cite{ABS2013} when updating $\ZZZ$. That is, for some `suitable' subset $\wt{\ZZZ}$ of $\ZZZ$ and some $\epsilon > 0$, we have $ \Lag(\XX^+, \ZZZ, \WW) - \Lag(\XX^+,\ZZZ^+,\WW) \geq \epsilon \|\wt{\ZZZ} - \wt{\ZZZ}^+\|^2$.
\end{description}
A `suitable' subset of $\ZZZ$ is one whose associated images in the constraints satisfies \Cref{afinalblockim}. By design, our formulation $(P)$ uses the subset $\ZZ = (Z_1,Z_2)$ in this role.
\textbf{M1} follows from the fact that $g_1, g_2$ are Lipschitz differentiable, and the other conditions in \Cref{aobj} are intended to ensure that \textbf{M2} holds. For instance, the strong convexity assumption in \Cref{aobj}(2) ensures that \textbf{M2} holds with respect to $Z_1$ regardless of the properties of $Q_1$. The concept of sufficient decrease for descent methods is discussed in \cite{ABS2013}.

To connect this to the classical linearly-constrained problem, observe that an assumption corresponding to \textbf{M1} is:
\begin{description}
\item[AL] For the problem $(P1)$ (see page~\pageref{multiblocklinearadmm}), $f_n(x_n)$ is Lipschitz differentiable.
\end{description}
Thus, in this sense $\ZZ$ alone corresponds to the final block in the linearly-constrained case. In the multiaffine setting, we can add a sub-block $Z_0$ to the final block $\ZZZ$, a nonsmooth term $h(Z_0)$ to the objective function and a coupled constraint $A_1(\XX,Z_0)$, but only to a limited extent: the interaction of the final block $\ZZZ$ with these elements is limited to the variables $Z_0$.

As with \Cref{afinalblockim}, it would expand the scope of ADMM if \textbf{AL}, or the corresponding \textbf{M1}, could be relaxed. However, we find that for nonconvex problems, \textbf{AL} cannot readily be relaxed even in the linearly-constrained case, where \textbf{AL} is a standard assumption \cite{WYZ2019JSC,LP2015,JLMZ2019}.
Furthermore, an example is given in \cite[36(a)]{WYZ2019JSC} of a 2-block problem in which the function $f_2(x_2) = \|x_2\|_1$ is nonsmooth, and it is shown that ADMM diverges for any $\rho$ when initialized at a given point. Thus, we suspect that \textbf{AL}/\textbf{M1} is tight for general problems, though it may be possible to prove convergence for specific structured problems not satisfying \textbf{M1}.

\textbf{M1} often has implications for modeling. When a constraint $C(\XX) = 0$ fails to have the required structure, one can introduce a new slack variable $Z$, and replace that constraint by $C(\XX) - Z = 0$, and add a term $g(Z)$ to the objective function to penalize $Z$. Because of \textbf{M1}, exact penalty functions such as $\lambda \|Z\|_1$ or the indicator function of $\{0\}$ fail to satisfy \Cref{aobj}, so this reformulation is not exact. Based on the above discussion, this may be a limitation inherent to ADMM (as opposed to merely an artifact of existing proof techniques).

We turn now to \textbf{M2}. Note that \textbf{AL} corresponds only to \textbf{M1}, which is why \Cref{aobj} is more complicated than \textbf{AL}. There are two main sub-assumptions within \Cref{aobj} that ensure \textbf{M2}: that $g_1$ is strongly convex in $Z_1$, and the map $Q_2$ is injective. These assumptions are \emph{not} tight\footnote{in the sense that this \emph{exact} assumption is always necessary and cannot be replaced.} since \textbf{M2} may hold under alternative hypotheses. On the other hand, we are not aware of other assumptions that are as comparably simple \emph{and} apply with the generality of \Cref{aobj}; hence we have chosen to adopt the latter. For example, if we restrict the problem structure by assuming that the sub-block $Z_0$ is not present, then the condition that $g_1$ is strongly convex can be relaxed to the weaker condition that $\nabla^2 g_1(Z_1) + \rho Q_1^TQ_1 \succeq mI$ for $m > 0$. However, even in the absence of \Cref{aobj}, one might show that specific problems, or classes of structured problems, satisfy the sufficient decrease property, using the general principles of ADMM outlined in \Cref{sec:general}.

Property \textbf{M2} often arises implicitly when analyzing ADMM. In some cases, such as \cite{HLR2016,SLYWY2014}, it follows either from strong convexity of the objective function, or because $A_n = I$ (and is thus injective). Proximal and majorized versions of ADMM are considered in \cite{JLMZ2019,LP2015} and add quadratic terms which force \textbf{M2} to be satisfied. The approach in \cite{WYZ2019JSC}, by contrast, takes a different approach and uses an abstract assumption which relates $\|A_k(x_k^+ - x_k)\|^2$ to $\|x_k^+ - x_k\|^2$; in our experience, it is difficult to verify this abstract assumption in general, except when other properties such as strong convexity or injectivity of $A_k$ hold.

Finally, we remark on the coercivity of $\phi$ over the feasible region. It is common to assume coercivity (see, e.g. \cite{LP2015,WYZ2019JSC}) to ensure that the sequence of iterates is bounded, which implies that limit points exist. In many applications, such as (DL) (\Cref{sec:dl}), $\phi$ is independent of some of the variables. However, $\phi$ can still be coercive over the feasible region. For the variable-splitting formulation (DL3), this holds because of the constraints $X = X' + X''$ and $Y = Y' + Y''$. The objective function is coercive in $X'$, $X''$, $Y'$, and $Y''$, and therefore $X$ and $Y$ cannot diverge on the feasible region.

\subsubsection{Assumption \ref{anicefx}}

The key element of this assumption is that $X_0,\ldots,X_n$ may only be coupled by a Lipschitz differentiable function $F(X_0,\ldots,X_n)$, and the (possibly nonsmooth) terms $f_0(X_0), \ldots, f_n(X_n)$ must be separable. This type of assumption is also used in previous works such as \cite{CLST2016,JLMZ2019,WYZ2019JSC}.

\subsubsection{Assumption \ref{axainj}, \ref{azainj}}\label{subsub:discussaxz}

We have grouped \Cref{axainj}, \Cref{azainj} together here because their motivation is the same. Our goal is to obtain conditions under which the convergence of the function differences $\Lag(\XX_{<\ell}^+, X_\ell, \XX_{>\ell}, \ZZZ, \WW) - \Lag(\XX_{<\ell}^+, X_\ell^+,\XX_{>\ell}, \ZZZ,\WW)$ implies that $\|X_\ell - X_\ell^+\| \rightarrow 0$ (and likewise for $Z_0$). This can be viewed as a much weaker analogue of the sufficient decrease property \textbf{M2}. In \Cref{axainj} and \Cref{azainj}, we have presented several alternatives under which this holds. Under \Cref{axainj}(1) and \Cref{azainj}(1), the strengthened convexity condition (\Cref{defscc}), it is straightforward to show that
\begin{equation}\label{eq:lagubounddiff}
\Lag(\XX_{<\ell}^+, X_\ell, \XX_{>\ell}, \ZZZ, \WW) - \Lag(\XX_{<\ell}^+, X_\ell^+,\XX_{>\ell}, \ZZZ,\WW) \geq \Delta(\|X_\ell - X_\ell^+\|)
\end{equation}
(and likewise for $Z_0$), where $\Delta(t)$ is the $0$-forcing function arising from strengthened convexity. For \Cref{axainj}(2) and \Cref{azainj}(2), the inequality \eqref{eq:lagubounddiff} holds with $\Delta(t) = a t^2$, which is the sufficient decrease condition of \cite{ABS2013}. Note that having $\Delta(t) \in O(t^2)$ is important for proving convergence in the K-\L{} setting, hence the additional hypotheses in \Cref{mainkl}.

As with \Cref{aobj}, the assumptions in \Cref{axainj} and \Cref{azainj} are not tight, because \eqref{eq:lagubounddiff} may occur under different conditions. We have chosen to use this particular set of assumptions because they are easily verifiable, and fairly general. The general results of \Cref{sec:general} may be useful in analyzing ADMM for structured problems when the particular conditions of \Cref{axainj} are not satisfied.

\subsubsection{Connection with proximal ADMM}\label{subsub:discussprox}

When modeling, one may always ensure that \Cref{axainj}(2a) is satisfied for $X_\ell$ by introducing a new variable $Z_3$ and a new constraint $X_\ell = Z_3$. This may appear to be a trivial reformulation of the problem, but it in fact promotes regularity of the ADMM subproblem in the same way as introducing a positive semidefinite proximal term.

Generalizing this trick, let $S$ be positive semidefinite, with square root $S^{1/2}$. Consider the constraint $\sqrt{\frac{2}{\rho}}S^{1/2}(X_\ell - Z_3) = 0$. The term of the augmented Lagrangian induced by this constraint is $\|X_\ell - Z_3\|_S^2$, where $\|\cdot\|_S$ is the seminorm $\|X\|_S^2 = \la X, SX\ra$ induced by $S$. To see this, let $W_0$ be the Lagrange multiplier corresponding to this constraint.
\begin{lma}\label{constraint_prox}
If $W_3^0$ is initialized to $0$, then for all $k \geq 1$, $Z_3^k = X_\ell^k$ and $W_3^k = 0$. Consequently, the constraint $\sqrt{\frac{2}{\rho}}S^{1/2}(X_\ell - Z_3) = 0$ is equivalent to adding a proximal term $\|X_\ell - X_\ell^k\|^2_S$ to the minimization problem for $X_\ell$.
\end{lma}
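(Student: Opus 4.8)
The plan is to establish the two identities $Z_3^k = X_\ell^k$ and $W_3^k = 0$ by a short induction on $k$, and then read off the proximal-term claim directly from the form of the $X_\ell$-subproblem. First I would record how the new constraint $c(X_\ell,Z_3) := \sqrt{2/\rho}\,S^{1/2}(X_\ell - Z_3)$ enters the augmented Lagrangian: since $\frac{\rho}{2}\|c\|^2 = \|S^{1/2}(X_\ell - Z_3)\|^2 = \|X_\ell - Z_3\|_S^2$ and $\la W_3, c\ra = \sqrt{2/\rho}\,\la S^{1/2}W_3,\, X_\ell - Z_3\ra$, the \emph{only} dependence of $\Lag$ on $Z_3$, and the only change to its dependence on $X_\ell$, is through the single term $T(X_\ell,Z_3,W_3) := \|X_\ell - Z_3\|_S^2 + \sqrt{2/\rho}\,\la S^{1/2}W_3,\, X_\ell - Z_3\ra$. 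Because $Z_3$ is introduced fresh — it contributes nothing to the objective and appears in no other constraint — the $\ZZZ$-block minimization in \Cref{alg:admm_mult} decouples in $Z_3$, so its $Z_3$-component is simply $\min_{Z_3} T(X_\ell^{k+1}, Z_3, W_3^k)$; moreover $X_\ell$ is updated before $\ZZZ$ in each sweep.

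For the induction, take as hypothesis that $W_3^k = 0$, which holds at $k=0$ by assumption. In iteration $k$ of \Cref{alg:admm_mult}: (i) the $Z_3$-component of the $\ZZZ$-update is $\min_{Z_3}\|X_\ell^{k+1}-Z_3\|_S^2$, whose value is $0$, attained at $Z_3^{k+1} = X_\ell^{k+1}$; (ii) any minimizer $Z_3^{k+1}$ satisfies $S^{1/2}(X_\ell^{k+1}-Z_3^{k+1}) = 0$, i.e. $c(X_\ell^{k+1},Z_3^{k+1}) = 0$, so the multiplier update gives $W_3^{k+1} = W_3^k + \rho\,c(X_\ell^{k+1},Z_3^{k+1}) = 0$, closing the induction. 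This yields $W_3^k = 0$ for all $k \geq 0$ and $Z_3^k = X_\ell^k$ for all $k \geq 1$ (initializing $Z_3^0 := X_\ell^0$ would extend the latter to $k = 0$ as well).

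For the final clause, consider the $X_\ell$-update in any iteration $k \geq 1$: since $W_3^k = 0$ and $Z_3^k = X_\ell^k$, we have $T(X_\ell, Z_3^k, W_3^k) = \|X_\ell - X_\ell^k\|_S^2$, and every other term of $\Lag$ in $X_\ell$ is exactly that of the augmented Lagrangian of the original problem (the one without $c = 0$). Hence $X_\ell^{k+1}$ minimizes the original-problem augmented Lagrangian in $X_\ell$ plus $\|X_\ell - X_\ell^k\|_S^2$, which is precisely the proximal ADMM update for $X_\ell$ with proximal term $\|\,\cdot\, - X_\ell^k\|_S^2$.

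The computation is routine; the one point that needs care — and the only place I expect any friction — is that when $S$ is singular the $Z_3$-subproblem in (i) does not have a unique solution, so $Z_3^k = X_\ell^k$ holds literally only for the canonical choice of argmin. I would dispatch this by noting that any two minimizers differ by an element of $\opn{Null}(S)$, which alters neither $c(X_\ell^{k+1},Z_3^{k+1})$ (hence not the $W_3$-update) nor the seminorm $\|X_\ell - Z_3^{k+1}\|_S$ (hence not the subsequent $X_\ell$-subproblem); thus every assertion is insensitive to the choice, and one may as well take $Z_3^{k+1} = X_\ell^{k+1}$ throughout.
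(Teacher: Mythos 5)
Your proposal is correct and follows essentially the same route as the paper's proof: induct on $k$, observe that the $Z_3$-component of the final-block update reduces to minimizing $\|X_\ell^{k+1}-Z_3\|_S^2$ so that $Z_3^{k+1}=X_\ell^{k+1}$ and the residual vanishes, whence $W_3^{k+1}=W_3^k=0$, and then read the term $\|X_\ell - X_\ell^k\|_S^2$ off the $X_\ell$-subproblem. Your extra remark handling non-uniqueness of the $Z_3$-minimizer when $S$ is singular is a sensible refinement of a point the paper's proof passes over silently, but it does not change the argument.
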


Note that proximal ADMM is often preferable to ADMM in practice \cite{MST2017,FPST2013}. ADMM subproblems, which may have no closed-form solution because of the linear mapping in the quadratic penalty term, can often be transformed into a pure proximal mapping with a closed-form solution, by adding a suitable proximal term. Several applications of this approach are developed in \cite{FPST2013}. Furthermore, for proximal ADMM, the conditions on $f_i$ in \Cref{axainj}(2b) can be slightly weakened, by modifying \Cref{decgeneric} and \Cref{yzdecr} (see \Cref{proxsimpler}) to account for the proximal term as in \cite{JLMZ2019}.

\section{Preliminaries}\label{sec:prelim}
This section is a collection of definitions, terminology, and technical results which are not specific to ADMM. Proofs of the results in this section can be found in \Cref{apdx:technical}, or in the provided references. The reader may wish to proceed directly to \Cref{sec:general} and return here for details as needed.

\subsection{General Subgradients and First-Order Conditions}\label{sec:gensubgradref}

In order to unify our treatment of first-order conditions, we use the notion of \emph{general subgradients}, which generalize gradients and subgradients. When $f$ is smooth or convex, the set of general subgradients consists of the ordinary gradient or subgradients, respectively. Moreover, some useful functions that are neither smooth nor convex such as the indicator function of certain nonconvex sets possess general subgradients.

\begin{defi}
Let $G$ be a closed and convex set. The \emph{tangent cone} $T_G(x)$ of~$G$ at the point $x \in G$ is the set of directions $T_G(x) = \opn{cl}(\{ y - x : y \in G\})$. The \emph{normal cone} $N_G(x)$ is the set $N_G(x) = \{v : \la v, y - x\ra \leq 0~ \forall y \in G\}$.
\end{defi}
\begin{defi}[\cite{RW1997}, 8.3]\label{defgensubgradient}
Let $f: \RR^n \rightarrow \RR \cup \{\infty\}$ and $x \in \opn{dom}(f)$. A vector~$v$ is a \emph{regular subgradient} of $f$ at $x$, indicated by $v \in \wh{\partial}f(x)$, if $f(y) \geq f(x) + \la v, y - x \ra + o(\|y - x\|)$ for all $y \in \RR^n$. A vector $v$ is a \emph{general subgradient}, indicated by $v \in \partial f(x)$, if there exist sequences $x_n \rightarrow x$ and $v_n \rightarrow v$ with $f(x_n) \rightarrow f(x)$ and $v_n \in \wh{\partial}f(x_n)$. A vector $v$ is a \emph{horizon subgradient}, indicated by $v \in \partial^\infty f(x)$, if there exist sequences $x_n \rightarrow x$, $\lambda_n \rightarrow 0$, and $v_n \in \wh{\partial}f(x_n)$ with $f(x_n) \rightarrow f(x)$ and $\lambda_n v_n \rightarrow v$.
\end{defi}

The properties of the general subgradient can be found in \cite[\S8]{RW1997}.

Under the assumption that the objective function is proper and lower semicontinuous, the ADMM subproblems will satisfy a necessary first-order condition.

\begin{lma}[\cite{RW1997}, 8.15]\label{necfoc}
Let $f: \RR^n \rightarrow \RR \cup \{\infty\}$ be proper and lower semicontinuous over a closed set $G \subseteq \RR^n$. Let $x \in G$ be a point at which the following constraint qualification is fulfilled: the set $\partial^\infty f(x)$ of horizon subgradients contains no vector $v \neq 0$ such that $-v \in N_G(x)$. Then, for $x$ to be a local optimum of $f$ over $G$, it is necessary that $0 \in \partial f(x) + N_G(x)$.
\end{lma}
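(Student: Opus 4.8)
The plan is to convert the constrained minimization into an unconstrained one through the indicator function, and then to invoke Fermat's rule together with the general-subgradient sum rule. Let $\iota_G$ denote the indicator function of $G$, equal to $0$ on $G$ and $+\infty$ off $G$, and set $g := f + \iota_G$. Because $\iota_G$ vanishes on $G$ and is infinite elsewhere, a point $x \in G$ is a local minimizer of $f$ over $G$ if and only if it is an (unconstrained) local minimizer of $g$ over $\RR^n$. This reduces the problem of finding a necessary condition for \emph{constrained} optimality of $f$ to one for \emph{unconstrained} optimality of the single function $g$, at the cost of having to differentiate a sum.

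First I would apply Fermat's rule for general subgradients (\cite{RW1997}, Thm 10.1): at a local minimizer $x$ of a proper function $g$, one has $0 \in \partial g(x)$. This is immediate from \Cref{defgensubgradient}, since local minimality gives $g(y) \geq g(x) + o(\|y - x\|)$, so $v = 0$ is a regular subgradient, i.e. $0 \in \wh{\partial}g(x) \subseteq \partial g(x)$. Next I would identify the subgradients of the indicator: for the closed convex set $G$, both the general and horizon subgradients of $\iota_G$ at $x$ coincide with the normal cone, $\partial \iota_G(x) = N_G(x)$ and $\partial^\infty \iota_G(x) = N_G(x)$ (the subdifferential of a convex indicator is already the cone $N_G(x)$, and scaling its regular subgradients at nearby points keeps them in the same cone). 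The crux is then the sum rule (\cite{RW1997}, Cor 10.9): if the only pair $(v_1,v_2)$ with $v_1 \in \partial^\infty f(x)$, $v_2 \in \partial^\infty \iota_G(x)$, and $v_1 + v_2 = 0$ is the trivial one, then $\partial g(x) \subseteq \partial f(x) + \partial \iota_G(x)$. Substituting $\partial^\infty \iota_G(x) = N_G(x)$, the sum-rule hypothesis becomes exactly the stated constraint qualification: no nonzero $v \in \partial^\infty f(x)$ satisfies $-v \in N_G(x)$. Combining Fermat's rule with the sum rule and the identification $\partial \iota_G(x) = N_G(x)$ then gives $0 \in \partial g(x) \subseteq \partial f(x) + N_G(x)$, which is the assertion.

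The hard part will be justifying the inclusion $\partial(f + \iota_G)(x) \subseteq \partial f(x) + N_G(x)$ under the horizon-subgradient constraint qualification, since this sum rule is false without a qualifying condition. The standard argument approximates a general subgradient of $g$ by regular subgradients at nearby points (per \Cref{defgensubgradient}), splits the corresponding difference quotients into a contribution from $f$ and one from $\iota_G$, and passes to the limit; the role of the constraint qualification is precisely to keep the two approximating sequences bounded. Without it, the split could only be achieved with subgradient magnitudes diverging to infinity along cancelling directions, and such escaping directions are exactly what $\partial^\infty f(x)$ and $\partial^\infty \iota_G(x)$ record — so forbidding a nontrivial cancelling pair rules this out and licenses the limit. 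For the purposes of this paper I would simply cite \cite{RW1997}, Thm 8.15 (equivalently, Thm 10.1 together with Cor 10.9), which establishes this statement verbatim, rather than reproducing the variational-analysis machinery.
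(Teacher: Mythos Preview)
Your proposal is correct and in fact goes beyond what the paper does: the paper provides no proof for this lemma at all, simply citing \cite{RW1997}, 8.15 as the source and immediately specializing to the trivial case $G = \RR^n$ (where $N_G(x) = \{0\}$ and the constraint qualification is vacuous). Your outline via $g = f + \iota_G$, Fermat's rule, and the subgradient sum rule under the horizon constraint qualification is the standard route to this result, and your concluding recommendation to simply cite the reference is exactly what the paper does. One minor remark: you call $G$ a closed \emph{convex} set when identifying $\partial \iota_G(x) = N_G(x)$, whereas the lemma as stated only assumes $G$ closed; the identification $\partial \iota_G(x) = \partial^\infty \iota_G(x) = N_G(x)$ still holds for the general (limiting) normal cone of a closed set, and in any case the paper's only application is to $G = \RR^n$.
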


For our purposes, it suffices to note that when $G = \RR^n$, the constraint qualification is trivially satisfied because $N_G(x) = \{0\}$. In the context of ADMM, this implies that the solution of each ADMM subproblem satisfies the first-order condition $0 \in \partial \Lag$.

Problem $(P)$ has nonlinear constraints, and thus it is not guaranteed \emph{a priori} that its minimizers satisfy first-order necessary conditions, unless a constraint qualification holds. However, \Cref{baseasm} implies that the \emph{constant rank constraint qualification} (CRCQ) \cite{J1984MPS,L2011MP} is satisfied by $(P)$, and minimizers of $(P)$ will therefore satisfy first-order necessary conditions as long as the objective function is suitably regular. This follows immediately from \Cref{afinalblockim} and the following lemma.

\begin{lma}\label{crcqlma}
Let $C(x,z) = A(x) + Qz$, where $A(x)$ is smooth and $Q$ is a linear map with $\opn{Im}(Q) \supseteq \opn{Im}(A)$. Then for any points $x,z$, and any vector $w$, $(\nabla C(x,z))^Tw = 0$ if and only if $Q^Tw = 0$.
\end{lma}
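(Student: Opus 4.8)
The plan is to exploit the block structure of the Jacobian of $C$. Since $C(x,z) = A(x) + Qz$ with $A$ smooth and $Q$ linear, the differential of $C$ at $(x,z)$ decomposes as $\nabla C(x,z) = [\, \nabla A(x) \mid Q \,]$, acting on a direction $(v_x, v_z)$ by $\nabla A(x) v_x + Q v_z$. Consequently, for any $w$,
$$(\nabla C(x,z))^T w = \big( \nabla A(x)^T w,\; Q^T w \big).$$
The ``only if'' direction is then immediate: if $(\nabla C(x,z))^T w = 0$, then reading off the second component gives $Q^T w = 0$.

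For the ``if'' direction, suppose $Q^T w = 0$, i.e. $w \in \opn{Im}(Q)^\perp$. It remains to show $\nabla A(x)^T w = 0$, for which it suffices to prove the inclusion $\opn{Im}(\nabla A(x)) \subseteq \opn{Im}(Q)$. Fix any direction $v$ and write $\nabla A(x) v = \lim_{t \to 0} t^{-1}\big(A(x+tv) - A(x)\big)$. Both $A(x+tv)$ and $A(x)$ lie in $\opn{Im}(A) \subseteq \opn{Im}(Q)$, and $\opn{Im}(Q)$ is a linear subspace, hence closed under subtraction and scalar multiplication; thus each difference quotient lies in $\opn{Im}(Q)$. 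Since $\opn{Im}(Q)$ is a finite-dimensional subspace, it is closed, so the limit $\nabla A(x) v$ also lies in $\opn{Im}(Q)$. This gives $\opn{Im}(\nabla A(x)) \subseteq \opn{Im}(Q)$, hence $w \in \opn{Im}(Q)^\perp \subseteq \opn{Im}(\nabla A(x))^\perp$, which is exactly $\nabla A(x)^T w = 0$. Combining this with the hypothesis $Q^T w = 0$ yields $(\nabla C(x,z))^T w = 0$.

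There is no serious obstacle here; the only point requiring a moment's care is the inclusion $\opn{Im}(\nabla A(x)) \subseteq \opn{Im}(Q)$, and specifically the observation that although $\opn{Im}(A)$ need not itself be a subspace, it is contained in the subspace $\opn{Im}(Q)$, which is what makes the difference-quotient/limit argument go through. Everything else is bookkeeping with the block decomposition of the Jacobian and the elementary duality $Q^T w = 0 \iff w \perp \opn{Im}(Q)$.
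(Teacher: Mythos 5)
Your proof is correct and follows essentially the same route as the paper: decompose $\nabla C(x,z) = \begin{pmatrix} \nabla A(x) & Q \end{pmatrix}$ and reduce everything to $\opn{Im}(\nabla A(x)) \subseteq \opn{Im}(Q)$, equivalently $\opn{Null}(Q^T) \subseteq \opn{Null}((\nabla A(x))^T)$. The only difference is that you spell out, via the difference-quotient argument, the inclusion $\opn{Im}(\nabla A(x)) \subseteq \opn{Im}(Q)$ that the paper simply asserts as an immediate consequence of $\opn{Im}(Q) \supseteq \opn{Im}(A)$.
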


\subsection{Multiaffine Maps}
Every multiaffine map can be expressed as a sum of multilinear maps and a constant. This provides a useful concrete representation.

\begin{lma}\label{multiaffinesumform}
Let $\MM(X_1,\ldots,X_n)$ be a multiaffine map. Then, $\MM$ can be written in the form $\MM(X_1,\ldots,X_n) = B + \sum_{j=1}^m \MM_j(\DD_j)$ where $B$ is a constant, and each $\MM_j(\DD_j)$ is a multilinear map of a subset $\DD_j \subseteq (X_1,\ldots,X_n)$.
\end{lma}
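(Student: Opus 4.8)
The plan is to argue by induction on the number of variables $n$, proving the slightly sharper statement that a multiaffine map $\MM(X_1,\ldots,X_n)$ decomposes as $\MM = B + \sum_{\emptyset \neq S \subseteq \{1,\ldots,n\}} \MM_S$, where $B$ is a constant and $\MM_S$ is a multilinear map in the variables $\{X_i : i \in S\}$; since there are only finitely many subsets $S$, this gives the form $\MM = B + \sum_{j=1}^m \MM_j(\DD_j)$ claimed in the lemma. The base case $n = 1$ is immediate, since an affine map $\mathbb{E}_1 \to \mathbb{E}$ is the sum of a linear map and a constant.

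For the inductive step I would freeze $X_1,\ldots,X_{n-1}$ and use that $X_n \mapsto \MM(X_1,\ldots,X_n)$ is affine: set $P(X_1,\ldots,X_{n-1}) := \MM(X_1,\ldots,X_{n-1},0)$ and $L(X_1,\ldots,X_{n-1};X_n) := \MM(X_1,\ldots,X_n) - P(X_1,\ldots,X_{n-1})$. The argument then rests on three elementary bookkeeping facts: (i) sums and differences of multiaffine maps are multiaffine, since scalar affine functions form a vector space; (ii) freezing one argument of a multiaffine map to a constant value yields a map that is multiaffine in the remaining arguments; and (iii) an affine map vanishing at $0$ is linear. By (ii), $P$ is multiaffine in $(X_1,\ldots,X_{n-1})$; by (i)–(ii), $(X_1,\ldots,X_{n-1}) \mapsto L(\,\cdot\,;X_n)$ is multiaffine for each fixed $X_n$; and by (iii), $X_n \mapsto L(X_1,\ldots,X_{n-1};X_n)$ is linear for each fixed $(X_1,\ldots,X_{n-1})$. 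Hence $L$ can be regarded as a multiaffine map $\mathbb{E}_1 \oplus \cdots \oplus \mathbb{E}_{n-1} \to \opn{Hom}(\mathbb{E}_n,\mathbb{E})$, $(X_1,\ldots,X_{n-1}) \mapsto [\,X_n \mapsto L(X_1,\ldots,X_{n-1};X_n)\,]$.

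Now apply the induction hypothesis twice. Applied to $P$ (a multiaffine map in $n-1$ variables) it gives $P = B + \sum_{\emptyset \neq S \subseteq \{1,\ldots,n-1\}} \MM_S$ with each $\MM_S$ multilinear. Applied to the $\opn{Hom}(\mathbb{E}_n,\mathbb{E})$-valued map above — the inductive argument uses nothing about the target beyond its being a finite-dimensional vector space, which $\opn{Hom}(\mathbb{E}_n,\mathbb{E})$ is — it gives a constant $T_0 \in \opn{Hom}(\mathbb{E}_n,\mathbb{E})$ and multilinear $\opn{Hom}$-valued terms $T_S$ for $\emptyset \neq S \subseteq \{1,\ldots,n-1\}$. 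Evaluating at $X_n$ and using that evaluation $\opn{Hom}(\mathbb{E}_n,\mathbb{E}) \times \mathbb{E}_n \to \mathbb{E}$ is bilinear, $X_n \mapsto T_0(X_n)$ is a linear term (the case $S = \{n\}$) and $(\{X_i\}_{i \in S}, X_n) \mapsto T_S(\{X_i\}_{i \in S})(X_n)$ is multilinear in the variables indexed by $S \cup \{n\}$. Adding these contributions to the expansion of $P$ and using $\MM = P + L$ yields the desired decomposition.

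The only genuinely delicate point is the legitimacy of viewing the coefficient map $L$ as a multiaffine map valued in $\opn{Hom}(\mathbb{E}_n,\mathbb{E})$, so that the induction hypothesis applies to it; once facts (i)–(iii) are established this is routine, and the remainder is simply collecting terms indexed by subsets of $\{1,\ldots,n\}$.
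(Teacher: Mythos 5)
Your argument is correct, but it takes a genuinely different route from the paper. The paper also inducts on $n$, but its inductive step is an inclusion--exclusion construction: it forms $\NN(X_1,\ldots,X_n) = \MM(X_1,\ldots,X_n) + \sum_{|S|\leq n-1}(-1)^{n-|S|}\MM(X_S)$, where $X_S$ sets the variables outside $S$ to zero, verifies by a direct combinatorial cancellation that $\NN$ is multilinear, and then applies the induction hypothesis to each lower-dimensional map $X_S \mapsto \MM(X_S)$. You instead peel off one variable: write $\MM = P + L$ with $P = \MM(\cdot,0)$ and $L$ vanishing at $X_n=0$, observe that $L$ is linear in $X_n$, and curry it into a multiaffine map valued in $\opn{Hom}(\mathbb{E}_n,\mathbb{E})$, applying the induction hypothesis both to $P$ and to this $\opn{Hom}$-valued map. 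The trade-off: the paper's proof stays with the original target space and produces an explicit inclusion--exclusion formula for the multilinear pieces, at the cost of a somewhat delicate cancellation argument; yours avoids that bookkeeping entirely but requires (a) strengthening the induction to arbitrary finite-dimensional target spaces, which you correctly note costs nothing, and (b) the small verification that affineness of $X_i \mapsto L(\ldots,X_i,\ldots;X_n)$ pointwise in $X_n$ yields affineness of the $\opn{Hom}$-valued map $X_i \mapsto [X_n \mapsto L(\ldots;X_n)]$ --- this follows since preservation of affine combinations can be checked pointwise, but it deserves a sentence rather than only the word ``routine.'' With that sentence added, your proof is complete and arguably cleaner than the paper's.
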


Let $\MM(X_1,\ldots,X_n,Y)$ be multiaffine, with $Y$ a particular variable of interest, and $X = (X_1,\ldots,X_n)$ the other variables. By \Cref{multiaffinesumform}, grouping the multilinear terms $\MM_j$ depending on whether $Y$ is one of the arguments of $\MM_j$, we have
\begin{equation}\label{multiaffinestruc}
\MM(X_1,\ldots,X_n,Y) = B + \sum_{j=1}^{m_1} \MM_j(\DD_j, Y) + \sum_{j=m_1+ 1}^{m} \MM_j(\DD_j)
\end{equation}
where each $\DD_j \subseteq (X_1,\ldots,X_n)$.

\begin{defi}\label{linearterm}
Let $\MM(X_1,\ldots,X_n,Y)$ have the structure \eqref{multiaffinestruc}. Let $\FF_Y$ be the space of functions from $Y \rightarrow \opn{Im}(\MM)$. Let $\theta_j: \DD_j \rightarrow \FF_Y$ be the map\footnote{When $j > m_1$, $\theta_j(X)$ is a constant map of $Y$.} given by $(\theta_j(X))(Y) = \MM_j(\DD_j,Y)$. Here, we use the notation $\theta_j(X)$ for $\theta_j(\DD_j)$, with $\DD_j$ taking the values in $X$. Finally, let $\MM_{Y,X} = \sum_{j=1}^{m_1} \theta_j(X)$.

We call $\MM_{Y,X}$ the \emph{$Y$-linear term} of $\MM$ (evaluated at $X$).
\end{defi}

To motivate this definition, observe that when $X$ is fixed, the map $Y \mapsto \MM(X,Y)$ is \emph{affine}, with the linear component given by $\MM_{Y,X}$ and the constant term given by $B_X = B + \sum_{j=m_1+1}^m \MM_j(\DD_j)$. When analyzing the ADMM subproblem in $Y$, a multiaffine constraint $\MM(X,Y) = 0$ becomes the \emph{linear} constraint $\MM_{Y,X}(Y) = -B_X$.

The definition of multilinearity immediately shows the following.
\begin{lma}\label{decomp1}
$\theta_j$ is a multilinear map of $\DD_j$. For every $X$, $\theta_j(X)$ is a linear map of $Y$, and thus $\MM_{Y,X}$ is a linear map of $Y$.
\end{lma}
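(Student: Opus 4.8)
The plan is to deduce all three assertions of \Cref{decomp1} from a single reduction principle — that (multi)linearity of a map taking values in the function space $\FF_Y$ may be tested pointwise in $Y$ — together with the elementary fact that freezing some of the arguments of a multilinear map leaves a multilinear map of the remaining arguments.

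First I would record the relevant structure on $\FF_Y$: viewed, if one wishes, as a subspace of the vector space of all maps from the space in which $Y$ varies into $\mathbb{E}$ (this is harmless and sidesteps the minor point that $\opn{Im}(\MM)$ need not be a linear subspace when $\MM$ is merely multiaffine), $\FF_Y$ carries pointwise vector-space operations, and for every fixed value $\bar{Y}$ the evaluation $g \mapsto g(\bar{Y})$ is a linear map $\FF_Y \to \mathbb{E}$. It follows immediately that a map $\Phi$ from $\bigoplus_{i \in \DD_j} \mathbb{E}_i$ into $\FF_Y$ is multilinear in the variables $\DD_j$ if and only if $\DD_j \mapsto (\Phi(\DD_j))(\bar{Y})$ is multilinear in $\DD_j$ for every fixed $\bar{Y}$; and that an element $g \in \FF_Y$ "is a linear map of $Y$" exactly when it is linear in the usual sense.

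Next I would verify the assertions. For the multilinearity of $\theta_j$: by the reduction it is enough that, for each fixed $\bar{Y}$, the map $\DD_j \mapsto (\theta_j(\DD_j))(\bar{Y}) = \MM_j(\DD_j, \bar{Y})$ be multilinear in $\DD_j$, and this holds because $\MM_j$ is a multilinear map of the arguments $(\DD_j, Y)$ by the decomposition \eqref{multiaffinestruc}, so restricting it to the arguments $\DD_j$ with $Y = \bar{Y}$ fixed preserves multilinearity. For the second assertion, fix $X$; for $1 \leq j \leq m_1$ the element $\theta_j(X) \in \FF_Y$ is the map $Y \mapsto \MM_j(\DD_j, Y)$ with $\DD_j$ taking its values from $X$, which is linear in the single remaining argument $Y$ because $\MM_j$ is multilinear in $(\DD_j, Y)$ (for $j > m_1$, $\theta_j(X)$ is the constant map of $Y$ noted in \Cref{linearterm}, and plays no role below). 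Finally, $\MM_{Y,X} = \sum_{j=1}^{m_1} \theta_j(X)$ is a finite pointwise sum in $\FF_Y$ of maps each linear in $Y$, hence is itself a linear map of $Y$.

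I do not anticipate a genuine obstacle: the lemma is purely the unwinding of \Cref{linearterm}, and the one spot warranting a moment's care is the first step — making sure that "(multi)linear into $\FF_Y$" is read with respect to the pointwise structure, so that every claim collapses to a pointwise statement about the maps $\MM_j$, which are multilinear by construction. The only other thing to flag is the cosmetic caveat about $\opn{Im}(\MM)$ noted above.
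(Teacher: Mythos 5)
Your proof is correct and matches the paper's treatment: the paper offers no separate argument, stating that the lemma follows immediately from the definition of multilinearity, and your pointwise-evaluation unwinding (fix $\bar Y$ to get multilinearity of $\theta_j$ in $\DD_j$, fix $X$ to get linearity in $Y$, then sum) is precisely that immediate verification. The aside about replacing $\opn{Im}(\MM)$ by $\mathbb{E}$ so that $\FF_Y$ carries pointwise vector-space operations is a reasonable cosmetic clarification and changes nothing.
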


\begin{exs}
Consider $\MM(X_1,X_2,X_3,X_4) = X_1X_2X_3 + X_2X_3X_4 + X_2 + B = 0$ for square matrices $X_1,X_2,X_3,X_4$. Taking $Y = X_3$ as the variable of focus, and $X = (X_1,X_2,X_4)$, we have $(\theta_1(X))(Y) = X_1X_2Y$, $(\theta_2(X))(Y) = X_2YX_4$, $(\theta_3(X))(Y) = X_2$, $ (\theta_4(X))(Y) = B$, and thus $\MM_{Y,X}$ is the linear map $Y \mapsto X_1X_2Y + X_2YX_4$.
\end{exs}


Our general results in \Cref{sec:general} require smooth constraints, which holds for multiaffine maps.

\begin{lma}\label{multiaffsmooth}
Multiaffine maps are smooth, and in particular, biaffine maps are Lipschitz differentiable.
\end{lma}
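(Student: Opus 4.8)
The plan is to exploit the concrete representation of multiaffine maps provided by \Cref{multiaffinesumform}, which writes $\MM(X_1,\ldots,X_n) = B + \sum_{j=1}^m \MM_j(\DD_j)$ with each $\MM_j$ multilinear on a subset $\DD_j$ of the arguments. Since smoothness and Lipschitz differentiability are preserved under finite sums (and adding a constant $B$ changes nothing), it suffices to establish the claim for a single multilinear map $\MM_j(\DD_j)$, where $\DD_j = (X_{i_1},\ldots,X_{i_r})$ for some subset of indices. So the real content is: every multilinear map is smooth, and every bilinear map is Lipschitz differentiable.

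First I would treat smoothness. Fixing bases on the finite-dimensional spaces $\mathbb{E}_{i_1},\ldots,\mathbb{E}_{i_r},\mathbb{E}$, a multilinear map $\MM_j$ is given in coordinates by a fixed array of real coefficients contracted against the coordinate vectors of its arguments; hence each coordinate of $\MM_j$ is a polynomial in the entries of $(X_{i_1},\ldots,X_{i_r})$, and polynomials are $C^\infty$. Equivalently and more cleanly, one can compute the partial derivative $\nabla_{X_{i_s}}\MM_j$ directly from multilinearity: holding the other arguments fixed, $X_{i_s}\mapsto \MM_j(\ldots,X_{i_s},\ldots)$ is linear, so its derivative is that linear map itself, and this linear map depends multilinearly (hence polynomially, hence continuously) on the remaining arguments. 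This gives continuity of all first partials, so $\MM_j$ is continuously differentiable; iterating gives $C^\infty$.

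For the bilinear case, let $\MM_j(X,Y)$ be bilinear with $X\in\mathbb{E}_1$, $Y\in\mathbb{E}_2$. Its derivative at $(X,Y)$ is the linear map $(\Delta X,\Delta Y)\mapsto \MM_j(\Delta X,Y) + \MM_j(X,\Delta Y)$, so for two points $(X,Y)$ and $(X',Y')$ the difference of derivatives, applied to $(\Delta X,\Delta Y)$, is $\MM_j(\Delta X, Y - Y') + \MM_j(X - X', \Delta Y)$. Bounding the bilinear form by its operator norm, $\|\MM_j(\Delta X, Y-Y')\| \le \|\MM_j\|\,\|\Delta X\|\,\|Y-Y'\|$ and similarly for the other term, yields $\|\nabla\MM_j(X,Y) - \nabla\MM_j(X',Y')\|_{op} \le \|\MM_j\|\big(\|X-X'\| + \|Y-Y'\|\big) \le \sqrt{2}\,\|\MM_j\|\,\|(X,Y)-(X',Y')\|$, which is exactly Lipschitz continuity of the gradient. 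Summing over $j$ and noting the $B$ term is constant gives the claim for biaffine maps.

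I do not expect a genuine obstacle here; the only point requiring a little care is bookkeeping the identification of a multilinear/bilinear map with its coordinate array (already licensed by the ``fixed bases'' convention in the Notation section) and being explicit that the Lipschitz argument fails for $n\ge 3$ blocks — a trilinear map $\MM(X,Y,Z)$ has a derivative with a term like $\MM(X,\Delta Y, Z)$ that grows linearly in $\|X\|$ and $\|Z\|$, so the gradient is only \emph{locally} Lipschitz. That is why the Lipschitz-differentiability half of the statement is restricted to the biaffine ($n=2$) case, and the proof should make this restriction visibly necessary rather than incidental.
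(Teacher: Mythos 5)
Your proposal is correct, and it reaches the conclusion by a somewhat different technical route than the paper. The paper also reduces to multilinear components via \Cref{multiaffinesumform}, but its published proof consists of two estimates on the \emph{values} of a multilinear map: a uniform product bound $\|\MM(X_1,\ldots,X_n)\|\leq\sigma_M\prod_i\|X_i\|$, proved by induction together with the uniform boundedness principle, and a telescoping local Lipschitz bound $\|\MM(X)-\MM(X')\|\leq n\sigma_M d^{n-1}\epsilon$ on a ball of radius $d$; the lemma itself is then stated to follow as a corollary (in effect because the gradient of a multilinear map is assembled from lower-order multilinear maps, to which the same estimates apply). You instead get smoothness directly from the coordinate representation (each component is a polynomial, hence $C^\infty$), and you prove Lipschitz differentiability in the bilinear case by explicitly writing the derivative $(\Delta X,\Delta Y)\mapsto\MM(\Delta X,Y)+\MM(X,\Delta Y)$ and bounding the difference of derivatives at two points by the operator norm of the bilinear form, which yields the constant $\sqrt{2}\,\|\MM_j\|$. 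Your route is more elementary (no appeal to uniform boundedness, which is overkill in finite dimensions where the operator-norm bound is immediate from coordinates or compactness of the unit sphere) and it addresses the gradient directly, whereas the paper's lemmas bound function values and leave the final step implicit; on the other hand, the paper's formulation produces explicit quantitative constants for general $n$-multilinear maps that it can invoke later when it needs boundedness and uniform continuity of the constraint maps over compact sets. Your closing observation — that for $n\geq 3$ a term like $\MM(X,\Delta Y,Z)$ makes the gradient grow with the other blocks, so only local Lipschitz continuity survives and the global claim must be restricted to the biaffine case — is accurate and explains why the lemma is phrased as it is.
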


\subsection{Smoothness, Convexity, and Coercivity}

\begin{defi}
A function $g$ is \emph{$M$-Lipschitz differentiable} if $g$ is differentiable and its gradient is Lipschitz continuous with modulus $M$, i.e. $\|\nabla g(x) - \nabla g(y)\| \leq M\|x - y\|$ for all $x,y$.

A function $g$ is \emph{$(m,M)$-strongly convex} if $g$ is convex, $M$-Lipschitz differentiable, and satisfies $g(y) \geq g(x) + \la \nabla g(x), y - x\ra + \frac{m}{2}\|y - x\|^2$ for all $x$ and $y$.  The condition number of $g$ is $\kappa := \frac{M}{m}$.
\end{defi}

\begin{lma}\label{lipschdfbound}
If $g$ is $M$-Lipschitz differentiable, then $|g(y) - g(x) - \la \nabla g(x), y-x\ra| \leq \frac{M}{2}\|y - x\|^2$.
\end{lma}

\begin{lma}\label{convexspec}
If $g(\cdot,\cdot)$ is $M$-Lipschitz differentiable, then for any fixed $y$, the function $h_y(\cdot) = g(\cdot,y)$ is $M$-Lipschitz differentiable. If $g(\cdot,\cdot)$ is $(m,M)$-strongly convex, then $h_y(\cdot)$ is $(m,M)$-strongly convex.
\end{lma}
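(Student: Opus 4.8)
The plan is to reduce both claims to the definitions by restricting the $n$-variable function $g$ to the affine slice where the second argument is frozen at $y$. For the Lipschitz-differentiability claim, first observe that $h_y$ is differentiable as a composition of the differentiable map $g$ with the (smooth) inclusion $x \mapsto (x,y)$, and that $\nabla h_y(x)$ equals the first block of partial derivatives $\nabla_x g(x,y)$. Then for any $x_1, x_2$ I would write
\[
\|\nabla h_y(x_1) - \nabla h_y(x_2)\| = \|\nabla_x g(x_1,y) - \nabla_x g(x_2,y)\| \leq \|\nabla g(x_1,y) - \nabla g(x_2,y)\| \leq M\|(x_1,y) - (x_2,y)\| = M\|x_1 - x_2\|,
\]
using that a block of a vector has norm no larger than the whole vector (for the middle inequality) and that $(x_1,y)$ and $(x_2,y)$ differ only in the first block (for the last equality), which is where the Lipschitz hypothesis on $\nabla g$ is invoked.

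For the strong-convexity claim I would verify the three defining properties in turn. Convexity of $h_y$ is immediate since the restriction of a convex function to an affine subspace is convex. The $M$-Lipschitz-differentiability of $h_y$ is exactly the first part of the lemma, already established. Finally, the quadratic lower bound transfers directly: for any $x_1, x_2$,
\[
h_y(x_2) = g(x_2,y) \geq g(x_1,y) + \la \nabla g(x_1,y), (x_2,y) - (x_1,y)\ra + \frac{m}{2}\|(x_2,y) - (x_1,y)\|^2 = h_y(x_1) + \la \nabla h_y(x_1), x_2 - x_1\ra + \frac{m}{2}\|x_2 - x_1\|^2,
\]
where the inner product simplifies because the $y$-components cancel, so only the first block of $\nabla g(x_1,y)$ — namely $\nabla h_y(x_1)$ — contributes. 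This establishes $(m,M)$-strong convexity of $h_y$.

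There is no real obstacle here; the only point requiring a moment's care is the identification $\nabla h_y(x) = \nabla_x g(x,y)$ and the bookkeeping that differences in the frozen variable vanish, which makes the norm inequality an equality rather than just an inequality. I would state these as one- or two-line observations rather than belaboring them.
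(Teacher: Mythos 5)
Your proposal is correct: the identification $\nabla h_y(x)=\nabla_x g(x,y)$, the block-norm inequality, and the cancellation of the frozen $y$-component in both the inner product and the norm are exactly what is needed, and the restriction-to-a-slice argument for convexity is sound. The paper itself omits the proof of this lemma as a standard result, and your argument is precisely that standard argument, so there is nothing to reconcile.
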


\begin{defi}
A function $\phi$ is said to be \emph{coercive on the set $\Omega$} if for every sequence $\{x_k\}_{k=1}^\infty \subseteq \Omega$ with $\|x_k\| \rightarrow \infty$, then $\phi(x_k) \rightarrow \infty$.
\end{defi}

\begin{defi}\label{defscc}
A function $\Delta: \RR \rightarrow \RR$ is \emph{0-forcing} if $\Delta \geq 0$, and any sequence $\{t_k\}$ has $\Delta(t_k) \rightarrow 0$ only if $t_k \rightarrow 0$. A function $f$ is said to satisfy a \emph{strengthened convexity condition} if there exists a $0$-forcing function $\Delta$ such that for any $x, y$, and any $v \in \partial f(x)$, $f$ satisfies
\begin{equation}\label{eq:scceq}
f(y) - f(x) - \la v, y - x \ra \geq \Delta(\|y - x\|).
\end{equation}
\end{defi}

\begin{rmk}The strengthened convexity condition is stronger than convexity, but weaker than strong convexity. An example is given by higher-order polynomials of degree $d$ composed with the Euclidean norm, which are not strongly convex for $d > 2$. An example is the function $f(x) = \|x\|_2^3$, which is not strongly convex but does satisfy the strengthened convexity condition. This function appears in the context of the \emph{cubic-regularized Newton method} \cite{NP2006MP}. We note that ADMM can be applied to solve the nonconvex cubic-regularized Newton subproblem
$$\min_x \frac{1}{2}x^TAx + b^Tx + \frac{\mu}{3}\|x\|_2^3$$
by performing the splitting $\min\limits_{x,y} q(x) + h(y)$ for $q(x) = \frac{1}{2}x^TAx + b^Tx$ and $h(y) = \frac{\mu}{3}\|x\|_2^3$.
	
Since we will subsequently show (\Cref{main}) that the sequence of ADMM iterates is bounded, the strengthened convexity condition can be relaxed. It would be sufficient to assume that for every compact set $G$, a 0-forcing function $\Delta_G$ exists so that \eqref{eq:scceq} holds with $\Delta_G$ whenever $x,y \in G$.
\end{rmk}

\subsection{Distances and Translations}\label{sec:distandtr}
\begin{defi}
For a symmetric matrix $S$, let $\lambda_{min}(S)$ be the minimum eigenvalue of $S$, and let $\lambda_{++}(S)$ be the minimum \emph{positive} eigenvalue of $S$.
\end{defi}

\begin{lma}\label{leastim}
Let $R$ be a matrix and $y \in \opn{Im}(R)$. Then $\|y\|^2 \leq \lambda_{++}^{-1}(R^TR)\|R^Ty\|^2$. 
\end{lma}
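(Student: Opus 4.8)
\textbf{Proof proposal for \Cref{leastim}.}
The plan is to reduce to a spectral decomposition on $\opn{Im}(R)$. Since $y \in \opn{Im}(R)$, I would first record the standard facts that $\opn{Im}(R) = \opn{Im}(RR^T)$ and that $RR^T$ and $R^TR$ have the same nonzero eigenvalues, so in particular $\lambda_{++}(RR^T) = \lambda_{++}(R^TR)$. Then, using the eigendecomposition of the symmetric positive semidefinite matrix $RR^T$, write $y$ in terms of the orthogonal projections $P_j$ onto the eigenspaces of $RR^T$ associated with its \emph{positive} eigenvalues $\mu_j$; because $y \in \opn{Im}(RR^T)$, the component of $y$ in $\opn{Null}(RR^T)$ vanishes, so $y = \sum_j P_j y$ and $\|y\|^2 = \sum_j \|P_j y\|^2$.

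The key computation is then
$$\|R^Ty\|^2 = \la R^Ty, R^Ty\ra = \la y, RR^Ty\ra = \sum_j \mu_j \|P_j y\|^2 \geq \Big(\min_j \mu_j\Big)\sum_j \|P_j y\|^2 = \lambda_{++}(RR^T)\,\|y\|^2,$$
where the cross terms drop out by orthogonality of the eigenspaces and $\min_j \mu_j = \lambda_{++}(RR^T) = \lambda_{++}(R^TR)$. Rearranging gives $\|y\|^2 \leq \lambda_{++}^{-1}(R^TR)\|R^Ty\|^2$, as claimed. (An equivalent route is to take a singular value decomposition $R = U\Sigma V^T$, expand $y$ in the left singular vectors with nonzero singular values, and note $\|R^Ty\|^2 = \sum \sigma_i^2 c_i^2 \geq (\min_{\sigma_i>0}\sigma_i^2)\|y\|^2$; I would pick whichever is more self-contained given what the paper has already set up.)

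There is essentially no serious obstacle here — the statement is a routine linear-algebra fact. The only point requiring a little care is the justification that $y$ has no component in $\opn{Null}(RR^T)$, which is exactly where the hypothesis $y \in \opn{Im}(R)$ is used and without which the inequality is false; I would make sure this step is stated explicitly rather than glossed over.
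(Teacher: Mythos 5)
Your proof is correct and complete: passing to the positive eigenspaces of $RR^T$ via $\opn{Im}(R)=\opn{Im}(RR^T)$, computing $\|R^Ty\|^2=\la y,RR^Ty\ra\geq\lambda_{++}(RR^T)\|y\|^2$, and using $\lambda_{++}(RR^T)=\lambda_{++}(R^TR)$ is the standard argument, and you correctly identify that the hypothesis $y\in\opn{Im}(R)$ is precisely what eliminates the nullspace component, without which the claim fails. Note that the paper gives no proof of \Cref{leastim} (it is explicitly listed in \Cref{apdx:technical} among the standard results whose proofs are omitted), so there is nothing to compare against; your spectral (equivalently SVD) route is the natural and expected one.
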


\begin{lma}\label{subdistgeneric}
Let $A$ be a matrix, and $b,c \in \opn{Im}(A)$. There exists a constant $\alpha_A$ with $\opn{dist}(\{x: Ax = b\},\{x: Ax = c\}) \leq \alpha_A \|b - c\|$. Furthermore, we may take $\alpha_A \leq \sqrt{\lambda_{++}^{-1}(AA^T)}$.
\end{lma}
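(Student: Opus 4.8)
The plan is to reduce the claim to the case $c = 0$ by translation, then to find, for each target point $x$ with $Ax = c$, a nearby point in $\{x : Ax = b\}$ obtained by subtracting the minimum-norm solution of a linear system. First I would fix $x$ with $Ax = c$ and note that a point $x'$ satisfies $Ax' = b$ if and only if $A(x - x') = c - b$; since $b, c \in \opn{Im}(A)$, the vector $c - b$ lies in $\opn{Im}(A)$, so this system is solvable. Choosing $x' = x - d$ where $d$ is the \emph{minimum-norm} solution of $Ad = c - b$ (equivalently, $d \in \opn{Im}(A^T) = \opn{Null}(A)^\perp$), I get $\opn{dist}(x, \{x : Ax = b\}) \le \|x - x'\| = \|d\|$. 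The point is that this bound on $\|d\|$ does not depend on $x$, only on $b - c$.

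The core estimate is then a bound of the form $\|d\| \le \alpha_A \|b - c\|$ for the minimum-norm solution $d$ of $Ad = c - b$. Write $d = A^T u$ for some $u$ (possible since $d \in \opn{Im}(A^T)$); then $AA^T u = c - b$, and since $c - b \in \opn{Im}(A) = \opn{Im}(AA^T)$, we may further take $u \in \opn{Im}(AA^T) = \opn{Im}(A)$ (here I am using that $AA^T$ is symmetric, so its image equals the orthogonal complement of its kernel). Now $\|d\|^2 = \|A^T u\|^2 = \la u, AA^T u \ra = \la u, c - b\ra \le \|u\|\,\|c - b\|$. Since $c - b = AA^T u$ with $u \in \opn{Im}(AA^T)$, \Cref{leastim} applied to the symmetric PSD matrix $AA^T$ (which has $(AA^T)^T(AA^T) = (AA^T)^2$, with $\lambda_{++}((AA^T)^2) = \lambda_{++}(AA^T)^2$) — or, more directly, the spectral decomposition of $AA^T$ restricted to its image — gives $\|u\| \le \lambda_{++}^{-1}(AA^T)\,\|AA^T u\| = \lambda_{++}^{-1}(AA^T)\,\|c - b\|$. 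Substituting, $\|d\|^2 \le \lambda_{++}^{-1}(AA^T)\,\|c-b\|^2$, hence $\|d\| \le \sqrt{\lambda_{++}^{-1}(AA^T)}\,\|b - c\|$, which yields the claim with $\alpha_A = \sqrt{\lambda_{++}^{-1}(AA^T)}$.

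A cleaner way to package the last estimate, which I would probably use in the write-up, is to diagonalize directly: on $\opn{Im}(AA^T)$ the map $AA^T$ is invertible with all eigenvalues $\ge \lambda_{++}(AA^T)$, so $(AA^T)^{-1}$ restricted there has operator norm $\lambda_{++}^{-1}(AA^T)$; then $d = A^T(AA^T)^{\dagger}(c-b)$ (pseudoinverse), $\|d\|^2 = \la (c-b), (AA^T)^{\dagger}(c-b)\ra \le \lambda_{++}^{-1}(AA^T)\|c-b\|^2$. The main obstacle is essentially bookkeeping: making sure the minimum-norm solution can be taken in $\opn{Im}(A^T)$ and that the relevant restriction of $AA^T$ is genuinely invertible with the stated smallest eigenvalue — i.e. handling the kernel of $A$ correctly — rather than any deep difficulty. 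The uniformity of the bound over all base points $x$ is automatic once $d$ depends only on $b - c$.
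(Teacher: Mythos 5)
Your proposal is correct: reducing to the minimum-norm solution $d = A^T(AA^T)^{\dagger}(c-b)$ of $Ad = c-b$ and bounding $\|d\|$ via the smallest positive eigenvalue of $AA^T$ gives exactly the stated constant, and the handling of $\opn{Null}(A)$ and of $\opn{Im}(AA^T)=\opn{Im}(A)$ is sound. The paper omits its proof of this lemma as a standard fact, and your argument is precisely the standard one it has in mind (indeed the same computation the authors invoke implicitly when applying the lemma in the proof of \Cref{bicvxargmin}).
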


\begin{lma}\label{cvxargmin}
Let $g$ be a $(m,M)$-strongly convex function with condition number $\kappa = \frac{M}{m}$, let $\CC$ be a closed and convex set with $\CC_1 = a + \CC$ and $\CC_2 = b + \CC$ being two translations of $\CC$, let $\delta = \|b - a\|$, and let $x^\ast = \opn{argmin} \{g(x) : x \in \CC_1\}$ and $y^\ast = \opn{argmin} \{g(y) : y \in \CC_2\}$. Then, $\|x^\ast - y^\ast\| \leq (1 + 2\kappa)\delta$.
\end{lma}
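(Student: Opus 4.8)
The plan is to reduce the whole statement to a single scalar quadratic inequality. Write $d = b - a$, so that $\delta = \|d\|$ and $\CC_2 = \CC_1 + d$. The key structural observation is that translating a feasible point keeps it feasible: since $x^\ast - a \in \CC$ we have $x^\ast + d \in b + \CC = \CC_2$, and symmetrically $y^\ast - d \in \CC_1$. (Both minimizers exist and are unique because $g$, being strongly convex, is coercive, and $\CC_1,\CC_2$ are closed, convex, and nonempty.)

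Next I would record two ``quadratic growth at the minimizer'' inequalities. Since $g$ is differentiable and $\CC_1$ is convex, first-order optimality of $x^\ast$ gives $\la \nabla g(x^\ast), z - x^\ast \ra \ge 0$ for every $z \in \CC_1$; combined with the strong-convexity lower bound $g(z) \ge g(x^\ast) + \la \nabla g(x^\ast), z - x^\ast\ra + \tfrac m2 \|z - x^\ast\|^2$, this yields $g(z) \ge g(x^\ast) + \tfrac m2\|z - x^\ast\|^2$ for all $z \in \CC_1$, and likewise $g(z) \ge g(y^\ast) + \tfrac m2\|z - y^\ast\|^2$ for all $z \in \CC_2$. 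Evaluating the first at $z = y^\ast - d \in \CC_1$ and the second at $z = x^\ast + d \in \CC_2$, and noting $\|y^\ast - d - x^\ast\| = \|x^\ast + d - y^\ast\|$, addition gives
\[ m\|x^\ast + d - y^\ast\|^2 \le \bigl(g(y^\ast - d) - g(y^\ast)\bigr) + \bigl(g(x^\ast + d) - g(x^\ast)\bigr). \]

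The right-hand side is then bounded from above using $M$-Lipschitz differentiability: by \Cref{lipschdfbound}, $g(y^\ast - d) - g(y^\ast) \le -\la \nabla g(y^\ast), d\ra + \tfrac M2\delta^2$ and $g(x^\ast + d) - g(x^\ast) \le \la \nabla g(x^\ast), d\ra + \tfrac M2\delta^2$, so the sum is at most $\la \nabla g(x^\ast) - \nabla g(y^\ast), d\ra + M\delta^2 \le M\delta\|x^\ast - y^\ast\| + M\delta^2$ by Cauchy--Schwarz and Lipschitz continuity of $\nabla g$. Writing $r = \|x^\ast - y^\ast\|$ and using $\|x^\ast + d - y^\ast\| \ge r - \delta$, in the only nontrivial case $r > \delta$ we obtain $m(r-\delta)^2 \le M\delta(r+\delta)$; dividing by $m\delta^2$ and setting $t = r/\delta$ gives $(t-1)^2 \le \kappa(t+1)$. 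Solving this quadratic, $t \le \tfrac12\bigl((2+\kappa) + \sqrt{\kappa^2 + 8\kappa}\bigr)$, and bounding $\sqrt{\kappa^2 + 8\kappa} \le 3\kappa$ (valid since $\kappa \ge 1$) yields $t \le 1 + 2\kappa$, i.e. $\|x^\ast - y^\ast\| \le (1+2\kappa)\delta$.

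There is no genuinely hard step here; the points to be careful about are (i) justifying existence of the minimizers so that the first-order conditions apply, (ii) arranging the algebra so that the gradient cross-terms collapse into the single inner product $\la \nabla g(x^\ast) - \nabla g(y^\ast), d\ra$ that Lipschitz continuity controls, and (iii) peeling off the trivial case $r \le \delta$. I would also remark that simply adding the two first-order inequalities at the translated points and invoking strong monotonicity of $\nabla g$ directly gives the sharper bound $\|x^\ast - y^\ast\| \le \kappa\delta$; the stated constant $1 + 2\kappa$ is all the paper requires, so either route is adequate.
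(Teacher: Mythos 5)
Your proof is correct, but it is arranged differently from the paper's. The paper keeps the two minimizers in asymmetric roles: it sets $x' = y^\ast - d \in \CC_1$, uses first-order optimality plus strong convexity only at $x^\ast$ (via the normal-cone condition and $s = x' - x^\ast \in T_{\CC_1}(x^\ast)$), uses only the bare minimality $g(x^\ast + d) \geq g(y^\ast)$ at $y^\ast$, and controls the difference of the two line integrals $\int_0^1 \nabla g(\cdot + td)^T d\,dt$ along parallel segments by Lipschitz continuity of $\nabla g$. This produces the inequality $\tfrac m2\|s\|^2 - M\|s\|\delta \leq 0$, which factors immediately to give $\|s\| \leq 2\kappa\delta$, and the triangle inequality then yields the constant $1+2\kappa$ with no quadratic formula. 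You instead symmetrize: quadratic growth at both minimizers, the descent lemma at both translated points, and Cauchy--Schwarz plus Lipschitz continuity of $\nabla g$ between $x^\ast$ and $y^\ast$, which yields a genuine quadratic inequality $(t-1)^2 \leq \kappa(t+1)$ in $t = r/\delta$ that you must solve and then relax using $\sqrt{\kappa^2+8\kappa} \leq 3\kappa$ (valid since $\kappa \geq 1$) to recover the same constant. Both routes are sound; the paper's bookkeeping is slightly leaner because the unknown it bounds ($\|s\|$, not $r$) makes the inequality factor, whereas yours needs the extra case split $r > \delta$ and the observation that $\delta = 0$ is handled before dividing by $\delta^2$ (your inequality $m(r-\delta)^2 \leq M\delta(r+\delta)$ already forces $r=0$ there, so this is only a presentational gap). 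Your closing remark is also right: adding the two variational inequalities at the translated points and invoking strong monotonicity of $\nabla g$ gives $\|x^\ast - y^\ast\| \leq \kappa\delta$, which is sharper than what the paper proves and would be the cleanest argument of all; the paper does not exploit it, and $1+2\kappa$ suffices for its use in \Cref{bicvxargmin}.
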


\begin{lma}\label{bicvxargmin}
Let $h$ be a $(m,M)$-strongly convex function, $A$ a linear map of $x$, and $\CC$ a closed and convex set. Let $b_1,b_2 \in \opn{Im}(A)$, and consider the sets $\UU_1 = \{ x : Ax + b_1 \in \CC\}$ and $\UU_2 = \{x : Ax + b_2 \in \CC \}$, which we assume to be nonempty. Let $x^\ast = \argmin\{ h(x) : x \in \UU_1\}$ and $y^\ast = \argmin \{ h(y) : y \in \UU_2\}$. Then, there exists a constant $\gamma$, depending on $\kappa$ and $A$ but independent of $\CC$, such that $\|x^\ast - y^\ast\| \leq \gamma \|b_2 - b_1\|$.
\end{lma}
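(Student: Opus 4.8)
The plan is to reduce the lemma to \Cref{cvxargmin} by exhibiting $\UU_2$ as a translate of $\UU_1$. First I would use $b_1,b_2 \in \opn{Im}(A)$ to write $b_2 - b_1 = A\eta$, choosing $\eta$ to be the minimum-norm preimage, i.e.\ $\eta \in \opn{Im}(A^T)$. Then \Cref{leastim}, applied with $R = A^T$ (so that $\eta \in \opn{Im}(R)$ and $R^T\eta = A\eta = b_2 - b_1$), bounds this preimage uniformly: $\|\eta\|^2 \le \lambda_{++}^{-1}(AA^T)\,\|b_2 - b_1\|^2$. This is the one place where a little care is needed — the reduction below is harmless precisely because $\|\eta\|$ is controlled by $\|b_2-b_1\|$ with a constant that does not see $\CC$.

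Next I would verify the translation identity $\UU_2 = \UU_1 - \eta$: for any $x$, one has $Ax + b_1 \in \CC$ iff $A(x-\eta) + b_2 = Ax + b_1 \in \CC$, i.e.\ $x \in \UU_1 \iff x - \eta \in \UU_2$. Both $\UU_1$ and $\UU_2$ are closed and convex, being preimages of the closed convex set $\CC$ under the continuous affine maps $x \mapsto Ax + b_i$; and since an $(m,M)$-strongly convex $h$ is coercive and strictly convex, $x^\ast$ and $y^\ast$ are well-defined unique minimizers over the (nonempty) sets $\UU_1,\UU_2$.

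Finally I would invoke \Cref{cvxargmin} with the strongly convex function $h$, the closed convex set $\UU_1$ playing the role of $\CC$, and the two translations $\CC_1 = 0 + \UU_1 = \UU_1$ and $\CC_2 = (-\eta) + \UU_1 = \UU_2$; its conclusion gives $\|x^\ast - y^\ast\| \le (1 + 2\kappa)\|\eta\|$ (the quantity called $\delta$ in \Cref{cvxargmin} being $\|-\eta\| = \|\eta\|$). Combining with the bound on $\|\eta\|$ yields the claim with $\gamma = (1+2\kappa)\sqrt{\lambda_{++}^{-1}(AA^T)}$, which indeed depends only on $\kappa$ and $A$. If one preferred a self-contained argument, the same estimate follows by adding the two first-order optimality inequalities — for $x^\ast$ minimizing $h$ over $\UU_1$ and for $y^\ast + \eta$ minimizing $z \mapsto h(z-\eta)$ over $\UU_1$ — using $m$-strong convexity of each together with $M$-Lipschitz continuity of $\nabla h$ to estimate the cross term, which gives $\gamma = (1+\kappa)\sqrt{\lambda_{++}^{-1}(AA^T)}$. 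Either way I do not anticipate a genuine obstacle: the content is just the bookkeeping of the minimal-norm preimage plus the translation invariance already packaged in \Cref{cvxargmin}.
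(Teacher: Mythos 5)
Your proposal is correct and follows essentially the same route as the paper: exhibit $\UU_1$ and $\UU_2$ as translates of one another via a minimum-norm preimage of $b_2-b_1$, invoke \Cref{cvxargmin} to get the factor $(1+2\kappa)$, and bound the translation vector by a constant depending only on $A$. The only cosmetic difference is that you control $\|\eta\|$ via \Cref{leastim} while the paper cites \Cref{subdistgeneric}; both give the same constant $\sqrt{\lambda_{++}^{-1}(AA^T)}$.
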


\subsection{K-\L{} Functions}\label{sub:klproperty}
\begin{defi}
Let $f$ be proper and lower semicontinuous. The domain $\opn{dom}(\partial f)$ of the general subgradient mapping is the set $\{x: \partial f(x) \neq \emptyset\}$.
\end{defi}

\begin{defi}[\cite{ABS2013}, 2.4]
A function $f: \RR^n \rightarrow \RR \cup \{\infty\}$ is said to have the \emph{Kurdyka-\L{}ojasiewicz (K-\L{})} property at $x \in \opn{dom}(\partial f)$ if there exist $\eta \in (0, \infty\rb$, a neighborhood $U$ of $x$, and a continuous concave function $\varphi: \lb 0, \eta) \rightarrow \RR$ such that:
\begin{enumerate}
\item $\varphi(0) = 0$
\item $\varphi$ is smooth on $(0,\eta)$
\item For all $s \in (0,\eta)$, $\varphi'(s) > 0$
\item For all $y \in U \cap \{w: f(x) < f(w) < f(x) + \eta\}$, the Kurdyka-\L{}ojasiewicz inequality holds:
$$\varphi'(f(y) - f(x))\opn{dist}(0,\partial f(x)) \geq 1$$
\end{enumerate}
A proper, lower semicontinuous function $f$ that satisfies the K-\L{} property at every point of $\opn{dom}(\partial f)$ is called a \emph{K-\L{}} function.
\end{defi}

A large class of K-L{} functions is provided by the \emph{semialgebraic functions}, which include many functions of importance in optimization.

\begin{defi}[\cite{ABS2013}, 2.1]
A subset $S$ of $\RR^n$ is \emph{(real) semialgebraic} if there exists a finite number of real polynomial functions $P_{ij}, Q_{ij}: \RR^n \rightarrow \RR$ such that
$$S = \bigcup_{j=1}^p \bigcap_{i=1}^q \{x \in \RR^n: P_{ij}(x) = 0, Q_{ij}(x) < 0\}.$$

A function $f: \RR^n \rightarrow \RR^m$ is \emph{semialgebraic} if its graph $\{ (x,y) \in \RR^{n+m}: f(x) = y\}$ is a real semialgebraic subset of $\RR^{n+m}$.
\end{defi}

The set of semialgebraic functions is closed under taking finite sums and products, scalar products, and composition. The indicator function of a semialgebraic set is a semialgebraic function, as is the generalized inverse of a semialgebraic function. More examples can be found in \cite{ABRS2010}.

The key property of K-\L{} functions is that if a sequence $\{x^k\}_{k=0}^\infty$ is a `descent sequence' with respect to a K-\L{} function, then limit points of $\{x^k\}$ are necessarily unique. This is formalized by the following;

\begin{thm}[\cite{ABS2013}, 2.9]\label{absconvergence}
Let $f: \RR^n \rightarrow \RR$ be a proper and lower semicontinuous function. Consider a sequence $\{x^k\}_{k=0}^\infty$ satisfying the properties:
\begin{description}
\item[H1] There exists $a > 0$ such that for each $k$, $f(x^{k+1})  - f(x^k) \leq -a \|x^{k+1} - x^k\|^2$.
\item[H2] There exists $b > 0$ such that for each $k$, there exists $w^{k+1} \in \partial f(x^{k+1})$ with $\|w^{k+1}\| \leq b\|x^{k+1} - x^k\|$.
\end{description}
If $f$ is a K-\L{} function, and $x^\ast$ is a limit point of $\{x^k\}$ with $f(x^k) \rightarrow f(x^\ast)$, then $x^k \rightarrow x^\ast$.
\end{thm}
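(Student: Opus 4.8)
The plan is to apply the abstract convergence result \Cref{absconvergence} to the augmented Lagrangian $\Lag$ evaluated along the full ADMM iterate sequence $\ux{z}{k} := (\ux{\XX}{k}, \ux{\ZZZ}{k}, \ux{\WW}{k})$. Note that \Cref{main,main2} already do most of the work: under \Cref{asm2} the sequence is bounded, limit points exist, and \emph{every} limit point is a constrained stationary point of $(P)$. The only thing left to establish is that the whole sequence converges to a \emph{single} such point, and this is exactly the conclusion that \Cref{absconvergence} delivers once we verify its two hypotheses \textbf{H1} (sufficient decrease) and \textbf{H2} (a subgradient bounded by the iterate increment) for $f = \Lag$ and $\ux{x}{k} = \ux{z}{k}$, and confirm that $\Lag$ is a K-\L{} function (which is assumed).

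First I would verify \textbf{H1}. The per-block sufficient-decrease inequalities are furnished by the general machinery of \Cref{sec:general} together with \Cref{asm2}: updating each $X_i$ decreases $\Lag$ by at least $\Delta(\|\ux{X}{k+1}_i - \ux{X}{k}_i\|)$, and similarly for $Z_0$ and for $\ZZ = (Z_1,Z_2)$ via \Cref{aobj}. The crucial point specific to the K-\L{} setting is that $\Delta$ must be \emph{quadratic}, i.e. $\Delta(t) = a t^2$; the generic $0$-forcing function arising from the strengthened convexity alternatives \Cref{axainj}(1)/\Cref{azainj}(1) need not be. This is precisely why \Cref{mainkl} strengthens the hypotheses to require \Cref{axainj}(2) for \emph{all} of $X_0,\ldots,X_n$ and \Cref{azainj}(2), since these yield $\Delta(t)=at^2$ (as recorded in the discussion of \eqref{eq:lagubounddiff}). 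Summing the block-wise quadratic decreases, and then bounding the dual ascent $\|\ux{\WW}{k+1} - \ux{\WW}{k}\|^2$ by a combination of the primal increments — which requires \Cref{afinalblockim} so that $\ux{\WW}{k}$ is controlled through $\opn{Im}(Q)$, together with the Lipschitz differentiability of the final-block objective — gives, for $\rho$ sufficiently large, a net bound $\ux{\Lag}{k+1} - \ux{\Lag}{k} \leq -a\|\ux{z}{k+1} - \ux{z}{k}\|^2$ with $a > 0$.

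Next I would verify \textbf{H2}. The first-order condition \Cref{necfoc} applied to each subproblem gives partial stationarity at the updated variable but evaluated with the \emph{stale} values of the later blocks (and of $\WW$). To assemble a genuine element $\ux{v}{k+1} \in \partial\Lag(\ux{z}{k+1})$, I would correct each of these conditions by adding the gradient discrepancies of the smooth coupling terms — the multiaffine penalty $\tfrac{\rho}{2}\|A(\XX,Z_0)+Q(\ZZ)\|^2$ and the Lipschitz-differentiable $F$ — between the stale and current iterates. Because these terms are smooth with Lipschitz gradients on bounded sets, each discrepancy is bounded by a constant times the relevant block increments, yielding $\|\ux{v}{k+1}\| \leq b\|\ux{z}{k+1} - \ux{z}{k}\|$. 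This is the same construction already used to prove $\ux{v}{k} \to 0$ in \Cref{main2}, now tracked quantitatively to expose the linear dependence on the increment.

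Finally I would conclude. Since $\{\ux{\Lag}{k}\}$ is nonincreasing by \textbf{H1} and bounded below (using coercivity \Cref{aobj}(1) and the boundedness of the iterates from \Cref{main}), it converges to some finite $\Lag^\ast$. For any limit point $\ux{z}{\ast} = (\XX^\ast,\ZZZ^\ast,\WW^\ast)$ along a subsequence, the continuity of each $f_i$ on $\opn{dom}(f_i)$ (from \Cref{anicefx}), the smoothness of the remaining terms, and the fact that $\|\ux{z}{k+1}-\ux{z}{k}\| \to 0$ (a consequence of \textbf{H1} and summability) together give $\Lag(\ux{z}{\ast}) = \Lag^\ast$, so the value hypothesis of \Cref{absconvergence} is met. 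That theorem then forces $\ux{z}{k} \to \ux{z}{\ast}$; in particular the limit point is unique, and by \Cref{main2} it is a constrained stationary point of $(P)$. The main obstacle is the verification of \textbf{H1} in its quadratic form: obtaining $\Delta(t)=at^2$ simultaneously for \emph{every} block (which is what the strengthened hypotheses buy us) and, in tandem, controlling the dual ascent by primal increments so that the inequality survives the $\WW$-update — both steps being where \Cref{afinalblockim} and the Lipschitz-differentiability assumptions are essential.
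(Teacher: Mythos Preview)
You have proved the wrong theorem. The statement in question is \Cref{absconvergence} itself --- the abstract K-\L{} convergence result of Attouch--Bolte--Svaiter --- whereas your proposal takes \Cref{absconvergence} as a black box and uses it to prove \Cref{mainkl}. In other words, you have reproduced (quite faithfully) the paper's proof of \Cref{mainkl} in \Cref{sec:mainklproof}, not a proof of \Cref{absconvergence}.

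The paper does not prove \Cref{absconvergence} at all; it is quoted from \cite{ABS2013} and used as a tool. A proof of \Cref{absconvergence} would instead proceed internally: from \textbf{H1} one gets that $\{f(x^k)\}$ is nonincreasing and converges to $f(x^\ast)$; one then invokes the K-\L{} inequality at $x^\ast$ with desingularizing function $\varphi$, combines it with \textbf{H2} to obtain $\varphi'(f(x^k)-f(x^\ast)) \geq (b\|x^k-x^{k-1}\|)^{-1}$, and with \textbf{H1} and the concavity of $\varphi$ to get a telescoping bound of the form $\|x^{k+1}-x^k\| \leq \tfrac{b}{a}\bigl(\varphi(f(x^k)-f(x^\ast)) - \varphi(f(x^{k+1})-f(x^\ast))\bigr)\cdot \|x^k-x^{k-1}\|^{-1}$, from which (via AM--GM or a square-root trick) one deduces $\sum_k \|x^{k+1}-x^k\| < \infty$ and hence that $\{x^k\}$ is Cauchy. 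None of this appears in your proposal.
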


\section{General Properties of ADMM}\label{sec:general}

In this section, we will derive results that are inherent properties of ADMM, and require minimal conditions on the structure of the problem. We first work in the most general setting where $C$ in the constraint $C(U_0,\ldots,U_n) = 0$ may be any smooth function, the objective function $f(U_0,\ldots,U_n)$ is proper and lower semicontinuous, and the variables $\{U_0,\ldots,U_n\}$ may be coupled. We then specialize to the case where the constraint $C(U_0,\ldots,U_n)$ is multiaffine, which allows us to quantify the changes in the augmented Lagrangian using the subgradients of $f$. Finally, we specialize to the case where the objective function splits into $F(U_0,\ldots,U_n) + \sum_{i=0}^n g_i(U_i)$ for a smooth coupling function $F$, which allows finer quantification using the subgradients of the augmented Lagrangian.

The results given in this section hold under very weak conditions; hence, these results may be of independent interest, as tools for analyzing ADMM in other settings.

\subsection{General Objective and Constraints}\label{sec:genobjcs}

In this section, we consider 
$$\left\{ \begin{array}{rl} \displaystyle \inf_{U_0,\dots,U_n} & f(U_0,\ldots,U_n) \\
&C(U_0,\ldots,U_n) = 0.\end{array} \right.$$

The augmented Lagrangian is given by $$\Lag(U_0,\ldots,U_n,W) = f(U_0,\ldots,U_n) + \la W, C(U_0,\ldots,U_n)\ra + \frac{\rho}{2}\|C(U_0,\ldots,U_n)\|^2$$ and ADMM performs the updates as in \Cref{alg:admm}. We assume only the following.
\begin{asm}\label{generalasm}
The following hold.
\begin{subasm}
For sufficiently large $\rho$, every ADMM subproblem attains its optimal value.
\end{subasm}
\begin{subasm}
$C(U_0,\ldots,U_n)$ is smooth.
\end{subasm}	
\begin{subasm}
$f(U_0,\ldots,U_n)$ is proper and lower semicontinuous.
\end{subasm}
\end{asm}
This assumption ensures that the $\opn{argmin}$ in \Cref{alg:admm} is well-defined, and that the first-order condition in \Cref{necfoc} holds at the optimal point. The results in this section are extensions of similar results for ADMM in the classical setting (linear constraints, separable objective function), so it is interesting that the ADMM algorithm retains many of the same properties under the generality of \Cref{generalasm}. We defer the proofs to the appendix.

\begin{lma}\label{dualgeneric}
Let $\UU = (U_0,\ldots,U_n)$ denote the set of all variables.  The ADMM update of the dual variable $W$ increases the augmented Lagrangian such that $\Lag(\UU^+,W^+) - \Lag(\UU^+,W) = \rho \|C(\UU^+)\|^2 = \frac{1}{\rho}\|W - W^+\|^2$. If $\|W - W^+\| \rightarrow 0$, then $\nabla_W \Lag(\ux{\UU}{k},\ux{W}{k}) \rightarrow 0$ and every limit point $\UU^\ast$ of $\{\ux{\UU}{k}\}_{k=0}^\infty$ satisfies $C(\UU^\ast) = 0$.
\end{lma}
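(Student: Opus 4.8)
The plan is to verify the stated chain of equalities by direct substitution, and then read off the two limiting conclusions from it; this is essentially a bookkeeping argument.

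First I would compute $\Lag(\UU^+,W^+) - \Lag(\UU^+,W)$ straight from the definition of the augmented Lagrangian. Since both evaluations share the same primal point $\UU^+$, the objective term $f(\UU^+)$ and the penalty term $\frac{\rho}{2}\|C(\UU^+)\|^2$ cancel, leaving only $\la W^+ - W, C(\UU^+)\ra$. The ADMM dual step in \Cref{alg:admm} sets $W^+ = W + \rho C(\UU^+)$, i.e. $W^+ - W = \rho C(\UU^+)$, so the difference equals $\rho\|C(\UU^+)\|^2$. Taking norms in the dual update gives $\|W - W^+\| = \rho\|C(\UU^+)\|$, hence $\rho\|C(\UU^+)\|^2 = \frac{1}{\rho}\|W - W^+\|^2$, which is the claimed identity.

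Next, restoring the iteration index, the identity reads $\|C(\ux{\UU}{k+1})\| = \frac{1}{\rho}\|\ux{W}{k} - \ux{W}{k+1}\|$. Thus if $\|\ux{W}{k} - \ux{W}{k+1}\| \to 0$, then $C(\ux{\UU}{k+1}) \to 0$, and after reindexing $C(\ux{\UU}{k}) \to 0$. Because $\nabla_W \Lag(\UU,W) = C(\UU)$ identically, this is precisely the assertion $\nabla_W \Lag(\ux{\UU}{k},\ux{W}{k}) \to 0$. Finally, for any limit point $\UU^\ast$ of $\{\ux{\UU}{k}\}_{k=0}^\infty$, pick a subsequence $\ux{\UU}{k_j} \to \UU^\ast$; since $C$ is smooth and hence continuous (\Cref{generalasm}), $C(\ux{\UU}{k_j}) \to C(\UU^\ast)$, while $C(\ux{\UU}{k}) \to 0$ along the whole sequence, so $C(\UU^\ast) = 0$.

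I do not expect any real obstacle here: the computation is immediate once the dual update is substituted, and continuity of $C$ handles the limit point. The only point requiring care is the iteration-index bookkeeping, since the dual update performed during iteration $k$ produces $\ux{W}{k+1}$ and involves $C(\ux{\UU}{k+1})$ rather than $C(\ux{\UU}{k})$; this is harmless because convergence of a sequence to $0$ is unaffected by an index shift.
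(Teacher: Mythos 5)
Your proposal is correct and follows essentially the same route as the paper: cancel the shared terms at the common primal point $\UU^+$, substitute the dual update $W^+ - W = \rho C(\UU^+)$ to get the chain of equalities, then use $\nabla_W \Lag(\UU,W) = C(\UU)$ and continuity of $C$ for the limiting conclusions. Your remark about the index shift is a careful touch but, as you note, harmless.
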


Consider the ADMM update of the primal variables. ADMM minimizes $\Lag(U_0,\ldots,U_n,W)$ with respect to each of the variables $U_0,\ldots,U_n$ in succession. Let $Y = U_j$ be a particular variable of focus, and let $U = \UU_{\neq j} = (U_i : i \neq j)$ denote the other variables. For fixed $U$, let $f_U(Y) = f(U,Y)$. When $Y$ is given, we let $U_<$ denote the variables that are updated before $Y$, and $U_>$ the variables that are updated after $Y$. The ADMM subproblem for $Y$ is
$$\min_Y \Lag(U,Y,W) = \min_Y f_U(Y) + \la W, C(U,Y)\ra + \frac{\rho}{2}\|C(U,Y)\|^2.$$

\begin{lma}\label{subgradgeneric2}
The general subgradient of $\Lag(U,Y,W)$ with respect to $Y$ is given by
$$\partial_Y \Lag(U,Y,W) = \partial f_U(Y) + (\nabla_Y C(U,Y))^TW + \rho (\nabla_Y C(U,Y))^TC(U,Y)$$
where $\nabla_Y C(U,Y)$ is the Jacobian of $Y \mapsto C(U,Y)$ and $(\nabla_Y C(U,Y))^T$ is its adjoint.

Defining $V(U,Y,W) = (\nabla_Y C(U,Y))^TW + \rho (\nabla_Y C(U,Y))^TC(U,Y)$, the function $V(U,Y,W)$ is continuous, and $\partial_Y \Lag(U,Y,W) = \partial f_U(Y) + V(U,Y,W)$. The first-order condition satisfied by $Y^+$ is therefore
\begin{align*}
0 &\in \partial f_{U_<^+,U_>}(Y^+) + (\nabla_Y C(U_<^+,Y^+,U_>))^TW + \rho (\nabla_Y C(U_<^+,Y^+,U_>))^TC(U_<^+,Y^+,U_>) \\
&=\partial f_{U_<^+,U_>}(Y^+) + V(U_<^+,Y^+,U_>,W).
\end{align*}
\end{lma}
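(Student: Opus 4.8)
The plan is to split the $Y$-dependence of the augmented Lagrangian into its nonsmooth part $f_U(Y)$ and a smooth part, and then apply the general-subgradient sum rule for a lower semicontinuous function plus a smooth function. Concretely, with $U$ and $W$ held fixed, set
$$P(Y) := \la W, C(U,Y)\ra + \frac{\rho}{2}\|C(U,Y)\|^2,$$
so that $\Lag(U,Y,W) = f_U(Y) + P(Y)$ as functions of $Y$. By \Cref{generalasm}, $C$ is smooth, so $P$ is the composition of the smooth map $Y \mapsto C(U,Y)$ with the smooth maps $c \mapsto \la W, c\ra$ and $c \mapsto \frac{\rho}{2}\|c\|^2$; hence $P$ is smooth, and the chain rule gives $\nabla_Y P(Y) = (\nabla_Y C(U,Y))^TW + \rho(\nabla_Y C(U,Y))^TC(U,Y) = V(U,Y,W)$.

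Next I would invoke the subgradient sum rule for a smooth perturbation from \cite[\S8]{RW1997}: since $f_U$ is lower semicontinuous (inherited from $f$ via \Cref{generalasm} by fixing the other blocks) and $P$ is continuously differentiable, $\partial(f_U + P)(Y) = \partial f_U(Y) + \nabla_Y P(Y)$, which is exactly the asserted identity $\partial_Y \Lag(U,Y,W) = \partial f_U(Y) + V(U,Y,W)$. Continuity of $V$ is then immediate from the smoothness of $C$: both $(U,Y) \mapsto \nabla_Y C(U,Y)$ and $(U,Y) \mapsto C(U,Y)$ are continuous, and $V$ is built from these by the (jointly continuous, being polynomial) operations of applying an adjoint to a vector and scalar multiplication, so $V$ is continuous in $(U,Y,W)$.

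Finally, for the stated first-order condition at $Y^+$: the ADMM update minimizes $\Lag(U_<^+, Y, U_>, W)$ with $Y$ ranging over the whole space, so \Cref{necfoc} applies with $G$ the whole space (i.e.\ $G = \RR^n$ in the notation there), where the constraint qualification is vacuous because $N_G \equiv \{0\}$; this gives $0 \in \partial_Y \Lag(U_<^+, Y^+, U_>, W)$. Substituting the subgradient formula just obtained, with $U = (U_<^+, U_>)$, yields the displayed inclusion. I do not expect a genuine obstacle in this argument; the only point needing care is verifying the hypotheses of the smooth-plus-lsc sum rule, namely that the linear-plus-quadratic-penalty term is genuinely $C^1$ in $Y$ (which here follows directly from smoothness of $C$, and in the multiaffine case also from \Cref{multiaffsmooth}) and that $f_U$ remains proper and lower semicontinuous after fixing the other blocks.
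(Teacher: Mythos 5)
Your proposal is correct and follows essentially the same route as the paper: the paper likewise splits $\Lag(U,Y,\allowbreak W)$ into $f_U(Y)$ plus the smooth term $\la W, C(U,Y)\ra + \frac{\rho}{2}\|C(U,Y)\|^2$ and invokes the smooth-plus-lsc sum rule \cite[8.8(c)]{RW1997} together with the chain rule to get the stated formula, with the first-order condition following from \Cref{necfoc} since the subproblem is unconstrained. The only difference is that you spell out the continuity of $V$ and the trivial constraint qualification explicitly, which the paper leaves to the surrounding discussion.
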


For the next results, we add the following assumption.

\begin{asm}\label{weakcoupling}
The function $f$ has the form $f(U_0,\ldots,U_n) = F(U_0,\ldots,U_n) + \sum_{i=0}^n g_i(U_i)$, where $F$ is smooth and each $g_i$ is continuous on $\opn{dom}(g_i)$.
\end{asm}

\begin{lma}\label{generalinnerblock}
Suppose that \Cref{generalasm,weakcoupling} hold. The general subgradient $\partial_Y \Lag(\ux{U}{k+1},\ux{Y}{k+1},\ux{W}{k+1})$ contains
\begin{align*}
&V(\ux{U}{k+1}_<,\ux{Y}{k+1},\ux{U}{k+1}_>,\ux{W}{k+1}) - V(\ux{U}{k+1}_<,\ux{Y}{k+1},\ux{U}{k}_>,\ux{W}{k}) \\
&\bump + \nabla_Y F(\ux{U}{k+1}_<,\ux{Y}{k+1},\ux{U}{k+1}_>) - \nabla_Y F(\ux{U}{k+1}_<,\ux{Y}{k+1},\ux{U}{k}_>).
\end{align*}

Consider any limit point $(U^\ast, Y^\ast, W^\ast)$ of ADMM. If $\|W^+ - W\| \rightarrow 0$ and $\|U^+_> - U_>\| \rightarrow 0$, then for any subsequence $\{(\ux{U}{k(s)}, \ux{Y}{k(s)}, \ux{W}{k(s)})\}_{s=0}^\infty$ converging to $(U^\ast,Y^\ast,W^\ast)$, there exists a sequence $\ux{v}{s} \in \partial_Y \Lag(\ux{U}{k(s)}, \ux{Y}{k(s)}, \ux{W}{k(s)})$ with $\ux{v}{s} \rightarrow 0$.
\end{lma}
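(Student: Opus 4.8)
The plan is to apply the first-order condition from Lemma \ref{subgradgeneric2} at iteration $k+1$ and ``correct'' it from the state at which $Y^{k+1}$ was computed to the state at iteration $k+1$. When $Y^{k+1}$ is produced, the ADMM subproblem uses the pre-update values $U_>^k$ for the later blocks and the old multiplier $W^k$; Lemma \ref{subgradgeneric2} then gives
$$
0 \in \partial f_{U_<^{k+1}, U_>^k}(Y^{k+1}) + V(U_<^{k+1}, Y^{k+1}, U_>^k, W^k).
$$
On the other hand, by \Cref{weakcoupling} the term $\partial f_U(Y)$ decomposes as $\nabla_Y F(U,Y) + \partial g_j(Y^{k+1})$, and the coupling $\partial g_j(Y^{k+1})$ does not depend on which values the other blocks take. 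Therefore, adding and subtracting $\nabla_Y F(U_<^{k+1}, Y^{k+1}, U_>^{k+1})$ and $V(U_<^{k+1}, Y^{k+1}, U_>^{k+1}, W^{k+1})$, and using again the sum rule from \Cref{subgradgeneric2}, we obtain that
$$
\partial_Y \Lag(U^{k+1}, Y^{k+1}, W^{k+1}) = \partial g_j(Y^{k+1}) + \nabla_Y F(U_<^{k+1},Y^{k+1},U_>^{k+1}) + V(U_<^{k+1},Y^{k+1},U_>^{k+1},W^{k+1})
$$
contains the displayed vector: it equals $0$ (the subproblem optimality residual) plus the two ``defect'' differences in $V$ and in $\nabla_Y F$. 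This establishes the first claim and is essentially a bookkeeping argument once the decomposition of $\partial f_U$ is in hand.

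For the second claim, fix a subsequence $\{(U^{k(s)}, Y^{k(s)}, W^{k(s)})\}$ converging to $(U^\ast, Y^\ast, W^\ast)$ and take $v^s$ to be exactly the vector exhibited above, evaluated at index $k(s)$. I would bound $\|v^s\|$ by the sum of $\|V(U_<^{k(s)},Y^{k(s)},U_>^{k(s)},W^{k(s)}) - V(U_<^{k(s)},Y^{k(s)},U_>^{k(s)-1},W^{k(s)-1})\|$ and $\|\nabla_Y F(U_<^{k(s)},Y^{k(s)},U_>^{k(s)}) - \nabla_Y F(U_<^{k(s)},Y^{k(s)},U_>^{k(s)-1})\|$. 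The hypotheses $\|W^+ - W\| \to 0$ and $\|U_>^+ - U_>\| \to 0$ say that the two arguments being compared differ by an amount tending to $0$; since $V$ is continuous (Lemma \ref{subgradgeneric2}), $\nabla_Y F$ is continuous (smoothness of $F$, \Cref{weakcoupling}), and along the convergent subsequence all the relevant arguments stay in a bounded set, uniform continuity on that compact set forces both differences, and hence $v^s$, to $0$.

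The main obstacle is the last step: $V$ and $\nabla_Y F$ are merely continuous, not (globally) Lipschitz, so I cannot naively pass a limit from ``arguments differ by $\to 0$'' to ``values differ by $\to 0$'' without first confining everything to a compact set. The resolution is that the subsequence $(U^{k(s)}, Y^{k(s)}, W^{k(s)})$ converges, hence is bounded, and one also needs the neighboring iterates $U_>^{k(s)-1}$ and $W^{k(s)-1}$ to be bounded; these are controlled because they differ from the convergent-subsequence iterates by quantities going to $0$ by hypothesis. Restricting $V$ and $\nabla_Y F$ to a closed ball containing all these points, uniform continuity then closes the argument. A minor secondary point to be careful about is that $Y^{k(s)}$ lies in $\opn{dom}(g_j)$ and $g_j$ is continuous there, so that the general subgradient sum rule used to identify $\partial_Y \Lag$ is valid — but this is exactly what \Cref{weakcoupling} provides.
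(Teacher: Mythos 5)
Your proposal is correct and follows essentially the same route as the paper: use the first-order condition for the $Y$-subproblem (with $U_>^k$, $W^k$) to identify $-\bigl(\nabla_Y F(U_<^{k+1},Y^{k+1},U_>^k) + V(U_<^{k+1},Y^{k+1},U_>^k,W^k)\bigr) \in \partial g_y(Y^{k+1})$, substitute into the expression for $\partial_Y \Lag(U^{k+1},Y^{k+1},W^{k+1})$, and then drive the resulting difference terms to zero along the subsequence via uniform continuity of $V$ and $\nabla_Y F$ on a compact set containing the iterates and their immediate predecessors. Your handling of the boundedness of $U_>^{k(s)-1}$, $W^{k(s)-1}$ matches the paper's construction of the compact set $\mathcal{B}$, so no gap remains.
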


\begin{lma}\label{limitfocgeneric}
Suppose that \Cref{generalasm,weakcoupling} hold. Let $(U^\ast, Y^\ast,W^\ast)$ be a feasible limit point. By passing to a subsequence converging to the limit point, let $\{(\ux{U}{s},\ux{Y}{s},\ux{W}{s})\}$ be a subsequence of the ADMM iterates with $(\ux{U}{s},\ux{Y}{s},\ux{W}{s}) \rightarrow (U^\ast,Y^\ast,W^\ast)$. Suppose that there exists a sequence $\{v^s\}$ such that $\ux{v}{s} \in \partial_Y \Lag (\ux{U}{s},\ux{Y}{s},\ux{W}{s})$ for all $s$ and $\ux{v}{s} \rightarrow 0$. Then $0 \in \partial g_y(Y^\ast) + \nabla_Y F(U^\ast,Y^\ast) + (\nabla_Y C(U^\ast,Y^\ast))^TW^\ast$, so $(U^\ast, Y^\ast, W^\ast)$ is a constrained stationary point.
\end{lma}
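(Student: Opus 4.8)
The plan is to unwind the subgradient formula of \Cref{subgradgeneric2} down to $\partial g_y$ and then pass to the limit along the given subsequence. Writing $f_U(Y) = F(U,Y) + g_y(Y) + (\text{terms not involving } Y)$ via \Cref{weakcoupling}, and using that $F(U,\cdot)$ is smooth so the sum rule for general subgradients applies (\cite[8.8(c)]{RW1997}), the formula of \Cref{subgradgeneric2} becomes
$$\partial_Y \Lag(U,Y,W) = \partial g_y(Y) + \nabla_Y F(U,Y) + V(U,Y,W).$$
Hence for each $s$ there is a $u^s \in \partial g_y(\ux{Y}{s})$ with $v^s = u^s + \nabla_Y F(\ux{U}{s},\ux{Y}{s}) + V(\ux{U}{s},\ux{Y}{s},\ux{W}{s})$.

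Next I would let $s \to \infty$. Since $F$ is smooth, $\nabla_Y F$ is continuous, and by \Cref{subgradgeneric2} the map $V$ is continuous; together with $(\ux{U}{s},\ux{Y}{s},\ux{W}{s}) \to (U^\ast,Y^\ast,W^\ast)$ and $v^s \to 0$, this forces
$$u^s \longrightarrow u^\ast := -\nabla_Y F(U^\ast,Y^\ast) - V(U^\ast,Y^\ast,W^\ast).$$
The crux is to conclude that $u^\ast \in \partial g_y(Y^\ast)$. By the definition of the general subgradient (\Cref{defgensubgradient}) — choosing, for each $s$, a point in $\wh{\partial}g_y$ that is within $1/s$ of $(\ux{Y}{s}, u^s, g_y(\ux{Y}{s}))$ and diagonalizing — this follows from $u^s \in \partial g_y(\ux{Y}{s})$, $\ux{Y}{s} \to Y^\ast$, and $u^s \to u^\ast$, \emph{provided} that $g_y(\ux{Y}{s}) \to g_y(Y^\ast)$. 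Establishing this value-convergence is the part I expect to require the most care: each $\ux{Y}{s}$ lies in $\opn{dom}(g_y)$ (otherwise $\partial_Y \Lag$ would be empty there and $v^s$ could not exist), lower semicontinuity of $g_y$ gives $\liminf_s g_y(\ux{Y}{s}) \geq g_y(Y^\ast)$, and the reverse inequality — hence also $Y^\ast \in \opn{dom}(g_y)$ — comes from boundedness of $\{g_y(\ux{Y}{s})\}$ along the subsequence (which in the ADMM setting follows from boundedness of the augmented Lagrangian values and feasibility of the limit point) combined with the assumed continuity of $g_y$ on $\opn{dom}(g_y)$ (\Cref{weakcoupling}).

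Finally I would assemble the conclusion. Because $(U^\ast,Y^\ast,W^\ast)$ is feasible, $C(U^\ast,Y^\ast) = 0$, so the penalty term in $V$ vanishes and $V(U^\ast,Y^\ast,W^\ast) = (\nabla_Y C(U^\ast,Y^\ast))^TW^\ast$. Combining this with $u^\ast \in \partial g_y(Y^\ast)$ and the definition of $u^\ast$ yields
$$0 \in \partial g_y(Y^\ast) + \nabla_Y F(U^\ast,Y^\ast) + (\nabla_Y C(U^\ast,Y^\ast))^TW^\ast,$$
which, together with $C(U^\ast,Y^\ast) = 0$, is exactly the statement that $(U^\ast,Y^\ast,W^\ast)$ is a constrained stationary point of $\min_Y\{F(U^\ast,Y) + g_y(Y) : C(U^\ast,Y) = 0\}$.
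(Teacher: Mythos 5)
Your proposal is correct and follows essentially the same route as the paper: decompose $\partial_Y\Lag$ via \Cref{subgradgeneric2} into $\partial g_y + \nabla_Y F + V$, pass to the limit using continuity of $V$ and $\nabla_Y F$ and feasibility of the limit point, and then close the subgradient under the limit. Your diagonalization step is exactly the paper's \Cref{gentoreg}, and the value-convergence $g_y(\ux{Y}{s}) \to g_y(Y^\ast)$ is handled the same way the paper does (via the continuity-on-domain assumption in \Cref{weakcoupling}), so no substantive difference remains.
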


\begin{cor}\label{iteratesconvergeimpliesstationary}
If \Cref{generalasm,weakcoupling} hold, and $\|\ux{U}{k+1}_\ell - \ux{U}{k}_\ell\| \rightarrow 0$ for $\ell \geq 1$, and $\|\ux{W}{k+1} - \ux{W}{k}\| \rightarrow 0$, then every limit point is a constrained stationary point.
\end{cor}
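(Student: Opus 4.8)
The plan is to deduce the corollary directly from \Cref{dualgeneric,generalinnerblock,limitfocgeneric}, applied one block at a time. First I would invoke \Cref{dualgeneric}: since $\|\ux{W}{k+1} - \ux{W}{k}\| \to 0$ by hypothesis, every limit point $\UU^\ast$ of $\{\ux{\UU}{k}\}$ is feasible, i.e.\ $C(\UU^\ast) = 0$. Hence, if $(\UU^\ast, W^\ast)$ is any limit point of $\{(\ux{\UU}{k}, \ux{W}{k})\}$, it is a feasible limit point, and after passing to a subsequence I may assume $(\ux{\UU}{k(s)}, \ux{W}{k(s)}) \to (\UU^\ast, W^\ast)$.

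Next, I would apply \Cref{generalinnerblock} and then \Cref{limitfocgeneric} with the variable of focus $Y = U_j$, for each $j = 0, 1, \dots, n$ in turn, always using this same subsequence (decomposing $\ux{\UU}{k(s)} = (\ux{\UU}{k(s)}_{\neq j}, \ux{U}{k(s)}_j)$, which then converges to $(\UU^\ast_{\neq j}, U_j^\ast, W^\ast)$). The only hypothesis of \Cref{generalinnerblock} that needs checking is $\|U^+_> - U_>\| \to 0$, where $U_>$ consists of the variables updated after $U_j$ within one ADMM sweep, namely $U_{j+1}, \dots, U_n$ (empty when $j = n$). Each of these has index $\geq 1$, so the assumption $\|\ux{U}{k+1}_\ell - \ux{U}{k}_\ell\| \to 0$ for $\ell \geq 1$ supplies exactly what is needed; this is also why the assumption may be omitted for $\ell = 0$, since $U_0$ is updated first and therefore never lies in $U_>$ for any block. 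With this and $\|\ux{W}{k+1} - \ux{W}{k}\| \to 0$ in hand, \Cref{generalinnerblock} yields, for each $j$, a sequence $v_j^s \in \partial_{U_j}\Lag(\ux{\UU}{k(s)}, \ux{W}{k(s)})$ with $v_j^s \to 0$, and then \Cref{limitfocgeneric} (the limit point being feasible) gives $0 \in \partial g_j(U_j^\ast) + \nabla_{U_j} F(\UU^\ast) + (\nabla_{U_j} C(\UU^\ast))^T W^\ast$.

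Finally, I would assemble the $n+1$ blockwise conditions into the single first-order condition $0 \in \partial f(\UU^\ast) + \nabla C(\UU^\ast)^T W^\ast$. This uses the calculus of general subgradients: since $F$ is smooth and $\sum_{i} g_i$ is separable, $\partial f(\UU^\ast) = \nabla F(\UU^\ast) + \prod_{i=0}^n \partial g_i(U_i^\ast)$ (the exact sum rule for a smooth-plus-lsc function together with the product rule for separable functions, \cite{RW1997}), while $\nabla C(\UU^\ast)^T W^\ast$ decomposes blockwise as $\bigl((\nabla_{U_i} C(\UU^\ast))^T W^\ast\bigr)_{i=0}^n$. Combining this with $C(\UU^\ast) = 0$ shows that $(\UU^\ast, W^\ast)$ is a constrained stationary point. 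The argument is largely bookkeeping; the two places that deserve care are the verification that the ``later variables'' hypothesis of \Cref{generalinnerblock} is met for every block using only $\ell \geq 1$, and the passage from blockwise to joint stationarity, which is where the separability of the subgradient must be invoked explicitly rather than treated as obvious.
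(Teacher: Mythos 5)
Your proposal is correct and follows essentially the same route as the paper: feasibility of limit points via \Cref{dualgeneric}, then \Cref{generalinnerblock} and \Cref{limitfocgeneric} applied block by block, with exactly the paper's observation that $U_0$ never lies in $U_>$ so the hypothesis $\|\ux{U}{k+1}_\ell - \ux{U}{k}_\ell\| \to 0$ is only needed for $\ell \geq 1$. Your explicit assembly of the blockwise conditions into joint stationarity via the separable subgradient calculus is a minor elaboration of what the paper leaves implicit, not a different argument.
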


\begin{rmk}
The assumption that the successive differences $U_i - U_i^+$ converge to 0 is used in analyses of nonconvex ADMM such as \cite{JMZ2014,YWZY}. \Cref{iteratesconvergeimpliesstationary} shows that this is a very strong assumption: it alone implies that every limit point of ADMM is a constrained stationary point, even when $f$ and $C$ only satisfy \Cref{generalasm,weakcoupling}.
\end{rmk}

\subsection{General Objective and Multiaffine Constraints}
In this section, we assume that $f$ satisfies \Cref{generalasm} and that $C$ is multiaffine. Note that we do \emph{not} use \Cref{weakcoupling} in this section.

As in \Cref{sec:genobjcs}, let $Y$ be a particular variable of focus, and $U$ the remaining variables. We let $f_U(Y) = f(U,Y)$. Since $C(U,Y)$ is multiaffine, the resulting function of $Y$ when $U$ is fixed is an \emph{affine} function of $Y$. Therefore, we have $C(U,Y) = C_U(Y) - b_U$ for a \emph{linear} map $C_U$ and a constant $b_U$. The Jacobian of the constraints is then $\nabla_Y C(U,Y) = C_U$ with adjoint $(\nabla_Y C(U,Y))^T = C_U^T$ such that the relation $\la W, C_U(Y)\ra = \la C_U^TW, Y\ra$ holds.

\begin{cor}\label{subgradgeneric}
Taking $\nabla_Y C(U,Y) = C_U$ in \Cref{subgradgeneric2}, the general subgradient of $Y \mapsto \Lag(U,Y,W)$ is given by $\partial_Y \Lag(U,Y,W) = \partial f_U(Y) + C_U^TW + \rho C_U^T(C_U(Y) - b_U)$. Thus, the first-order condition for $Y \mapsto \Lag(U,Y,W)$ at $Y^+$ is given by $0 \in \partial f_U(Y^+) + C_U^TW + \rho C_U^T(C_U(Y^+) - b_U)$.
\end{cor}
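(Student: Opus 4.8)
The plan is to apply \Cref{subgradgeneric2} directly, exploiting the fact that multiaffinity of $C$ makes the partial Jacobian $\nabla_Y C(U,Y)$ independent of $Y$.

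First I would record the affine structure in $Y$. Fixing all variables except $Y$, the map $Y \mapsto C(U,Y)$ is affine by the definition of a multiaffine map (equivalently, by \Cref{multiaffinesumform} and the decomposition \eqref{multiaffinestruc} with $Y$ taken as the distinguished variable), so it can be written $C(U,Y) = C_U(Y) - b_U$, where $C_U$ is its linear part — the $Y$-linear term of $C$ evaluated at $U$ in the sense of \Cref{linearterm} — and $-b_U$ is the constant part. An affine map has constant Jacobian equal to its linear part, so $\nabla_Y C(U,Y) = C_U$ for every $Y$, with adjoint $(\nabla_Y C(U,Y))^T = C_U^T$ characterized by $\la W, C_U(Y)\ra = \la C_U^T W, Y\ra$.

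Next I would substitute these two identities into the formula furnished by \Cref{subgradgeneric2}. Replacing $\nabla_Y C(U,Y)$ by $C_U$ and $C(U,Y)$ by $C_U(Y) - b_U$ in
$$\partial_Y \Lag(U,Y,W) = \partial f_U(Y) + (\nabla_Y C(U,Y))^T W + \rho (\nabla_Y C(U,Y))^T C(U,Y)$$
yields $\partial_Y \Lag(U,Y,W) = \partial f_U(Y) + C_U^T W + \rho C_U^T(C_U(Y) - b_U)$, which is the claimed expression. Finally, the ADMM subproblem for $Y$ minimizes $\Lag(U,\cdot,W)$ over all of $\RR^n$, so \Cref{necfoc} applies with $G = \RR^n$ — where the constraint qualification is vacuous since $N_G \equiv \{0\}$ — and gives $0 \in \partial_Y \Lag(U,Y^+,W)$; combining this with the displayed formula evaluated at $Y^+$ produces the stated first-order condition $0 \in \partial f_U(Y^+) + C_U^T W + \rho C_U^T(C_U(Y^+) - b_U)$.

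Since every step is either a direct substitution or an invocation of an already-established lemma, I do not anticipate any real obstacle; the only point needing a word of care is the justification that the partial Jacobian of an affine map equals its constant linear part, which is immediate.
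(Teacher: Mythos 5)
Your proposal is correct and matches the paper's own (essentially one-line) argument: the paper likewise writes $C(U,Y)=C_U(Y)-b_U$ with constant Jacobian $\nabla_Y C(U,Y)=C_U$, substitutes into \Cref{subgradgeneric2}, and invokes \Cref{necfoc} with $G=\RR^n$ for the first-order condition at $Y^+$. No gaps.
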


Using this corollary, we can prove the following.

\begin{lma}\label{decgeneric}
The change in the augmented Lagrangian when the primal variable $Y$ is updated to $Y^+$ is given by
$$\Lag(U,Y,W) - \Lag(U,Y^+,W) = f_U(Y) - f_U(Y^+) - \la v, Y - Y^+ \ra + \frac{\rho}{2}\|C_U(Y) - C_U(Y^+)\|^2$$
for some $v \in \partial f_U(Y^+)$.
\end{lma}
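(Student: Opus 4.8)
The plan is a direct algebraic computation, the only non-routine ingredient being the first-order optimality condition recorded in \Cref{subgradgeneric}. Recall that, since $C$ is multiaffine, fixing $U$ gives $C(U,Y) = C_U(Y) - b_U$ with $C_U$ linear, so
$$\Lag(U,Y,W) = f_U(Y) + \la W, C_U(Y) - b_U\ra + \tfrac{\rho}{2}\|C_U(Y) - b_U\|^2.$$
First I would subtract the two values of $\Lag$ to get
$$\Lag(U,Y,W) - \Lag(U,Y^+,W) = f_U(Y) - f_U(Y^+) + \la W, C_U(Y) - C_U(Y^+)\ra + \tfrac{\rho}{2}\bigl(\|C_U(Y) - b_U\|^2 - \|C_U(Y^+) - b_U\|^2\bigr).$$

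Next I would expand the quadratic difference using the identity $\|a\|^2 - \|c\|^2 = \|a - c\|^2 + 2\la c, a - c\ra$ with $a = C_U(Y) - b_U$ and $c = C_U(Y^+) - b_U$, so that $a - c = C_U(Y) - C_U(Y^+)$. This produces the term $\tfrac{\rho}{2}\|C_U(Y) - C_U(Y^+)\|^2$ (already in the desired form) together with a cross term $\rho\la C_U(Y^+) - b_U,\, C_U(Y) - C_U(Y^+)\ra$. Combining this cross term with the $\la W, C_U(Y) - C_U(Y^+)\ra$ term gives the single pairing $\la W + \rho(C_U(Y^+) - b_U),\, C_U(Y) - C_U(Y^+)\ra$. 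Using linearity of $C_U$ to write $C_U(Y) - C_U(Y^+) = C_U(Y - Y^+)$ and then moving $C_U$ across the inner product via its adjoint, this equals $\la C_U^T\bigl(W + \rho(C_U(Y^+) - b_U)\bigr),\, Y - Y^+\ra$.

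Finally, the first-order condition from \Cref{subgradgeneric} applied at the ADMM minimizer $Y^+$ states $0 \in \partial f_U(Y^+) + C_U^TW + \rho C_U^T(C_U(Y^+) - b_U)$; hence there is some $v \in \partial f_U(Y^+)$ with $v = -C_U^T\bigl(W + \rho(C_U(Y^+) - b_U)\bigr)$. Substituting this identifies the pairing above with $-\la v, Y - Y^+\ra$, which yields exactly the claimed formula. I do not anticipate a genuine obstacle here; the only points requiring care are that the subgradient $v$ lives at $Y^+$ (not $Y$), and that in the polarization step the linear cross term must be paired against $C_U(Y^+) - b_U$ rather than $C_U(Y) - b_U$, since it is the optimality condition at $Y^+$ that is available.
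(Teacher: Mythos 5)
Your proposal is correct and follows essentially the same route as the paper's proof: expand the difference of augmented Lagrangians, apply the polarization identity to the quadratic terms, collect the cross term with the multiplier term via the adjoint $C_U^T$, and invoke the first-order condition of \Cref{subgradgeneric} at $Y^+$ to identify $v \in \partial f_U(Y^+)$. If anything, your explicit bookkeeping of the sign of $v$ is slightly cleaner than the paper's, which states the membership as $v \in -\partial f_U(Y^+)$ before writing the same final formula.
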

\begin{proof}
Expanding $\Lag(U,Y,W)-\Lag(U,Y^+,W)$, the change is equal to
\begin{align}
\nonumber &f_U(Y) - f_U(Y^+) + \la W, C_U(Y) - C_U(Y^+)\ra + \frac{\rho}{2}(\|C_U(Y) - b_U\|^2 - \|C_U(Y^+) - b_U\|^2) \\
\label{eq:linnecessary} &= f_U(Y) - f_U(Y^+) + \la W, C_U(Y) - C_U(Y^+)\ra \\
\nonumber &\bump + \rho \la C_U(Y) - C_U(Y^+), C_U(Y^+) - b_U\ra +  \frac{\rho}{2}\|C_U(Y) - C_U(Y^+)\|^2.
\end{align}
To derive \eqref{eq:linnecessary}, we use the identity $\|Q - P\|^2 - \|R - P\|^2 = \|Q - R\|^2 + 2\la Q - R, R - P\ra$ which holds for any elements $P,Q,R$ of an inner product space. Next, observe that 
\begin{align*}
&\la W, C_U(Y) - C_U(Y^+)\ra +  \rho \la C_U(Y) - C_U(Y^+), C_U(Y^+) - b_U\ra \\
& = \la C_U(Y) - C_U(Y^+), W + \rho(C_U(Y^+) - b_U) \ra \\
&= \la Y - Y^+, C_U^T(W + \rho(C_U(Y^+) - b_U)) \ra
\end{align*}
From \Cref{subgradgeneric}, $v = C_U^TW + \rho C_U^T(C_U(Y^+) - b_U)) \in -\partial f_U(Y^+)$. Hence
$$\Lag(U,Y,W) - \Lag(U,Y^+,W) = f(Y) - f(Y^+) - \la v, Y - Y^+ \ra + \frac{\rho}{2}\|C_U(Y) - C_U(Y^+)\|^2.$$
\end{proof}

\begin{rmk}\label{whymulti}
The proof of \Cref{decgeneric} provides a hint as to why ADMM can be extended naturally to multiaffine constraints, but not to arbitrary nonlinear constraints. When $C(U,Y) = 0$ is a general nonlinear system, we cannot manipulate the difference of squares \eqref{eq:linnecessary} to arrive at the first-order condition for $Y^+$, which uses the crucial fact $\nabla_Y C(U,Y) = C_U$.
\end{rmk}

\begin{rmk}\label{proxsimpler}
If we introduce a proximal term $\|Y - Y^k\|_S^2$, the change in the augmented Lagrangian satisfies $\Lag(U,Y,W) - \Lag(U,Y^+,W) \geq \|Y - Y^+\|_S^2$, regardless of the properties of $f$ and $C$\footnote{To see this, define the prox-Lagrangian $\Lag^P(U,Y,W,O) = \Lag(U,Y,W) + \|Y-O\|_S^2$. By definition, $Y^+$ decreases the prox-Lagrangian, so $\Lag^P(U,Y^+,W,Y^k) \leq \Lag^P(U,Y^k,W,Y^k) = \Lag(U,Y,W)$ and the desired result follows.}. This is usually stronger than \Cref{decgeneric}. Hence, one can generally obtain convergence of proximal ADMM under weaker assumptions than ADMM.
\end{rmk}


Our next lemma shows a useful characterization of $Y^+$.

\begin{lma}\label{ymingeneric}
It holds that $Y^+ = \opn{argmin}_Y \{ f_U(Y) : C_U(Y) = C_U(Y^+)\}$. 
\end{lma}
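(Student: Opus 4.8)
The plan is to exploit the fact that the last two terms of the augmented Lagrangian depend on $Y$ only through the value $C_U(Y)$, so they are constant on each level set of $C_U$. First I would recall that, by the ADMM update in \Cref{alg:admm} together with \Cref{generalasm}(a), the point $Y^+$ is an \emph{actual} (global) minimizer of the subproblem $\min_Y \Lag(U,Y,W)$, where, since $C$ is multiaffine, $\Lag(U,Y,W) = f_U(Y) + \la W, C_U(Y) - b_U\ra + \frac{\rho}{2}\|C_U(Y) - b_U\|^2$.

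Next I would introduce the affine set $\CC := \{Y : C_U(Y) = C_U(Y^+)\}$, which is nonempty because $Y^+ \in \CC$. On $\CC$ the quantity $\la W, C_U(Y) - b_U\ra + \frac{\rho}{2}\|C_U(Y) - b_U\|^2$ equals the constant $c := \la W, C_U(Y^+) - b_U\ra + \frac{\rho}{2}\|C_U(Y^+) - b_U\|^2$, so that $\Lag(U,Y,W) = f_U(Y) + c$ for every $Y \in \CC$. Finally, since $Y^+$ minimizes $\Lag(U,\cdot,W)$ over all $Y$, it in particular minimizes $\Lag(U,\cdot,W)$ over the subset $\CC$; combining this with the identity $\Lag(U,Y,W) = f_U(Y) + c$ on $\CC$ shows that $Y^+$ minimizes $f_U$ over $\CC$, which is the claim. (Equivalently: if some $Y' \in \CC$ had $f_U(Y') < f_U(Y^+)$, then $\Lag(U,Y',W) = f_U(Y') + c < f_U(Y^+) + c = \Lag(U,Y^+,W)$, contradicting optimality of $Y^+$.)

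There is essentially no hard step here; the only point requiring care is that the ADMM subproblem is solved to global optimality rather than merely to first-order stationarity, which is exactly what \Cref{generalasm}(a) guarantees. I would also note in passing that multiaffinity of $C$ is what makes this work: it is precisely the fact that $Y \mapsto C(U,Y)$ is affine (so $C(U,Y) = C_U(Y) - b_U$) that lets us write the penalty and multiplier terms as a function of $C_U(Y)$ alone.
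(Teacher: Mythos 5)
Your argument is correct and is essentially identical to the paper's proof: both rest on the observation that the multiplier and penalty terms of $\Lag(U,\cdot,W)$ are constant on the level set $\{Y : C_U(Y) = C_U(Y^+)\}$, so global optimality of $Y^+$ for the subproblem yields $f_U(Y^+) \leq f_U(Y)$ on that set. Nothing is missing.
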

\begin{proof}
For any two points $Y_1$ and $Y_2$ with $C_U(Y_1) = C_U(Y_2)$, it follows that $\Lag(U,Y_1,W) - \Lag(U,Y_2,W) = f_U(Y_1) - f_U(Y_2)$. Hence $Y^+$, the minimizer of $Y \mapsto \Lag(U,Y,W)$ with $U$ and $W$ fixed, must satisfy $f_U(Y^+) \leq f_U(Y)$ for all $Y$ with $C_U(Y) = C_U(Y^+)$. That is, $Y^+ = \opn{argmin}_Y \{ f_U(Y) : C_U(Y) = C_U(Y^+)\}$.
\end{proof}

We now show conditions under which the sequence of computed augmented Lagrangian values is bounded below.

\begin{lma}\label{lagbdgeneric}
Suppose that $Y$ represents the final block of primal variables updated in an ADMM iteration and that $f$ is bounded below on the feasible region. Consider the following condition:
\begin{cond}\label{ideal}
The following two statements hold true.
\begin{enumerate}
\item $Y$ can be partitioned\footnote{To motivate the sub-blocks $(Y_0, Y_1)$ in \Cref{ideal}, one should look to the decomposition of $\psi(\ZZZ)$ in \Cref{baseasm}, where we can take $Y_0 = \{Z_0\}$ and $Y_1 = \ZZ$. Intuitively, $Y_1$ is a sub-block such that $\psi$ is a smooth function of $Y_1$, and which is `absorbing' in the sense that for any $U^+$ and $Y_0^+$, there exists $Y_1$ making the solution feasible.} into sub-blocks $Y = (Y_0, Y_1)$ such that there exists a constant $M_Y$ such that, for any $U$, $Y_0$, $Y_1$, $Y_1'$, and $v \in \partial f_U(Y_0, Y_1)$,
$$f_U(Y_0, Y_1') - f_U(Y_0, Y_1) - \la v, (Y_0, Y_1') - (Y_0, Y_1)\ra \leq \frac{M_Y}{2} \|Y_1' - Y_1\|^2.$$
			
\item\label{hatbound} There exists a constant $\zeta$ such that for every $U^+$ and $Y^+$ produced by ADMM\footnote{\ref{hatbound} is assumed to hold for the iterates $U^+$ and $Y^+$ generated by ADMM as the minimal required condition, but one should not, in general, think of this property as being specifically related to the iterates of the algorithm. In the cases we consider, it will be a property of the function $f$ and the constraint $C$ that for \emph{any} point $(\wt{U}, \wt{Y})$, there exists $\wh{Y}_1 \in \opn{argmin}_{Y_1} \{ f_{\wt{U}}(\wt{Y}_0,Y_1) : C_{\wt{U}}(\wt{Y}_0, Y_1) = b_{\wt{U}}\}$ such that $\|\wh{Y}_1 - \wt{Y}_1\|^2 \leq \zeta \|C_{\wt{U}}(Y^+) - b_{\wt{U}}\|^2$.}, we can find a solution
$$\wh{Y}_1 \in \opn{argmin}_{Y_1} \{ f_{U^+}(Y_0^+,Y_1) : C_{U^+}(Y_0^+, Y_1) = b_{U^+}\} \footnote{To clarify the definition of $\wh{Y}_1$, the sub-block for $Y_0$ is fixed to the value of $Y_0^+$ on the given iteration, and then $\wh{Y}_1$ is obtained by minimizing $f_{U^+}(Y_0^+,Y_1)$ for the $Y_1$ sub-block over the feasible region $C_{U^+}(Y_0^+, Y_1) = b_{U^+}$.}$$
satisfying $\|\wh{Y}_1 - Y^+_1\|^2 \leq \zeta \|C_{U^+}(Y^+) - b_{U^+}\|^2$.
\end{enumerate}
\end{cond}
If \Cref{ideal} holds, then there exists $\rho$ sufficiently large such that the sequence $\{\ux{\Lag}{k}\}_{k=0}^\infty$ is bounded below.
\end{lma}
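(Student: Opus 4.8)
The plan is to prove the stronger statement that $\ux{\Lag}{k} \ge \ud{f}$ for every $k \ge 1$, where $\ud{f} := \inf\{f(\UU) : C(\UU) = 0\}$ is finite by hypothesis; combined with the finite initial value $\ux{\Lag}{0}$ this gives the lower bound. I would work within a single iteration, writing $\delta := C(\UU^+)$ and recalling that the dual step gives $W^+ = W + \rho\delta$. Because $Y$ is the \emph{last} primal block updated, \Cref{subgradgeneric} applied to the $Y$-subproblem (all other blocks already at $U^+$, multiplier $W$) furnishes a subgradient $v \in \partial f_{U^+}(Y^+)$ with
$$v = -C_{U^+}^T W - \rho C_{U^+}^T\bigl(C_{U^+}(Y^+) - b_{U^+}\bigr) = -C_{U^+}^T W^+ ,$$
using $C_{U^+}(Y^+) - b_{U^+} = \delta$.

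Next I would bring in the feasible companion point $\wh{Y} := (Y_0^+, \wh{Y}_1)$, where $\wh{Y}_1$ is supplied by \Cref{ideal}(2). Since $C_{U^+}(\wh{Y}) = b_{U^+}$, the point $(U^+, \wh{Y})$ is feasible for the original problem, hence $f_{U^+}(\wh{Y}) \ge \ud{f}$. Applying the descent-type inequality of \Cref{ideal}(1) with \emph{this particular} subgradient $v$, taking $Y_1 = Y_1^+$ and $Y_1' = \wh{Y}_1$, and rearranging yields
$$f_{U^+}(Y^+) \ge f_{U^+}(\wh{Y}) - \la v, (0,\wh{Y}_1 - Y_1^+)\ra - \frac{M_Y}{2}\|\wh{Y}_1 - Y_1^+\|^2 .$$
The decisive step is the cross term: since $C_{U^+}$ is linear with $C_{U^+}(\wh{Y}) = b_{U^+}$ while $C_{U^+}(Y^+) = b_{U^+} + \delta$, the displacement obeys $C_{U^+}(0,\wh{Y}_1 - Y_1^+) = -\delta$, so $\la v, (0,\wh{Y}_1 - Y_1^+)\ra = -\la W^+, C_{U^+}(0,\wh{Y}_1-Y_1^+)\ra = \la W^+, \delta\ra$. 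Substituting this into the expansion $\ux{\Lag}{k+1} = f_{U^+}(Y^+) + \la W^+, \delta\ra + \frac{\rho}{2}\|\delta\|^2$, the two copies of $\la W^+, \delta\ra$ cancel; then invoking $\|\wh{Y}_1 - Y_1^+\|^2 \le \zeta\|\delta\|^2$ from \Cref{ideal}(2) leaves $\ux{\Lag}{k+1} \ge \ud{f} + \frac{\rho - M_Y\zeta}{2}\|\delta\|^2$. Choosing $\rho > M_Y\zeta$ makes the residual term nonnegative, giving $\ux{\Lag}{k+1} \ge \ud{f}$ and finishing the argument.

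The main obstacle is conceptual rather than computational: recognizing that the sign-indefinite cross term $\la W^+, C(\UU^+)\ra$ in the augmented Lagrangian is exactly annihilated once $f_{U^+}(Y^+)$ is compared, through the first-order condition for $Y^+$, against a genuinely \emph{feasible} point, and that the leftover first-order slack is quadratic in the residual $\delta$ and hence absorbed by the penalty once $\rho > M_Y\zeta$. The care points are (i) applying \Cref{ideal}(1) with the specific subgradient $v = -C_{U^+}^T W^+$ coming from optimality rather than an arbitrary one, and (ii) the identity $C_{U^+}(0,\wh{Y}_1 - Y_1^+) = -\delta$, which is precisely what triggers the cancellation.
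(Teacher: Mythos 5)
Your proposal is correct and follows essentially the same route as the paper's proof: identify the subgradient $-C_{U^+}^TW^+ \in \partial f_{U^+}(Y^+)$ from the first-order condition for the final block, compare $Y^+$ against the feasible companion point $(Y_0^+,\wh{Y}_1)$ so that the cross term $\la W^+, C(\UU^+)\ra$ cancels, and absorb the quadratic slack via \Cref{ideal}(2) once $\rho \geq M_Y\zeta$. The only cosmetic difference is that you bound $\ux{\Lag}{k}$ below by the infimum of $f$ on the feasible region, whereas the paper uses an arbitrary lower bound $\nu$; the argument is otherwise identical.
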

\begin{proof}
Suppose that \Cref{ideal} holds. We proceed to bound the value of $\Lag^+$ by relating $Y^+$ to the solution $(Y_0^+, \wh{Y}_1)$. Since $f$ is bounded below on the feasible region and $(U^+, Y_0^+, \wh{Y}_1)$ is feasible by construction, it follows that $f(U^+,Y_0^+,\wh{Y}_1) \geq \nu$ for some $\nu > -\infty$. Subtracting $0 = \la W^+, C_{U^+}(Y_0^+, \wh{Y}_1) - b_{U^+} \ra$ from $\Lag^+$ yields
\begin{align}\label{eq:lagpluszero}
\Lag^+ &= f_{U^+}(Y^+) + \la W^+, C_{U^+}(Y^+ - (Y_0^+,\wh{Y}_1))\ra + \frac{\rho}{2}\|C_{U^+}(Y^+) - b_{U^+}\|^2.
\end{align}
Since $Y$ is the \emph{final} block before updating $W$, all other variables have been updated to $U^+$, and \Cref{subgradgeneric} implies that the first-order condition satisfied by $Y^+$ is
$$0 \in \partial f_{U^+}(Y^+) + C_{U^+}^TW + \rho C_{U^+}^T(C_{U^+}(Y^+) - b_{U^+}) = \partial f_{U^+}(Y^+) + C_{U^+}^TW^+.$$
Hence $v = C_{U^+}^TW^+ \in -\partial f_{U^+}(Y^+)$. Substituting this into \eqref{eq:lagpluszero}, we have
$$\Lag^+ = f_{U^+}(Y^+) + \la v, Y^+ - (Y_0^+, \wh{Y}_1) \ra + \frac{\rho}{2}\|C_{U^+}(Y^+) - b_{U^+}\|^2.$$
Adding and subtracting $f_{U^+}(Y_0^+,\wh{Y}_1)$ yields
\begin{align*}
\Lag^+ &= f_{U^+}(Y_0^+,\wh{Y}_1) + \frac{\rho}{2}\|C_{U^+}(Y^+) - b_{U^+}\|^2 \\
&\bump - (f_{U^+}(Y_0^+,\wh{Y}_1) - f_{U^+}(Y^+) - \la -v, (Y_0^+, \wh{Y}_1) - Y^+\ra).
\end{align*}
Since $Y^+ = (Y_0^+, Y_1^+)$ and $-v \in \partial f_{U^+}(Y^+)$, \Cref{ideal} implies that
$$f_{U^+}(Y_0^+, \wh{Y}_1) - f_{U^+}(Y^+) - \la -v, (Y_0^+, \wh{Y}_1) - Y^+\ra \leq \frac{M_Y}{2} \|\wh{Y}_1\ - Y_1^+\|^2.$$
Hence, we have
\begin{align}
\nonumber \ux{\Lag}{+} &\geq f_{U^+}(Y_0^+,\wh{Y}_1)  + \frac{\rho}{2}\|C_{U^+}(Y^+) - b_{U^+}\|^2 - \frac{M_Y}{2} \|\wh{Y}_1 - Y_1^+\|^2 \\
\label{eq:lag_above} &\geq f_{U^+}(Y_0^+,\wh{Y}_1) + \left( \frac{\rho - M_Y\zeta}{2}\right) \|C_{U^+}(Y^+) - b_{U^+}\|^2.\
\end{align}
It follows that if $\rho \geq M_Y\zeta$, then $\ux{\Lag}{k} \geq \nu$ for all $k \geq 1$.
\end{proof}

The following useful corollary is an immediate consequence of the final inequalities in the proof of the previous lemma.


\begin{cor}\label{varbdgeneric}
Recall the notation from \Cref{lagbdgeneric}. Suppose that $f(U,Y)$ is coercive on the feasible region, \Cref{ideal} holds, and $\rho$ is chosen sufficiently large so that $\{\ux{\Lag}{k}\}$ is bounded above and below. Then $\{\ux{U}{k}\}$ and $\{\ux{Y}{k}\}$ are bounded. 
\end{cor}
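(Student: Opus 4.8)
The plan is to read off what we need directly from the chain of inequalities at the end of the proof of \Cref{lagbdgeneric}. Recall that for $\rho > M_Y\zeta$ (which holds once $\rho$ is large enough to make $\{\ux{\Lag}{k}\}$ bounded below), inequality \eqref{eq:lag_above} gives, for every $k$,
$$\ux{\Lag}{k+1} \;\geq\; f\bigl(\ux{U}{k+1}, \ux{Y}{k+1}_0, \wh{Y}_1\bigr) + \left(\frac{\rho - M_Y\zeta}{2}\right)\bigl\|C_{\ux{U}{k+1}}(\ux{Y}{k+1}) - b_{\ux{U}{k+1}}\bigr\|^2,$$
where $\wh{Y}_1 = \wh{Y}_1^{\,k+1}$ is the auxiliary point furnished by \Cref{ideal}, and $(\ux{U}{k+1}, \ux{Y}{k+1}_0, \wh{Y}_1)$ is \emph{feasible} by its construction. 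Set $B := \sup_k \ux{\Lag}{k} < \infty$ (finite by hypothesis) and $\nu := \inf_\Omega f$, which is finite since $f$ is coercive and lower semicontinuous on the closed set $\Omega$.

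First I would extract two uniform bounds from this inequality together with $\ux{\Lag}{k+1} \leq B$. Dropping the nonnegative quadratic term yields $f(\ux{U}{k+1}, \ux{Y}{k+1}_0, \wh{Y}_1) \leq B$ for all $k$; and since feasibility of $(\ux{U}{k+1}, \ux{Y}{k+1}_0, \wh{Y}_1)$ gives $f \geq \nu$ there, rearranging yields $\|C_{\ux{U}{k+1}}(\ux{Y}{k+1}) - b_{\ux{U}{k+1}}\|^2 \leq 2(B-\nu)/(\rho - M_Y\zeta)$, a bound independent of $k$.

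Next I would invoke coercivity. The sequence $\{(\ux{U}{k+1}, \ux{Y}{k+1}_0, \wh{Y}_1^{\,k+1})\}_k$ lies entirely in $\Omega$ and has $f$-values $\leq B$; were it unbounded, a subsequence would have norm tending to infinity, forcing $f$ along it to $\infty$ by coercivity, contradicting the bound $B$. Hence this sequence is bounded, and in particular $\{\ux{U}{k}\}$, $\{\ux{Y}{k}_0\}$, and $\{\wh{Y}_1^{\,k}\}$ are bounded for $k \geq 1$ (the iterate at $k=0$ is the fixed initial point). Finally, to bound $\{\ux{Y}{k}_1\}$ I would use part (2) of \Cref{ideal}, namely $\|\wh{Y}_1^{\,k+1} - \ux{Y}{k+1}_1\|^2 \leq \zeta \|C_{\ux{U}{k+1}}(\ux{Y}{k+1}) - b_{\ux{U}{k+1}}\|^2$, whose right-hand side we have just bounded uniformly; since $\{\wh{Y}_1^{\,k}\}$ is bounded, so is $\{\ux{Y}{k}_1\}$, and therefore $\{\ux{Y}{k}\} = \{(\ux{Y}{k}_0, \ux{Y}{k}_1)\}$. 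This completes the argument.

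There is no real obstacle here; the corollary is bookkeeping built on \eqref{eq:lag_above}. The only point deserving care is that coercivity cannot be applied to the ADMM iterate $(\ux{U}{k+1}, \ux{Y}{k+1})$ itself, which need not be feasible, but rather to the feasible surrogate $(\ux{U}{k+1}, \ux{Y}{k+1}_0, \wh{Y}_1^{\,k+1})$; boundedness of $\ux{Y}{k+1}_1$ is then recovered from the proximity of $\ux{Y}{k+1}_1$ to $\wh{Y}_1^{\,k+1}$ guaranteed by \Cref{ideal}.
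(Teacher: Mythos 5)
Your proposal is correct and follows essentially the same route as the paper: extract from \eqref{eq:lag_above} uniform upper bounds on $f(\ux{U}{k},\ux{Y}{k}_0,\ux{\wh{Y}}{k}_1)$ and on the constraint residual, apply coercivity to the feasible surrogate point $(\ux{U}{k},\ux{Y}{k}_0,\ux{\wh{Y}}{k}_1)$ rather than to the (possibly infeasible) iterate, and then recover boundedness of $\{\ux{Y}{k}_1\}$ from \Cref{ideal}(2) and the bounded residual. The subtlety you flag at the end is precisely the one the paper's proof handles the same way.
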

\begin{proof}
Under the given conditions, $\{\ux{\Lag}{k}\}$ is monotonically decreasing and it can be seen from \eqref{eq:lag_above} that $\{f(\ux{U}{k}, \ux{Y}{k}_0, \ux{\wh{Y}}{k}_1)\}$ and $\{\|C_{\ux{U}{k}}(\ux{Y}{k}) - b_{\ux{U}{k}}\|^2\}$ are bounded above. Since $f$ is coercive on the feasible region, and $(\ux{U}{k}, \ux{Y}{k}_0, \ux{\wh{Y}}{k}_1)$ is feasible by construction, this implies that $\{\ux{U}{k}\}, \{\ux{Y}{k}_0\}$, and $\{\ux{\wh{Y}}{k}_1\}$ are bounded. It only remains to show that the `true' sub-block $\{ \ux{Y}{k}_1\}$ is bounded. From \Cref{ideal}, there exists $\zeta$ with $\|\ux{\wh{Y}}{k}_1 - \ux{Y}{k}_1\|^2 \leq \zeta \|C_{\ux{U}{k}}(\ux{Y}{k}) - b_{\ux{U}{k}}\|^2$. \eqref{eq:lag_above} also implies that $\{ \|C_{\ux{U}{k}}(\ux{Y}{k}) - b_{\ux{U}{k}}\|^2\}$ is bounded. Hence $\{\ux{Y}{k}_1\}$ is also bounded.
\end{proof}

\subsection{Separable Objective and Multiaffine Constraints}
Now, in addition to \Cref{generalasm}, we require that $C(U_0,\ldots,U_n)$ is multiaffine, and that \Cref{weakcoupling} holds. Most of the results in this section can be obtained from the corresponding results in \Cref{sec:genobjcs}; however, since we will extensively use these results in \Cref{sec:convergence}, it is useful to see their specific form when $C$ is multiaffine.

Again, let $Y = U_j$ be a particular variable of focus, and $U$ the remaining variables. Since $f$ is separable, minimizing $f_U(Y)$ is equivalent to minimizing $f_j(Y)$. Hence, writing $f_y$ for $f_j$, we have
$$\partial_Y \Lag(U,Y,W) = \partial f_y(Y) + \nabla_Y F(U,Y) + C_U^TW + \rho C_U^T(C_U(Y) - b_U)$$
and $Y^+$ satisfies the first-order condition $0 \in \partial f_y(Y^+) + \nabla_Y F(U,Y^+)+ C_U^TW + \rho C_U^T(C_U(Y^+) - b_U)$. The crucial property is that $\partial f_y(Y)$ depends only on $Y$.

\begin{cor}\label{innerblockgeneric}
Suppose that $Y$ is a block of variables in ADMM, and let $U_<, U_>$ be the variables that are updated before and after $Y$, respectively. During an iteration of ADMM, let $C_<(Y) = b_<$ denote the constraint $C(U_<^+,Y,U_>) = b_<$ as a linear function of $Y$, after updating the variables $U_<$, and let $C_>(Y) = b_>$ denote the constraint $C(U_<^+,Y,U_>^+) = b_>$. Then the general subgradient $\partial_Y \Lag(U_<^+,Y^+,U_>^+,W^+)$ at the final point contains
\begin{align*}
&(C_>^T - C_<^T)W^+ + C_<^T(W^+ - W) + \rho (C_>^T - C_<^T)(C_>(Y^+) - b_>) \\
&\bump + \rho C_<^T(C_>(Y^+) - b_> - (C_<(Y^+) - b_<)) \\
&\bump + \nabla_Y F(U_<^+,Y^+,U_>^+) - \nabla_Y F(U_<^+,Y^+,U_>)
\end{align*}
In particular, if $Y$ is the final block, then $C_<^T(W^+ - W) \in \partial_Y \Lag(U_<^+,Y^+,W^+)$.
\end{cor}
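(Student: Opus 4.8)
The plan is to combine two expressions for $\partial_Y\Lag$ with respect to the block $Y$: the first-order condition that $Y^+$ inherits from its own ADMM subproblem, and the formula for $\partial_Y\Lag$ evaluated at the end of the iteration, transporting the nonsmooth term $\partial f_y(Y^+)$ from one to the other. The transport is legitimate precisely because $f$ is separable (\Cref{weakcoupling}), so $\partial f_y$ depends on $Y$ alone and not on the coupling variables or the multiplier.

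First I would record the two ingredients. By \Cref{subgradgeneric} and the separability of $f$ (the first-order condition displayed just before this corollary), the minimization of $\Lag(U_<^+,Y,U_>,W)$ over $Y$ that produces $Y^+$ gives
$$0 \in \partial f_y(Y^+) + \nabla_Y F(U_<^+,Y^+,U_>) + C_<^TW + \rho C_<^T(C_<(Y^+)-b_<),$$
so that $\xi := -\nabla_Y F(U_<^+,Y^+,U_>) - C_<^TW - \rho C_<^T(C_<(Y^+)-b_<)$ lies in $\partial f_y(Y^+)$. Applying \Cref{subgradgeneric} a second time, now at the final iterate $(U_<^+,Y^+,U_>^+,W^+)$ with the constraint read as $C_>(Y)=b_>$, gives
$$\partial_Y\Lag(U_<^+,Y^+,U_>^+,W^+) = \partial f_y(Y^+) + \nabla_Y F(U_<^+,Y^+,U_>^+) + C_>^TW^+ + \rho C_>^T(C_>(Y^+)-b_>).$$

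Then I would substitute $\xi\in\partial f_y(Y^+)$ into the second display, which shows that
$$\nabla_Y F(U_<^+,Y^+,U_>^+) - \nabla_Y F(U_<^+,Y^+,U_>) + C_>^TW^+ - C_<^TW + \rho\bigl(C_>^T(C_>(Y^+)-b_>) - C_<^T(C_<(Y^+)-b_<)\bigr)$$
belongs to $\partial_Y\Lag(U_<^+,Y^+,U_>^+,W^+)$. It remains to regroup: writing $C_>^TW^+ - C_<^TW = (C_>^T-C_<^T)W^+ + C_<^T(W^+-W)$ and, after inserting $\pm\,\rho C_<^T(C_>(Y^+)-b_>)$, splitting the last term as $\rho(C_>^T-C_<^T)(C_>(Y^+)-b_>) + \rho C_<^T\bigl((C_>(Y^+)-b_>) - (C_<(Y^+)-b_<)\bigr)$, one obtains exactly the claimed expression. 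For the ``in particular'' clause, when $Y$ is the final primal block there is no $U_>$, hence $U_>=U_>^+$, $C_<=C_>$, and $b_<=b_>$; every term above except $C_<^T(W^+-W)$ vanishes, so $C_<^T(W^+-W)\in\partial_Y\Lag(U_<^+,Y^+,W^+)$.

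This argument is largely bookkeeping, so I do not anticipate a genuine obstacle; the two points demanding care are tracking which variables carry the $k$-th versus the $(k{+}1)$-st value at each stage (the constraints $C_<(Y)=b_<$ and $C_>(Y)=b_>$ are linearizations at different sets of frozen variables), and the transport of $\partial f_y(Y^+)$ between the two displays, which is the one place where \Cref{weakcoupling} is indispensable — without separability, $\partial f_U(Y^+)$ would depend on $U$ and $W$ and could not be carried from the subproblem condition to the end-of-iteration subgradient.
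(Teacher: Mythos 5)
Your proposal is correct and matches the paper's own argument: the paper likewise applies \Cref{subgradgeneric} twice (to the first-order condition at the $Y$-update and to the subgradient at the end of the iteration), substitutes the resulting element of $\partial f_y(Y^+)$ — valid precisely because separability makes $\partial f_y$ depend only on $Y$ — and regroups with the same two identities for $C_>^TW^+ - C_<^TW$ and the $\rho$-terms, with the same specialization when $Y$ is the final block. The only cosmetic difference is that the paper also notes the result follows from \Cref{generalinnerblock} before presenting this identical calculation.
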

\begin{proof}
This is an application of \Cref{generalinnerblock}. Since we will use this special case extensively in \Cref{sec:convergence}, we also show the calculation. By \Cref{subgradgeneric}
$$\partial_Y \Lag(U_<^+,Y^+,U_>^+,W^+) = \partial f_y(Y^+) + \nabla_Y F(U_<^+,Y^+,U_>^+) + C_>^TW^+ + \rho C_>^T(C_>(Y^+) - b_>)$$
By \Cref{subgradgeneric}, $-(\nabla_Y F(U_<^+,Y^+,U_>) + C_<^TW + \rho C_<^T(C_<(Y^+) - b_<)) \in \partial f_y(Y^+)$. 
To obtain the result, write $C_>^TW^+ - C_<^TW = (C_>^T - C_<^T)W^+ + C_<^T(W^+ - W)$ and
\begin{align*}
C_>^T(C_>(Y^+) - b_>) - C_<^T(C_<(Y^+) - b_<) &= (C_>^T - C_<^T)(C_>(Y^+) - b_>) \\
&\bump + C_<^T(C_>(Y^+) - b_> - (C_<(Y^+) - b_<)).
\end{align*}
\end{proof}

\begin{lma}\label{limitfocconditionsgeneric}
Recall the notation from \Cref{innerblockgeneric}. Suppose that
\begin{enumerate}
\item\label{item:lastvar} $\|W - W^+\| \rightarrow 0$,
\item $\|C_> - C_<\| \rightarrow 0$,
\item $\|b_> - b_<\| \rightarrow 0$, and
\item $\{\ux{W}{k}\}$, $\{ \ux{Y}{k}\}$, $\{\ux{C}{k}_<\}$, $\{\ux{C}{k}_>(Y^+) - b_>\}$ are bounded, and
\item $\|U_>^+ - U_>\| \rightarrow 0$.
\end{enumerate}
Then there exists a sequence $\ux{v}{k} \in \partial_Y \ux{\Lag}{k}$ with $\ux{v}{k} \rightarrow 0$. In particular, if $Y$ is the \emph{final} block, then only condition~\ref{item:lastvar} and the boundedness of $\{\ux{C}{k}_<\}$ are needed.
\end{lma}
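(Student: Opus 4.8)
The plan is to take the explicit subgradient element supplied by \Cref{innerblockgeneric} and show it vanishes in the limit; everything then reduces to a handful of norm estimates. Concretely, on iteration $k$ \Cref{innerblockgeneric} exhibits a vector lying in $\partial_Y \Lag(\ux{\UU}{k+1},\ux{W}{k+1})$, i.e.\ in $\partial_Y \ux{\Lag}{k+1}$, namely the sum of the five quantities
\begin{gather*}
(C_>^T - C_<^T)W^+,\qquad C_<^T(W^+ - W),\qquad \rho(C_>^T - C_<^T)(C_>(Y^+) - b_>),\\
\rho\, C_<^T\big((C_>(Y^+) - b_>) - (C_<(Y^+) - b_<)\big),\qquad \nabla_Y F(U_<^+,Y^+,U_>^+) - \nabla_Y F(U_<^+,Y^+,U_>).
\end{gather*}
Call this sum $\ux{v}{k+1}$, so that $\ux{v}{k} \in \partial_Y \ux{\Lag}{k}$ for every $k \geq 1$; it then suffices to prove $\ux{v}{k} \to 0$. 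I would do this by bounding each of the five quantities separately, using only submultiplicativity of the operator norm ($\|A^T\|_{op} = \|A\|_{op}$ and $\|A^T x\| \le \|A\|_{op}\|x\|$) together with hypotheses (1)--(5).

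For the first and third quantities, which carry the factor $C_> - C_<$: hypothesis (2) gives $\|C_>^T - C_<^T\|_{op} \to 0$, while the companion factors $W^+$ and $C_>(Y^+) - b_>$ are bounded by (4), so both tend to $0$. The second quantity has norm at most $\|\ux{C}{k}_<\|_{op}\,\|W^+ - W\|$, which tends to $0$ by the boundedness of $\{\ux{C}{k}_<\}$ in (4) and by (1). For the fourth quantity, I would first rewrite the bracketed term as $(C_> - C_<)(Y^+) - (b_> - b_<)$, whose norm is at most $\|C_> - C_<\|_{op}\,\|Y^+\| + \|b_> - b_<\|$ and hence tends to $0$ by (2), the boundedness of $\{\ux{Y}{k}\}$ in (4), and (3); multiplying by the bounded operator $\rho\,C_<^T$ preserves this.

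The fifth quantity is a difference of $\nabla_Y F$ evaluated at two points that agree in every coordinate except $U_>$, where they differ by $\ux{U}{k+1}_> - \ux{U}{k}_> \to 0$ by (5). Since $F$ is smooth, $\nabla_Y F$ is continuous, hence uniformly continuous on bounded sets, so — using that the iterates remain in a bounded region — this difference also tends to $0$. Summing the five estimates gives $\ux{v}{k} \to 0$. For the ``in particular'' statement, when $Y$ is the final block there is no $U_>$, so $C_> = C_<$ and $b_> = b_<$: the first, third, and fourth quantities vanish and the two $\nabla_Y F$ terms cancel, leaving only $C_<^T(W^+ - W) \in \partial_Y \ux{\Lag}{k+1}$ from \Cref{innerblockgeneric}, which tends to $0$ using just (1) and the boundedness of $\{\ux{C}{k}_<\}$.

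I do not expect a genuine obstacle here: the only delicate point is the $\nabla_Y F$ difference, which is handled by continuity of $\nabla F$ together with the boundedness of the relevant iterates, while the other four estimates are mechanical consequences of the triangle inequality and submultiplicativity of the operator norm. The real work of the lemma has already been done in establishing \Cref{innerblockgeneric}; this statement merely packages the hypotheses under which its subgradient element provably converges to zero along the full sequence rather than only along subsequences.
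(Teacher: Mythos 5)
Your proposal is correct and follows essentially the same route as the paper, which simply states that the triangle inequality and the continuity of $\nabla_Y F$ applied to the subgradient element from \Cref{innerblockgeneric} yield the result; you have merely written out the term-by-term operator-norm estimates that the paper leaves implicit. Your handling of the final-block case also matches the paper, since \Cref{innerblockgeneric} already records that only $C_<^T(W^+ - W)$ survives in that situation.
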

\begin{proof}
If the given conditions hold, then the triangle inequality and the continuity of $\nabla_YF$ show that the subgradients identified in \Cref{innerblockgeneric} converge to 0.
\end{proof}

The previous results have focused on a single block $Y$, and the resulting equations $C_U(Y) = b_U$. Let us now relate $C_U, b_U$ to the full constraints. Suppose that we have variables $U_0,\ldots,U_n, Y$ (not necessarily listed in update order), and the constraint $C(U_0,\ldots,U_n,Y) = 0$ is multiaffine. Using the decomposition \eqref{multiaffinestruc} and the notation $\theta_j(U)$ from \Cref{linearterm}, we express $C_U$ and $b_U$ as
\begin{equation}\label{decomp3}
C_U = \sum_{j=1}^{m_1} \theta_j(U), \qquad b_U = -( B + \sum_{j=m_1+1}^m \theta_j(U)).
\end{equation}

This allows us to verify the conditions of \Cref{limitfocconditionsgeneric} when certain variables are known to converge.

\begin{lma}\label{constraintconvergegeneric}
Adopting the notation from \Cref{innerblockgeneric}, assume that $\{ \ux{U}{k}_<\}$, $\{\ux{Y}{k}\}$, $\{\ux{U}{k}_>\}$ are bounded, and that $\|U_>^+ - U_>\| \rightarrow 0$. Then $\|C_> - C_<\| \rightarrow 0$ and $\|b_> - b_<\| \rightarrow 0$.
\end{lma}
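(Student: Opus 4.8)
The plan is to express everything in terms of the multilinear decomposition \eqref{decomp3}. Recall that $C_<$ corresponds to the constraint $C(U_<^+, Y, U_>) = b_<$ (i.e.\ $U_>$ not yet updated), while $C_>$ corresponds to $C(U_<^+, Y, U_>^+) = b_>$ (i.e.\ $U_>$ updated to $U_>^+$). Writing $U$ for the full tuple $(U_<^+, U_>)$ and $U'$ for $(U_<^+, U_>^+)$, we have by \eqref{decomp3} that $C_< = \sum_{j=1}^{m_1}\theta_j(U)$ and $C_> = \sum_{j=1}^{m_1}\theta_j(U')$, and similarly $b_< = -(B + \sum_{j=m_1+1}^{m}\theta_j(U))$, $b_> = -(B + \sum_{j=m_1+1}^{m}\theta_j(U'))$. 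So the whole statement reduces to showing that each $\theta_j(U') - \theta_j(U) \to 0$ (in operator norm for $j \le m_1$, in norm for $j > m_1$), since the two tuples $U$ and $U'$ differ only in the $U_>$ coordinates and $\|U_>^+ - U_>\| \to 0$.

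The first step is to recall from \Cref{decomp1} that each $\theta_j$ is a multilinear map of the block $\DD_j$, hence continuous, and in fact \emph{locally Lipschitz}: on any bounded set, a multilinear map is Lipschitz, with Lipschitz constant controlled by the bound on the arguments. The second step is to invoke the hypothesis that $\{\ux{U}{k}_<\}$, $\{\ux{Y}{k}\}$, $\{\ux{U}{k}_>\}$ are all bounded, so that (for all $k$) both $U$ and $U'$ lie in a fixed bounded set $G$ determined by these bounds together with $\sup_k \|U_>^{k+1} - U_>^k\|$. On $G$, the multilinear map $\theta_j$ is Lipschitz with some constant $L_j = L_j(G)$, so $\|\theta_j(U') - \theta_j(U)\| \le L_j \|U' - U\| = L_j \|U_>^+ - U_>\|$. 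The third step is simply to sum over $j$ and apply the triangle inequality: $\|C_> - C_<\| \le \sum_{j=1}^{m_1} L_j \|U_>^+ - U_>\| \to 0$ and $\|b_> - b_<\| \le \sum_{j=m_1+1}^{m} L_j \|U_>^+ - U_>\| \to 0$.

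The only genuine content — and the place to be slightly careful — is the claim that a multilinear map is Lipschitz on bounded sets with a constant depending only on the bound; this is the standard telescoping estimate $\MM_j(a_1,\dots) - \MM_j(b_1,\dots) = \sum_i \MM_j(b_1,\dots,b_{i-1}, a_i - b_i, a_{i+1},\dots)$, each term of which is bounded by the product of the norms of the (fixed, bounded) remaining arguments times $\|a_i - b_i\|$. Since only the $U_>$ coordinates vary between $U$ and $U'$, the corresponding differences $a_i - b_i$ vanish except on those coordinates, which is what produces the factor $\|U_>^+ - U_>\|$ rather than $\|U' - U\|$ including spurious terms. I do not anticipate any real obstacle here; the lemma is essentially a continuity statement dressed up in the decomposition notation, and the boundedness hypotheses are exactly what is needed to upgrade continuity to a uniform (in $k$) modulus of convergence.
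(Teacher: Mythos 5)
Your proposal is correct and follows essentially the same route as the paper: decompose $C_> - C_<$ and $b_> - b_<$ via \eqref{decomp3} into differences $\theta_j(U') - \theta_j(U)$, and use the boundedness of the iterates to turn continuity of the multilinear $\theta_j$ into a uniform-in-$k$ estimate. The only cosmetic difference is that you re-derive the Lipschitz-on-bounded-sets bound by the telescoping argument (which is precisely the paper's \Cref{multilindiffbd}, underlying \Cref{multiaffsmooth}), whereas the paper simply cites \Cref{multiaffsmooth} and uniform continuity on a compact set containing the iterates.
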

\begin{proof}
Unpacking our definitions, $C_<$ corresponds to the system of constraints $C(U_<^+,Y,U_>) = b$, and $C_>$ corresponds to $C(U_<^+,Y,U_>^+) = b$. Let $U = (U_<^+, U_>)$ and $U' = (U_<^+,U_>^+)$. By \eqref{decomp3}, we have $C_> - C_< = \sum_{j=1}^{m_1} \theta_j(U') - \theta_j(U)$. From \Cref{multiaffsmooth}, each $\theta_j$ is smooth, and therefore uniformly continuous over a compact set containing $\{\ux{U}{k}_<, \ux{Y}{k}, \ux{U}{k}_>\}_{k=0}^\infty$. Thus, $\|U_>^+ - U_>\| \rightarrow 0$ implies that $\|C_> - C_<\| \rightarrow 0$. The same applies to $b_> - b_<$.
\end{proof}

\section{Convergence Analysis of Multiaffine ADMM}\label{sec:convergence}
We now apply the results from \Cref{sec:general} to multiaffine problems of the form $(P)$ that satisfy \Cref{baseasm,asm2}.

\subsection{Proof of \Cref{main}}\label{sec:mainproof}
Under \Cref{baseasm}, we prove \Cref{main}. The proof appears at the end of this subsection after we prove a few intermediate results.

\begin{cor}\label{subgradients}
The general subgradients $\partial_\ZZZ \Lag(\XX,\ZZZ,\WW)$ are given by
\begin{align*}
\partial_{Z_0} \Lag(\XX,Z_0,Z_1,Z_2,\WW) &= \partial_{Z_0}\psi(\ZZZ) + A_{Z_0,\XX}^T\WW + \rho A_{Z_0,\XX}^T(A(\XX,Z_0) + Q(\ZZ)) \ {and} \\
\nabla_{Z_i} \Lag(\XX,Z_0,Z_1,Z_2,\WW) &= \nabla_{Z_i}\psi(\ZZZ) + Q_i^TW_i + \rho Q_i^T(A_i(\XX,Z_0) + Q_i(Z_i)) \text{ for } i \in \{1,2\}.
\end{align*}
\end{cor}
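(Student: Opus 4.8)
The plan is to obtain all three formulas as direct specializations of \Cref{subgradgeneric}, which already computes the general subgradient of the augmented Lagrangian in a single multiaffine block, together with two elementary observations: in the $\ZZZ$-directions the only nonsmooth contribution comes from $\psi$, and the block structure of $Q$ decouples the quadratic penalty across $Z_1$ and $Z_2$.

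First, for $Z_0$, I would take $Y = Z_0$ and $U = (\XX, Z_1, Z_2)$ in \Cref{subgradgeneric}. The objective seen by this block is $f_U(Z_0) = \phi(\XX,Z_0,Z_1,Z_2) = f(\XX) + \psi(\ZZZ)$; since $f(\XX)$ does not depend on $Z_0$, the sum rule for general subgradients gives $\partial_{Z_0} f_U(Z_0) = \partial_{Z_0}\psi(\ZZZ)$. Because $A$ is multiaffine, the constraint map restricted to $Z_0$ is affine with linear part equal to the $Z_0$-linear term $A_{Z_0,\XX}$ (\Cref{linearterm}; note that $A_2(\XX)$ does not involve $Z_0$), and its value is the full residual $A(\XX,Z_0)+Q(\ZZ)$. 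Substituting $C_U = A_{Z_0,\XX}$ and $C_U(Z_0) - b_U = A(\XX,Z_0)+Q(\ZZ)$ into \Cref{subgradgeneric} yields the stated expression for $\partial_{Z_0}\Lag$.

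Next, for $Z_i$ with $i \in \{1,2\}$, I would again apply \Cref{subgradgeneric} with $Y = Z_i$, but now observe that $\psi(\ZZZ) = h(Z_0) + g_1(Z_S) + g_2(Z_2)$ is differentiable in $Z_i$: for $i = 2$ because $g_2$ is $M_2$-Lipschitz differentiable by \Cref{aobj}, and for $i = 1$ because $g_1$ is $(m_1,M_1)$-strongly convex and hence Lipschitz differentiable in $Z_1$ (invoking \Cref{convexspec} in the case $Z_S = (Z_0, Z_1)$ to fix $Z_0$). Consequently $\partial_{Z_i} f_U(Z_i) = \{\nabla_{Z_i}\psi(\ZZZ)\}$ is a singleton, which is why the statement records a gradient $\nabla_{Z_i}\Lag$ rather than a subdifferential. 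Finally, since $Q(\ZZ) = (Q_1(Z_1), Q_2(Z_2))$ and neither $A_1$ nor $A_2$ involves $Z_1$ or $Z_2$, the constraint map restricted to $Z_i$ is the linear map $Z_i \mapsto Q_i(Z_i)$ landing only in the $i$-th block row; hence $C_U^T\WW = Q_i^T W_i$ and $\rho C_U^T(C_U(Z_i) - b_U) = \rho Q_i^T(A_i(\XX,Z_0)+Q_i(Z_i))$, giving the claimed formula.

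There is no substantial obstacle here; the only point requiring a moment's care is the reduction $\partial_{Z_0}\phi = \partial_{Z_0}\psi$ and the collapse of $\partial_{Z_i}\phi$ to the single gradient $\nabla_{Z_i}\psi$, both of which follow from the general-subgradient sum rule applied to a sum with a function that is smooth in (indeed, for $Z_0$, independent of) the block of interest, together with \Cref{aobj} and \Cref{convexspec}.
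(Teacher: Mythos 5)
Your proposal is correct and follows essentially the same route as the paper, which proves the corollary simply by specializing \Cref{subgradgeneric} to each block $Z_0$, $Z_1$, $Z_2$, using \Cref{linearterm} to identify the $Z_0$-linear term and the block structure of $Q$ for $Z_1,Z_2$. Your additional remarks (constancy of $f(\XX)$ in the $\ZZZ$-directions, smoothness of $g_1,g_2$ collapsing the subdifferential to a gradient) are exactly the observations implicit in the paper's one-line proof.
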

\begin{proof}
This follows from \Cref{subgradgeneric}. Recall that $A_{Z_0,\XX}$ is the $Z_0$-linear term of $Z_0 \mapsto A(\XX,Z_0)$ (see \Cref{linearterm}).
\end{proof}

\begin{cor}\label{foc}For all $k \geq 1$, 
\begin{equation}\label{eq:qtw}
-\nabla_{Z_i} \psi(\ux{\ZZZ}{k}) = Q_i^T\ux{W}{k}_i \bump \text{ for } i \in \{1,2\}.
\end{equation}
\end{cor}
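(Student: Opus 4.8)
The plan is to read off the $Z_1$- and $Z_2$-components of the first-order optimality condition for the joint $\ZZZ$-subproblem and then eliminate the penalty term using the dual update formula. First I would note that the update $\ux{\ZZZ}{k+1} \in \argmin_\ZZZ \Lag(\ux{\XX}{k+1}, \ZZZ, \ux{\WW}{k})$ is an unconstrained minimization, so the normal cone of the (full) feasible space is $\{0\}$ and the constraint qualification of \Cref{necfoc} is automatic; hence $0 \in \partial_\ZZZ \Lag(\ux{\XX}{k+1}, \ux{\ZZZ}{k+1}, \ux{\WW}{k})$.

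Next I would project this inclusion onto the $Z_1$ and $Z_2$ coordinates. By \Cref{aobj}, $\psi(\ZZZ) = h(Z_0) + g_1(Z_S) + g_2(Z_2)$ with $g_1$ and $g_2$ Lipschitz differentiable, so $\ZZZ \mapsto \Lag$ is smooth in the $Z_1$ and $Z_2$ coordinates; consequently, for $i \in \{1,2\}$ the corresponding component of $\partial_\ZZZ \Lag$ is the single vector given by \Cref{subgradients}, namely $\nabla_{Z_i}\psi(\ux{\ZZZ}{k+1}) + Q_i^T\ux{W}{k}_i + \rho Q_i^T\!\left(A_i(\ux{\XX}{k+1}, \ux{Z_0}{k+1}) + Q_i(\ux{Z_i}{k+1})\right)$, which must therefore equal $0$.

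Then I would substitute the dual update. Componentwise, $\ux{W}{k+1}_i = \ux{W}{k}_i + \rho\!\left(A_i(\ux{\XX}{k+1}, \ux{Z_0}{k+1}) + Q_i(\ux{Z_i}{k+1})\right)$ for $i \in \{1,2\}$, so $\rho Q_i^T\!\left(A_i(\ux{\XX}{k+1}, \ux{Z_0}{k+1}) + Q_i(\ux{Z_i}{k+1})\right) = Q_i^T(\ux{W}{k+1}_i - \ux{W}{k}_i)$. Plugging this into the vanishing expression collapses the two multiplier terms into $Q_i^T\ux{W}{k+1}_i$, yielding $0 = \nabla_{Z_i}\psi(\ux{\ZZZ}{k+1}) + Q_i^T\ux{W}{k+1}_i$; re-indexing $k+1 \mapsto k$ gives the claim for all $k \geq 1$ (the offset reflecting that $\ux{\ZZZ}{k}, \ux{W}{k}$ arise from a genuine ADMM step precisely when $k \geq 1$).

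I do not expect a genuine obstacle: the proof is a two-line combination of optimality and the dual update. The only point needing a word of care is the additivity of the general subgradient used to split off the nonsmooth term $h(Z_0)$ from the smooth remainder, but this is exactly what \Cref{subgradients} already records, so it can be cited directly rather than re-derived.
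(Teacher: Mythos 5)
Your proposal is correct and follows essentially the same route as the paper: read off the first-order condition for the $\ZZZ$-subproblem (which simplifies in the $Z_1,Z_2$ components because $g_1,g_2$ are smooth, as recorded in \Cref{subgradients}), then use the dual update $W_i^+ = W_i + \rho(A_i(\XX^+,Z_0^+) + Q_i(Z_i^+))$ to collapse the penalty term into $Q_i^TW_i^+$. The re-indexing remark explaining the restriction to $k \geq 1$ is a harmless addition; no gaps.
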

\begin{proof}
This follows from \Cref{subgradients,subgradgeneric}, and the updating formula for $\WW^+$ in \Cref{alg:admm_mult}. Note that $g_1$ and $g_2$ are smooth, so the first-order conditions for each variable simplifies to $$-\nabla_{Z_i} \psi(\ZZZ^+) = Q_i^T(W_i + \rho (A_i(\XX^+, Z_0^+) + Q_i(Z_i^+))) = Q_i^TW_i^+.$$
Hence, \eqref{eq:qtw} immediately follows.
\end{proof} 



Next, we quantify the decrease in the augmented Lagrangian using properties of $h$, $g_1$, $g_2$, and $Q_2$.

\begin{cor}\label{yzdecr}
The change in the augmented Lagrangian after updating the final block $\ZZZ$ is bounded below by
\begin{equation}\label{postdecbd}\frac{m_1}{2}\|Z_S - Z_S^+\|^2 + \left( \frac{\rho \sigma - M_2}{2}\right) \|Z_2 - Z_2^+\|^2,\end{equation}
where $\sigma = \lambda_{min}(Q_2^TQ_2) > 0$.
\end{cor}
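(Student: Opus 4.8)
The plan is to reduce the claim to the single‑block change identity \Cref{decgeneric} and then extract the lower bound from the structural hypotheses in \Cref{aobj} on $\psi = h(Z_0) + g_1(Z_S) + g_2(Z_2)$ and on $Q_2$. Since \Cref{alg:admm_mult} updates the entire block $\ZZZ = (Z_0,Z_1,Z_2)$ simultaneously, I would apply \Cref{decgeneric} with $Y = \ZZZ$, $U = \XX^+$, and the current multiplier $\WW$; because $f(\XX^+)$ is constant in this subproblem, the relevant $f_U$ is $\psi$, and \Cref{decgeneric} yields, for some $v \in \partial\psi(\ZZZ^+)$,
$$\Lag(\XX^+,\ZZZ,\WW) - \Lag(\XX^+,\ZZZ^+,\WW) = \psi(\ZZZ) - \psi(\ZZZ^+) - \la v, \ZZZ - \ZZZ^+\ra + \frac{\rho}{2}\|C_{\XX^+}(\ZZZ) - C_{\XX^+}(\ZZZ^+)\|^2,$$
where $C_{\XX^+}$ is the linear part of the (now affine) map $\ZZZ \mapsto A(\XX^+,Z_0) + Q(\ZZ)$.

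Next I would lower‑bound $\psi(\ZZZ) - \psi(\ZZZ^+) - \la v, \ZZZ - \ZZZ^+\ra$ by decomposing $v$ along the block structure of $\psi$. Since $\psi$ is a separable sum of a function of $Z_S$ (to which the $h(Z_0)$ term attaches, in the case $Z_S = (Z_0,Z_1)$) and a function $g_2(Z_2)$, the $Z_2$‑component of $v$ is $\nabla g_2(Z_2^+)$ and the $Z_S$‑part of $v$ is $\nabla g_1(Z_S^+)$ plus, through the $Z_0$ slot, an element of $\partial h(Z_0^+)$. Applying convexity of $h$ to the $h$‑piece (contribution $\geq 0$), the $(m_1,M_1)$‑strong convexity of $g_1$ to the $g_1$‑piece (contribution $\geq \frac{m_1}{2}\|Z_S - Z_S^+\|^2$), and the descent inequality \Cref{lipschdfbound} to the $M_2$‑Lipschitz‑differentiable $g_2$ (contribution $\geq -\frac{M_2}{2}\|Z_2 - Z_2^+\|^2$) gives
$$\psi(\ZZZ) - \psi(\ZZZ^+) - \la v, \ZZZ - \ZZZ^+\ra \geq \frac{m_1}{2}\|Z_S - Z_S^+\|^2 - \frac{M_2}{2}\|Z_2 - Z_2^+\|^2.$$

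For the penalty term I would observe that, with $\XX = \XX^+$ fixed, the second component of $C_{\XX^+}(\ZZZ)$ is $A_2(\XX^+) + Q_2(Z_2)$, whose constant part $A_2(\XX^+)$ cancels in the difference; hence $\|C_{\XX^+}(\ZZZ) - C_{\XX^+}(\ZZZ^+)\|^2 \geq \|Q_2(Z_2 - Z_2^+)\|^2 \geq \sigma\|Z_2 - Z_2^+\|^2$, where $\sigma = \lambda_{min}(Q_2^TQ_2) > 0$ because $Q_2$ is injective by \Cref{aobj}. Substituting the last two displays into the identity from \Cref{decgeneric} and collecting the $\|Z_2 - Z_2^+\|^2$ terms yields exactly \eqref{postdecbd}. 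I expect the only step needing care to be the subgradient bookkeeping: checking that the single $v$ supplied by \Cref{decgeneric} splits compatibly with $\psi = h + g_1 + g_2$ in both admissible cases $Z_S = Z_1$ and $Z_S = (Z_0,Z_1)$, so that convexity, strong convexity, and the descent lemma are each applied to the right piece. Everything else — the cancellation of $A_2(\XX^+)$ and the elementary bound $\|Q_2 u\|^2 \ge \lambda_{min}(Q_2^TQ_2)\|u\|^2$ — is routine.
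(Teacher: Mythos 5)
Your proposal is correct and follows essentially the same route as the paper: apply \Cref{decgeneric} to the final block $\ZZZ$, split the resulting term via $\psi = h(Z_0) + g_1(Z_S) + g_2(Z_2)$ using convexity of $h$, strong convexity of $g_1$, and \Cref{lipschdfbound} for $g_2$, and bound the penalty term by dropping the nonnegative $A_1,Q_1$ component and using $\lambda_{min}(Q_2^TQ_2)$. The subgradient bookkeeping you flag is exactly how the paper handles it (the subgradient is $\nabla g_1(Z_S^+)$, $\nabla g_2(Z_2^+)$, plus some $v \in \partial h(Z_0^+)$), so there is no gap.
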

\begin{proof}
We apply \Cref{decgeneric} to $\ZZZ$. Recall that $\psi = h(Z_0) + g_1(Z_1) + g_2(Z_2)$. The decrease in the augmented Lagrangian is given, for some $v \in \partial h(Z_0^+)$, by
\begin{align}
&\label{eq:dec} h(Z_0) - h(Z_0^+) - \la v, Z_0 - Z_0^+\ra + g_1(Z_S) - g_1(Z_S^+) - \la \nabla g_1(Z_S^+), Z_S - Z_S^+\ra\\
\nonumber &+ g_2(Z_2) - g_2(Z_2^+) - \la \nabla g_2(Z_2^+), Z_2 - Z_2^+\ra \\
\nonumber &+ \frac{\rho}{2}\|A_1(\XX^+,Z_0 - Z_0^+) + Q_1(Z_1 - Z_1^+)\|^2 + \frac{\rho}{2}\|Q_2(Z_2 - Z_2^+)\|^2.
\end{align}
By \Cref{aobj}, we can show the following bounds for the components of \eqref{eq:dec}:
\begin{enumerate}
\item $h$ is convex, so $h(Z_0) - h(Z_0^+) - \la v, Z_0 - Z_0^+\ra \geq 0$.
\item $g_1$ is $(m_1,M_1)$-strongly convex, so $g_1(Z_S) - g_1(Z_S^+) - \la \nabla g_1(Z_S^+), Z_S - Z_S^+\ra \geq \frac{m_1}{2}\|Z_S - Z_S^+\|^2$.
\item $g_2$ is $M_2$-Lipschitz differentiable, so $g_2(Z_2) - g_2(Z_2^+) - \la \nabla g_2(Z_2^+), Z_2 - Z_2^+\ra \geq -\frac{M_2}{2}\|Z_2 - Z_2^+\|^2$.
\end{enumerate}
Since $Q_2$ is injective, $Q_2^TQ_2$ is positive definite. It follows that with $\sigma = \lambda_{min}(Q_2^TQ_2) > 0$, $\frac{\rho}{2}\|Q_2(Z_2 - Z_2^+)\|^2 \geq \frac{\rho}{2}\sigma\|Z_2 - Z_2^+\|^2$. Since $\frac{\rho}{2}\|A_1(\XX^+,Z_0 - Z_0^+) + Q_1(Z_1 - Z_1^+)\|^2 \geq 0$, summing the inequalities establishes the lower bound \eqref{postdecbd} on the decrease in $\Lag$.
\end{proof}

We now bound the change in the Lagrange multipliers by the changes in the variables in $\ZZZ$.

\begin{lma}\label{winc}
We have $\|\WW - \WW^+\|^2 \leq \beta_1 \|Z_S - Z_S^+\|^2 + \beta_2 \|Z_2 - Z_2^+\|^2$, where $\beta_1 = M_1^2 \lambda^{-1}_{++}(Q_1^TQ_1)$ and $\beta_2 = M_2^2 \lambda^{-1}_{++}(Q_2^TQ_2) = M_2^2\sigma^{-1}$.
\end{lma}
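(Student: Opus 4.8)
The plan is to bound the two dual sub-blocks separately, using $\|\WW - \WW^+\|^2 = \|W_1 - W_1^+\|^2 + \|W_2 - W_2^+\|^2$. Fix $i \in \{1,2\}$. The dual update in \Cref{alg:admm_mult} gives $W_i^+ - W_i = \rho(A_i(\XX^+, Z_0^+) + Q_i(Z_i^+))$. Since $h$ depends only on $Z_0$ and $g_2$ only on $Z_2$, \Cref{foc} yields, for every $k \ge 1$, both $Q_i^T W_i = -\nabla_{Z_i}\psi(\ZZZ)$ and $Q_i^T W_i^+ = -\nabla_{Z_i}\psi(\ZZZ^+)$, where $\nabla_{Z_1}\psi$ is the partial gradient with respect to $Z_1$ of $g_1$ (a function of $Z_1$ or of $Z_S=(Z_0,Z_1)$) and $\nabla_{Z_2}\psi = \nabla g_2$. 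Subtracting, $Q_i^T(W_i - W_i^+) = \nabla_{Z_i}\psi(\ZZZ^+) - \nabla_{Z_i}\psi(\ZZZ)$. This is the key reduction: the dual difference is controlled, after applying $Q_i^T$, by a gradient difference.

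Next I would show $W_i - W_i^+ \in \opn{Im}(Q_i)$ so that \Cref{leastim} applies. Because $Q$ acts block-diagonally on $(Z_1,Z_2)$, we have $\opn{Im}(Q) = \opn{Im}(Q_1) \times \opn{Im}(Q_2)$, and projecting \Cref{afinalblockim} ($\opn{Im}(Q) \supseteq \opn{Im}(A)$) onto the $i$-th constraint block forces $\opn{Im}(A_i) \subseteq \opn{Im}(Q_i)$. Hence $A_i(\XX^+, Z_0^+) \in \opn{Im}(Q_i)$, and so $W_i - W_i^+ = -\rho(A_i(\XX^+, Z_0^+) + Q_i(Z_i^+)) \in \opn{Im}(Q_i)$. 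Applying \Cref{leastim} with $R = Q_i$ and $y = W_i - W_i^+$,
$$\|W_i - W_i^+\|^2 \le \lambda_{++}^{-1}(Q_i^TQ_i)\,\|Q_i^T(W_i - W_i^+)\|^2 = \lambda_{++}^{-1}(Q_i^TQ_i)\,\|\nabla_{Z_i}\psi(\ZZZ^+) - \nabla_{Z_i}\psi(\ZZZ)\|^2.$$

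Finally I would bound the gradient differences by Lipschitz differentiability: $g_1$ is $(m_1,M_1)$-strongly convex, hence $M_1$-Lipschitz differentiable, and a partial gradient has Lipschitz modulus no larger than the full gradient, so $\|\nabla_{Z_1}\psi(\ZZZ^+) - \nabla_{Z_1}\psi(\ZZZ)\| \le M_1\|Z_S - Z_S^+\|$; likewise $\|\nabla_{Z_2}\psi(\ZZZ^+) - \nabla_{Z_2}\psi(\ZZZ)\| = \|\nabla g_2(Z_2^+) - \nabla g_2(Z_2)\| \le M_2\|Z_2 - Z_2^+\|$. Substituting for $i=1,2$ gives $\|W_1 - W_1^+\|^2 \le \beta_1\|Z_S - Z_S^+\|^2$ with $\beta_1 = M_1^2\lambda_{++}^{-1}(Q_1^TQ_1)$, and $\|W_2 - W_2^+\|^2 \le \beta_2\|Z_2 - Z_2^+\|^2$ with $\beta_2 = M_2^2\lambda_{++}^{-1}(Q_2^TQ_2) = M_2^2\sigma^{-1}$, the last equality because $Q_2$ injective makes $Q_2^TQ_2 \succ 0$, so $\lambda_{++}(Q_2^TQ_2) = \lambda_{min}(Q_2^TQ_2) = \sigma$. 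Adding the two bounds gives the claim. There is no real obstacle here: the only mildly delicate point is the image-containment step, and even that is immediate once one observes that \Cref{afinalblockim} decouples across the two constraint blocks; the rest is a direct application of \Cref{foc}, \Cref{leastim}, and Lipschitz differentiability. (As in \Cref{foc}, the bound is asserted for $k \ge 1$, which is all the convergence analysis requires.)
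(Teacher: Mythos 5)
Your proposal is correct and follows essentially the same route as the paper's proof: it uses the dual update together with \Cref{foc} to get $Q_i^T(W_i - W_i^+) = \nabla_{Z_i}\psi(\ZZZ^+) - \nabla_{Z_i}\psi(\ZZZ)$, the blockwise consequence of \Cref{afinalblockim} to place $W_i - W_i^+$ in $\opn{Im}(Q_i)$, then \Cref{leastim} and the Lipschitz bounds on $\nabla g_1$, $\nabla g_2$. The only difference is that you spell out the image-containment step slightly more explicitly than the paper does; the substance is identical.
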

\begin{proof}
From \Cref{foc}, we have $Q_i^TW_i = -\nabla_{Z_i} \psi(\ZZZ)$ and $Q_i^TW_i^+ = -\nabla_{Z_i} \psi(\ZZZ^+)$ for $i \in \{1,2\}$. By definition of the dual update, $W_i^+ - W_i = \rho (A_i(\XX^+,Z_0^+) + Q_i(Z_i^+))$. Since $\opn{Im}(Q_i)$ contains the image of $A_i$, we have $W_i^+ - W_i \in \opn{Im}(Q_i)$. \Cref{leastim} applied to $R = Q_i$ and $y = W_i^+ - W_i$ then implies that
\begin{align*}
\|W_i - W_i^+\|^2 &\leq \lambda_{++}^{-1}(Q_i^TQ_i)\|Q_i^TW_i - Q_i^TW_i^+\|^2 = \lambda_{++}^{-1}(Q_i^TQ_i) \|\nabla_{Z_i} \psi(\ZZZ) - \nabla_{Z_i} \psi(\ZZZ^+)\|^2.
\end{align*}
Since $\psi(\ZZZ) = h(Z_0) + g_1(Z_S) + g_2(Z_2)$, we have, for $Z_1$, the bound
\begin{align*}
\|\nabla_{Z_1} \psi(\ZZZ) - \nabla_{Z_1} \psi(\ZZZ^+)\|^2 &= \| \nabla_{Z_1} g_1(Z_S) - \nabla_{Z_i} g_1(Z_S^+)\|^2 \\
&\leq \|\nabla g_1(Z_S) - \nabla g_1(Z_S^+)\|^2 \leq M_1^2\|Z_S - Z_S^+\|^2
\end{align*}
and thus $\|W_1 - W_1^+\|^2 \leq M_1^2\lambda_{++}^{-1}(Q_1^TQ_1)\|Z_S - Z_S^+\|^2 = \beta_1\|Z_S - Z_S^+\|^2$. A similar calculation applies to $W_2$. Summing over $i \in \{1,2\}$, we have the desired result.
\end{proof}

\begin{lma}\label{lagdec}
For sufficiently large $\rho$, $\Lag(\XX^+,\ZZZ,\WW) - \Lag(\XX^+,\ZZZ^+,\WW^+) \geq 0$, and therefore $\{\ux{\Lag}{k}\}_{k=1}^\infty$ is monotonically decreasing. Moreover, for sufficiently small $\epsilon > 0$, we may choose $\rho$ so that $\Lag - \Lag^+ \geq \epsilon( \|Z_S - Z_S^+\|^2 + \|Z_2 - Z_2^+\|^2)$.
\end{lma}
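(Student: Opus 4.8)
The plan is to combine the three ingredients we have already established: (i) the dual update increases the augmented Lagrangian by exactly $\frac{1}{\rho}\|\WW - \WW^+\|^2$ (\Cref{dualgeneric}); (ii) updating the final block $\ZZZ$ decreases $\Lag$ by at least the quantity \eqref{postdecbd} from \Cref{yzdecr}; and (iii) each of the primal updates for $X_0, \ldots, X_n$ can only decrease $\Lag$, since $\ux{X}{k+1}_i$ is a minimizer of the corresponding subproblem. Writing the total change over one iteration as a telescoping sum through the blocks, we get
$$
\Lag - \Lag^+ \;\geq\; \Big(\text{decrease from }\XX\text{-updates}\Big) + \frac{m_1}{2}\|Z_S - Z_S^+\|^2 + \Big(\frac{\rho\sigma - M_2}{2}\Big)\|Z_2 - Z_2^+\|^2 - \frac{1}{\rho}\|\WW - \WW^+\|^2,
$$
and the $\XX$-contribution is nonnegative, so it may be dropped.

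The heart of the argument is then to absorb the dual term $-\frac{1}{\rho}\|\WW-\WW^+\|^2$ using \Cref{winc}, which bounds $\|\WW - \WW^+\|^2 \leq \beta_1 \|Z_S - Z_S^+\|^2 + \beta_2 \|Z_2 - Z_2^+\|^2$ with $\beta_1 = M_1^2\lambda_{++}^{-1}(Q_1^TQ_1)$ and $\beta_2 = M_2^2\sigma^{-1}$. Substituting gives
$$
\Lag - \Lag^+ \;\geq\; \Big(\frac{m_1}{2} - \frac{\beta_1}{\rho}\Big)\|Z_S - Z_S^+\|^2 + \Big(\frac{\rho\sigma - M_2}{2} - \frac{\beta_2}{\rho}\Big)\|Z_2 - Z_2^+\|^2.
$$
It now suffices to check that both coefficients can be made positive — in fact bounded below by a fixed $\epsilon > 0$ — by taking $\rho$ large. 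The first coefficient tends to $\frac{m_1}{2}$ as $\rho \to \infty$, so it exceeds $\epsilon$ once $\rho > \beta_1/(\frac{m_1}{2} - \epsilon)$ for any $\epsilon < m_1/2$. For the second, $\frac{\rho\sigma - M_2}{2} - \frac{\beta_2}{\rho}$ grows linearly in $\rho$ (the leading term is $\frac{\sigma}{2}\rho$) while $\frac{\beta_2}{\rho}$ vanishes, so it too eventually exceeds $\epsilon$; concretely one recovers the first condition in \eqref{eq:rhoexplicitbd}, namely $\frac{\sigma\rho}{2} - \frac{M_2^2}{\sigma\rho} > \frac{M_2}{2}$, which is exactly the statement that the second coefficient is positive (with a bit of slack for $\epsilon$). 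Taking $\rho$ larger than both thresholds, both coefficients are $\geq \epsilon$, which gives the claimed inequality $\Lag - \Lag^+ \geq \epsilon(\|Z_S - Z_S^+\|^2 + \|Z_2 - Z_2^+\|^2) \geq 0$, and monotonicity of $\{\ux{\Lag}{k}\}_{k\geq 1}$ follows.

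I expect the only delicate point to be bookkeeping: making sure the telescoping decomposition of $\Lag - \Lag^+$ through the inner blocks is valid (each inner $X_i$-update genuinely decreases $\Lag$ because $\ux{X}{k+1}_i$ is a global minimizer of that subproblem, by \Cref{aRegularity}), and that \Cref{yzdecr} and \Cref{winc} are applied to the same iteration with consistent notation $Z = \ux{Z}{k}$, $Z^+ = \ux{Z}{k+1}$. The inequality manipulation itself is routine once those pieces are lined up; no step should present a genuine obstacle. One should note that monotonicity is asserted only for $k \geq 1$ because \Cref{foc} (used inside \Cref{winc}) requires $k \geq 1$, so the very first iteration is excluded — this is consistent with the statement.
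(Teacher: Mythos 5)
Your proposal is correct and follows essentially the same route as the paper: telescope the per-iteration change through the block updates, drop the nonnegative $\XX$-update contribution, lower-bound the $\ZZZ$-update decrease via \Cref{yzdecr}, bound the dual increase via \Cref{dualgeneric} and \Cref{winc}, and choose $\rho$ so that both coefficients exceed $\epsilon$ (your explicit thresholds, including the link to the first condition in \eqref{eq:rhoexplicitbd}, match the paper's choice $\frac{m_1}{2} \geq \frac{\beta_1}{\rho} + \epsilon$ and $\frac{\rho\sigma - M_2}{2} \geq \frac{\beta_2}{\rho} + \epsilon$). Your remark about the $k \geq 1$ restriction coming from \Cref{foc} inside \Cref{winc} is also consistent with the paper.
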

\begin{proof}
Since the ADMM algorithm involves successively minimizing the augmented Lagrangian over sets of primal variables, it follows that the augmented Lagrangian does not increase after each block of primal variables is updated.  In particular, since it does not increase after the update from $\XX$ to $\XX^+$, one finds
\begin{align*}
\Lag - \Lag^+ &= \Lag(\XX,\ZZZ,\WW) - \Lag(\XX^+, \ZZZ,\WW) + \Lag(\XX^+,\ZZZ,\WW) - \Lag(\XX^+,\ZZZ^+,\WW) \\
&\bump + \Lag(\XX^+,\ZZZ^+,\WW) - \Lag(\XX^+,\ZZZ^+,\WW^+) \\
&\geq  \Lag(\XX^+,\ZZZ,\WW) - \Lag(\XX^+,\ZZZ^+,\WW) + \Lag(\XX^+,\ZZZ^+,\WW) - \Lag(\XX^+,\ZZZ^+,\WW^+).
\end{align*}
The only step which increases the augmented Lagrangian is updating $\WW$. It suffices to show that the size of $\Lag(\XX^+,\ZZZ,\WW) - \Lag(\XX^+,\ZZZ^+,\WW)$ exceeds the size of $\Lag(\XX^+,\ZZZ^+,\WW) - \Lag(\XX^+,\ZZZ^+,\WW^+)$ by at least $\epsilon( \|Z_S - Z_S^+\|^2 + \|Z_2 - Z_2^+\|^2)$.

By \Cref{dualgeneric}, $\Lag(\XX^+,\ZZZ^+,\WW) - \Lag(\XX^+,\ZZZ^+,\WW^+) = -\frac{1}{\rho}\|\WW - \WW^+\|^2$. Using \Cref{winc}, this is bounded by $-\frac{1}{\rho}(\beta_1 \|Z_S - Z_S^+\|^2 + \beta_2 \|Z_2 - Z_2^+\|^2)$. On the other hand, eq. \cref{postdecbd} of \Cref{yzdecr} implies that $\Lag(\XX^+,\ZZZ,\WW) - \Lag(\XX^+,\ZZZ^+,\WW) \geq \frac{m_1}{2}\|Z_S - Z_S^+\|^2 + \left( \frac{\rho \sigma - M_2}{2}\right) \|Z_2 - Z_2^+\|^2$. Hence, for any $0 < \epsilon < \frac{m_1}{2}$, we may choose $\rho$ sufficiently large so that $\frac{m_1}{2} \geq \frac{\beta_1}{\rho} + \epsilon$ and $\frac{\rho\sigma - M_2}{2} \geq \frac{\beta_2}{\rho} + \epsilon$.
\end{proof}

We next show that $\ux{\Lag}{k}$ is bounded below.

\begin{lma}\label{lagbd}
For sufficiently large $\rho$, the sequence $\{\ux{\Lag}{k}\}$ is bounded below, and thus with \Cref{lagdec}, the sequence $\{\ux{\Lag}{k}\}$ is convergent.
\end{lma}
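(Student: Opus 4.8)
The plan is to deduce \Cref{lagbd} from \Cref{lagbdgeneric} by taking $Y = \ZZZ$ (the final block of primal variables), $U = \XX$ (the variables updated before $\ZZZ$), and the partition $Y_0 = Z_0$, $Y_1 = \ZZ = (Z_1,Z_2)$ suggested in the footnote to \Cref{ideal}. Two inputs to that lemma must be supplied: that $\phi$ is bounded below on the feasible region, and that \Cref{ideal} holds. For the former, \Cref{aobj}(1) states $\phi$ is coercive on $\Omega$, which is closed because $C$ is continuous, and $\phi$ is lower semicontinuous; hence either $\Omega$ is empty or $\phi$ attains a finite minimum on a compact sublevel set, so in either case $\phi$ is bounded below on $\Omega$. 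It then remains to verify the two parts of \Cref{ideal}.

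To check \Cref{ideal}(1), fix $\XX$ and observe that $\ZZZ \mapsto \phi(\XX,\ZZZ)$ equals a constant plus $h(Z_0) + g_1(Z_S) + g_2(Z_2)$, and that the $Y_1$-block of any $v \in \partial_\ZZZ \phi(\XX,\ZZZ)$ is precisely $\nabla_{Y_1}(g_1(Z_S) + g_2(Z_2))$, since $h(Z_0)$ is unaffected when only $Y_1 = (Z_1,Z_2)$ varies and the remaining terms are smooth in $Y_1$. By \Cref{convexspec}, $g_1$ is $M_1$-Lipschitz differentiable as a function of $Z_1$ (with $Z_0$ held fixed, in the case $Z_S = (Z_0,Z_1)$), and $g_2$ is $M_2$-Lipschitz differentiable, so \Cref{lipschdfbound} yields the required quadratic upper bound with $M_Y = M_1 + M_2$.

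\Cref{ideal}(2) is the substantive step. With $Z_0$ frozen at $Z_0^+$, the constraint $A(\XX^+,Z_0^+) + Q(\ZZ) = 0$ decouples into $Q_1 Z_1 = -A_1(\XX^+,Z_0^+)$ and $Q_2 Z_2 = -A_2(\XX^+)$, and the objective, after dropping the constant $h(Z_0^+)$, is $g_1(Z_S) + g_2(Z_2)$, so the feasible minimizer $\wh Y_1 = (\wh Z_1,\wh Z_2)$ splits. Since $Q$ is block-diagonal, \Cref{afinalblockim} forces $\opn{Im}(Q_i) \supseteq \opn{Im}(A_i)$ for $i \in \{1,2\}$, so both systems are feasible; injectivity of $Q_2$ makes the $Z_2$-system a single point $\wh Z_2$ (which trivially minimizes $g_2$), and strong convexity of $g_1$ via \Cref{convexspec} makes $\wh Z_1$ well-defined. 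For $Z_2$: subtracting $Q_2 \wh Z_2 = -A_2(\XX^+)$ from $Q_2 Z_2^+ + A_2(\XX^+) = [C(\XX^+,\ZZZ^+)]_2$ and using $Q_2^TQ_2 \succeq \sigma I$ gives $\|\wh Z_2 - Z_2^+\|^2 \le \sigma^{-1}\|C(\XX^+,\ZZZ^+)\|^2$. For $Z_1$: applying \Cref{ymingeneric} to the block $\ZZZ$ and then restricting to $Z_0 = Z_0^+$, $Z_2 = Z_2^+$ shows $Z_1^+$ minimizes $g_1(Z_0^+,\cdot)$ over $\{Z_1 : Q_1 Z_1 = Q_1 Z_1^+\}$, while $\wh Z_1$ minimizes the same strongly convex function over $\{Z_1 : Q_1 Z_1 = -A_1(\XX^+,Z_0^+)\}$; these are two translations of $\opn{Null}(Q_1)$ whose separation is at most $\alpha_{Q_1}\|Q_1 Z_1^+ + A_1(\XX^+,Z_0^+)\| = \alpha_{Q_1}\|[C(\XX^+,\ZZZ^+)]_1\|$ by \Cref{subdistgeneric}, so \Cref{cvxargmin} gives $\|\wh Z_1 - Z_1^+\| \le (1+2\kappa_1)\alpha_{Q_1}\|C(\XX^+,\ZZZ^+)\|$ with $\kappa_1 = M_1/m_1$. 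Combining the two estimates, \Cref{ideal}(2) holds with $\zeta = (1+2\kappa_1)^2\lambda_{++}^{-1}(Q_1Q_1^T) + \sigma^{-1}$.

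With \Cref{ideal} established, \Cref{lagbdgeneric} gives that $\{\ux{\Lag}{k}\}$ is bounded below once $\rho \ge M_Y\zeta$; together with the monotone decrease already proved in \Cref{lagdec}, this shows $\{\ux{\Lag}{k}\}$ is convergent. I expect the main obstacle to be \Cref{ideal}(2): correctly decoupling the $\ZZ$-minimization, pinning down the feasible comparison points $\wh Z_1,\wh Z_2$, and extracting the $\|\cdot\|^2 \le \zeta\|C\|^2$ bounds — in particular carrying the two cases $Z_S = Z_1$ versus $Z_S = (Z_0,Z_1)$ through \Cref{convexspec} and invoking the characterization of $Z_1^+$ from \Cref{ymingeneric} at the correct sub-block.
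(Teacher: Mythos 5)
Your proposal is correct and follows essentially the same route as the paper: verify the two parts of \Cref{ideal} (with $Y_0 = Z_0$, $Y_1 = \ZZ$), construct $\wh Z_2 = -Q_2^{-1}A_2(\XX^+)$, use \Cref{ymingeneric} to identify $Z_1^+$ as the minimizer of $g_1(Z_0^+,\cdot)$ over $\{Q_1 Z_1 = Q_1 Z_1^+\}$, compare it to $\wh Z_1$ on the translated subspace, and invoke \Cref{lagbdgeneric}. The only cosmetic difference is that you inline the translation argument via \Cref{cvxargmin} and \Cref{subdistgeneric}, whereas the paper packages exactly that argument as \Cref{bicvxargmin} (with $\CC = \{0\}$); the resulting constants $\zeta$ differ only trivially.
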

\begin{proof}
We will apply \Cref{lagbdgeneric}. By \Cref{aobj}, $\phi$ is coercive on the feasible region. Thus, it suffices to show that \Cref{ideal} holds for the objective function $\phi$ and constraint $A(\XX,Z_0) + Q(\ZZ) = 0$, with final block $\ZZZ$.

In the notation of \Cref{lagbdgeneric}, we take $Y_0 = \{Z_0\}$, $Y_1 = \ZZ = (Z_1,Z_2)$. We first verify that \Cref{ideal}(1) holds. Recall that $\psi = h(Z_0) + g_1(Z_S) + g_2(Z_2)$ with $g_1$ and $g_2$ Lipschitz differentiable.  Fix any $Z_0$. For any $v \in \partial \psi(Z_0, \ZZ)$, we have
\begin{align*}
&\psi(Z_0,\ZZ') - \psi(Z_0,\ZZ) - \la v, (Z_0,\ZZ') - (Z_0,\ZZ)\ra \\
&\bump = g_1(Z_0,Z_1') - g_1(Z_0,Z_1) - \la \nabla g_1(Z_0,Z_1), (Z_0,Z'_1) - (Z_0,Z_1)\ra \\
&\bump\bump + g_2(Z'_2) - g_2(Z_2) - \la \nabla g_2(Z_2), Z'_2 - Z_2\ra \\
&\bump \leq \frac{M_1}{2}\|Z'_1 - Z_1\|^2 + \frac{M_2}{2}\|Z'_2 - Z_2\|^2
\end{align*}
Thus, \Cref{ideal}(1) is satisfied with $M_\psi = \frac{1}{2}(M_1 + M_2)$.

Next, we construct $\wh{\ZZ}$, a minimizer of $\psi(Z_0^+, \ZZ)$ over the feasible region with $\XX^+$ and $Z_0^+$ fixed, and find a value of $\zeta$ satisfying \Cref{ideal}\eqref{hatbound}. There is a unique solution $\wh{Z}_2$ which is feasible for $A_2(\XX^+) + Q_2(Z_2) = 0$, so we take $\wh{Z}_2 = -Q_2^{-1}A_2(\XX^+)$. We find that $\|\wh{Z}_2 - Z_2^+\|^2 \leq \lambda^{-1}_{min}(Q_2^TQ_2)\|Q_2(Z_2^+ - \wh{Z}_2)\|^2 = \lambda^{-1}_{min}(Q_2^TQ_2)\|A_2(\XX^+) + Q_2(Z_2^+)\|^2$. Thus, if $\zeta \geq \lambda^{-1}_{min}(Q_2^TQ_2)$, then $\|\wh{Z}_2 - Z_2^+\|^2 \leq \zeta \|A_2(\XX^+) + Q_2(Z_2^+)\|^2$.

To construct $\wh{Z}_1$, consider the spaces $
\UU_1 = \{ Z_1 : Q_1(Z_1) = - A_1(\XX^+,Z_0^+)\}$ and  $\UU_2 = \{ Z_1 : Q_1(Z_1) = Q_1(Z_1^+)\}$. From \Cref{ymingeneric}, $(Z_0^+,Z_1^+)$ is the minimizer of $h(Z_0) + g_1(Z_0,Z_1)$ over the subspace $$\UU_3 = \{ (Z_0,Z_1) : A_1(\XX^+,Z_0) + Q_1(Z_1) = A_1(\XX^+,Z_0^+) + Q_1(Z_1^+)\}.$$ Consider the function $g_0$ given by $g_0(Z_1) = g_1(Z_0^+,Z_1)$. It must be the case that $Z_1^+$ is the minimizer of $g_0$ over $\UU_2$, as any other $Z_1'$ with $Q_1(Z_1') = Q_1(Z_1^+)$ also satisfies $(Z_0^+, Z_1') \in \UU_3$.  By \Cref{convexspec}, $g_0$ inherits the $(m_1,M_1)$-strong convexity of $g_1$. Let
$$\wh{Z}_1 = \argmin_{Z_1} \{ g_0(Z_1): Z_1 \in \UU_1\}.$$
Notice that we can express the subspaces $\UU_1, \UU_2$ as $\UU_1 = \{ Z_1| Q_1(Z_1) + A_1(\XX^+,Z_0^+) \in \CC\}$ and $\UU_2 = \{ Z_1| Q_1(Z_1) - Q_1(Z_1^+) \in \CC\}$ for the closed convex set $\CC = \{0\}$. Since $Z_1^+$ is the minimizer of $g_0$ over $\UU_2$, \Cref{bicvxargmin} with $h = g_0$, and the subspaces $\UU_1$ and $\UU_2$, implies that
$$\|\wh{Z}_1 - Z_1^+\| \leq \gamma \|A_1(\XX^+,Z_0^+) + Q_1(Z_1^+)\|$$
where $\gamma$ is dependent only on $\kappa = \frac{M_1}{m_1}$ and $Q_1$.  Hence, taking $\zeta = \max\{\gamma^2, \lambda_{min}^{-1}(Q_2^TQ_2)\}$,
\begin{align*}
&\|\wh{\ZZ} - \ZZ^+\|^2 = \|\wh{Z}_2 - Z_2^+\|^2 + \|\wh{Z}_1 - Z_1^+\|^2 \\
&\bump \leq \zeta(\|A_2(\XX^+) + Q_2(Z_2^+)\|^2 + \|A_1(\XX^+,Z_0^+) + Q_1(Z_1^+)\|^2).
\end{align*}
Overall, we have shown that \Cref{ideal} is satisfied. Having verified the conditions of \Cref{lagbdgeneric}, we conclude that for sufficiently large $\rho$, $\{\ux{\Lag}{k}\}$ is bounded below.
\end{proof}

\begin{cor}\label{allbded}
For sufficiently large $\rho$, the sequence $\{(\ux{\XX}{k}, \ux{\ZZZ}{k}, \ux{\WW}{k})\}_{k=0}^\infty$ is bounded.
\end{cor}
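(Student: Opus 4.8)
The plan is to assemble results already established. By \Cref{lagdec}, for sufficiently large $\rho$ the sequence $\{\ux{\Lag}{k}\}_{k\ge 1}$ is monotonically nonincreasing, hence bounded above by $\ux{\Lag}{1}$, and by \Cref{lagbd} it is bounded below; thus $\{\ux{\Lag}{k}\}$ is bounded above and below.

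Next I would bound the primal iterates by invoking \Cref{varbdgeneric} with $U=\XX$ and $Y=\ZZZ$. Its hypotheses hold: $\phi$ is coercive on the feasible region by \Cref{aobj}(1); \Cref{ideal} holds for $\phi$ and the constraint $A(\XX,Z_0)+Q(\ZZ)=0$ with final block $\ZZZ$, as was verified inside the proof of \Cref{lagbd}; and $\{\ux{\Lag}{k}\}$ is bounded above and below by the previous step. Therefore $\{\ux{\XX}{k}\}$ and $\{\ux{\ZZZ}{k}\}$ are bounded.

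It then remains to bound $\{\ux{\WW}{k}\}$. Fix $i\in\{1,2\}$ and let $P_i$ denote the orthogonal projection onto $\opn{Im}(Q_i)$. The $k$-th dual update adds $\rho\bigl(A_i(\ux{\XX}{k+1},\ux{Z_0}{k+1})+Q_i(\ux{Z_i}{k+1})\bigr)$ to $W_i$, which lies in $\opn{Im}(Q_i)$ by \Cref{afinalblockim}; hence $(I-P_i)\ux{W_i}{k}=(I-P_i)\ux{W_i}{0}$ for all $k$, a fixed vector. For the remaining component, \Cref{foc} gives $Q_i^{T}\ux{W_i}{k}=-\nabla_{Z_i}\psi(\ux{\ZZZ}{k})$ for $k\ge 1$, so applying \Cref{leastim} with $R=Q_i$ and $y=P_i\ux{W_i}{k}$, and using $Q_i^{T}P_i=Q_i^{T}$ (since $\opn{Null}(Q_i^{T})=\opn{Im}(Q_i)^{\perp}$),
\[
\|P_i\ux{W_i}{k}\|^{2}\le \lambda_{++}^{-1}(Q_i^{T}Q_i)\,\|Q_i^{T}\ux{W_i}{k}\|^{2}=\lambda_{++}^{-1}(Q_i^{T}Q_i)\,\|\nabla_{Z_i}\psi(\ux{\ZZZ}{k})\|^{2}.
\]
Since $g_1$ and $g_2$ are Lipschitz differentiable, $\nabla_{Z_i}\psi$ is continuous, and $\{\ux{\ZZZ}{k}\}$ is bounded, so the right-hand side is bounded over $k$. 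Hence $\{\ux{W_i}{k}\}_{k\ge 1}$ is bounded; combining the two orthogonal components and the two indices $i$, and observing that dropping finitely many initial terms does not affect boundedness, $\{(\ux{\XX}{k},\ux{\ZZZ}{k},\ux{\WW}{k})\}_{k=0}^{\infty}$ is bounded.

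I do not expect a genuine obstacle here, since the corollary is essentially a bookkeeping consequence of \Cref{lagdec,lagbd,varbdgeneric,foc}. The only step that needs a short argument rather than a direct citation is the dual bound, where one uses \Cref{afinalblockim} to confine the dual increments to $\opn{Im}(Q_i)$ so that \Cref{leastim} controls the component of $\ux{W_i}{k}$ in $\opn{Im}(Q_i)$ via the first-order condition, while the orthogonal component remains frozen at its initial value.
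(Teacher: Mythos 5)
Your proposal is correct and follows essentially the same route as the paper: boundedness of $\{\ux{\XX}{k}\}$ and $\{\ux{\ZZZ}{k}\}$ via \Cref{varbdgeneric} (using the coercivity from \Cref{aobj} and the verification of \Cref{ideal} inside \Cref{lagbd}), and boundedness of the multipliers by splitting $\WW$ into its components in $\opn{Im}(Q)$ and $\opn{Im}(Q)^\perp$, freezing the latter via \Cref{afinalblockim} and controlling the former through \Cref{foc}, \Cref{leastim}, and the Lipschitz differentiability of $g_1,g_2$. The only cosmetic difference is that you carry out the dual argument blockwise for $i\in\{1,2\}$ while the paper works with the concatenated map $Q$ and multiplier $\WW$.
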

\begin{proof}
In \Cref{lagbd}, we showed that \Cref{ideal} holds. By assumption, $\phi$ is coercive on the feasible region. Thus, the conditions for \Cref{varbdgeneric} are satisfied, so $\{\ux{\XX}{k}\}$ and $\{\ux{\ZZZ}{k}\}$ are bounded.

To show that $\{ \ux{\WW}{k}\}$ is bounded, recall that $\WW^+ - \WW \in \opn{Im}(Q)$ by \Cref{afinalblockim}, and that $Q^T\WW^+= -\nabla_{(Z_1,Z_2)} \psi(\ZZZ^+)$ by \Cref{foc}. Taking an orthogonal decomposition of $\ux{\WW}{0}$ for the subspaces $\opn{Im}(Q)$ and $\opn{Im}(Q)^\perp$, we express $\ux{\WW}{0} = \ux{\WW}{0}_Q + \ux{\WW}{0}_P$, where $\ux{\WW}{0}_Q \in \opn{Im}(Q)$ and $\ux{\WW}{0}_P \in \opn{Im}(Q)^\perp$. Since $\WW^+ - \WW\in \opn{Im}(Q)$, it follows that if we decompose $\ux{\WW}{k} = \ux{\WW}{k}_Q + \ux{\WW}{k}_P$ with $\ux{\WW}{k}_P \in \opn{Im}(Q)^\perp$, then we have $\ux{\WW}{k}_P = \ux{\WW}{0}_P$ for every $k$. Thus, $\|\ux{\WW}{k}\|^2 = \|\ux{\WW}{k}_Q\|^2 + \|\ux{\WW}{0}_P\|^2$ for every $k$. Hence, it suffices to bound $\|\ux{\WW}{k}_Q\|$. Observe that $Q^T\ux{\WW}{k} = Q^T\ux{\WW}{0}_P + Q^T\ux{\WW}{k}_Q = Q^T\ux{\WW}{k}_Q$, because $\ux{\WW}{0}_P \in \opn{Im}(Q)^\perp = \opn{Null}(Q^T)$. Thus, by \Cref{foc}, $Q^T\ux{\WW}{k}_Q = -\nabla_{(Z_1,Z_2)} \psi(\ux{\ZZZ}{k})$. Since $\{\ux{\ZZZ}{k}\}$ is bounded and $g_1$ and $g_2$ are Lipschitz differentiable, we deduce that $\{ \|Q^T\ux{\WW}{k}_Q\|\}$ is bounded. By \Cref{leastim}, $\|\ux{\WW}{k}_Q\|^2 \leq \lambda^{-1}_{++}(Q^TQ) \|Q^T\ux{\WW}{k}_Q\|^2$, and so $\{ \|\ux{\WW}{k}_Q\|\}$ is bounded. Hence $\{\ux{\WW}{k}\}$ is bounded, completing the proof.
\end{proof}

\begin{cor}\label{lastconverges}
For sufficiently large $\rho$, we have $\|Z_S - Z_S^+\| \to 0$ and $\|Z_2 - Z_2^+\| \rightarrow 0$. Consequently, $\|\WW - \WW^+\| \rightarrow 0$ and every limit point is feasible.
\end{cor}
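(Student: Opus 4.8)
The plan is to read off the conclusion from the sufficient-decrease machinery already in place. The two key facts are: \Cref{lagdec} gives, for $\rho$ large, a constant $\epsilon > 0$ with $\ux{\Lag}{k} - \ux{\Lag}{k+1} \geq \epsilon(\|\ux{Z_S}{k} - \ux{Z_S}{k+1}\|^2 + \|\ux{Z_2}{k} - \ux{Z_2}{k+1}\|^2)$, and \Cref{lagbd} gives that $\{\ux{\Lag}{k}\}$ is bounded below; together with the monotonicity from \Cref{lagdec} this makes $\{\ux{\Lag}{k}\}$ convergent.

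First I would fix $\rho$ large enough that \Cref{lagdec,lagbd,allbded} all hold (each requires only ``$\rho$ sufficiently large'', so a common choice exists). Since $\{\ux{\Lag}{k}\}$ converges, $\ux{\Lag}{k} - \ux{\Lag}{k+1} \to 0$ --- indeed $\sum_k (\ux{\Lag}{k} - \ux{\Lag}{k+1}) = \ux{\Lag}{1} - \lim_k \ux{\Lag}{k} < \infty$ --- so the sufficient-decrease inequality forces $\|\ux{Z_S}{k} - \ux{Z_S}{k+1}\| \to 0$ and $\|\ux{Z_2}{k} - \ux{Z_2}{k+1}\| \to 0$.

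Next I would apply \Cref{winc}, which bounds $\|\ux{\WW}{k} - \ux{\WW}{k+1}\|^2 \leq \beta_1 \|\ux{Z_S}{k} - \ux{Z_S}{k+1}\|^2 + \beta_2 \|\ux{Z_2}{k} - \ux{Z_2}{k+1}\|^2$; the right-hand side tends to $0$, so $\|\ux{\WW}{k} - \ux{\WW}{k+1}\| \to 0$. Finally, feasibility of every limit point follows from \Cref{dualgeneric} applied with $\UU = (\XX,\ZZZ)$ and $C(\UU) = A(\XX,Z_0)+Q(\ZZ)$: the dual update gives $\|\ux{\WW}{k} - \ux{\WW}{k+1}\| = \rho\|A(\ux{\XX}{k+1},\ux{Z_0}{k+1}) + Q(\ux{\ZZ}{k+1})\|$, so the constraint residual tends to $0$, and continuity of the (multiaffine) map $A$ and the linear map $Q$ yields $A(\XX^\ast,Z_0^\ast)+Q(\ZZ^\ast)=0$ for every limit point $(\XX^\ast,\ZZZ^\ast,\WW^\ast)$.

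I do not expect a genuine obstacle here: every ingredient has already been proved, and the argument is purely a telescoping/propagation of the established estimates. The only point requiring a little care is ensuring that the chosen $\rho$ is simultaneously large enough for all of \Cref{lagdec,lagbd,allbded}, which is immediate since each hypothesis is of the form ``$\rho$ sufficiently large.''
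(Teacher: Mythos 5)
Your proposal is correct and follows essentially the same route as the paper: telescoping the sufficient-decrease bound from \Cref{lagdec} against the lower bound of \Cref{lagbd} to get $\|Z_S - Z_S^+\| \to 0$ and $\|Z_2 - Z_2^+\| \to 0$, then invoking \Cref{winc} for $\|\WW - \WW^+\| \to 0$ and \Cref{dualgeneric} for feasibility of limit points. The only cosmetic difference is that you also cite \Cref{allbded} when fixing $\rho$, which is harmless but not needed for this corollary.
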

\begin{proof}
From \Cref{lagdec}, we may choose $\rho$ so that the augmented Lagrangian decreases by at least $\epsilon(\|Z_S - Z_S^+\|^2 + \|Z_2 - Z_2^+\|^2)$ for some $\epsilon > 0$ in each iteration. Summing over $k$, $\epsilon \sum_{k=0}^\infty \|\ux{Z}{k}_S - \ux{Z}{k+1}_S\|^2 + \|\ux{Z}{k}_2 - \ux{Z}{k+1}_2\|^2 \leq \ux{\Lag}{0} - \lim_{k} \ux{\Lag}{k}$, which is finite by \Cref{lagbd}; hence, $\|Z_S - Z_S^+\| \rightarrow 0$ and $\|Z_2 - Z_2^+\| \rightarrow 0$.

Using \Cref{winc}, $\|\WW - \WW^+\|^2 \leq \beta_1 \|Z_S - Z_S^+\|^2 + \beta_2\|Z_2 - Z_2^+\|^2$, so $\|\WW - \WW^+\| \rightarrow 0$ as well. \Cref{dualgeneric} then implies that every limit point is feasible.
\end{proof}

Finally, we are prepared to prove the main theorems.

\begin{proof}[Proof (of \Cref{main})] \Cref{allbded} implies that limit points of $\{(\ux{\XX}{k}, \ux{\ZZZ}{k}, \ux{\WW}{k})\}$ exist. From \Cref{lastconverges}, every limit point is feasible.

We check the conditions of \Cref{limitfocconditionsgeneric}. Since $\ZZZ$ is the final block, it suffices to verify that $\|\WW - \WW^+\| \rightarrow 0$, and that the maps $\{\ux{C}{k}_<\}$ are uniformly bounded. That $\|\WW - \WW^+\| \rightarrow 0$ follows from \Cref{lastconverges}. Recall from \Cref{innerblockgeneric} that $\ux{C}{k}_<$ is the $\ZZZ$-linear term of $\ZZZ \mapsto A(\ux{\XX}{k}, Z_0) + Q(\ZZ)$; since $A$ is multiaffine, \Cref{multiaffsmooth} and the boundedness of $\{\ux{\XX}{k}\}_{k=0}^\infty$ (\Cref{allbded}) imply that indeed, $\{\ux{C}{k}_<\}$ is uniformly bounded in operator norm. Thus, the conditions of \Cref{limitfocconditionsgeneric} are satisfied. This exhibits the desired sequence $\ux{v}{k} \in \partial_\ZZZ \Lag (\ux{\XX}{k}, \ux{\ZZZ}{k}, \ux{\WW}{k})$ with $\ux{v}{k} \rightarrow 0$ of \Cref{main}. \Cref{limitfocgeneric} then completes the proof.
\end{proof}

\subsection{Proof of \Cref{main2}}\label{sec:main2proof}

Under \Cref{asm2}, we proceed to prove \Cref{main2}. For brevity, we introduce the notation $\XX_{<i}$ for the variables $(X_0,\ldots,X_{i-1})$ and $\XX_{>i}$ for $(X_{i+1},\ldots,X_n)$.

\begin{lma}\label{xconverges}
For sufficiently large $\rho$, we have $\|X_\ell - X_\ell^+\| \rightarrow 0$ for each $1 \leq \ell \leq n$, and $\|Z_0 - Z_0^+\| \rightarrow 0$.
\end{lma}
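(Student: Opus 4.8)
The plan is to run a telescoping argument on the monotone, convergent sequence $\{\ux{\Lag}{k}\}$ (\Cref{lagdec}, \Cref{lagbd}) and then read off the convergence of the successive differences from the individual per-block decreases. First I would write the one-step drop as
\[
\ux{\Lag}{k} - \ux{\Lag}{k+1} = \sum_{i=0}^n D_{X_i}^k + \bigl(D_{\ZZZ}^k - I_\WW^k\bigr),
\]
where $D_{X_i}^k \ge 0$ is the decrease from minimizing the $X_i$-block, $D_{\ZZZ}^k \ge 0$ is the decrease from minimizing $\ZZZ$ (each nonnegative since ADMM minimizes that block), and $I_\WW^k = \tfrac{1}{\rho}\|\ux{\WW}{k}-\ux{\WW}{k+1}\|^2 \ge 0$ is the dual increase (\Cref{dualgeneric}). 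The proof of \Cref{lagdec} already shows $D_{\ZZZ}^k - I_\WW^k \ge 0$ for $\rho$ large, so the right-hand side is a sum of nonnegative terms; summing over $k$ and using convergence of $\{\ux{\Lag}{k}\}$ forces $D_{X_i}^k \to 0$ for every $i$ and $D_{\ZZZ}^k - I_\WW^k \to 0$, and since $I_\WW^k \to 0$ by \Cref{lastconverges}, also $D_{\ZZZ}^k \to 0$.

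Next, for each $\ell \ge 1$ I would bound $D_{X_\ell}^k$ below by a $0$-forcing function of $\|\ux{X}{k}_\ell - \ux{X}{k+1}_\ell\|$, so that $D_{X_\ell}^k \to 0$ yields $\|X_\ell - X_\ell^+\| \to 0$. Applying \Cref{decgeneric} to the $X_\ell$-subproblem (with $f_U(X_\ell) = F(\dots,X_\ell,\dots) + f_\ell(X_\ell)$, since $\psi$ and the $f_i$ with $i\ne\ell$ do not involve $X_\ell$) gives $D_{X_\ell}^k = f_U(X_\ell) - f_U(X_\ell^+) - \la v, X_\ell - X_\ell^+\ra + \tfrac{\rho}{2}\|C_\ell(X_\ell - X_\ell^+)\|^2$ for some $v \in \partial f_U(X_\ell^+)$, where $C_\ell$ is the $X_\ell$-linear term of the constraint map. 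Under \Cref{axainj}(1), $F$ is independent of $X_\ell$ and the strengthened convexity of $f_\ell$ (in its bounded-set form, legitimate since the iterates are bounded) gives $D_{X_\ell}^k \ge \Delta_\ell(\|X_\ell - X_\ell^+\|)$ directly. Under \Cref{axainj}(2), I split $v = \nabla_{X_\ell}F + u$ with $u \in \partial f_\ell(X_\ell^+)$, bound the $F$-contribution below by $-\tfrac{M_F}{2}\|X_\ell - X_\ell^+\|^2$ (\Cref{lipschdfbound}), the $f_\ell$-contribution below by $-\tfrac{\mu_\ell}{2}\|X_\ell - X_\ell^+\|^2$ (convexity of $f_\ell$, giving $\mu_\ell = 0$, or \Cref{lipschdfbound}, giving $\mu_\ell = M_\ell$), and the penalty term below by $\tfrac{\rho}{2}\lambda_{min}(R_\ell^TR_\ell)\|X_\ell - X_\ell^+\|^2$ because the $r(\ell)$-th component of $C_\ell$ acts on $X_\ell$ through the injective map $R_\ell$; for $\rho > \lambda_{min}^{-1}(R_\ell^TR_\ell)(M_F + \mu_\ell)$ this is $\ge a_\ell\|X_\ell - X_\ell^+\|^2$ with $a_\ell > 0$. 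Either way $D_{X_\ell}^k \to 0$ forces $\|X_\ell - X_\ell^+\| \to 0$.

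For $Z_0$ I would split into the three cases of \Cref{azainj}. If $Z_0 \in Z_S$ (case (2)), then $\|Z_0 - Z_0^+\| \le \|Z_S - Z_S^+\| \to 0$ by \Cref{lastconverges}. Otherwise $Z_S = Z_1$, so additionally $\|Z_1 - Z_1^+\| \to 0$, and I work from the expansion of $D_{\ZZZ}^k$ used to prove \Cref{yzdecr}: its $g_1$-part is $\ge \tfrac{m_1}{2}\|Z_S - Z_S^+\|^2 \ge 0$ and its $g_2$-part plus the $Q_2$-penalty is $\ge \tfrac{\rho\sigma - M_2}{2}\|Z_2 - Z_2^+\|^2 \ge 0$ for $\rho$ large, so $D_{\ZZZ}^k$ dominates the remaining $Z_0$-dependent terms. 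Under case (1), strengthened convexity of $h$ upgrades $h(Z_0) - h(Z_0^+) - \la v, Z_0 - Z_0^+\ra \ge 0$ to $\ge \Delta_0(\|Z_0 - Z_0^+\|)$, so $D_{\ZZZ}^k \ge \Delta_0(\|Z_0 - Z_0^+\|)$ and $D_{\ZZZ}^k \to 0$ finishes. Under case (3), the index $r(0)$ must be that of the first constraint (the second, $A_2(\XX) + Q_2(Z_2)$, has no $Z_0$), so $A_1(\XX^+, Z_0 - Z_0^+) = R_0(Z_0 - Z_0^+)$ with $R_0$ injective, and the corresponding penalty term gives $\tfrac{\rho}{2}\|R_0(Z_0 - Z_0^+) + Q_1(Z_1 - Z_1^+)\|^2 \le D_{\ZZZ}^k \to 0$; since $\|Q_1(Z_1 - Z_1^+)\| \to 0$, the triangle inequality and injectivity of $R_0$ give $\|Z_0 - Z_0^+\| \to 0$.

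The step I expect to be the main obstacle is the $Z_0$ analysis: because $Z_0$ is updated jointly with $Z_1$ and $Z_2$ inside the single block $\ZZZ$, there is no stand-alone per-block decrease inequality to quote, so one must reopen the explicit term-by-term expansion of $D_{\ZZZ}^k$ from \Cref{yzdecr}, track precisely which summands remain nonnegative once $\rho$ is large, and isolate exactly the $Z_0$-dependent part in each of the three structural cases. A secondary nuisance is bookkeeping: one must fix a single threshold on $\rho$ that simultaneously makes positive the coefficients appearing in \Cref{lagdec}, \Cref{yzdecr}, and the quantities $\rho\lambda_{min}(R_\ell^TR_\ell) - M_F - \mu_\ell$ — precisely the bound recorded in the remark following \Cref{main2}.
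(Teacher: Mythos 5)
Your proposal is correct and follows essentially the same route as the paper's proof: telescoping the monotone convergent Lagrangian to force each nonnegative per-block decrease to zero, then lower-bounding the $X_\ell$-decrease via \Cref{decgeneric} under the two cases of \Cref{axainj}, and handling $Z_0$ through the three cases of \Cref{azainj} by reopening the expansion behind \Cref{yzdecr} and, in case (3), using $\|Z_1-Z_1^+\|\rightarrow 0$ with injectivity of $R_0$. The only cosmetic difference is notational (the paper isolates the $r(0)$-th constraint with the submatrix $Q_r$ of $Q_1$ rather than writing the bound for all of $A_1$), which does not affect the argument.
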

\begin{proof}
First, we consider $X_\ell$ for $1 \leq \ell \leq n$. Let $A_X(X_\ell) = b_X$ denote the linear system of constraints when updating $X_\ell$. Recall that under \Cref{asm2}, $f(\XX) = F(\XX) + \sum_{i=0}^n f_i(X_i)$, where $F$ is a smooth function. By \Cref{decgeneric}, the change in the augmented Lagrangian after updating $X_\ell$ is given (for some $v \in \partial f_\ell(X_\ell)$) by
\begin{align}\label{eq:xelldec} 
&f_\ell(X_\ell) - f_\ell(X_\ell^+) - \la v, X_\ell - X_\ell^+  \ra + \frac{\rho}{2}\|A_X(X_\ell) - A_X(X_\ell^+)\|^2 \\
\nonumber &+ F(\XX_{<\ell}^+,X_\ell,\XX_{>\ell}) - F(\XX_{<\ell}^+,X_\ell^+,\XX_{>\ell})  - \la \nabla_{X_\ell}F(\XX_{<\ell}^+,X_\ell^+,\XX_{>\ell}) , X_\ell - X_\ell^+\ra.
\end{align}

By \Cref{lagdec}, the change in the augmented Lagrangian from updating $\WW$ is less than the change from updating $\ZZZ$. Since \eqref{eq:xelldec} is nonnegative for every $\ell$, it follows that the change in the augmented Lagrangian in each iteration is greater than the sum of the change from updating each $X_\ell$, and therefore greater than \eqref{eq:xelldec} for each $\ell$. By \Cref{lagbd}, the augmented Lagrangian converges, so the expression \eqref{eq:xelldec} must converge to 0. We will show that this implies the desired result for both cases of \Cref{axainj}.

\begin{description}
\item[1] $F(X_0,\ldots,X_n)$ is independent of $X_\ell$ and there exists a 0-forcing function $\Delta_\ell$ such that for any $v \in \partial f_\ell(X_\ell^+)$, $f_\ell(X_\ell) - f_\ell(X_\ell^+) - \la v, X_\ell - X_\ell^+ \ra \geq \Delta_\ell(\|X_\ell^+ - X_\ell\|)$. In this case, \eqref{eq:xelldec} is bounded below by $\Delta_\ell(\|X_\ell^+ - X_\ell\|)$. Since \eqref{eq:xelldec} converges to 0, $\Delta_\ell(\|X_\ell^+ - X_\ell\|) \rightarrow 0$, which implies that $\|X_\ell - X_\ell^+\| \rightarrow 0$.

\item[2] There exists an index $r(\ell)$ such that $A_{r(\ell)}(\XX,Z_0)$ can be decomposed into the sum of a multiaffine map of $\XX_{\neq \ell}, Z_0$, and an injective linear map $R_\ell(X_\ell)$. Since $A_X = \nabla_{X_\ell}A(\XX,Z_0)$, the $r(\ell)$-th component of $A_X$ is equal to $R_\ell$. Thus, the $r(\ell)$-th component of $A_X(X_\ell) - A_X(X_\ell^+)$ is $R_\ell(X_\ell - X_\ell^+)$.

Let $\mu_\ell = M_\ell$ if $f_\ell$ is $M_\ell$-Lipschitz differentiable, and $\mu_\ell = 0$ if $f_\ell$ is convex and nonsmooth. We then have
\begin{align}
\nonumber &\Lag(\XX_{<\ell}^+, X_\ell, \XX_{>\ell}, \ZZZ, \WW) - \Lag(\XX_{<\ell}^+, X_\ell^+, \XX_{>\ell}, \ZZZ, \WW) \\
\nonumber & = f_\ell(X_\ell) - f_\ell(X_\ell^+) - \la v, X_\ell - X_\ell^+\ra + \frac{\rho}{2}\|A_X(X_\ell) - A_X(X_\ell^+)\|^2 \\
\nonumber & \bump + F(\XX_{<\ell}^+,X_\ell,\XX_{>\ell}) - F(\XX_{<\ell}^+,X_\ell^+,\XX_{>\ell})  - \la \nabla_{X_\ell}F(\XX_{<\ell}^+,X_\ell^+,\XX_{>\ell}) , X_\ell - X_\ell^+\ra \\
\nonumber &\geq -\frac{(\mu_\ell + M_F)}{2}\|X_\ell - X_\ell^+\|^2 + \frac{\rho}{2}\|R_\ell(X_\ell - X_\ell^+)\|^2 \\
\label{eq:xsuffdec} &\geq \frac{1}{2} \left(\rho\lambda_{min}(R_\ell^TR_\ell) - \mu_\ell - M_F\right) \|X_\ell - X_\ell^+\|^2.
\end{align}
Taking $\rho \geq \lambda_{min}^{-1}(R_\ell^TR_\ell)(\mu_\ell + M_F)$, we see that $\|X_\ell - X_\ell^+\| \rightarrow 0$.
\end{description}

It remains to show that $\|Z_0 - Z_0^+\| \rightarrow 0$ in all three cases of \Cref{azainj}. Two cases are immediate. If $Z_0 \in Z_S$, then $\|Z_0 - Z_0^+\| \rightarrow 0$ is implied by \Cref{lastconverges}, because $\|Z_S - Z_S^+\| \rightarrow 0$. If $h(Z_0)$ satisfies a strengthened convexity condition, then by inspecting the terms of \cref{eq:dec}, we see that the same argument for $X_\ell$ applies to $Z_0$. Thus, we assume that \Cref{azainj}(3) holds. Let $A_X(\ZZZ) = b_X$ denote the system of constraints when updating $\ZZZ$. The third condition of \Cref{azainj} implies that for $r = r(0)$, the $r$-th component of the system of constraints $A_1(\XX,Z_0) + Q_1(Z_1) = 0$ is equal to $A_0'(\XX) + R_0(Z_0) + Q_r(Z_1) = 0$ for the corresponding submatrix $Q_r$ of $Q_1$. Hence, the $r$-th component of $A_X(\ZZZ)$ is equal to $R_0(Z_0) + Q_r(Z_1)$. Inspecting the terms of \cref{eq:dec}, we see that 
$$\Lag(\XX^+,\ZZZ, \WW) - \Lag(\XX^+,\ZZZ^+,\WW) \geq \frac{\rho}{2}\|R_0(Z_0) + Q_r(Z_1) - (R_0(Z_0^+) + Q_r(Z_1^+))\|^2$$
Since $\ux{\Lag}{k}$ converges, and the increases of $\ux{\Lag}{k}$ are bounded by $\frac{1}{\rho}\|\WW - \WW^+\| \rightarrow 0$, we must also have $\Lag(\XX^+,\ZZZ, \WW) - \Lag(\XX^+,\ZZZ^+,\WW) \rightarrow 0$, or else the updates of $\ZZZ$ would decrease $\ux{\Lag}{k}$ to $-\infty$. By \Cref{lastconverges}, $\|Z_1 - Z_1^+\| \rightarrow 0$, since $Z_1$ is always part of $Z_S$. Hence $\|R_0(Z_0 - Z_0^+)\| \rightarrow 0$, and the injectivity of $R_0$ implies that $\|Z_0 - Z_0^+\| \rightarrow 0$. Combined with \Cref{lastconverges}, we conclude that $\|\ZZZ - \ZZZ^+\| \rightarrow 0$.
\end{proof}

\begin{proof}[Proof (of \Cref{main2})] We first confirm that the conditions of \Cref{limitfocconditionsgeneric} hold for $\{X_0,\ldots,X_n\}$. \Cref{allbded,lastconverges} together show that all variables and constraints are bounded, and that $\|\WW - \WW^+\| \rightarrow 0$. Since $\|X_\ell - X_\ell^+\| \rightarrow 0$ for all $\ell \geq 1$, and $\|\ZZZ - \ZZZ^+\| \rightarrow 0$, we have $\|U_>^+ - U_>\| \rightarrow 0$, and the conditions $\|C_> - C_<\| \rightarrow 0$ and $\|b_> - b_<\| \rightarrow 0$  follow from \Cref{constraintconvergegeneric}. Note that $X_0$ is not part of $X_>$ for any $\ell$, which is why we need only that $\{\ux{X}{k}_0\}$ is bounded, and $\|X_\ell - X_\ell^+\| \rightarrow 0$ for $\ell \geq 1$. Thus, \Cref{limitfocconditionsgeneric} implies that we can find $\ux{v}{k}_x \in \partial_\XX \Lag(\ux{\XX}{k}, \ux{\ZZZ}{k}, \ux{\WW}{k})$ with $\ux{v}{k}_x \rightarrow 0$; combined with the subgradients in $\partial_{\ZZZ}\Lag(\ux{\XX}{k}, \ux{\ZZZ}{k}, \ux{\WW}{k})$ converging to 0 (\Cref{main}) and the fact that $\nabla_W \Lag(\ux{\XX}{k}, \ux{\ZZZ}{k}, \ux{\WW}{k}) \rightarrow 0$ (\Cref{dualgeneric}), we obtain a sequence $\ux{v}{k} \in \partial \Lag(\ux{\XX}{k}, \ux{\ZZZ}{k}, \ux{\WW}{k})$ with $\ux{v}{k} \rightarrow 0$.

Having verified the conditions for \Cref{limitfocconditionsgeneric}, \Cref{limitfocgeneric} then shows that all limit points are constrained stationary points. Part of this theorem (that every limit point is a constrained stationary point) can also be deduced directly from \Cref{iteratesconvergeimpliesstationary} and \Cref{xconverges}.
\end{proof}

\subsection{Proof of \Cref{mainkl}}\label{sec:mainklproof}
\begin{proof}
We will apply \Cref{absconvergence} to $\Lag(\XX,\ZZZ,\WW)$. First, for \textbf{H2}, observe that the desired subgradient $w^{k+1}$ is provided by \Cref{generalinnerblock}. Since the functions $V$ and $\nabla F$ are continuous, and all variables are bounded by \Cref{allbded}, $V$ and $\nabla F$ are \emph{uniformly} continuous on a compact set containing $\{(\ux{\XX}{k}, \ux{\ZZZ}{k}, \ux{\WW}{k})\}_{k=0}^\infty$. Hence, we can find $b$ for which \textbf{H2} is satisfied. 

Together, \Cref{winc} and \Cref{lagdec} imply that \textbf{H1} holds for $\WW$ and $\ZZ$. Using the hypothesis that \Cref{axainj}(2) holds for $X_0, X_1,\ldots,X_n$, the inequality \eqref{eq:xsuffdec} implies that property \textbf{H1} in \Cref{absconvergence} holds for $X_0,X_1,\ldots,X_n$. Lastly, \Cref{azainj}(2) holds, so $Z_S = (Z_0,Z_1)$ and thus $g_1$ is a strongly convex function of $Z_0$, so \Cref{yzdecr} implies that \textbf{H1} also holds for $Z_0$. Thus, we see that \textbf{H1} is satisfied for all variables. Finally, \Cref{asm2} implies that $\phi$, and therefore $\Lag$, is continuous on its domain, so \Cref{absconvergence} applies and completes the proof.
\end{proof}

\section*{Acknowledgements}
We thank Qing Qu, Yuqian Zhang, and John Wright for helpful discussions about applications of multiaffine ADMM. We thank Wotao Yin for his feedback, and for bringing the paper \cite{HCWSH2016} to our attention. We thank Yenson Lau for discussions about numerical experiments.

\bibliographystyle{siam}
\bibliography{admm}

\appendix
\section{Proofs of Technical Lemmas}\label{apdx:technical}
We provide proofs of the technical results in \Cref{sec:mainresults,sec:prelim,sec:general}. \Cref{lipschdfbound,convexspec,leastim,subdistgeneric} are standard results, so we omit their proofs for space considerations.

\subsection{Proof of \Cref{im_counterexample}}
\begin{proof}
The augmented Lagrangian of this problem is $\Lag(x,y,w) = x^2 + y^2 + w(xy - 1) + \frac{\rho}{2}(xy - 1)^2$, and thus $\frac{\partial}{\partial x} \Lag(x,y,w) = x(2 + \rho y^2) + y(w-\rho)$. If $y = 0$ or $w = \rho$, the minimizer of the $x$-subproblem is $x^+ = 0$. Likewise, if $x = 0$, then $y^+ = 0$. Hence, if either $\ux{y}{k} = 0$ or $\ux{w}{k} = \rho$, we have $(\ux{x}{j}, \ux{y}{j}) = (0,0)$ for all $j > k$. The multiplier update is then $w^+ = w - \rho$, so $\ux{w}{k} \rightarrow -\infty$.
\end{proof}

\subsection{Proof of \Cref{constraint_prox}}
\begin{proof}
We proceed by induction. Since $Z_3$ is part of the final block and $W_3^k = 0$, the minimization problem for $Z_3^{k+1}$ is $\min_{Z_3} \|S^{1/2}(Z_3 - X_\ell^{k+1})\|^2$, for which $Z_3^{k+1} = X_\ell^{k+1}$ is an optimal solution. The update for $W_3^{k+1}$ is then $W_3^{k+1} = \rho(X_\ell^{k+1} - Z_3^{k+1}) = 0$.
\end{proof}

\subsection{Proof of \Cref{crcqlma}}
\begin{proof}
Observe that $\nabla C(x,z) = \begin{pmatrix} \nabla A(x) & Q \end{pmatrix}$. The condition $\opn{Im}(Q) \supseteq \opn{Im}(A)$ implies that for every $x$, $\opn{Im}(Q) \supseteq \opn{Im}(\nabla A(x))$, and thus $\opn{Null}(Q^T) \subseteq \opn{Null}((\nabla A(x))^T)$. The result follows immediately.
\end{proof}

\subsection{Proof of \Cref{multiaffinesumform}}
\begin{proof}
We proceed by induction on $n$. When $n = 1$, a multiaffine map is an affine map, so $\MM(X_1) = A(X_1) + B$ as desired. Suppose now that the desired result holds for any multiaffine map of $n-1$ variables. Given a subset $S \subseteq \{1,\ldots,n\}$, let $X_S$ denote the point with $(X_S)_j = X_j$ for $j \in S$, and $(X_S)_j = 0$ for $j \notin S$. That is, the variables not in $S$ are set to 0 in $X_S$. Consider the multiaffine map $\NN$ given by
$$\NN(X_1,\ldots,X_n) = \MM(X_1,\ldots,X_n) + \sum_{|S| \leq n- 1} (-1)^{n-|S|} \MM(X_S)$$
where the sum runs over all subsets $S \subseteq \{1,\ldots,n\}$ with $|S| \leq n -1$. Since $X_S \mapsto \MM(X_S)$ is a multiaffine map of $|S|$ variables, the induction hypothesis implies that $\MM(X_S)$ can be written as a sum of multilinear maps. Hence, it suffices to show that $\NN(X_1,\ldots,X_n)$ is multilinear, in which case $\MM(X_1,\ldots,X_n) = \NN(X_1,\ldots,X_n) - \sum_S (-1)^{n-|S|}\MM(X_S)$ is a sum of multilinear maps.

We verify the condition of multilinearity. Take $k \in \{1,\ldots,n\}$, and write $U = (X_j:j \neq k)$. Since $\MM$ is multiaffine, there exists a linear map $A_U(X_k)$ such that $\MM(U,X_k) = A_U(X_k) + \MM(U,0)$. Hence, we can write
\begin{equation}\label{eq:masub}
\MM(U,X_k + \lambda Y_k) = \MM(U,X_k) + \lambda \MM(U,Y_k) - \lambda \MM(U,0).
\end{equation}
By the definition of $\NN$, $\NN(U,X_k + \lambda Y_k)$ is equal to
\begin{align*}
\MM(U,X_k + \lambda Y_k) + \sum_{k \in S} (-1)^{n - |S|} \MM(U_{S \setminus k}, X_k + \lambda Y_k) + \sum_{k \notin S} (-1)^{n - |S|} \MM(U_S,0)
\end{align*}
where the sum runs over $S$ with $|S| \leq n-1$. Making the substitution \eqref{eq:masub} for every $S$ with $k \in S$, we find that $\NN(U,X_k + \lambda Y_k)$ is equal to
\begin{align}
\label{eq:forN} &\MM(U,X_k) + \lambda \MM(U,Y_k) - \lambda \MM(U,0) \\
\nonumber &\bump + \sum_{k \in S} (-1)^{n - |S|} (\MM(U_{S \setminus k}, X_k) + \lambda \MM(U_{S \setminus k},Y_k) - \lambda \MM(U_{S \setminus k},0)) \\
&\nonumber \bump + \sum_{k \notin S} (-1)^{n - |S|} \MM(U_S,0)
\end{align}
Our goal is to show that $\NN(U,X_k + \lambda Y_k) = \NN(U,X_k) + \lambda \NN(U,Y_k)$. Since
$$\NN(U,Y_k) = \MM(U,Y_k) + \sum_{k \in S} (-1)^{n - |S|}\MM(U_{S \setminus k},Y_k) + \sum_{k \notin S}(-1)^{n - |S|}\MM(U_S,0)$$
we add and subtract $\lambda \sum_{k \notin S} (-1)^{n-|S|}\MM(U_S,0)$ in \eqref{eq:forN} to obtain the desired expression $\NN(U,X_k) + \lambda \NN(U,Y_k)$, minus a residual term $$\lambda \left( \MM(U,0) + \sum_{k \in S} (-1)^{n - |S|} \MM(U_{S \setminus k},0) + \sum_{k \notin S} (-1)^{n-|S|}\MM(U_S,0) \right)$$ It suffices to show the term in parentheses is 0.

There is exactly one set $S$ with $k \notin S$ with $|S| = n - 1$, and for this set, $U_S = U$. For this $S$, the terms $\MM(U,0)$ and $(-1)^{n - (n-1)}\MM(U_S,0)$ cancel out. The remaining terms are
\begin{equation}\label{eq:invol}
\sum_{k \in S, |S| \leq n-1} (-1)^{n-|S|} \MM(U_{S \setminus k},0) + \sum_{k \notin S, |S| \leq n-2} (-1)^{n-|S|} \MM(U_S,0)\end{equation}
There is a bijective correspondence between $\{S:k \notin S, |S| \leq n-2\}$ and $\{S:k \in S, |S| \leq n-1\}$ given by $S \leftrightarrow S \cup \{k\}$. Since $|S \cup \{k\}| = |S| + 1$, \eqref{eq:invol} becomes
$$\sum_{k \notin S, |S| \leq n-2} ((-1)^{n-|S|-1} + (-1)^{n-|S|}) \MM(U_S,0) = 0$$
which completes the proof.
\end{proof}

\subsection{Proof of \Cref{multiaffsmooth}}
We prove two auxiliary lemmas, from which \Cref{multiaffsmooth} follows as a corollary.

\begin{lma}\label{bilinearuniformbd}
Let $\MM$ be a multilinear map. There exists a constant $\sigma_M$ such that $\|\MM(X_1,\ldots,X_n)\| \leq \sigma_M\prod \|X_i\|$.
\end{lma}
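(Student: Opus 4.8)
The plan is to reduce the estimate to a finite sum by exploiting finite-dimensionality, namely by fixing bases. First I would choose an orthonormal basis $\{e^{(i)}_1,\dots,e^{(i)}_{d_i}\}$ of each $\mathbb{E}_i$, where $d_i = \dim \mathbb{E}_i$, and write each argument in coordinates as $X_i = \sum_{j=1}^{d_i} x_{i,j}\, e^{(i)}_j$. Applying multilinearity one slot at a time (each map $X_i \mapsto \MM(\dots,X_i,\dots)$ is linear by definition, so a short induction on $n$ that peels off one argument at a time applies), I would expand
$$\MM(X_1,\dots,X_n) = \sum_{j_1=1}^{d_1}\cdots\sum_{j_n=1}^{d_n} x_{1,j_1}\cdots x_{n,j_n}\, \MM\!\left(e^{(1)}_{j_1},\dots,e^{(n)}_{j_n}\right).$$

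Next I would set $C := \max\{\, \|\MM(e^{(1)}_{j_1},\dots,e^{(n)}_{j_n})\| : 1 \le j_i \le d_i\,\}$, which is finite since it is a maximum over finitely many indices. By the triangle inequality,
$$\|\MM(X_1,\dots,X_n)\| \le C \sum_{j_1,\dots,j_n} |x_{1,j_1}|\cdots|x_{n,j_n}| = C \prod_{i=1}^n \Big(\sum_{j=1}^{d_i} |x_{i,j}|\Big),$$
i.e. $\|\MM(X_1,\dots,X_n)\| \le C \prod_{i=1}^n \|X_i\|_1$, where $\|\cdot\|_1$ denotes the $\ell_1$-norm of the coordinate vector. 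Finally, invoking the equivalence of norms in finite dimensions — concretely, $\|X_i\|_1 \le \sqrt{d_i}\,\|X_i\|$, using that the chosen basis is orthonormal so $\|X_i\|$ equals the Euclidean norm of its coordinates — I would conclude $\|\MM(X_1,\dots,X_n)\| \le \big(C \prod_{i=1}^n \sqrt{d_i}\big)\prod_{i=1}^n \|X_i\|$. Thus the lemma holds with $\sigma_M := C \prod_{i=1}^n \sqrt{d_i}$.

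There is no genuine obstacle here; this is a routine finite-dimensional argument. The only points requiring a little care are justifying the full multilinear expansion from the one-argument-at-a-time linearity in the definition, and recording a correct constant in the norm-equivalence step. A cosmetic variant replaces the $\ell_1$-to-$\ell_2$ bound by a Cauchy–Schwarz estimate on each inner sum $\sum_j |x_{i,j}|$, yielding the same constant; I would use whichever reads more cleanly in context.
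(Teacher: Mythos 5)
Your proof is correct, and it takes a genuinely different route from the paper. The paper argues by induction on the number of blocks combined with the uniform boundedness principle: it fixes all but the last argument on the unit spheres, uses the inductive hypothesis to get pointwise bounds on the family of linear maps $\MM_U$ with $\|X_1\|=\cdots=\|X_{n-1}\|=1$, and then invokes Banach--Steinhaus to extract a uniform operator-norm bound $\sigma_M$. You instead exploit finite-dimensionality directly: expand each argument in an orthonormal basis, use multilinearity to reduce to the finitely many values $\MM(e^{(1)}_{j_1},\dots,e^{(n)}_{j_n})$, and finish with the triangle inequality and the $\ell_1$-to-$\ell_2$ comparison $\|X_i\|_1\le\sqrt{d_i}\,\|X_i\|$. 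Your argument is more elementary (no functional-analytic machinery) and yields an explicit constant $\sigma_M = C\prod_i\sqrt{d_i}$, at the cost of being basis-dependent and tied to finite dimensions from the outset; the paper's argument is basis-free and organized around the same one-variable-at-a-time structure used elsewhere in the paper, though it too ultimately relies on finite-dimensionality (the base case that a linear map is bounded, and the coordinate-free induction). Both establish exactly the statement needed, and the slightly lossy constant in your version is immaterial since the lemma only asserts existence of some $\sigma_M$.
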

\begin{proof}
We proceed by induction on $n$. When $n = 1$, $\MM$ is linear. Suppose it holds for any multilinear map of up to $n - 1$ blocks. Given $U = (X_1,\ldots,X_{n-1})$, let $\MM_U$ be the linear map $\MM_U(X_n) = \MM(U,X_n)$, and let $\FF$ be the family of linear maps $\FF = \{ \MM_U : \|X_1\| = 1,\ldots,\|X_{n-1}\| = 1\}$. Now, given $X_n$, let $\MM_{X_n}$ be the \emph{multilinear} map $\MM_{X_n}(U) = \MM(U,X_n)$. By induction, there exists some $\sigma_{X_n}$ for $\MM_{X_n}$. For every $X_n$, we see that
\begin{align*}
\sup_\FF \|\MM_U(X_n)\| &= \sup \{ \|\MM(X_1,\ldots,X_n)\|: \|X_1\| = 1,\ldots,\|X_{n-1}\| = 1\} \\
&= \sup_{\|X_1\| = 1,\ldots,\|X_{n-1}\| = 1} \|\MM_{X_n}(U)\| \leq \sigma_{X_n} < \infty
\end{align*}
Thus, the uniform boundedness principle \cite{S2011} implies that 
$$\sigma_M := \sup_\FF \|\MM_U\|_{op} =  \sup\limits_{\|X_1\| = 1,\ldots,\|X_n\| = 1} \| \MM(X_1,\ldots,X_n)\| < \infty$$
Given any $\{X_1,\ldots,X_n\}$, we then have $\|\MM(X_1,\ldots,X_n)\| \leq \sigma_M \prod \|X_i\|$.
\end{proof}

\begin{lma}\label{multilindiffbd}
Let $\MM(X_1,\ldots,X_n)$ be a multilinear map with $X = (X_1,\ldots,X_n)$ and $X' = (X_1', \ldots,X_n')$ being two points with the property that, for all $i$, $\|X_i\| \leq d$, $\|X'_i\| \leq d$, and $\|X_i - X'_i\| \leq \epsilon$. Then $\|\MM(X) - \MM(X')\| \leq n\sigma_M d^{n-1} \epsilon$, where $\sigma_M$ is from \Cref{bilinearuniformbd}.
\end{lma}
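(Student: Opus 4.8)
The plan is to prove the estimate by a standard telescoping decomposition of $\MM(X) - \MM(X')$ into $n$ terms, each differing in only a single argument, and then to control each term using the multilinearity of $\MM$ together with the uniform bound from \Cref{bilinearuniformbd}.

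Concretely, first I would write the telescoping identity
\begin{equation*}
\MM(X) - \MM(X') = \sum_{i=1}^n \Bigl( \MM(X_1',\ldots,X_{i-1}',X_i,X_{i+1},\ldots,X_n) - \MM(X_1',\ldots,X_{i-1}',X_i',X_{i+1},\ldots,X_n) \Bigr),
\end{equation*}
which is valid because the intermediate terms cancel in pairs. Next, for each fixed $i$, I would use that $\MM$ is linear in its $i$-th argument (with all other arguments held fixed) to rewrite the $i$-th summand as
$$\MM(X_1',\ldots,X_{i-1}',\, X_i - X_i',\, X_{i+1},\ldots,X_n).$$

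Then I would apply \Cref{bilinearuniformbd} to this multilinear expression, treating $X_i - X_i'$ as the vector in the $i$-th slot, to obtain
$$\bigl\| \MM(X_1',\ldots,X_{i-1}',X_i - X_i',X_{i+1},\ldots,X_n) \bigr\| \leq \sigma_M \Bigl( \prod_{j<i}\|X_j'\| \Bigr) \|X_i - X_i'\| \Bigl( \prod_{j>i}\|X_j\| \Bigr) \leq \sigma_M\, d^{i-1}\, \epsilon\, d^{n-i} = \sigma_M d^{n-1}\epsilon,$$
using the hypotheses $\|X_j'\| \leq d$, $\|X_j\| \leq d$, and $\|X_i - X_i'\| \leq \epsilon$. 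Summing the $n$ identical bounds over $i$ and applying the triangle inequality to the telescoping identity yields $\|\MM(X) - \MM(X')\| \leq n\sigma_M d^{n-1}\epsilon$, as claimed.

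This argument is essentially routine; there is no real obstacle. The only point requiring a small amount of care is ensuring that the norm bound of \Cref{bilinearuniformbd} is being applied legitimately when one argument is the difference $X_i - X_i'$ rather than one of the original points — but this is immediate, since \Cref{bilinearuniformbd} holds for arbitrary arguments, and $X_i - X_i'$ is just another vector in $\mathbb{E}_i$.
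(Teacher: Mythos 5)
Your proof is correct and follows essentially the same argument as the paper: a telescoping decomposition into $n$ single-slot differences, multilinearity to write each difference as $\MM$ evaluated with $X_i - X_i'$ in one slot, and then the uniform bound of \Cref{bilinearuniformbd} plus the triangle inequality. The only difference is cosmetic — the paper's hybrid points take the first $k$ arguments from $X$ and the rest from $X'$, while yours swap in the opposite order — which does not affect the estimate.
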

\begin{proof}
For each $0 \leq k \leq n$, let $X_k'' = (X_1,\ldots,X_k, X_{k+1}',\ldots, X_n')$. By \Cref{bilinearuniformbd}, $\|\MM(X''_k) - \MM(X''_{k-1})\| = \|\MM(X_1,\ldots,X_{k-1}, X_k - X_k',X_{k+1}',\ldots,X_n') \|$ is bounded by $\sigma_M d^{n-1} \|X_k - X_k'\| \leq \sigma_M d^{n-1}\epsilon$. Observe that $\MM(X) - \MM(X') = \sum_{k=1}^{n} \MM(X''_k) - \MM(X''_{k-1})$, and thus we obtain $\|\MM(X) - \MM(X')\| \leq n\sigma_M d^{n-1}\epsilon$.
\end{proof}



\subsection{Proof of \Cref{cvxargmin}}
\begin{proof}
Let $d = b - a$, and define $\delta = \|d\|$. Define $x' = y^\ast - d \in \CC_1$, $y' = x^\ast + d \in \CC_2$, and $s = x' - x^\ast \in T_{\CC_1}(x^\ast)$. Let $\sigma = g(y') - g(x^\ast)$. We can express $\sigma$ as $\sigma = \int_0^1 \nabla g(x^\ast + td)^Td~dt$. Since $\nabla g$ is Lipschitz continuous with constant $M$, we have
\begin{align*}
g(y^\ast) - g(x') &= \int_0^1 \nabla g(x' + td)^Td~dt \\
&= \sigma + \int_0^1 (\nabla g(x' + td) - \nabla g(x^\ast + td))^Td~dt
\end{align*}
and thus $|g(y^\ast) - g(x') - \sigma| \leq \int_0^1 \|\nabla g(x'+td) - \nabla g(x^\ast+td)\|\|d\|~dt \leq M\|s\|\delta$, by Lipschitz continuity of $\nabla g$. Therefore $g(y^\ast) \geq g(x') + \sigma - M\|s\|\delta$. Since $g$ is differentiable and $\CC_1$ is closed and convex, $x^\ast$ satisfies the first-order condition $\nabla g(x^\ast) \in -N_{\CC_1}(x^\ast)$. Hence, since $s \in T_{\CC_1}(x^\ast) = N_{\CC_1}(x^\ast)^\circ$, we have $g(x') \geq g(x^\ast) + \la \nabla g(x^\ast), s\ra + \frac{m}{2}\|s\|^2 \geq g(x^\ast) + \frac{m}{2}\|s\|^2$. Combining these inequalities, we have $g(y^\ast) \geq g(x^\ast) + \sigma + \frac{m}{2}\|s\|^2 - M\|s\|\delta$. Since $y^\ast$ attains the minimum of $g$ over $\CC_2$, $g(y') \geq g(y^\ast)$. Thus
\begin{align*}
g(y') = g(x^\ast) + \sigma &\geq g(y^\ast) \geq g(x^\ast) + \sigma + \frac{m}{2}\|s\|^2 - M\|s\|\delta
\end{align*}
We deduce that $\frac{m}{2}\|s\|^2 - M\|s\|\delta \leq 0$, so $\|s\| \leq 2\kappa \delta$. Since $y^\ast  - x^\ast = s + d$, we have $\|x^\ast - y^\ast\| \leq \|s\| + \|d\| \leq \delta + 2\kappa \delta = (1+2\kappa)\delta$.
\end{proof}

\subsection{Proof of \Cref{bicvxargmin}}
\begin{proof}
Note that $x \in \UU_1$ is equivalent to $Ax \in -b_1 + \CC$, and thus $\UU_1 = A^{-1}(-b_1 + \CC)$, where $A^{-1}(S) = \{x: Ax \in S\}$ is the preimage of a set $S$ under $A$. Since $\UU_1$ is the preimage of the closed, convex set $-b_1 + \CC$ under a linear map, $\UU_1$ is closed and convex. Similarly, $\UU_2 = A^{-1}(-b_2 + \CC)$ is closed and convex.

We claim that $\UU_1, \UU_2$ are translates. Since $b_1, b_2 \in \opn{Col}(A)$, we can find $d$ such that $Ad = b_1 - b_2$. Given $x \in \UU_1$, $A(x + d) \in -b_2 + \CC$, so $x + d \in \UU_2$, and thus $\UU_1 + d \subseteq \UU_2$. Conversely, given $y \in \UU_2$, $A(y - d) \in -b_1 + \CC$, so $y - d \in \UU_1$ and $\UU_1 + d \supseteq \UU_2$. Hence $\UU_2 = \UU_1 + d$. Applying \Cref{cvxargmin} to $\UU_1, \UU_2$, we find that $\|x^\ast - y^\ast\| \leq (1 + 2\kappa)\|d\|$. We may choose $d$ to be a solution of minimum norm satisfying $Ad = b_1 - b_2$; applying \Cref{subdistgeneric} to the spaces $\{x:Ax = 0\}$ and $\{x: Ax = b_1 - b_2\}$, we see that $\|d\| \leq \alpha\|b_1 - b_2\|$, where $\alpha$ depends only on $A$. Hence $\|x^\ast - y^\ast\| \leq (1+2\kappa)\alpha \|b_2 - b_1\|$.
\end{proof}

\subsection{Proof of \Cref{dualgeneric}}
\begin{proof}
The dual update is given by $W^+ = W + \rho C(\UU^+)$. Thus, we have $$\Lag(\UU^+,W^+) - \Lag(\UU^+,W) = \la W^+ - W, C(\UU^+)\ra = \rho \|C(\UU^+)\|^2 = \frac{1}{\rho}\|W - W^+\|^2.$$

For the second statement, observe that $\nabla_W \Lag(\UU,W) = C(\UU)$. From the dual update, we have $W^+ - W = \rho C(U^+)$. Hence $\|C(\UU^+)\| = \frac{1}{\rho}\|W - W^+\| \rightarrow 0$. It follows that $\nabla_W \Lag(\ux{U}{k}, \ux{W}{k}) \rightarrow 0$ and, by continuity of $C$, any limit point $\UU^\ast$ of $\{\ux{U}{k}\}_{k=0}^\infty$ satisfies $C(\UU^\ast) = 0$.
\end{proof}

\subsection{Proof of \Cref{subgradgeneric2}}
\begin{proof}
Since $\la W, C(U,Y)\ra + \frac{\rho}{2}\|C(U,Y)\|^2$ is smooth, \cite[8.8(c)]{RW1997} implies that
\begin{align*}
\partial_Y \Lag(U,Y,W) &= \partial f_U(Y) + \nabla_Y \la W, C(U,Y) \ra + \nabla_Y \left( \frac{\rho}{2}\|C(U,Y)\|^2 \right) \\
&= \partial f_U(Y) + (\nabla_Y C(U,Y))^TW + \rho (\nabla_Y C(U,Y))^TC(U,Y).
\end{align*}
\end{proof}

\subsection{Proof of \Cref{generalinnerblock}}
\begin{proof}
Let $g_y$ denote the separable term in $Y$ (that is, if $Y = U_j$, then $g_y = g_j$). By \Cref{subgradgeneric2},
\begin{align*}
0 &\in \partial f_{U_<^{k+1},U_>^k}(Y^{k+1}) + V(U_<^{k+1},Y^{k+1},U_>^k,W^k) \\
&= \nabla_Y F(\ux{U}{k+1}_<, \ux{Y}{k+1},\ux{U}{k}_>) + \partial g_y(\ux{Y}{k+1}) + V(U_<^{k+1},Y^{k+1},U_>^k,W^k).
\end{align*}
Hence, 
\begin{equation}\label{eq:diffobj}
-(\nabla_Y F(\ux{U}{k+1}_<, \ux{Y}{k+1},\ux{U}{k}_>) +  V(U_<^{k+1},Y^{k+1},U_>^k,W^k)) \in \partial g_y(\ux{Y}{k+1}).
\end{equation}
In addition, by \Cref{subgradgeneric2},
$$\begin{aligned} &\ \partial_Y \Lag(\ux{U}{k+1},\ux{Y}{k+1},\ux{W}{k+1}) \\ =&\ \partial g_y(\ux{Y}{k+1}) + \nabla_Y F(\ux{U}{k+1}_<, \ux{Y}{k+1},\ux{U}{k+1}_>) + V(\ux{U}{k+1}_<,\ux{Y}{k+1},\ux{U}{k+1},\ux{W}{k+1}).\end{aligned}$$
Combining this with \eqref{eq:diffobj} implies the desired result.

Applying this to $\partial_Y \Lag(\ux{U}{k(s)}, \ux{Y}{k(s)}, \ux{W}{k(s)})$, we obtain the subgradient
\begin{align*}
\ux{v}{s} &:= V(\ux{U}{k(s)}_<,\ux{Y}{k(s)},\ux{U}{k(s)}_>,\ux{W}{k(s)}) - V(\ux{U}{k(s)}_<,\ux{Y}{k(s)},\ux{U}{k(s)-1}_>,\ux{W}{k(s)-1}) \\
&\bump + \nabla_Y F(\ux{U}{k(s)}_<,\ux{Y}{k(s)},\ux{U}{k(s)}_>) - \nabla_Y F(\ux{U}{k(s)}_<,\ux{Y}{k(s)},\ux{U}{k(s)-1}_>).
\end{align*}
Since $\{(\ux{U}{k(s)}, \ux{Y}{k(s)}, \ux{W}{k(s)})\}_{s=0}^\infty$ converges, and $\|\ux{U}{k+1}_> - \ux{U}{k}_>\| \rightarrow 0$ and $\|\ux{W}{k+1} - \ux{W}{k}\| \rightarrow 0$ by assumption, there exists a compact set $\mathcal{B}$ containing the points $\{\ux{U}{k(s)}_<, \ux{U}{k(s)-1}_>, \ux{Y}{k(s),}, \ux{W}{k(s)}, \ux{W}{k(s)-1}\}_{s=0}^\infty$. $V$ and $\nabla_Y F$ are continuous, so it follows that $V$ and $\nabla_Y F$ are \emph{uniformly} continuous over $\mathcal{B}$. It follows that when $s$ is sufficiently large,
$$V(\ux{U}{k(s)}_<,\ux{Y}{k(s)},\ux{U}{k(s)}_>,\ux{W}{k(s)}) - V(\ux{U}{k(s)}_<, \ux{Y}{k(s)}, \ux{U}{k(s)-1}_>,\ux{W}{k(s)-1})$$
and
$$\nabla_Y F(\ux{U}{k(s)}_<,\ux{Y}{k(s)},\ux{U}{k(s)}_>) - \nabla_Y F(\ux{U}{k(s)}_<, \ux{Y}{k(s)}, \ux{U}{k(s)-1}_>)$$
can be made arbitrarily small. This completes the proof.
\end{proof}

\subsection{Proof of \Cref{limitfocgeneric}}
We require the following simple fact.
\begin{lma}\label{gentoreg}
Let $f: \RR^n \rightarrow \RR \cup \{\infty\}$. Suppose that we have sequences $x_k \rightarrow x$ and $v_k \in \partial f(x_k)$ such that $f(x_k) \rightarrow f(x)$ and $v_k \rightarrow v$. Then $v \in \partial f(x)$.
\end{lma}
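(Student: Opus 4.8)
The plan is to unwind the definition of the general subgradient (\Cref{defgensubgradient}) one extra time and then splice the resulting approximating sequences together with a diagonal argument. By hypothesis $v_k \in \partial f(x_k)$ for each $k$, so by \Cref{defgensubgradient} there exist sequences $\{x_{k,j}\}_j$ and $\{v_{k,j}\}_j$ with $x_{k,j} \to x_k$, $v_{k,j} \to v_k$, $f(x_{k,j}) \to f(x_k)$ as $j \to \infty$, and $v_{k,j} \in \wh\partial f(x_{k,j})$ for every $j$. The point is that the regular subgradients $\wh\partial f$ are already ``close enough'' to the point to serve directly in the definition of $\partial f(x)$, so no further refinement beyond a diagonal selection is needed.

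First I would, for each fixed $k$, choose an index $j(k)$ large enough that $\|x_{k,j(k)} - x_k\| < 1/k$, $\|v_{k,j(k)} - v_k\| < 1/k$, and $|f(x_{k,j(k)}) - f(x_k)| < 1/k$; this is possible precisely because of the three convergences listed above. Set $y_k := x_{k,j(k)}$ and $w_k := v_{k,j(k)}$, so that $w_k \in \wh\partial f(y_k)$ for every $k$ by construction.

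Then I would verify the three limits required by \Cref{defgensubgradient} to conclude $v \in \partial f(x)$. From $\|y_k - x\| \le \|y_k - x_k\| + \|x_k - x\| \le 1/k + \|x_k - x\| \to 0$ we get $y_k \to x$; from $\|w_k - v\| \le \|w_k - v_k\| + \|v_k - v\| \le 1/k + \|v_k - v\| \to 0$ we get $w_k \to v$; and from $|f(y_k) - f(x)| \le |f(y_k) - f(x_k)| + |f(x_k) - f(x)| \le 1/k + |f(x_k) - f(x)| \to 0$ we get $f(y_k) \to f(x)$. Thus $\{y_k\}$ and $\{w_k\}$ are sequences witnessing $v \in \partial f(x)$, which is exactly the claim.

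There is no genuine obstacle here — the statement is essentially the closedness of the general subgradient mapping, cf.\ \cite[8.6]{RW1997}. The only step requiring a line of care is the diagonal selection of $j(k)$, together with the observation that $\wh\partial f(x_{k,j})$ already consists of regular subgradients, so the sequences produced can be fed verbatim into the definition of $\partial f(x)$ without any additional approximation.
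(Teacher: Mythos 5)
Your proof is correct and is essentially the paper's own argument: the paper's proof also unwinds the definition of $\partial f(x_k)$ to get regular subgradients $v_{j,k}\in\wh{\partial}f(x_{j,k})$ and then invokes ``a simple approximation'' to extract the witnessing sequences, which is exactly the diagonal selection you have spelled out explicitly. No gaps; your version just makes the paper's final step concrete.
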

\begin{proof}
This result would follow by definition if $v_k \in \wh{\partial}f(x_k)$, but instead we have $v_k \in \partial f(x_k)$. However, for each $k$, there exists sequences $x_{j,k} \rightarrow x_k$ and $v_{j,k} \in \wh{\partial}f(x_{j,k})$ with $f(x_{j,k}) \rightarrow f(x_k)$ and $v_{j,k} \rightarrow v_k$. By a simple approximation, we can select subsequences $y_s \rightarrow x, z_s \in \wh{\partial}f(y_s)$ with $f(y_s) \rightarrow f(x), z_s \rightarrow v$.
\end{proof}

\begin{proof}[Proof (of \Cref{limitfocgeneric})]
By \Cref{subgradgeneric2}, $\partial_Y \Lag(\ux{U}{s},\ux{Y}{s},\ux{W}{s}) = \partial g_y(\ux{Y}{s}) + \nabla_Y F(\ux{U}{s},\ux{Y}{s}) + V(\ux{U}{s},\ux{Y}{s},\ux{W}{s})$. Since $V$ is continuous, the sequence $\{V(\ux{U}{s},\ux{Y}{s},\ux{W}{s})\}$ converges to $V(U^\ast,Y^\ast,W^\ast)$, which is equal to $(\nabla_Y C(U^\ast,Y^\ast))^TW^\ast$ because $(U^\ast, Y^\ast,W^\ast)$ is feasible. Likewise, $\{\nabla_Y F(\ux{U}{s},\ux{Y}{s})\}$ converges to $\nabla_Y F(U^\ast, Y^\ast)$.
	
Since $\ux{v}{s} \in \partial_Y \Lag(\ux{U}{s},\ux{Y}{s},\ux{W}{s})$ for all $s$ and $\ux{v}{s} \rightarrow 0$, we deduce that there exists a sequence $\{\ux{v}{s}_y\}$ such that $\ux{v}{s}_y \in \partial g_y(\ux{Y}{s})$ for all $s$ and $\ux{v}{s}_y \rightarrow -(\nabla_Y F(U^\ast, Y^\ast) + (\nabla_Y C(U^\ast,Y^\ast))^TW^\ast)$.
Hence, by \Cref{gentoreg}\footnote{The assumption that each $g_i$ is continuous on $\opn{dom}(g_i)$ was introduced in \Cref{weakcoupling} to ensure that $g_y(Y^s) \rightarrow g_y(Y^\ast)$, which is required to obtain the general subgradient $\partial g_y(Y^\ast)$.} applied to $g_y$ and the sequences $\{\ux{Y}{s}\}$ and $\{\ux{v}{s}_y\}$, we find $-(\nabla_Y F(U^\ast, Y^\ast) + (\nabla_Y C(U^\ast,Y^\ast))^TW^\ast) \in \partial g_y(Y^\ast)$, as desired.
\end{proof}

\subsection{Proof of \Cref{iteratesconvergeimpliesstationary}}
\begin{proof}
If $\|\ux{W}{k+1} - \ux{W}{k}\| \rightarrow 0$ and $\|\ux{U}{k+1}_\ell - \ux{U}{k}_\ell\| \rightarrow 0$ for all $\ell \geq 1$, then the conditions of \Cref{generalinnerblock} are satisfied for all blocks $U_0,\ldots,U_n$. Thus, \Cref{dualgeneric} implies that $\UU^\ast$ is feasible, and by \Cref{limitfocgeneric}, $(\UU^\ast, W^\ast)$ satisfies the first-order conditions. Note that we do not need to assume $\|\ux{U}{k+1}_0 - \ux{U}{k}_0\| \rightarrow 0$ because $U_0$ is not part of $U_>$ for any block.
\end{proof}

\section{Alternate Deep Neural Net Formulation}\label{apdx:nnet}

When $h(z) = \max\{z,0\}$, we can approximate the constraint $a_\ell - h(z_\ell) = 0$ by introducing a variable $a'_\ell \geq 0$, and minimizing a combination of $\|a'_\ell - z_\ell\|^2, \|a'_\ell - a_\ell\|^2$. This leads to the following biaffine formulation, which satisfies \Cref{baseasm,asm2}, for the deep learning problem:
$$
\left\{
\begin{array}{rll}
\inf & E(z_L,y)
+ \sum_{\ell=1}^{L-1} \iota(a'_\ell)
+  \frac{\mu}{2} \sum_{\ell=1}^{L-1}[ \|\hat{a_\ell}\|^2 + \|s_\ell\|^2 ] + R(X_1,\ldots,X_L) \\
& X_La_{L-1} - z_L = 0 \\
& \begin{bmatrix} X_\ell a_{\ell-1} \\ a'_\ell \\ a'_\ell - a_\ell \end{bmatrix} - \begin{bmatrix} I & 0 & 0 \\ I & I & 0 \\ 0 & 0 & I \end{bmatrix} \begin{bmatrix} z_\ell \\ s_\ell \\ \hat{a}_\ell \end{bmatrix} = 0 \quad \text{ for } 1 \leq \ell \leq L - 1. \\
\end{array}
\right.
.$$

\section{Table of Assumptions}\label{apdx:tables}

We organize \Cref{baseasm,asm2} in tabular form and present them in \Cref{tab:assumption1_tabular,tab:assumption2_tabular}.

\begin{table}[h]
	\caption{Contents of \Cref{baseasm}}
	\begin{center}
		\begin{tabular}{
				|p{0.3\textwidth}
				|p{0.5\textwidth}|
			}
			\hline
			& \Cref{baseasm} \\ \hline
			
			$\ZZZ$-block operator $Q$ & $\opn{Im}(Q) \supseteq \opn{Im}(A)$\\ \hline
			
			\multirow{2}{*}{Total objective function $\phi$} 
			& $\phi$ coercive on feasible region \\
			\cline{2-2}
			& $\phi$ splits as $f(\XX) + \psi(\ZZZ)$ \\ \hline
			
			$\ZZZ$-block objective function $\psi$ & $\psi$ splits as $h(Z_0) + g_1(Z_2) + g_2(Z_2)$ \\ \hline
			$h(Z_0)$ & $h(Z_0)$ is proper, convex, and lower semicontinuous \\ \hline
			$g_1(Z_S)$ & $g_1$ is strongly convex, and either
			\begin{itemize}
				\item $Z_S = Z_1$, or
				\item $Z_S = (Z_0, Z_1)$
			\end{itemize} \\ \hline
			$g_2(Z_2)$ & $g_2(Z_2)$ is Lipschitz differentiable \\ \hline
			$Z_2$-operator $Q_2$ & $Q_2$ is injective\\ \hline
		\end{tabular}
	\end{center}
	\label{tab:assumption1_tabular}
\end{table}

Note that \Cref{asm2} is a superset of \Cref{baseasm}, and introduces additional assumptions on the function $f(\XX)$.

\begin{table}[h]
	\caption{Contents of \Cref{asm2}}
	\begin{center}
		\begin{tabular}{
				|p{0.2\textwidth}
				|p{0.6\textwidth}|
			}
			\hline
			& \Cref{asm2} \\ \hline
			$\XX$-block function $f$ & $f(\XX)$ splits into $F(X_0,\ldots,X_n) + \sum_{i=0}^n f_i(X_i)$ \\ \hline
			
			$F(X_0,\ldots,X_n)$ & $F(X_0,\ldots,X_n)$ is Lipschitz differentiable \\ \hline
			
			$f_0(X_0),\ldots, f_n(X_n)$ & $f_i(X_i)$ is proper and lower semicontinuous, and continuous on $\opn{dom}(f_i)$ \\ \hline
			
			$X_\ell$, for $1 \leq \ell \leq n$
			& One of the following cases holds:
			\begin{description}
				\item[Case 1]  $F(X_0,\ldots,X_n)$ is independent of $X_\ell$, and $f_\ell(X_\ell)$ satisfies a strengthened convexity condition (see \Cref{defscc}).
				\item [Case 2] $f_\ell$ is either convex or Lipschitz differentiable. Viewing $A(\XX,Z_0) + Q(\ZZ) = 0$ as a system of constraints, there exists an constraint in the system, with index $r(\ell)$, such that in the $r(\ell)$-th constraint, we have $A_{r(\ell)}(\XX,Z_0) = R_\ell(X_\ell) + A'_\ell (\XX_{\neq \ell}, Z_0)$ for an injective linear map $R_\ell$ and a multiaffine map $A'_\ell$.
			\end{description}
			\\ \hline
			$Z_0$ & One of the following cases holds:
			\begin{description}
				\item[Case 1] $h(Z_0)$ satisfies a strengthened convexity condition (\Cref{defscc}).
				\item [Case 2] $Z_0 \in Z_S$, so $g_1(Z_S)$ is a strongly convex function of $Z_0$ and $Z_1$.
				\item [Case 3] Viewing $A(\XX,Z_0) + Q(\ZZ) = 0$ as a system of constraints, there exists an index $r(0)$ such that $A_{r(0)}(\XX,Z_0) = R_0(Z_0) + A'_0(\XX)$ for an injective linear map $R_0$ and multiaffine map $A_0'$.
			\end{description}
			\\ \hline
		\end{tabular}
	\end{center}
	\label{tab:assumption2_tabular}
\end{table}

\section{Formulations with Closed-Form Subproblems}\label{apdx:simple}

\subsection{Representation Learning}\label{apdx:replearning}

Observe that in (NMF1), the ADMM subproblems for $X$ and $Y$, which have quadratic objective functions and nonnegativity constraints, do not have closed-form solutions. To update $X$ and $Y$, \cite{HCWSH2016} proposes using ADMM to approximately solve the subproblems. This difficulty can be removed through variable splitting. Specifically, by introducing auxiliary variables $X'$ and $Y'$, one obtains the equivalent problem:
$$
(\text{NMF}2) \bump
\left\{
\begin{array}{rll}
\inf_{X,X',Y,Y',Z}\limits & \iota(X') + \iota(Y') + \frac{1}{2}\|Z - B\|^2 \\
& Z = XY,\ X = X',\ Y = Y',
\end{array}
\right.$$
where $\iota$ is the indicator function for the nonnegative orthant; i.e., $\iota(X) = 0$ if $X \geq 0$ and $\iota(X) = \infty$ otherwise. One can now apply ADMM, updating the variables in the order $Y$, $Y'$, $X'$, then $(Z,X)$. Notice that the subproblems for $Y$ and $(Z,X)$ now merely involve minimizing quadratic functions (with no constraints). The solution to the subproblem for $Y'$,
\begin{equation}\label{eq:yp}
\inf_{Y' \geq 0} \la W, -Y'\ra + \frac{\rho}{2}\|Y - Y'\|^2 = \inf_{Y' \geq 0} \left\|Y' - (Y + \frac{1}{\rho}W)\right\|^2,
\end{equation}
is obtained by setting the negative entries of $Y + \frac{1}{\rho}W$ to 0. An analogous statement holds for $X'$.

Unfortunately, while this splitting and order of variable updates yields easy subproblems, it does not satisfy all the assumptions we require in \Cref{aobj} (see also \Cref{tightness}). One reformulation which keeps all the subproblems easy \emph{and} satisfies our assumptions involves introducing slacks $X''$ and $Y''$ and penalizing them by a smooth function, as in
$$
(\text{NMF}3) \bump
\left\{
\begin{array}{rll}
\inf_{X,X',X'',Y,Y',Y'',Z}\limits & \iota(X') + \iota(Y') + \frac{1}{2}\|Z - B\|^2 + \frac{\mu}{2}\|X''\|^2 + \frac{\mu}{2}\|Y''\|^2\\
& Z = XY,\ X = X' + X'',\ Y = Y' + Y''. \end{array}
\right.$$
The variables can be updated in the order $Y$, $Y'$, $X$, $X'$, then $(Z,X'',Y'')$. It is straightforward to verify that the ADMM subproblems either involve minimizing a quadratic (with no constraints) or projecting onto the nonnegative orthant, as in \eqref{eq:yp}.

Next, we consider (DL). In \cite{MBPS2010JMLR}, a block coordinate descent (BCD) method is proposed for solving (DL), which requires an iterative subroutine for the Lasso \cite{T1996JRSS} problem  ($L_1$-regularized least squares regression). To obtain easy subproblems, we can formulate (DL) as
\begin{align*}
(\text{DL}2) \bump
\left\{
\begin{array}{rll}
\inf\limits_{X,Y,Z,X',Y'} &  \iota_S(X') + \|Y'\|_1 + \frac{\mu}{2}\|Z - B\|_2^2 \\
& Z = XY,\ Y = Y',\ X = X'.
\end{array}
\right.
\end{align*}
Notice that the Lasso has been replaced by soft thresholding, which has a closed-form solution. As with (NMF2), not all assumptions in \Cref{baseasm} are satisfied, so to retain easy subproblems and satisfy all assumptions, we introduce slack variables to obtain the problem
\begin{align*}
(\text{DL}3) \bump
\left\{
\begin{array}{rll}
\inf\limits_{X,X',X'',Y,Y',Y'',Z} &  \iota_S(X') + \|Y'\|_1 + \frac{\mu_Z}{2}\|Z - B\|_2^2  + \frac{\mu_X}{2}\|X''\|^2 + \frac{\mu_Y}{2}\|Y''\|_2^2\\
& Z = XY,\ Y = Y' + Y'',\ X = X' + X''.
\end{array}
\right.
\end{align*}

\subsection{Risk Parity Portfolio Selection}

As before, we can split the variables in a biaffine model to make each subproblem easy to solve. The projection onto the set of permissible weights $X$ has no closed-form solution, so let $X_B$ be the box $\{x \in \RR^n : a \leq x \leq b\}$, and $\iota_{X_B}$ its indicator function.  One can then solve:
$$
(\text{RP}2) \bump
\left\{
\begin{array}{rll}
\inf\limits_{x,x',y,z,z',z'',z'''} &  \iota_{X_B}(x') + \frac{\mu}{2}(\|z\|^2 + \|z'\|^2 + \|z''\|^2 + \|z'''\|^2) \\
& P(x \circ y) = z, \bump y = \Sigma x + z'\\
& x = x' + z'', \bump e_n^Tx = 1 + z'''.
\end{array}
\right.$$
The variables can be updated in the order $x$, $x'$, $y$, $(z,z',z'',z''')$. It is easy to see that every subproblem involves minimizing a quadratic function with no constraints, except for the update of $x'$, which consists of projection onto the box $X_B$ and can be evaluated in closed-form.

\section{Numerical Demonstration}\label{apdx:numeric}
In this section, we provide several numerical illustrations of ADMM applied to multiaffine problems. We consider an instance of the representation learning problem (\Cref{sec:dl,apdx:replearning}) known as \emph{spherical blind deconvolution}. In this setting, data $Y$ is generated by the \emph{circular convolution} $A \ast X$ of a \emph{kernel} $A$ and a sparse matrix $X$. An example of the convolution operation is shown in \Cref{fig:conv_example}.

\begin{figure}
\begin{minipage}{0.15\textwidth}
$\underbrace{\includegraphics[trim={5.3cm 9.6cm 4cm 9cm}, clip=true, scale=0.2]{./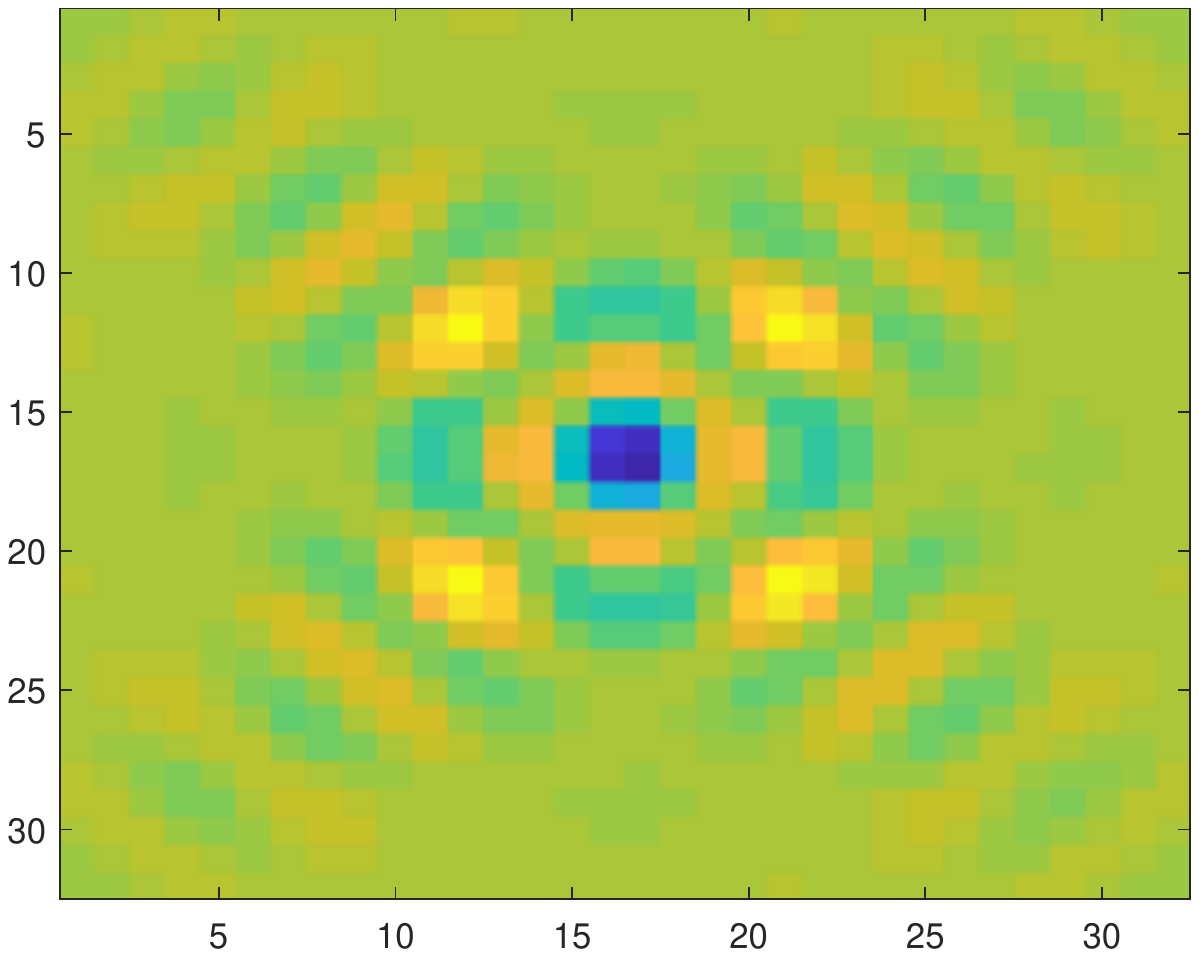}}_{A ~ (32 \times 32)}$
\end{minipage}
\begin{minipage}{0.02\textwidth}
$\ast$
\end{minipage}
\begin{minipage}{0.2\textwidth}
$\underbrace{\includegraphics[trim={5.3cm 9.6cm 4cm 9cm}, clip=true, scale=0.25]{./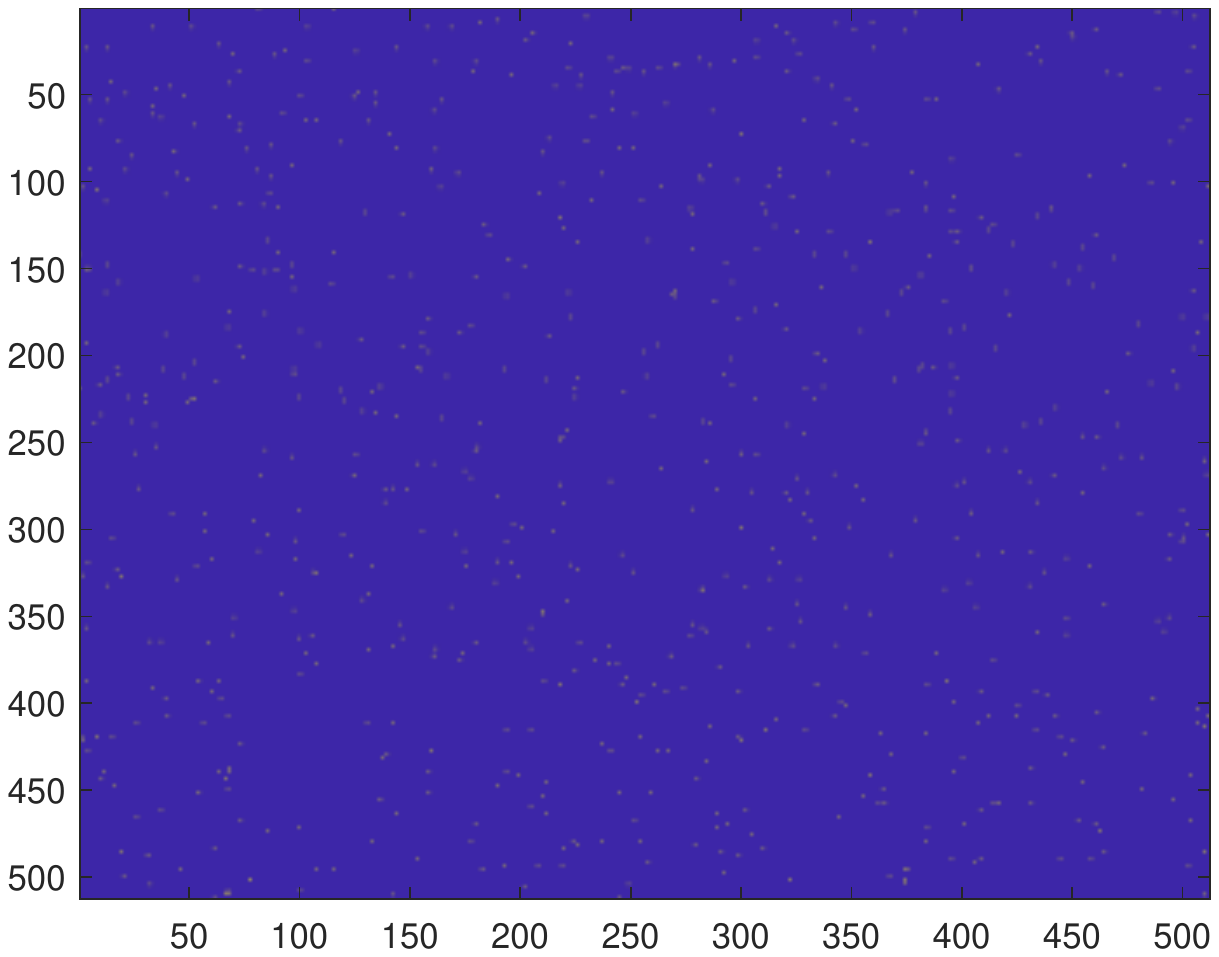}}_{X ~ (512 \times 512)}$
\end{minipage}
\begin{minipage}{0.02\textwidth}
$=$
\end{minipage}
\begin{minipage}{0.2\textwidth}
$\underbrace{\includegraphics[trim={5.3cm 9.6cm 4cm 9cm}, clip=true, scale=0.25]{./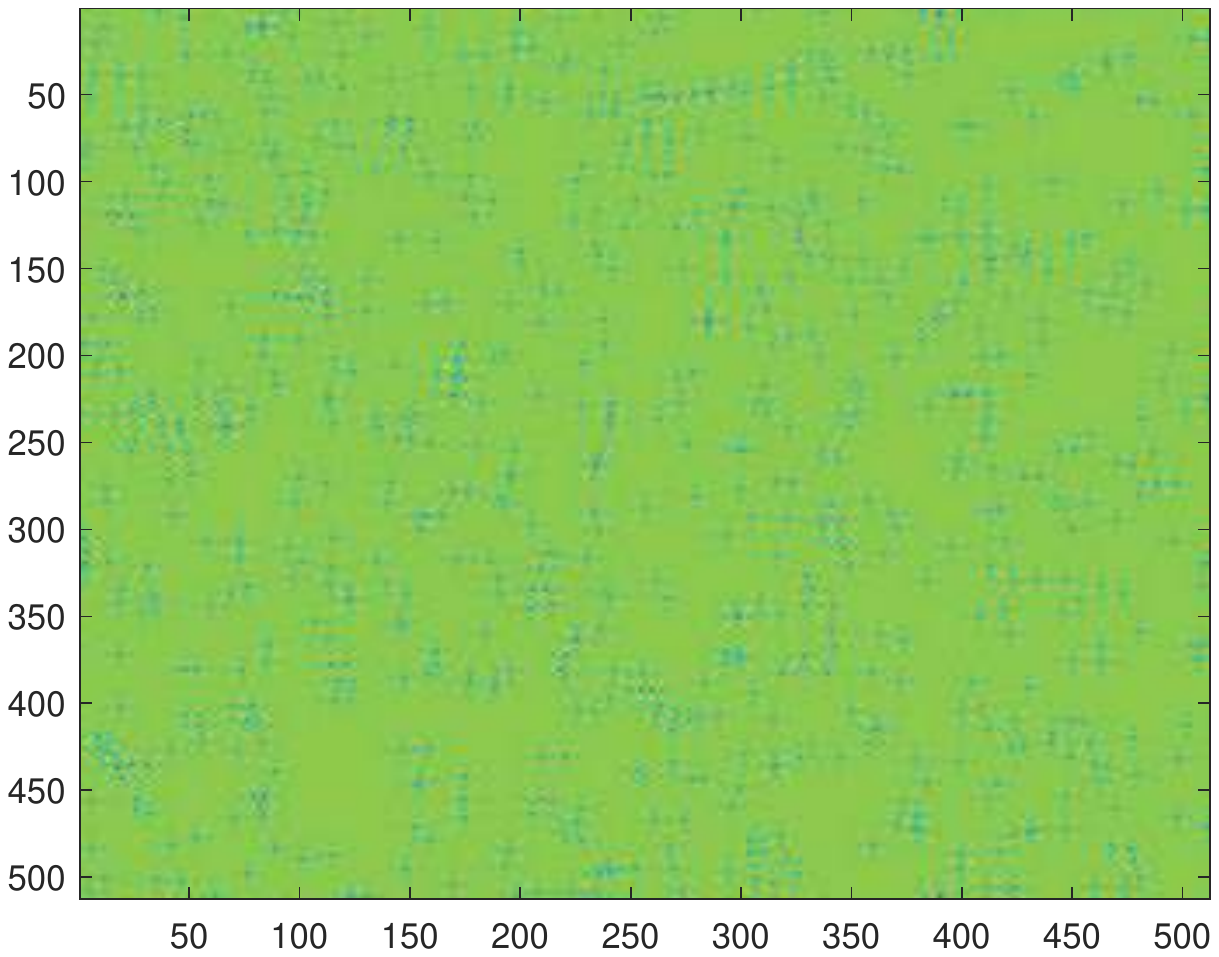}}_{Y ~ (512 \times 512)}$
\end{minipage}
\caption{An example of 2D circular convolution.}
\label{fig:conv_example}
\end{figure}

The data generating process for blind deconvolution can be modeled as
\begin{equation}\label{eq:sbd_generating}
Y = A \ast X + b\mathbf{1} + \xi,
\end{equation}
where $b\mathbf{1}$ is a bias term added to each entry, and $\xi$ denotes random noise. Recovering $A$ and $X$ in the ideal (noiseless) case can be formulated as the multiaffine optimization problem:
\begin{align*}
(\text{SBD0}) ~ 
\left\{
\begin{array}{ll}
\min\limits_{A,X,b} & \|X\|_1\\
&A \ast X + b\mathbf{1} = Y
\end{array}
\right.
\end{align*}
As discussed in \Cref{apdx:replearning}, this formulation does not satisfy \Cref{aobj}. We can introduce a slack variable $Z$ to ensure that \Cref{baseasm} is satisfied.
\begin{align*}
(\text{SBD1}) ~ 
\left\{
\begin{array}{ll}
\min\limits_{A,X,b,Z} &\|X\|_1 + \frac{\mu}{2}\|Z\|_F^2\\
&A \ast X + b\mathbf{1} - Z = Y
\end{array}
\right.
\end{align*}
The algorithm obtained by applying ADMM to (SBD0) will be referred to as \textsc{ADMM-exact}, and the one based on (SBD1) will be referred to as \textsc{ADMM-slack}.

\begin{figure}
\centering
\begin{subfigure}[t]{0.2\textwidth}
\includegraphics[trim=135pt 250pt 130pt 250pt, clip,scale=0.25]{./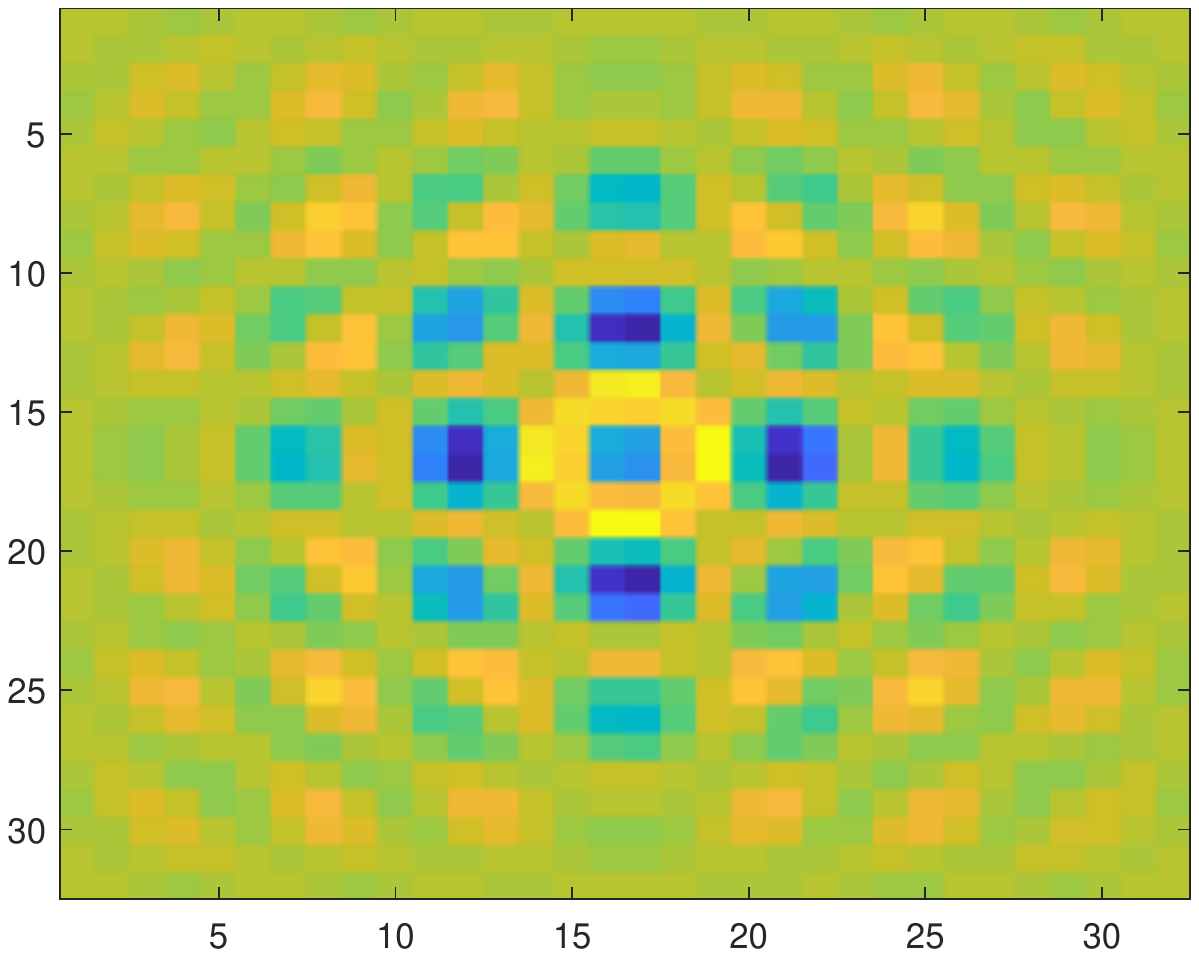}
\caption{True Kernel $A^\ast$}
\end{subfigure}
\begin{subfigure}[t]{0.2\textwidth}
\includegraphics[trim=125pt 250pt 130pt 250pt, clip,scale=0.25]{./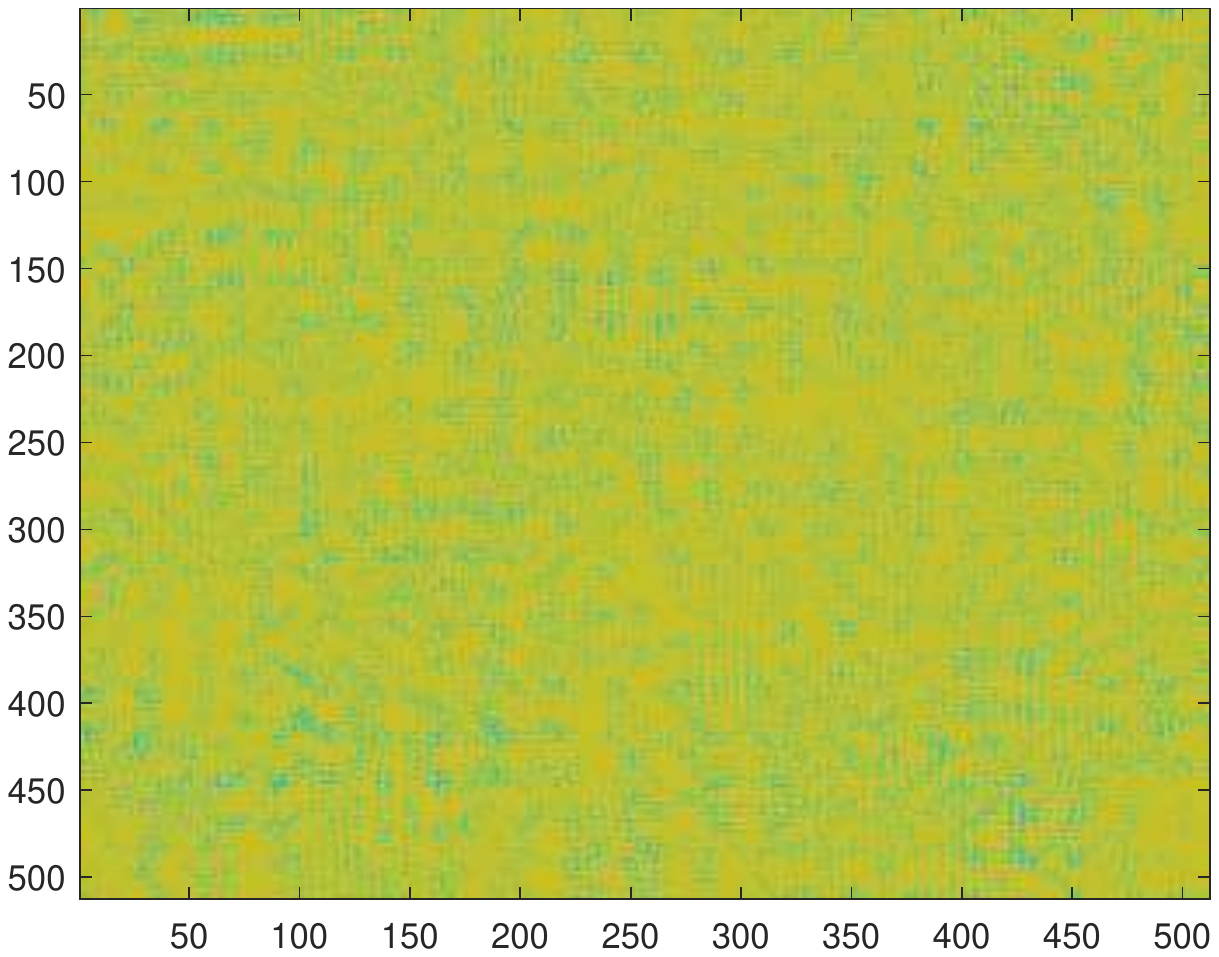}
\caption{Observations $Y$}
\end{subfigure}
\begin{subfigure}[t]{0.5\textwidth}
\includegraphics[trim=125pt 250pt 130pt 250pt, clip,scale=0.3]{./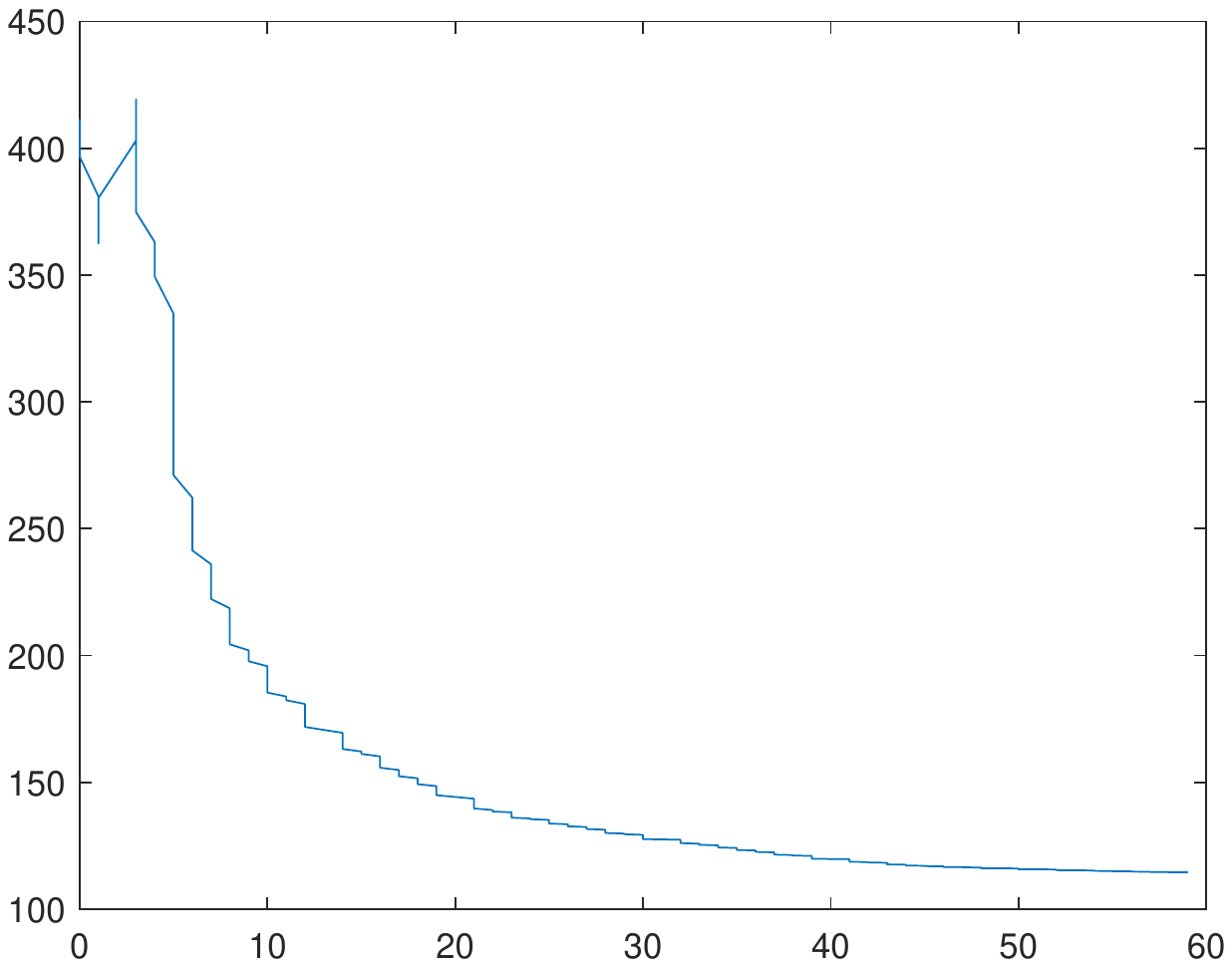}
\hfill
\includegraphics[trim=125pt 250pt 130pt 250pt, clip,scale=0.3]{./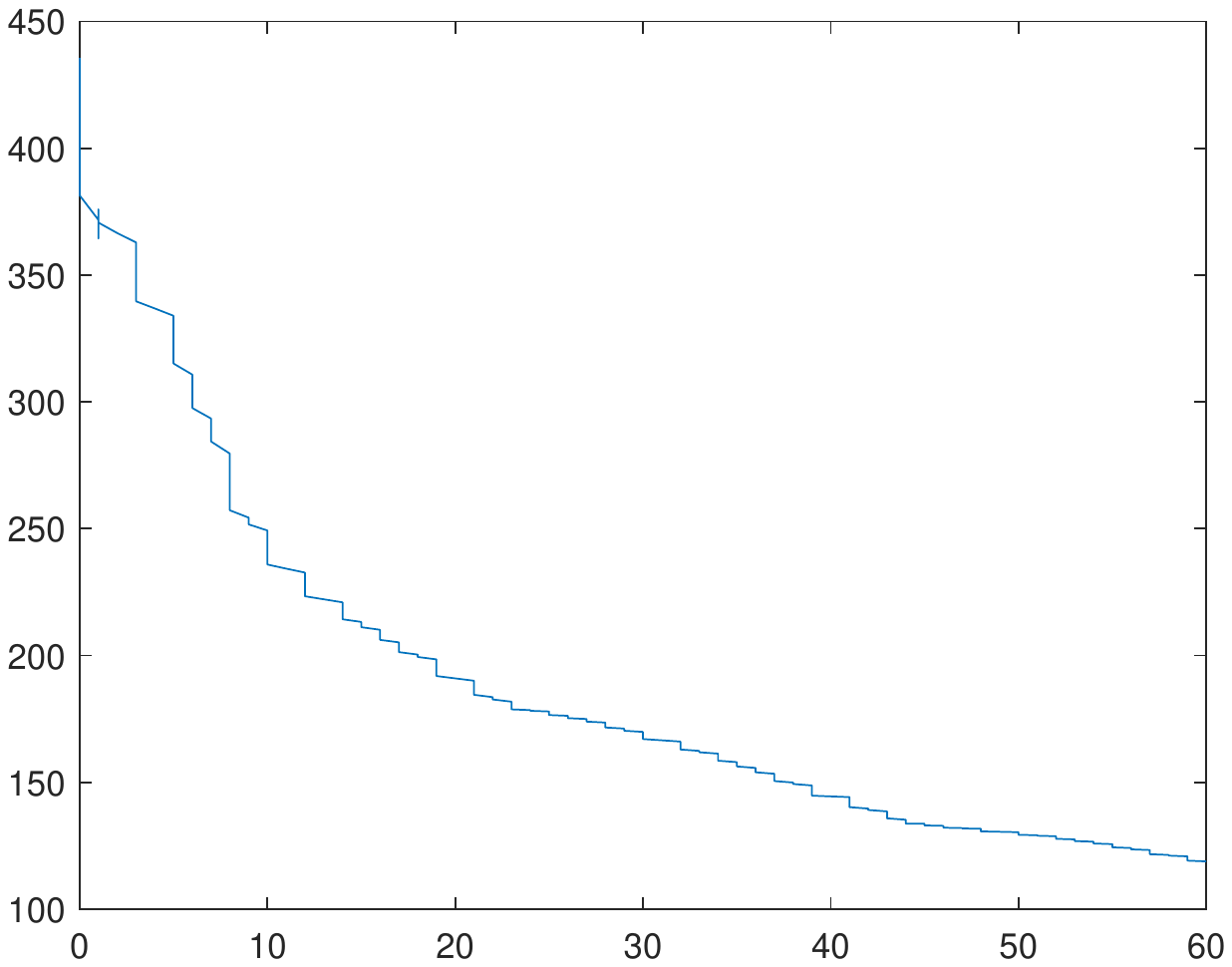}
\caption{Objective value over time (s). The left plot shows \textsc{ADMM-slack}, and the right shows \textsc{ADMM-exact}.}
\end{subfigure}

\bigskip

\centering
\begin{subfigure}[t]{1.0\textwidth}
\includegraphics[trim=125pt 250pt 130pt 250pt, clip,scale=0.23]{./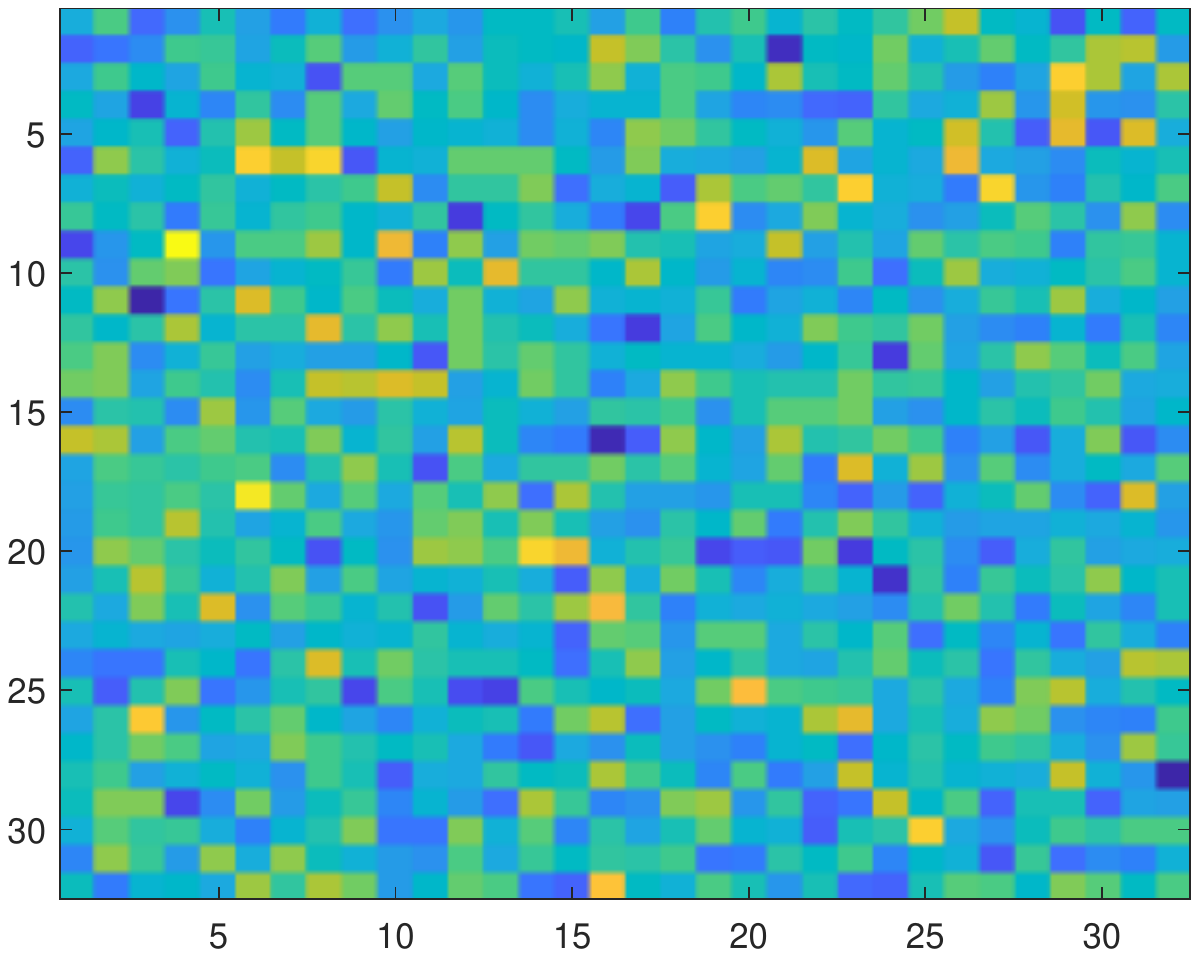}
\hfill
\includegraphics[trim=125pt 250pt 130pt 250pt, clip,scale=0.23]{./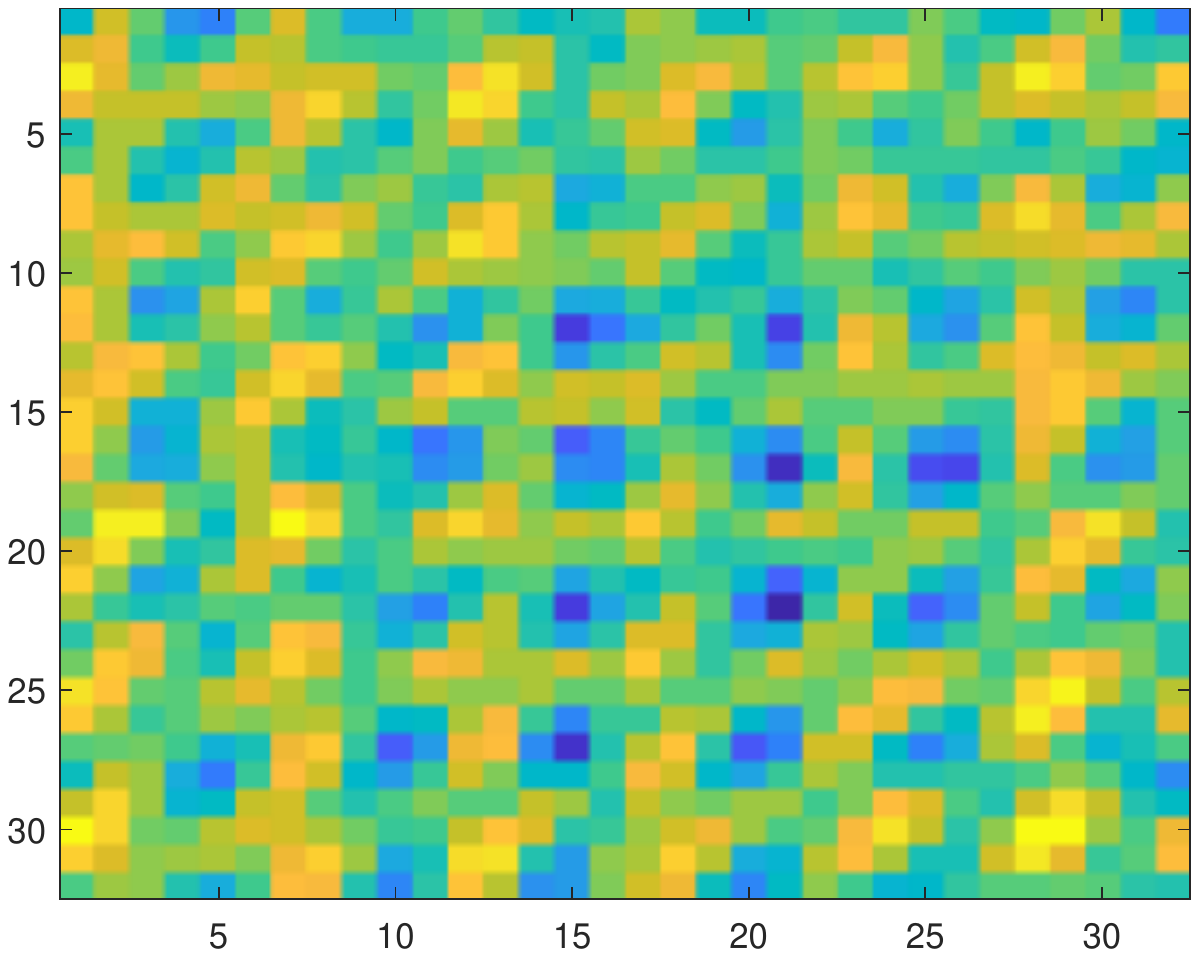}
\hfill
\includegraphics[trim=125pt 250pt 130pt 250pt, clip,scale=0.23]{./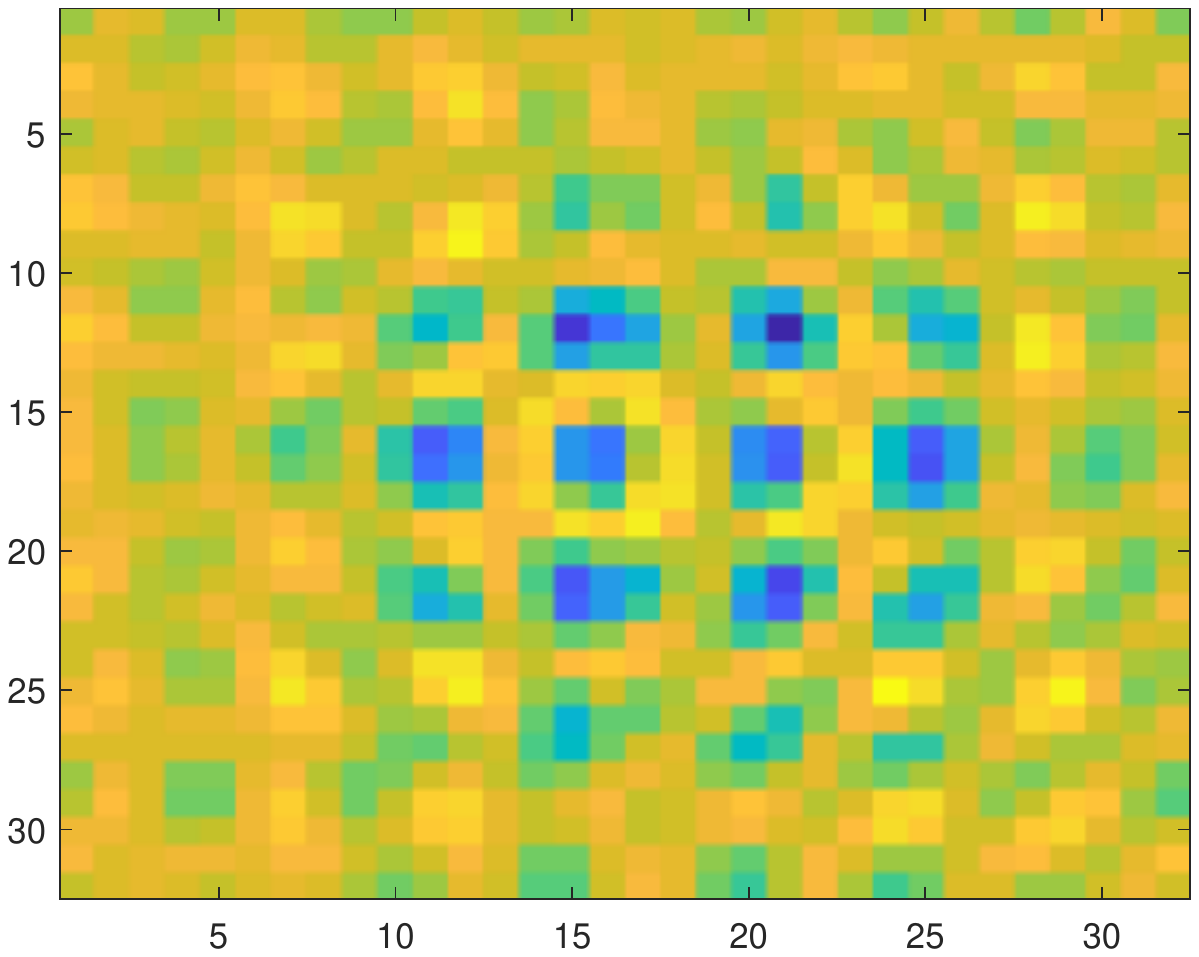}
\hfill
\includegraphics[trim=125pt 250pt 130pt 250pt, clip,scale=0.23]{./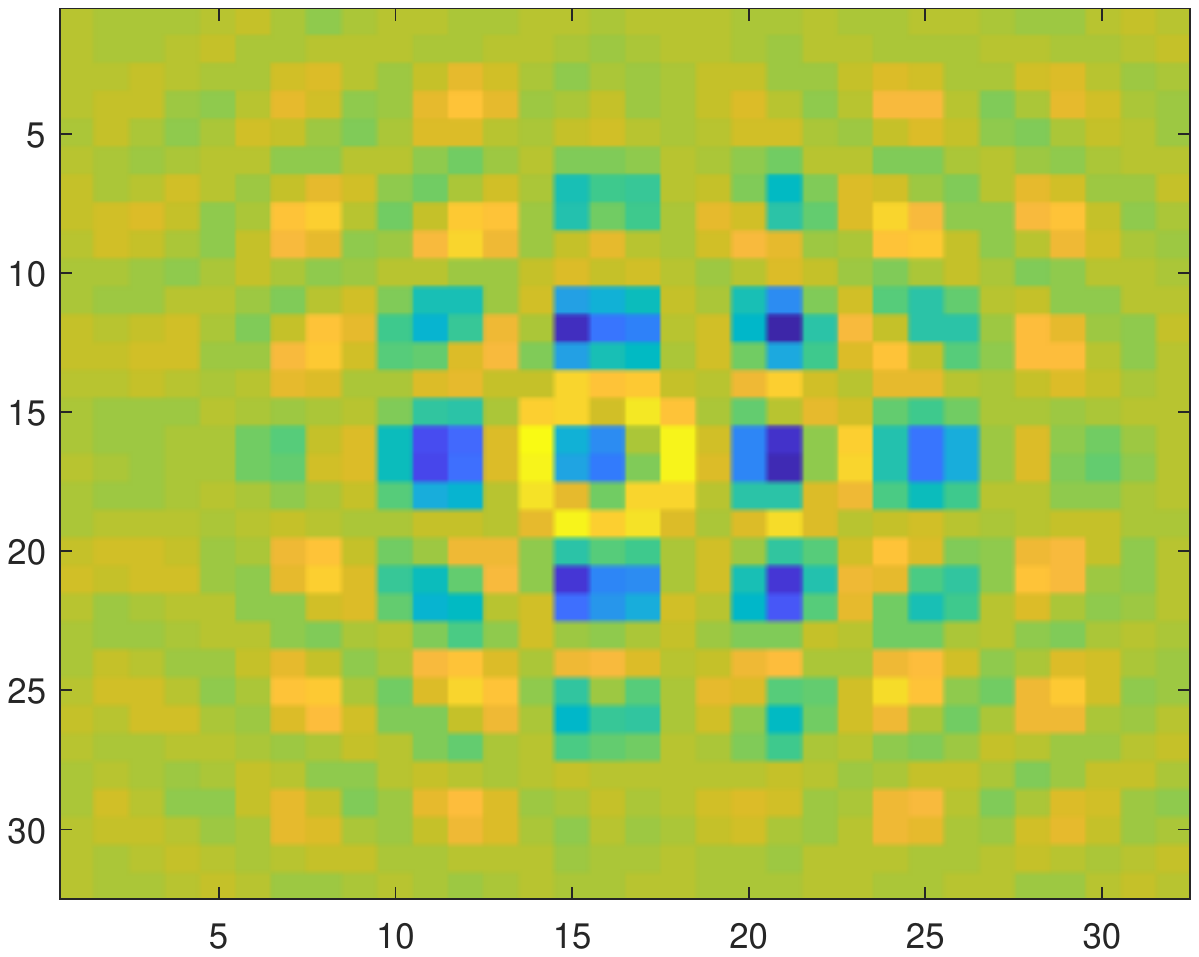}
\hfill
\includegraphics[trim=125pt 250pt 130pt 250pt, clip,scale=0.23]{./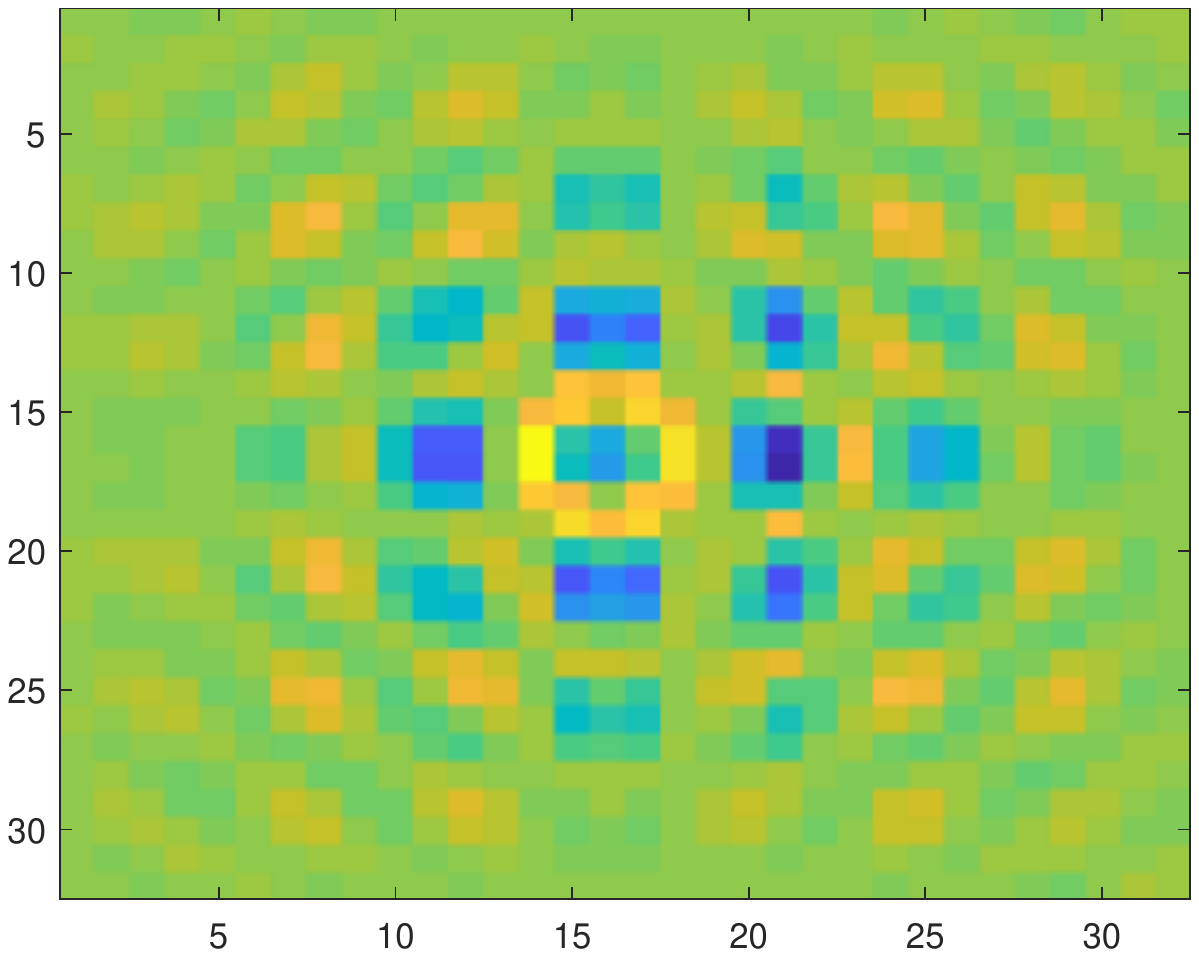}
\caption{Kernel matrices $A$ for \textsc{ADMM-slack} at iterations $k = 0, 10, 20, 50, 100$.}
\end{subfigure}

\bigskip
\centering
\begin{subfigure}[t]{1.0\textwidth}
\includegraphics[trim=125pt 250pt 130pt 250pt, clip,scale=0.23]{./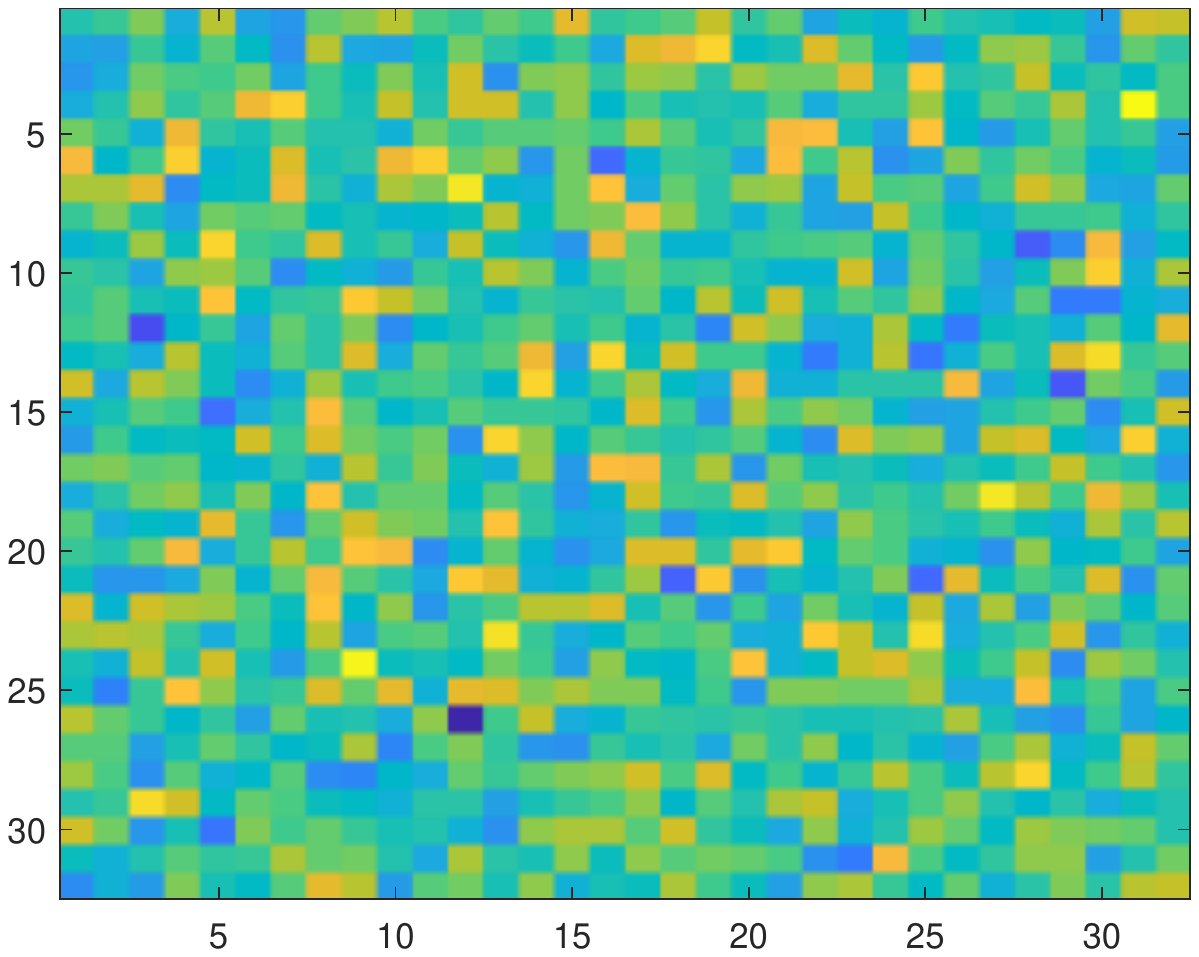}
\hfill
\includegraphics[trim=125pt 250pt 130pt 250pt, clip,scale=0.23]{./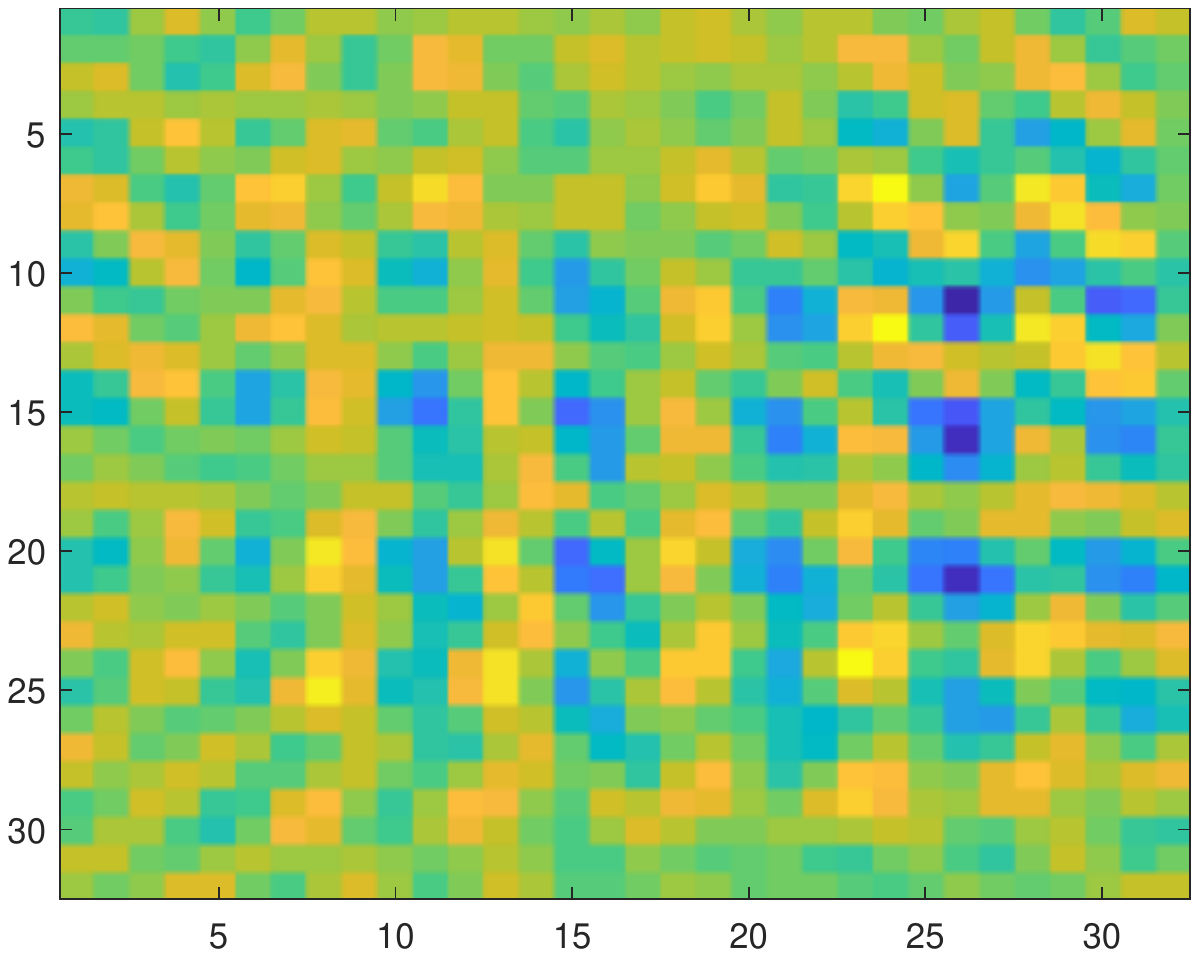}
\hfill
\includegraphics[trim=125pt 250pt 130pt 250pt, clip,scale=0.23]{./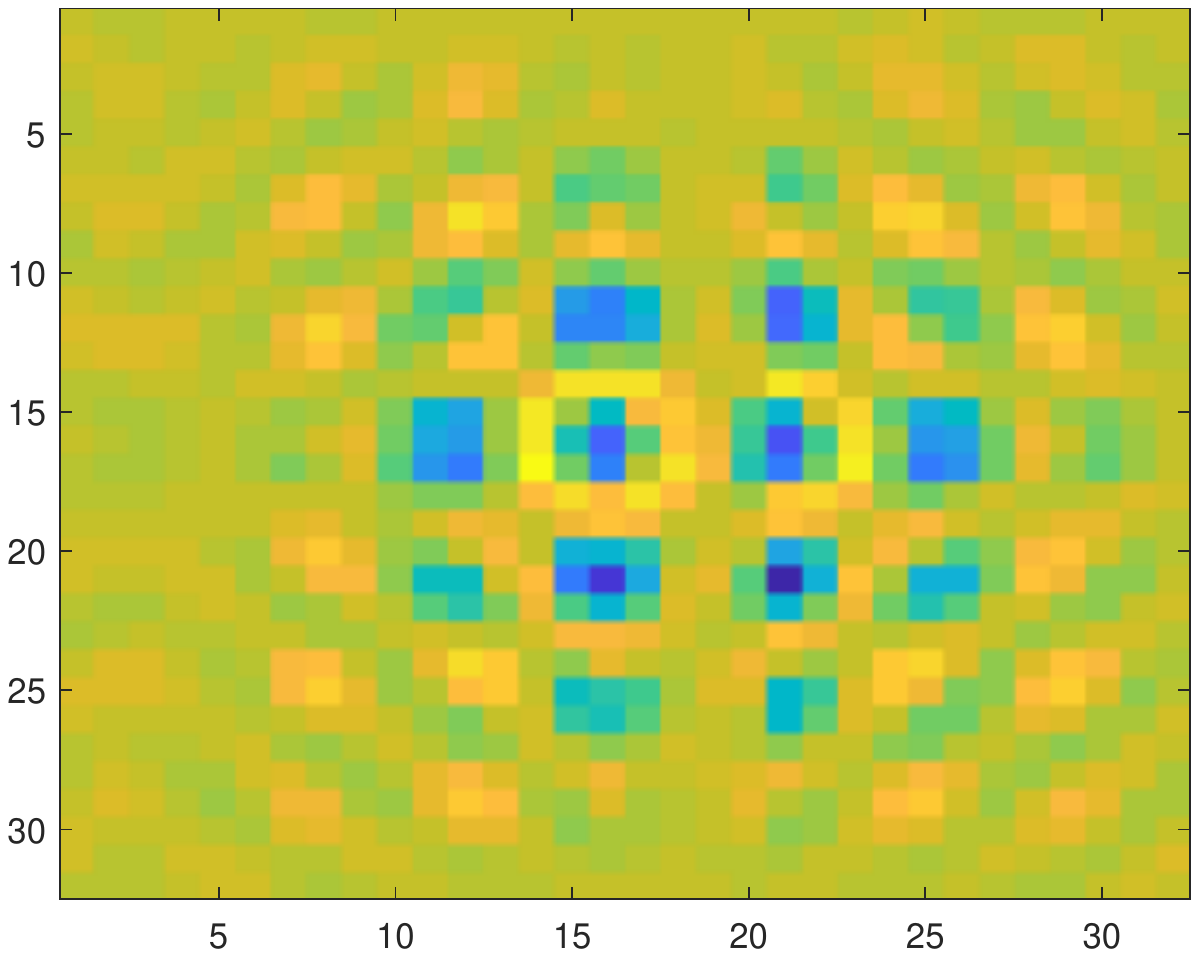}
\hfill
\includegraphics[trim=125pt 250pt 130pt 250pt, clip,scale=0.23]{./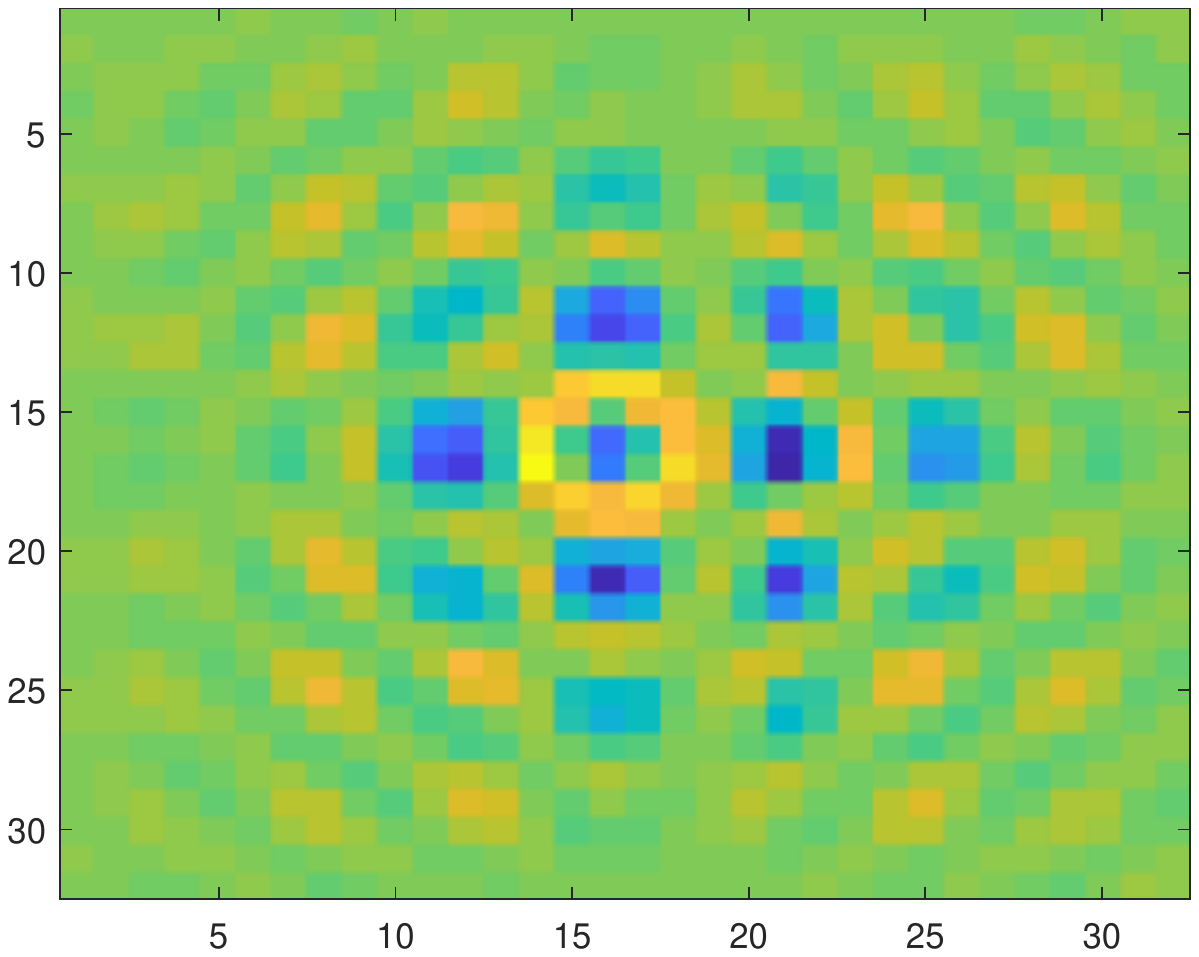}
\hfill
\includegraphics[trim=125pt 250pt 130pt 250pt, clip,scale=0.23]{./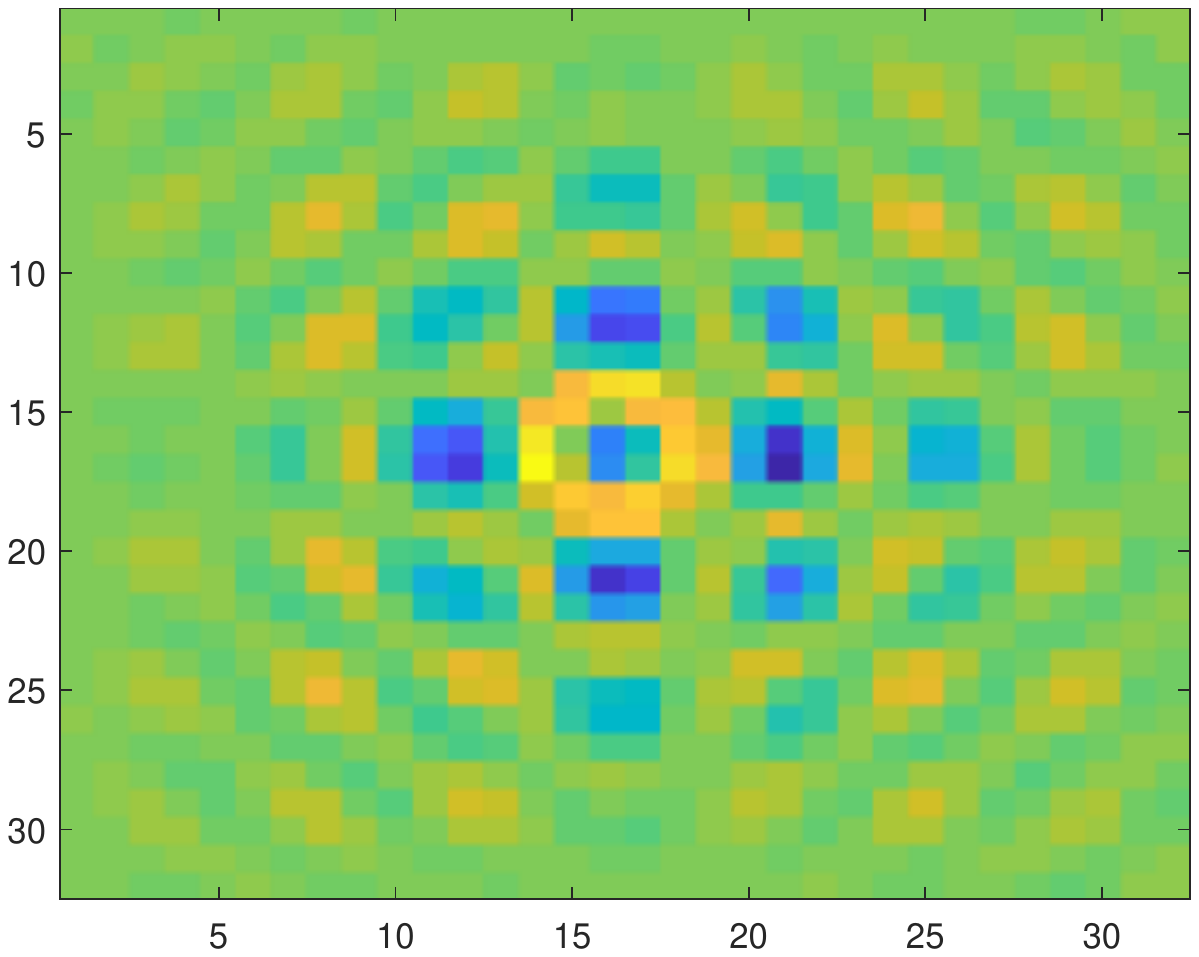}
\caption{Kernel matrices $A$ for \textsc{ADMM-exact} at iterations $k = 0, 10, 50, 100, 150$.}
\end{subfigure}
\caption{\textsc{ADMM-slack} and \textsc{ADMM-exact}, noiseless.}
\label{fig:exp_ker27_noiseless}
\end{figure}

We first compare \textsc{ADMM-exact} and \textsc{ADMM-slack} on noiseless data to assess the impact of the slack variable in (SBD1), which makes the formulation inexact. \Cref{fig:exp_ker27_noiseless,fig:exp_ker5_noiseless,fig:exp_ker23_noiseless} show the performance of \textsc{ADMM-exact} and \textsc{ADMM-slack} on synthetic problems generated according to \eqref{eq:sbd_generating} with $\xi = 0$. The top row of each figure shows the true kernel, the observation data $Y$, and the objective values over 60 seconds for \textsc{ADMM-slack} and \textsc{ADMM-exact}. To measure both the sparsity and accuracy, the objective function reported is given by $\frac{1}{10}\|X\|_1 + \frac{1}{2}\|Y - A \ast X - b\mathbf{1}\|_F^2$. The next two rows shows the kernel matrix $A$ recovered by each method at various stages of progress. In the noiseless case, the performance of \textsc{ADMM-exact} and \textsc{ADMM-slack} appears to be similar. We also observe empirically that the convergence is sublinear, which is consistent with the general rate $O(\frac{1}{k})$ for ADMM.

\begin{figure}
\centering
\begin{subfigure}[t]{0.2\textwidth}
\includegraphics[trim=135pt 250pt 130pt 250pt, clip,scale=0.25]{./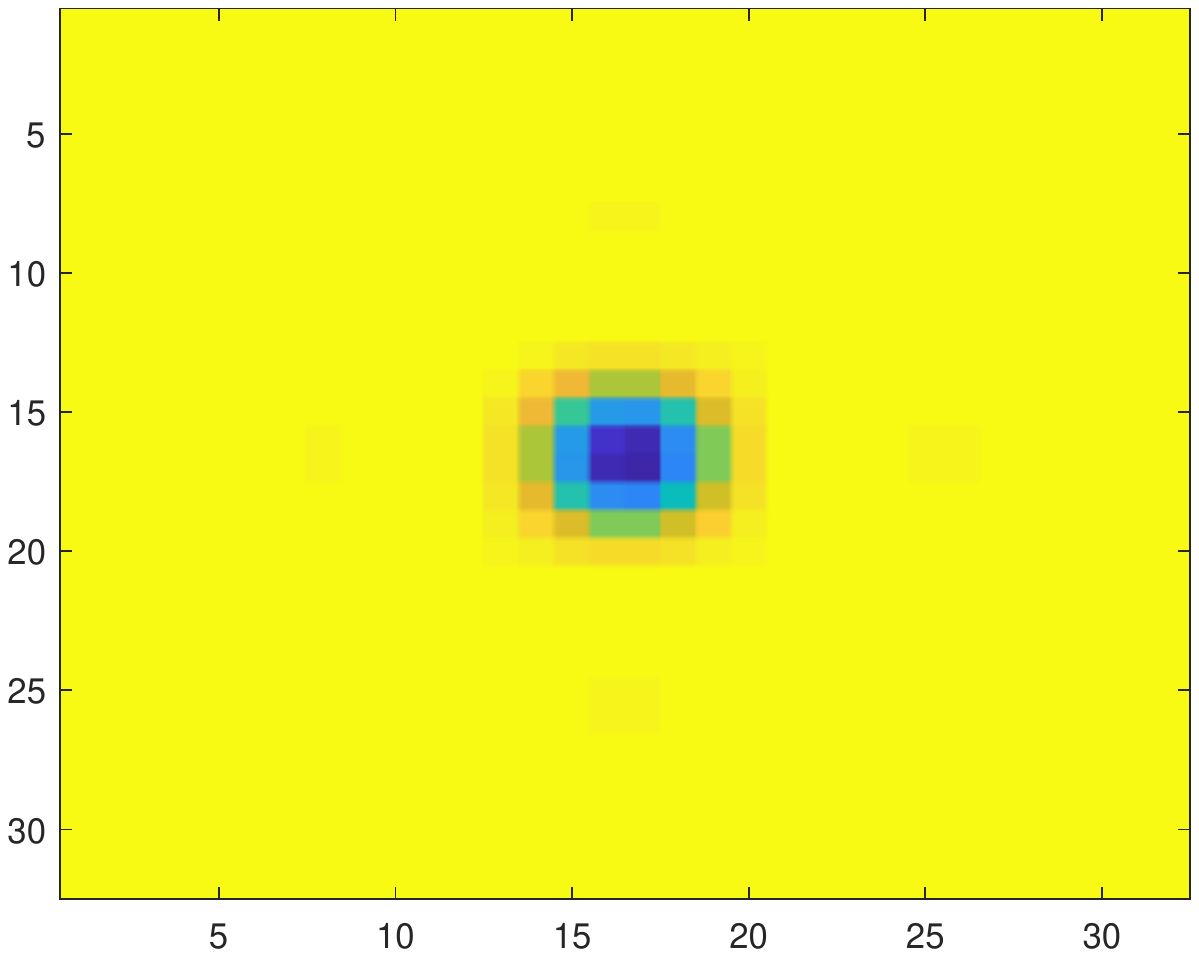}
\caption{True Kernel $A^\ast$}
\end{subfigure}
\begin{subfigure}[t]{0.2\textwidth}
\includegraphics[trim=125pt 250pt 130pt 250pt, clip,scale=0.25]{./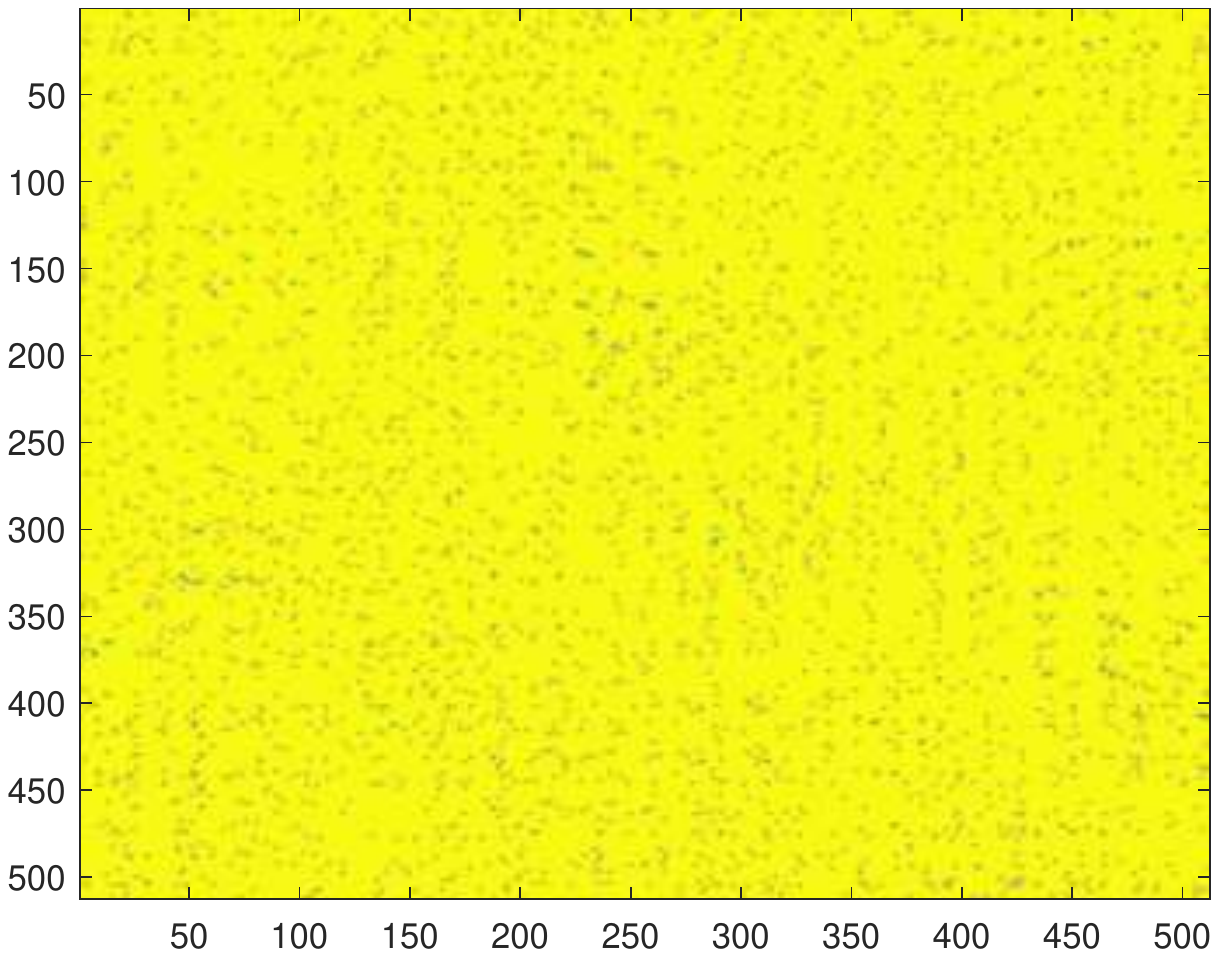}
\caption{Observations $Y$}
\end{subfigure}
\begin{subfigure}[t]{0.5\textwidth}
\includegraphics[trim=125pt 250pt 130pt 250pt, clip,scale=0.3]{./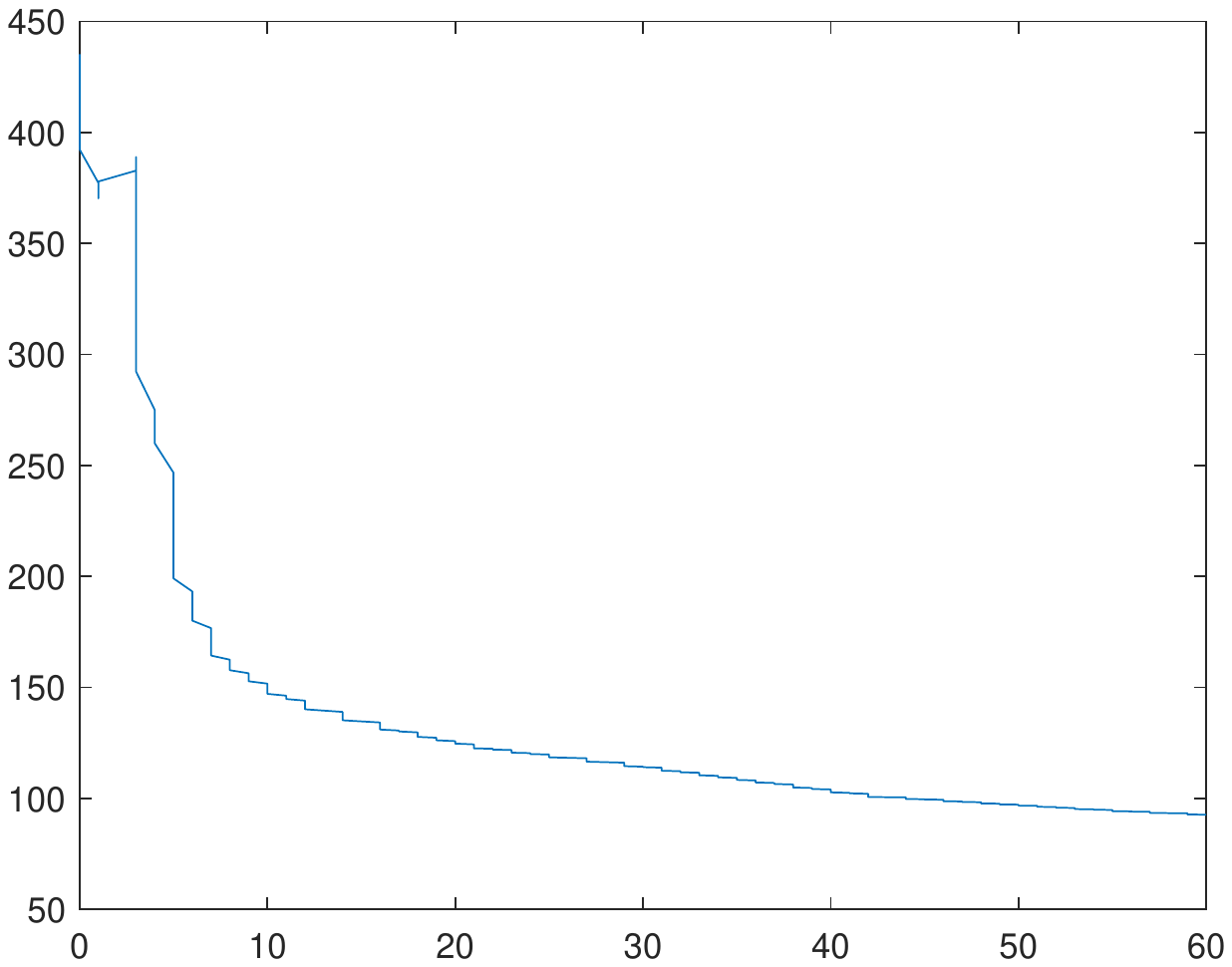}
\hfill
\includegraphics[trim=125pt 250pt 130pt 250pt, clip,scale=0.3]{./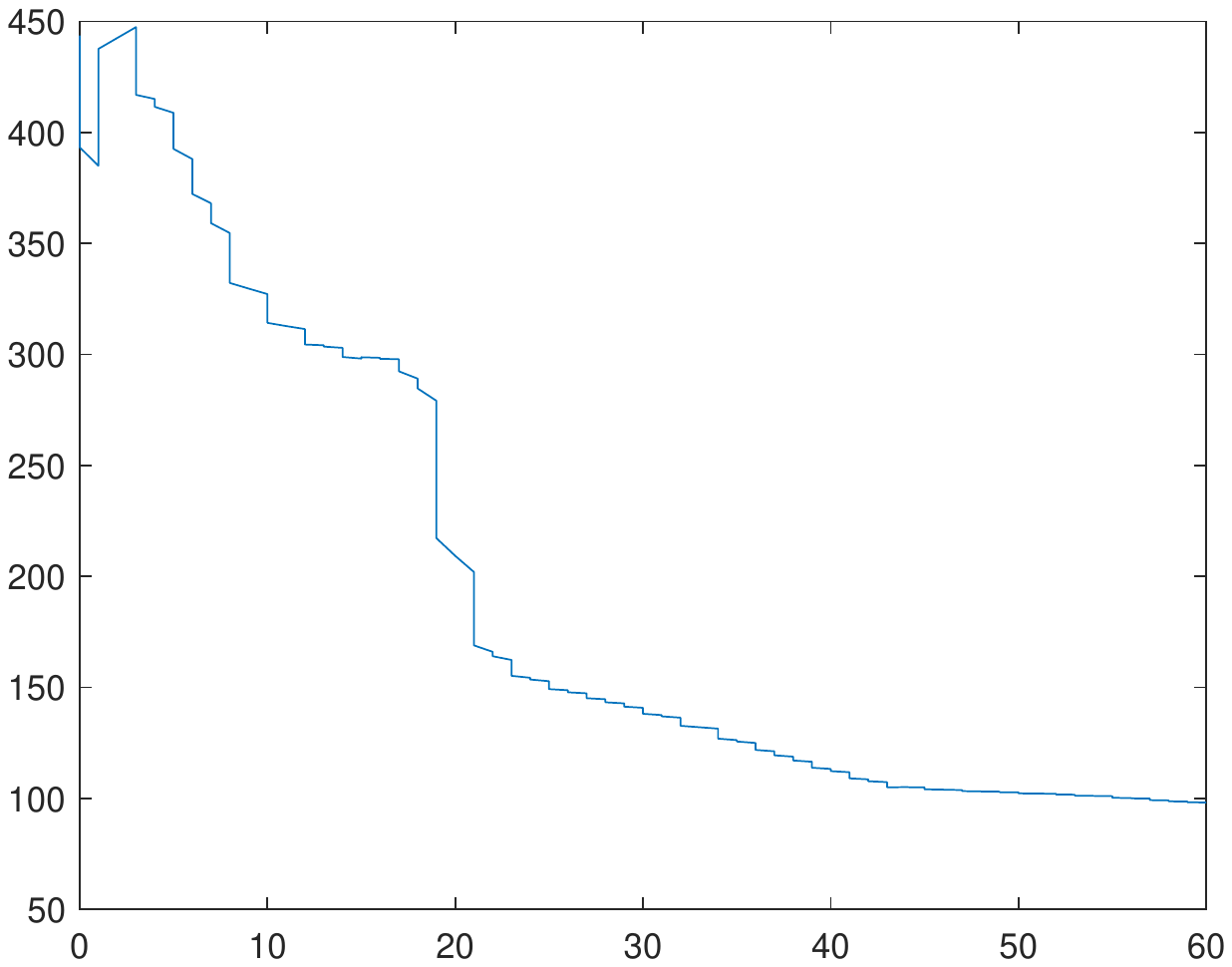}
\caption{Objective value over time (s). The left plot shows \textsc{ADMM-slack}, and the right shows \textsc{ADMM-exact}.}
\end{subfigure}

\bigskip

\centering
\begin{subfigure}[t]{1.0\textwidth}
\includegraphics[trim=125pt 250pt 130pt 250pt, clip,scale=0.23]{./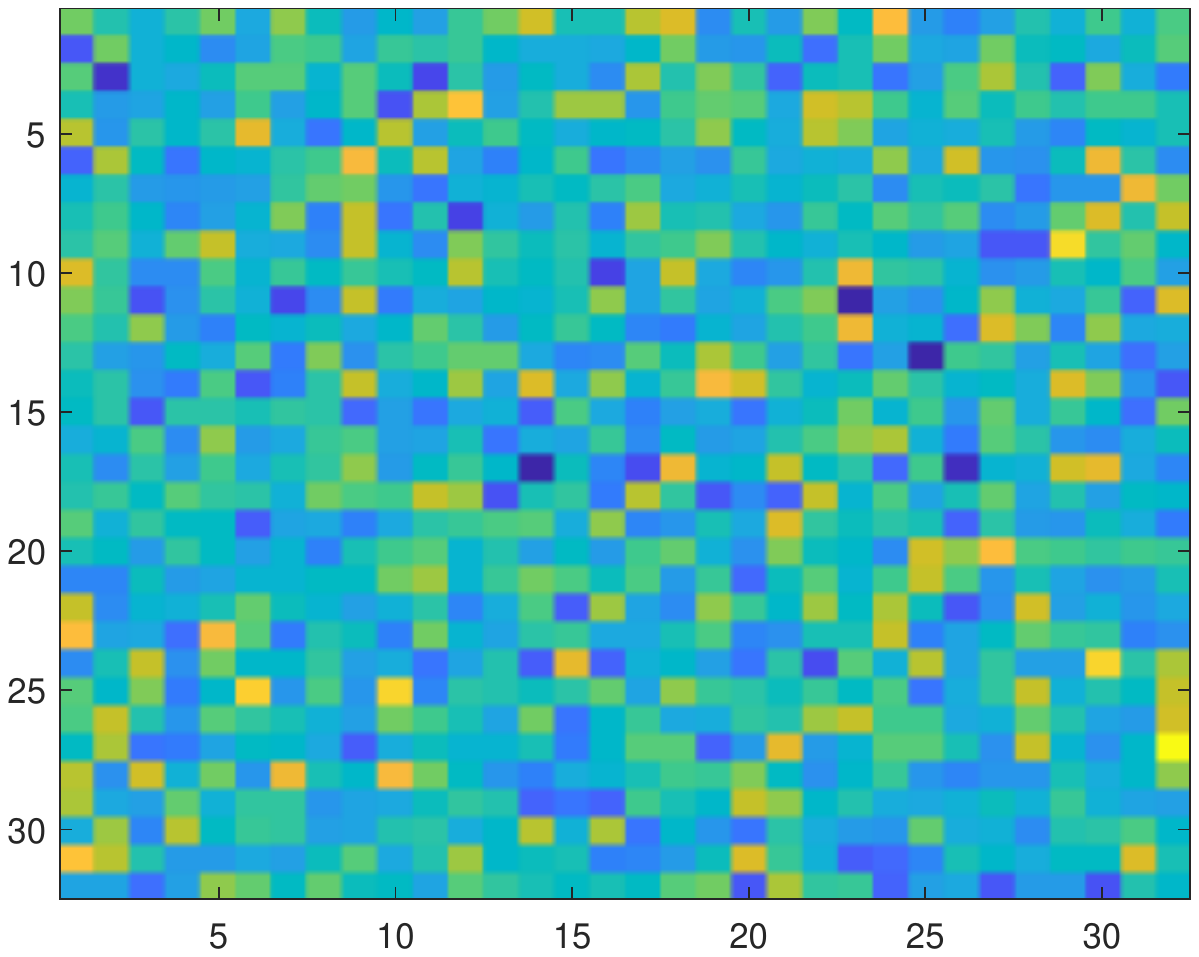}
\hfill
\includegraphics[trim=125pt 250pt 130pt 250pt, clip,scale=0.23]{./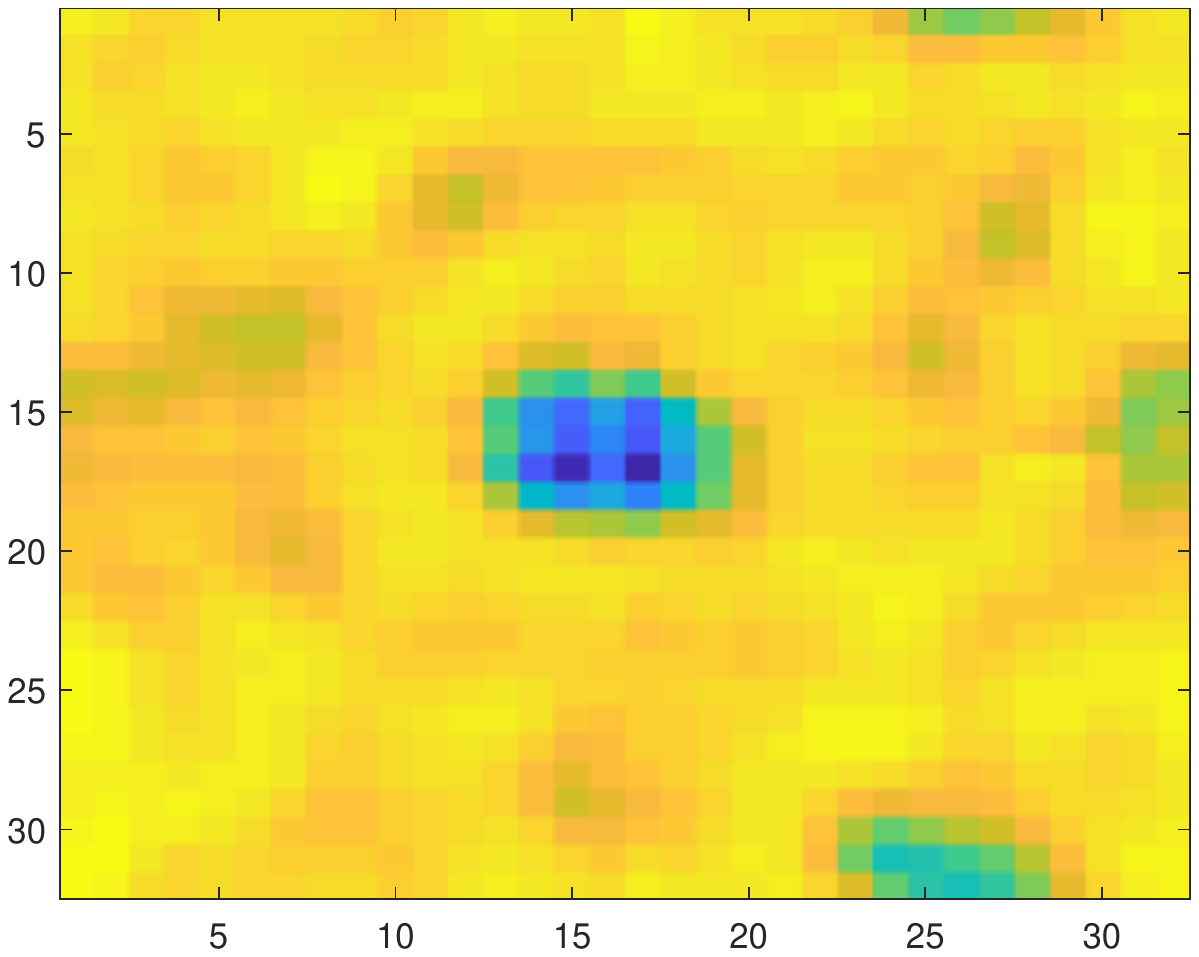}
\hfill
\includegraphics[trim=125pt 250pt 130pt 250pt, clip,scale=0.23]{./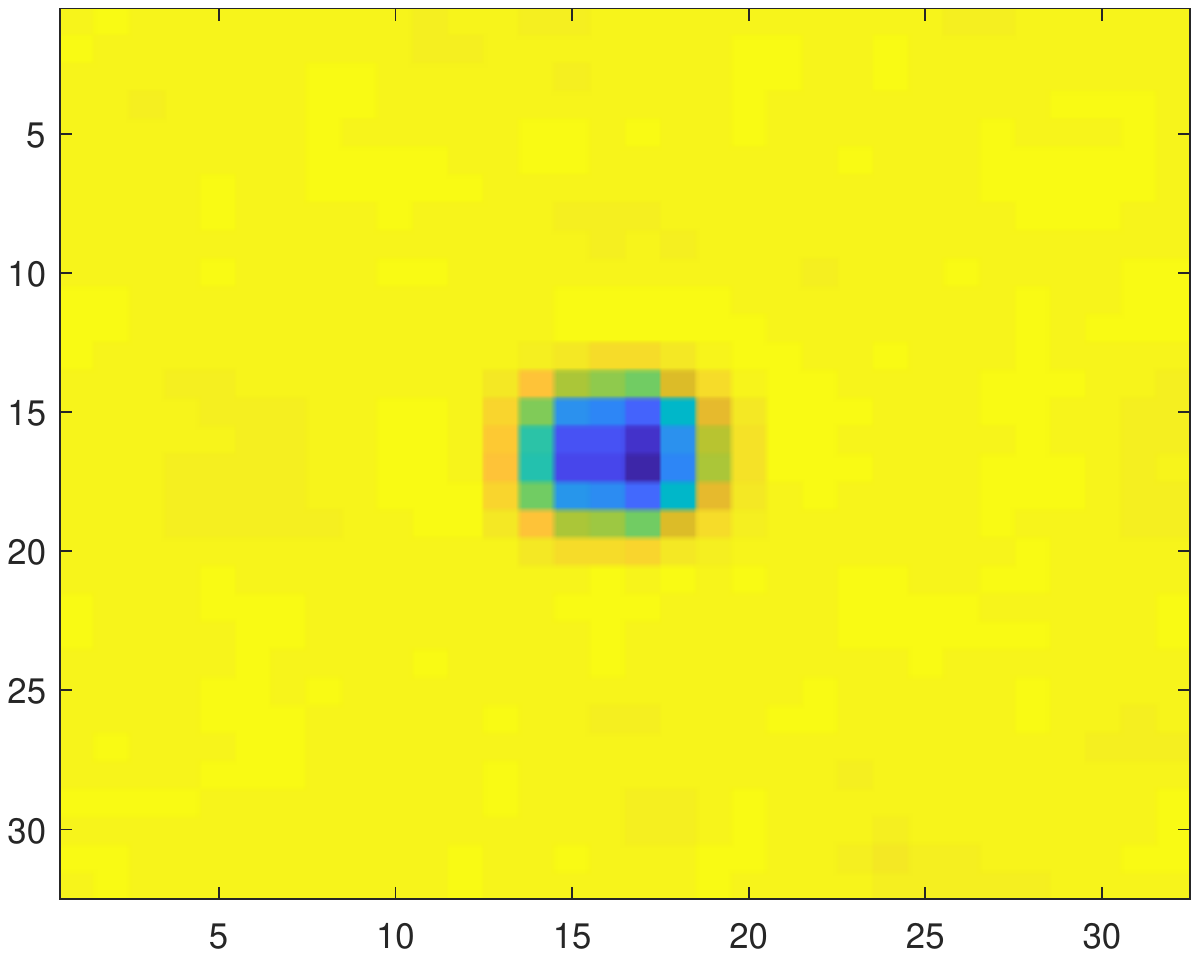}
\hfill
\includegraphics[trim=125pt 250pt 130pt 250pt, clip,scale=0.23]{./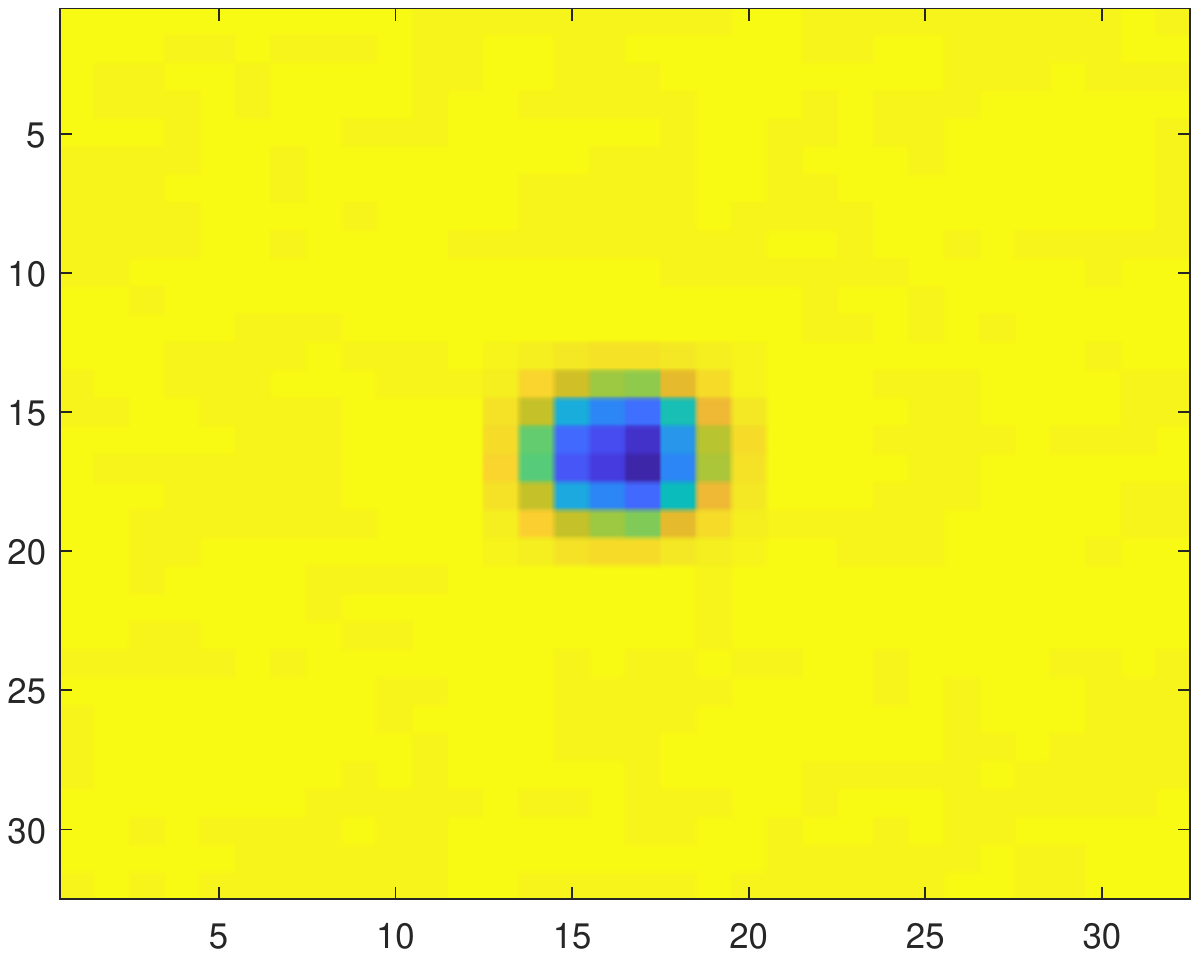}
\hfill
\includegraphics[trim=125pt 250pt 130pt 250pt, clip,scale=0.23]{./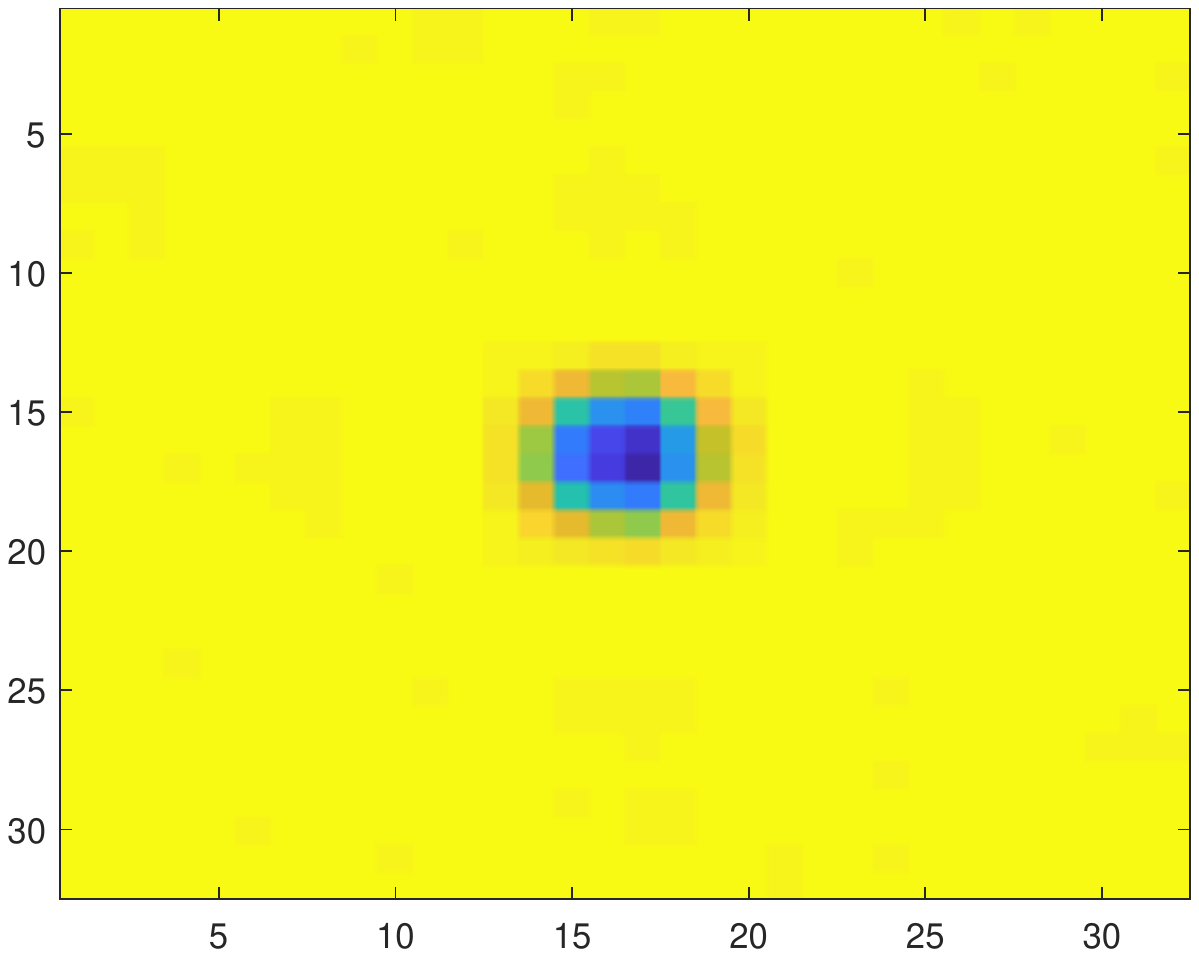}
\caption{Kernel matrices $A$ for \textsc{ADMM-slack} at iterations $k = 0, 10, 20, 30, 50$.}
\end{subfigure}

\bigskip
\centering
\begin{subfigure}[t]{1.0\textwidth}
\includegraphics[trim=125pt 250pt 130pt 250pt, clip,scale=0.23]{./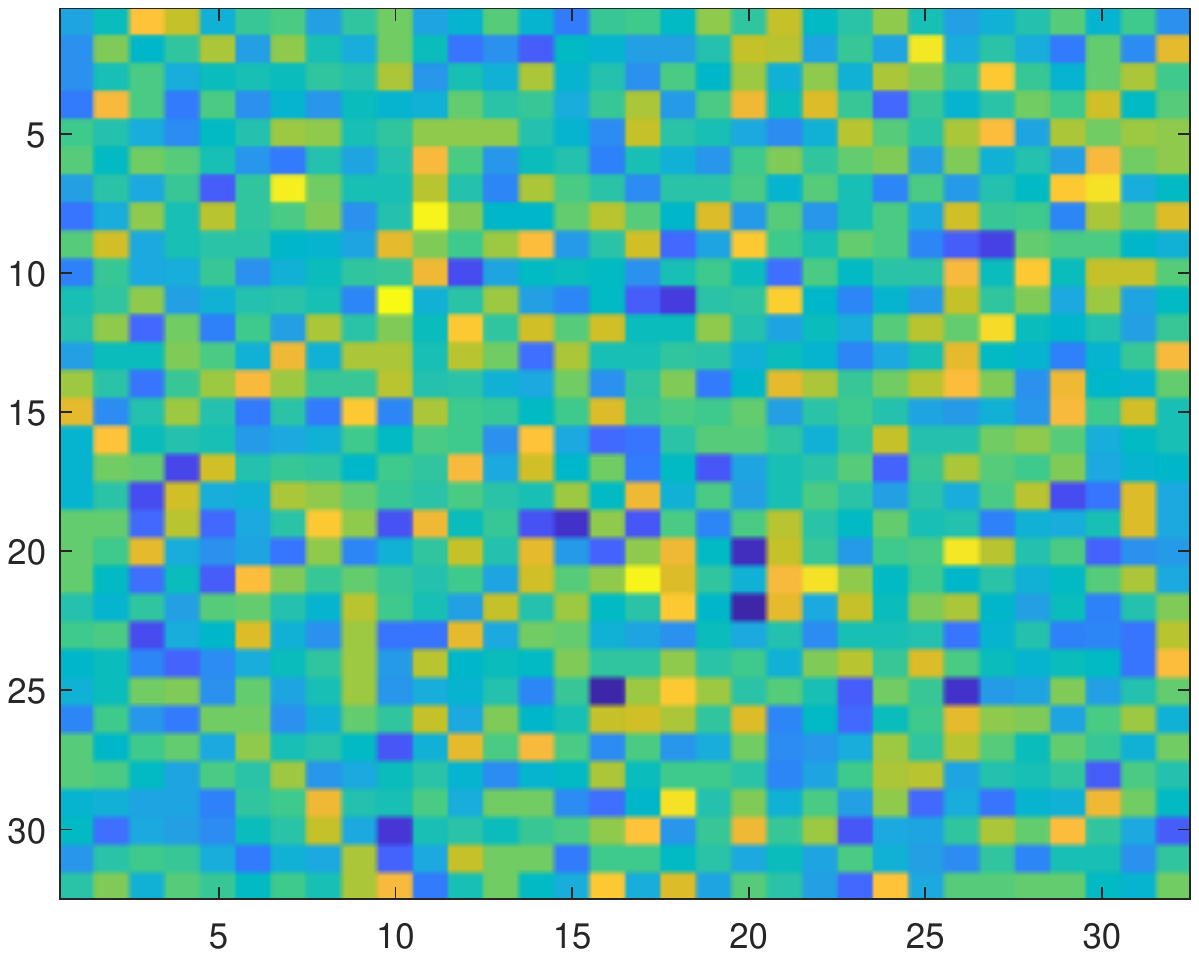}
\hfill
\includegraphics[trim=125pt 250pt 130pt 250pt, clip,scale=0.23]{./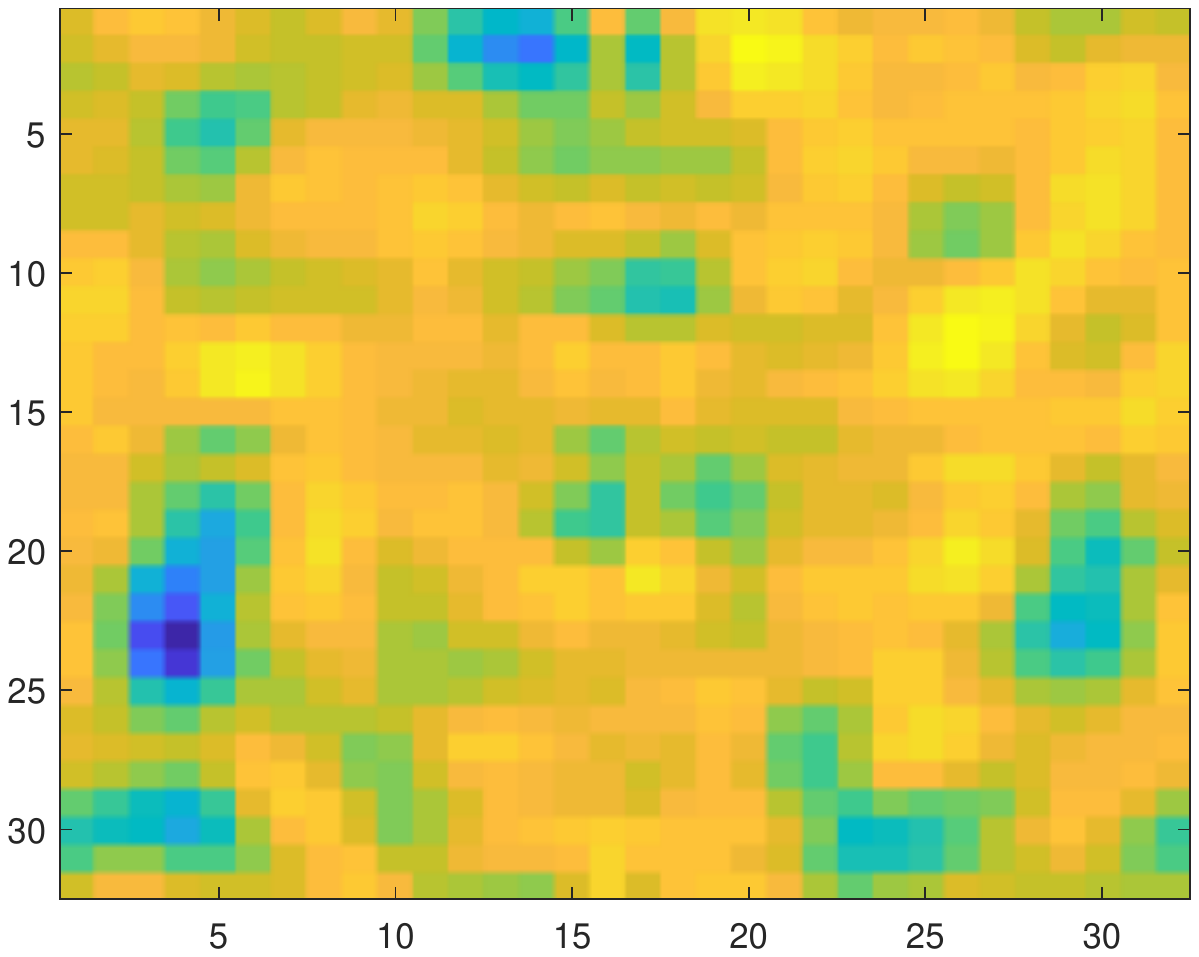}
\hfill
\includegraphics[trim=125pt 250pt 130pt 250pt, clip,scale=0.23]{./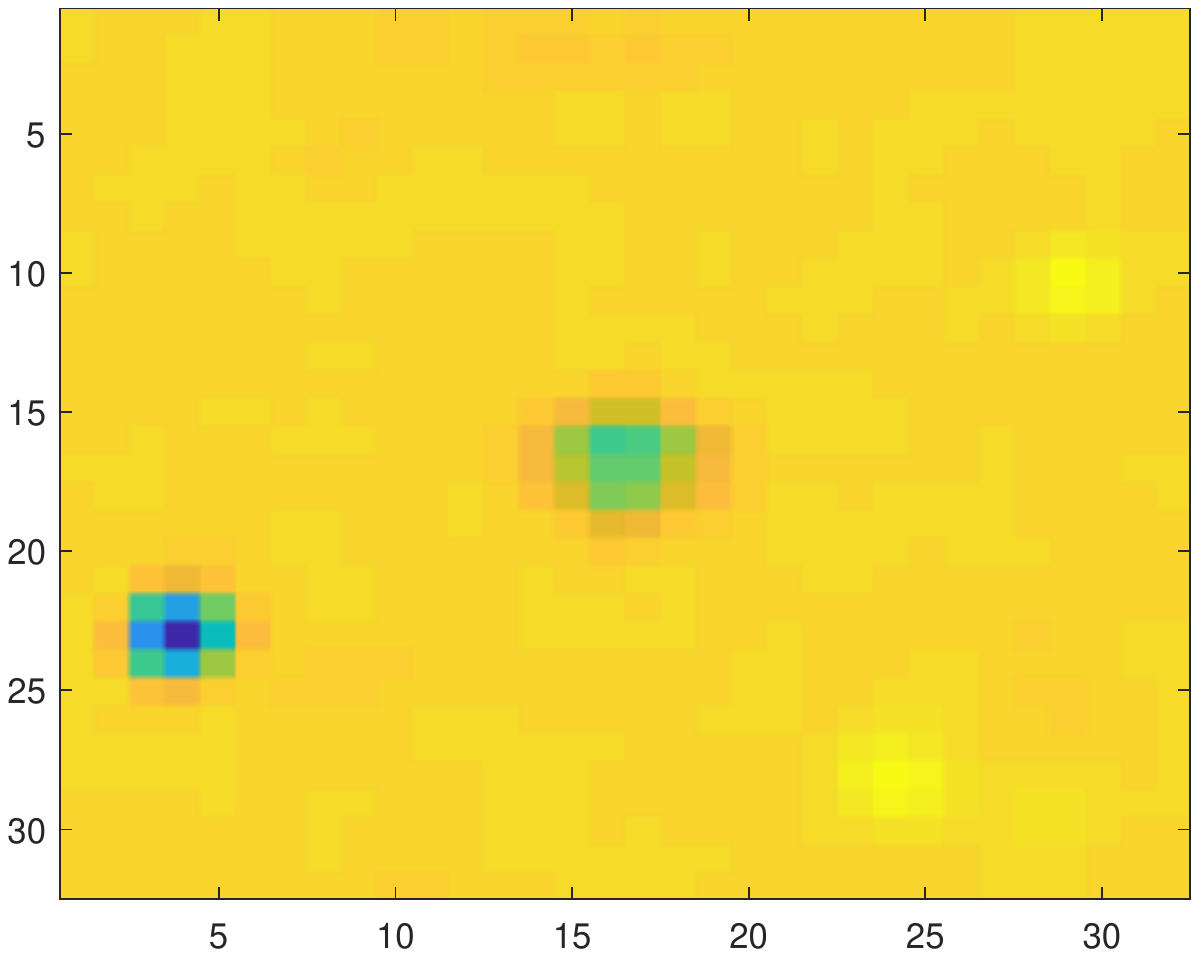}
\hfill
\includegraphics[trim=125pt 250pt 130pt 250pt, clip,scale=0.23]{./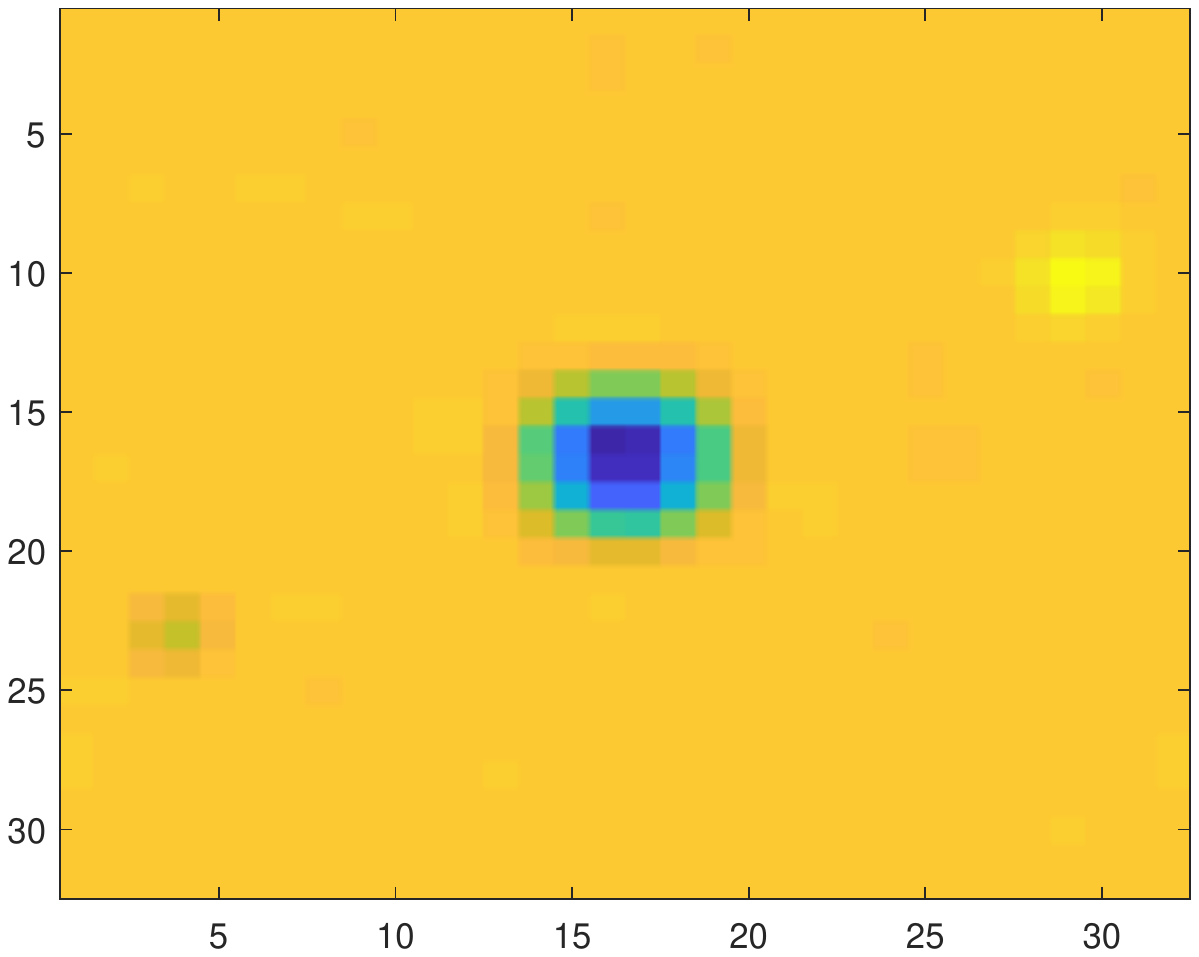}
\hfill
\includegraphics[trim=125pt 250pt 130pt 250pt, clip,scale=0.23]{./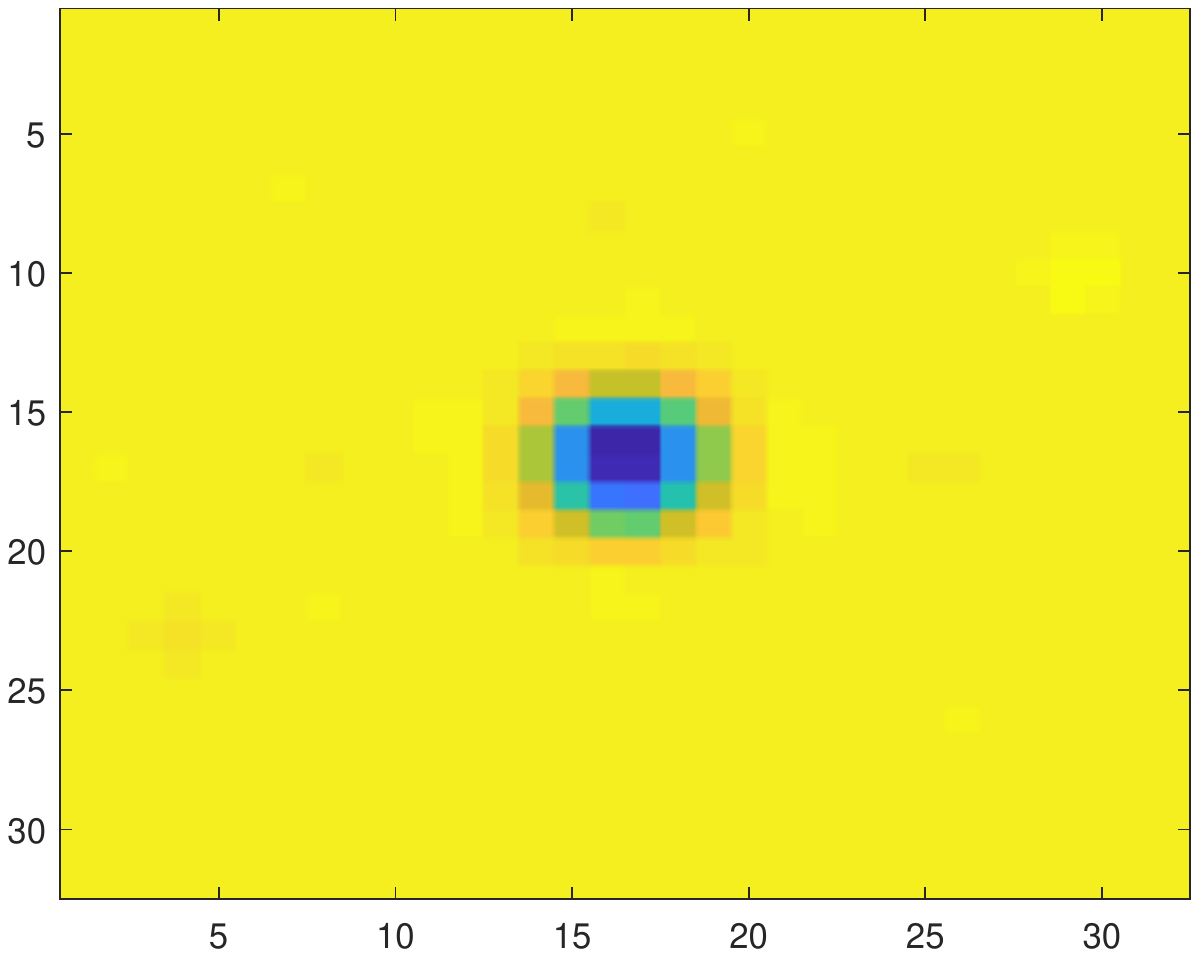}
\caption{Kernel matrices $A$ for \textsc{ADMM-exact} at iterations $k = 0, 10, 50, 100, 120$.}
\end{subfigure}
\caption{\textsc{ADMM-slack} and \textsc{ADMM-exact}, noiseless.}
\label{fig:exp_ker5_noiseless}
\end{figure}

\begin{figure}
\centering
\begin{subfigure}[t]{0.2\textwidth}
\includegraphics[trim=135pt 250pt 130pt 250pt, clip,scale=0.25]{./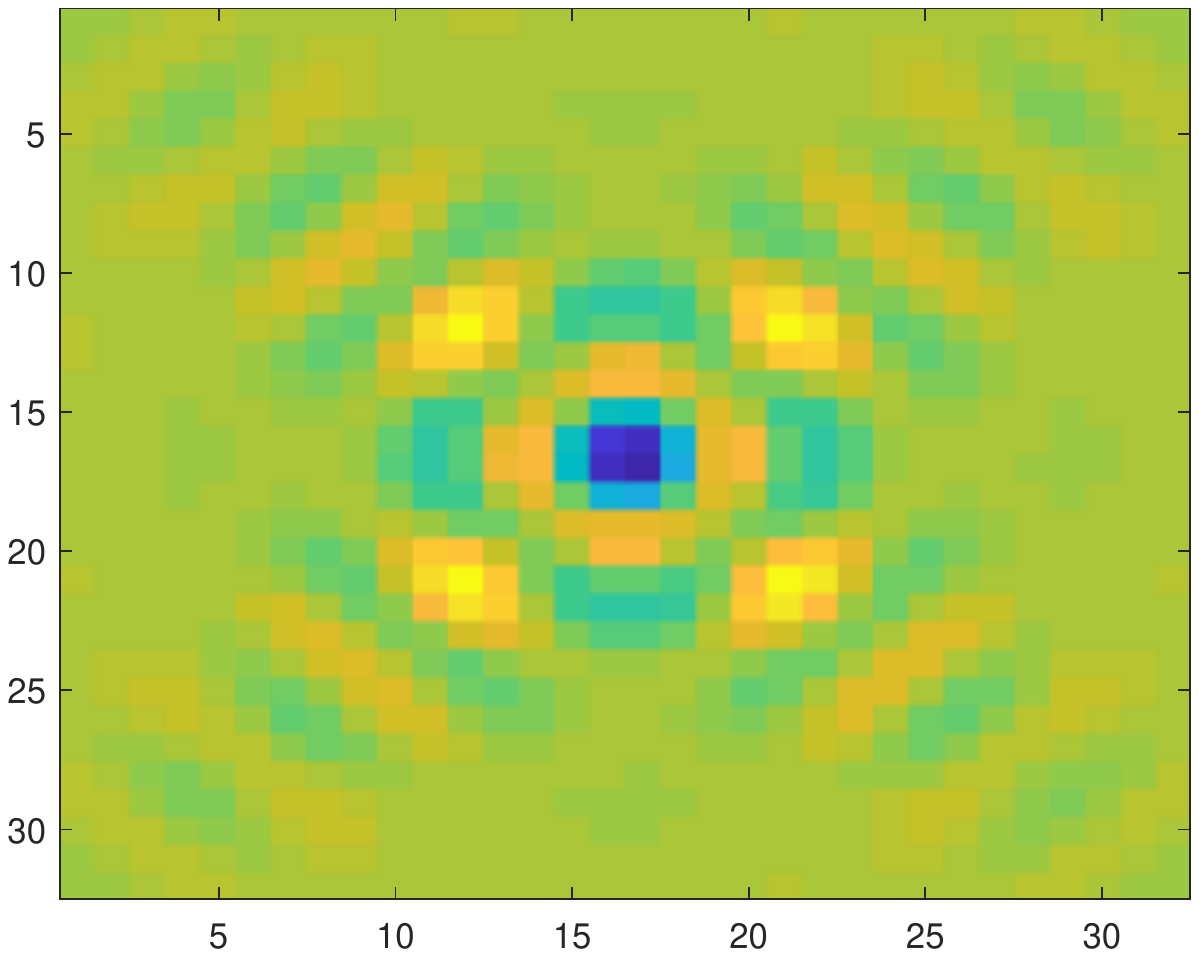}
\caption{True Kernel $A^\ast$}
\end{subfigure}
\begin{subfigure}[t]{0.2\textwidth}
\includegraphics[trim=125pt 250pt 130pt 250pt, clip,scale=0.25]{./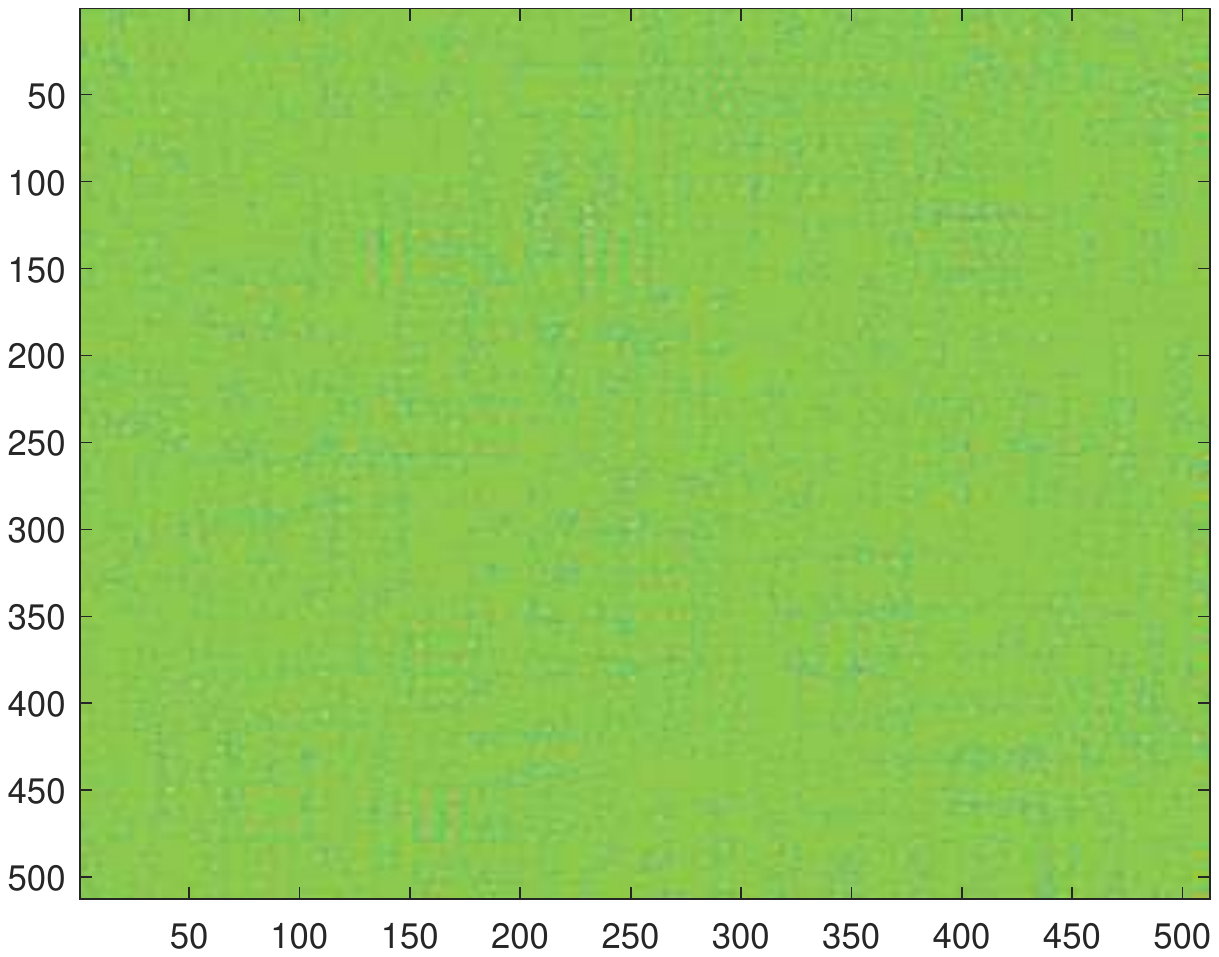}
\caption{Observations $Y$}
\end{subfigure}
\begin{subfigure}[t]{0.5\textwidth}
\includegraphics[trim=125pt 250pt 130pt 250pt, clip,scale=0.3]{./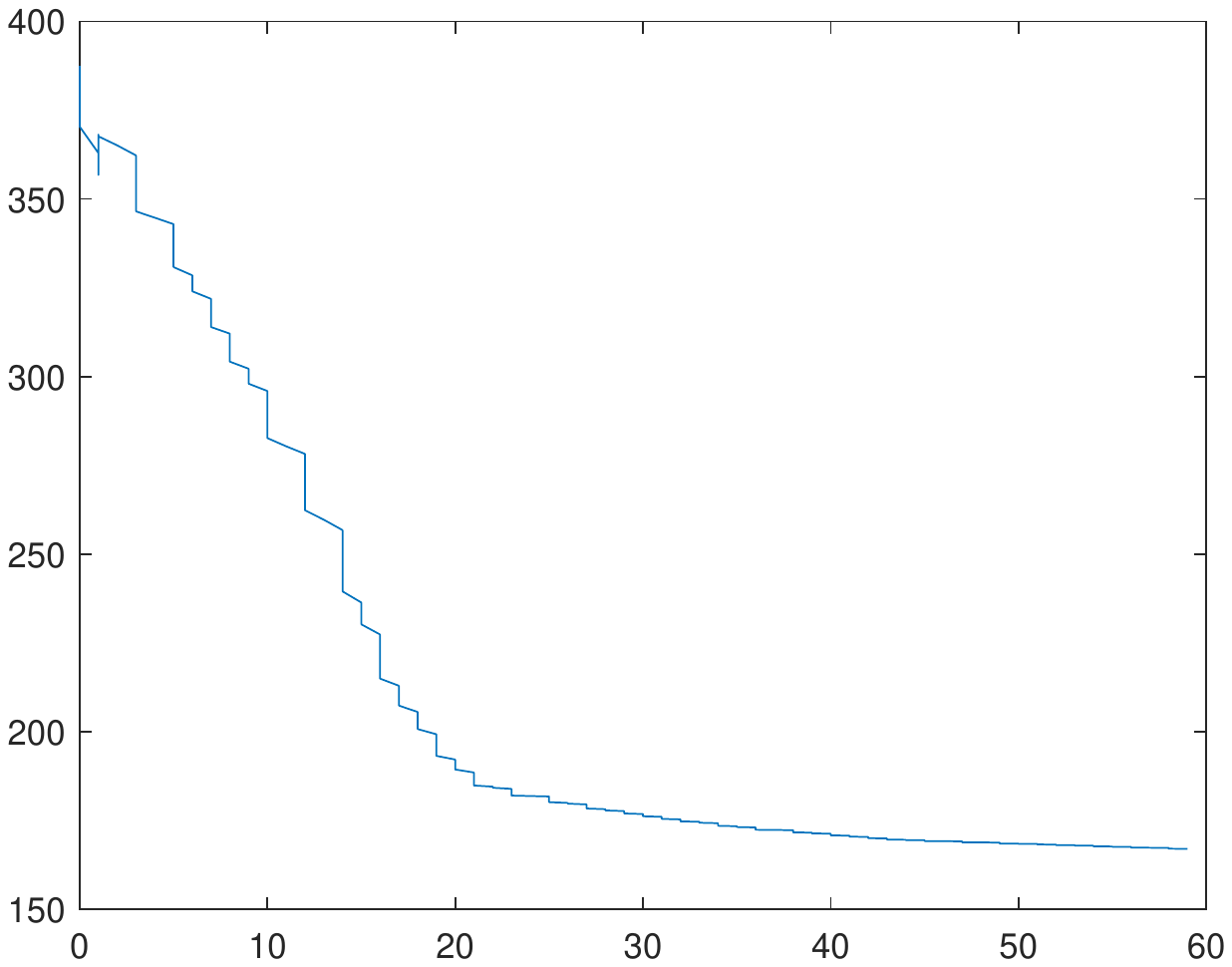}
\hfill
\includegraphics[trim=125pt 250pt 130pt 250pt, clip,scale=0.3]{./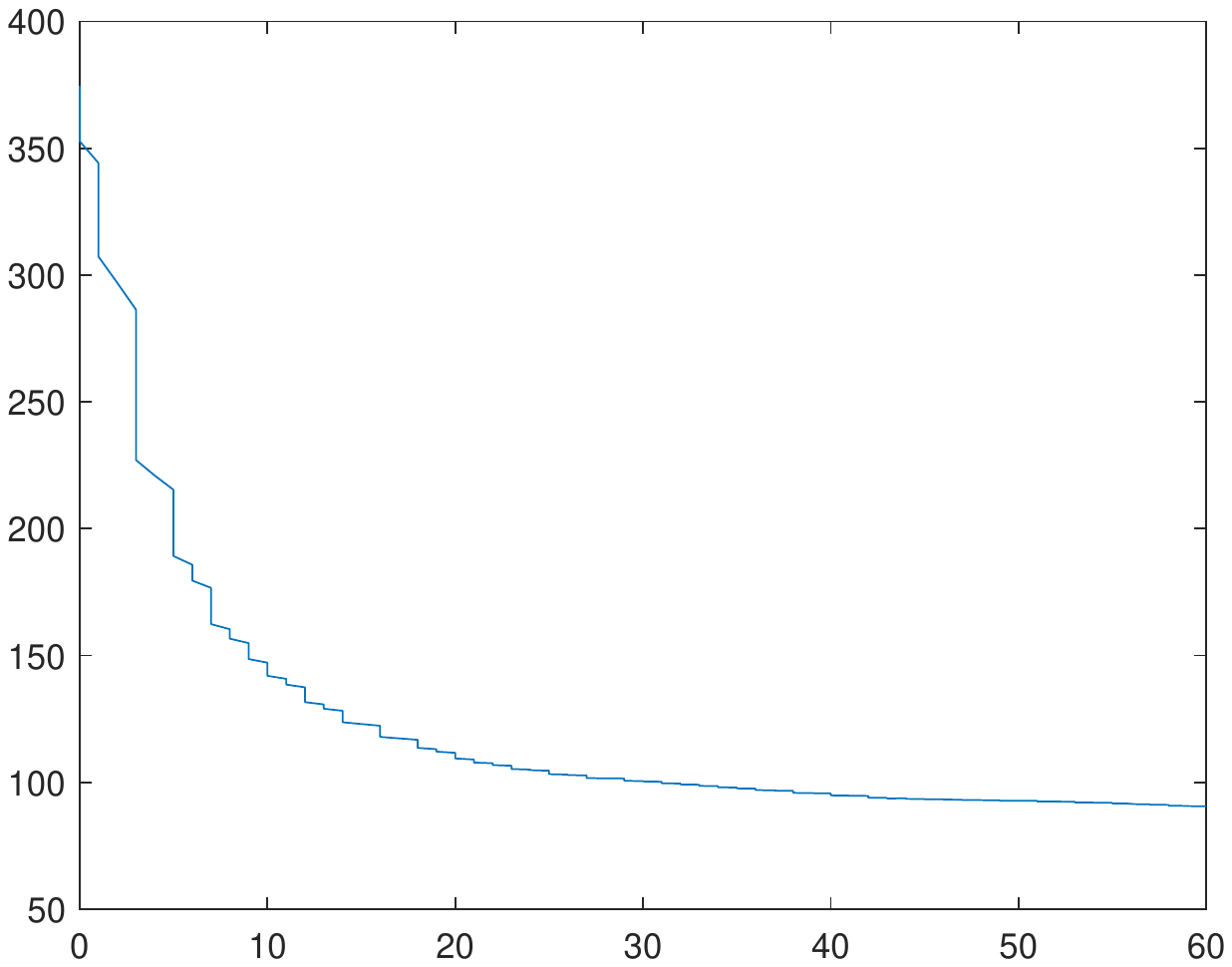}
\caption{Objective value over time (s). The left plot shows \textsc{ADMM-slack}, and the right shows \textsc{ADMM-exact}.}
\end{subfigure}

\bigskip

\centering
\begin{subfigure}[t]{1.0\textwidth}
\includegraphics[trim=125pt 250pt 130pt 250pt, clip,scale=0.23]{./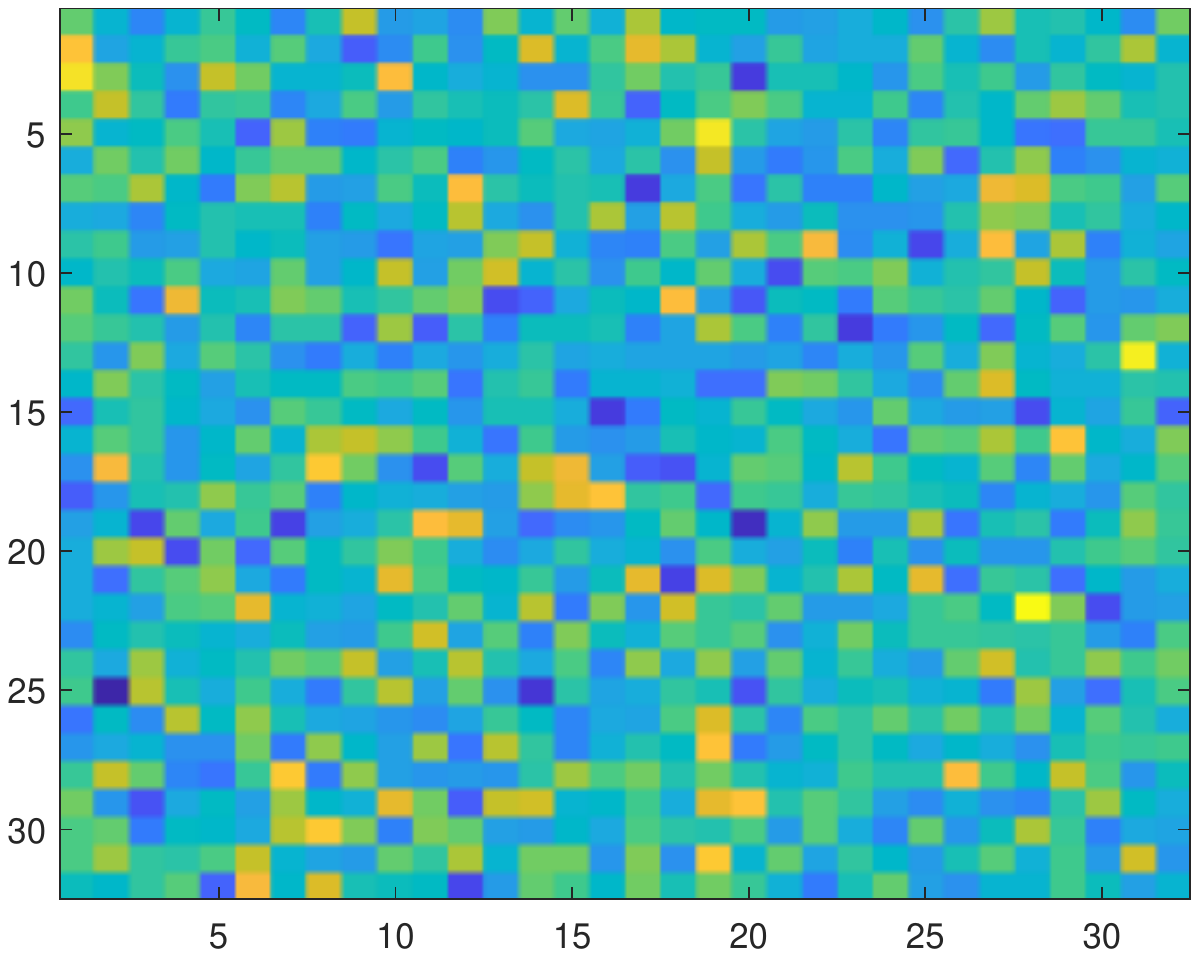}
\hfill
\includegraphics[trim=125pt 250pt 130pt 250pt, clip,scale=0.23]{./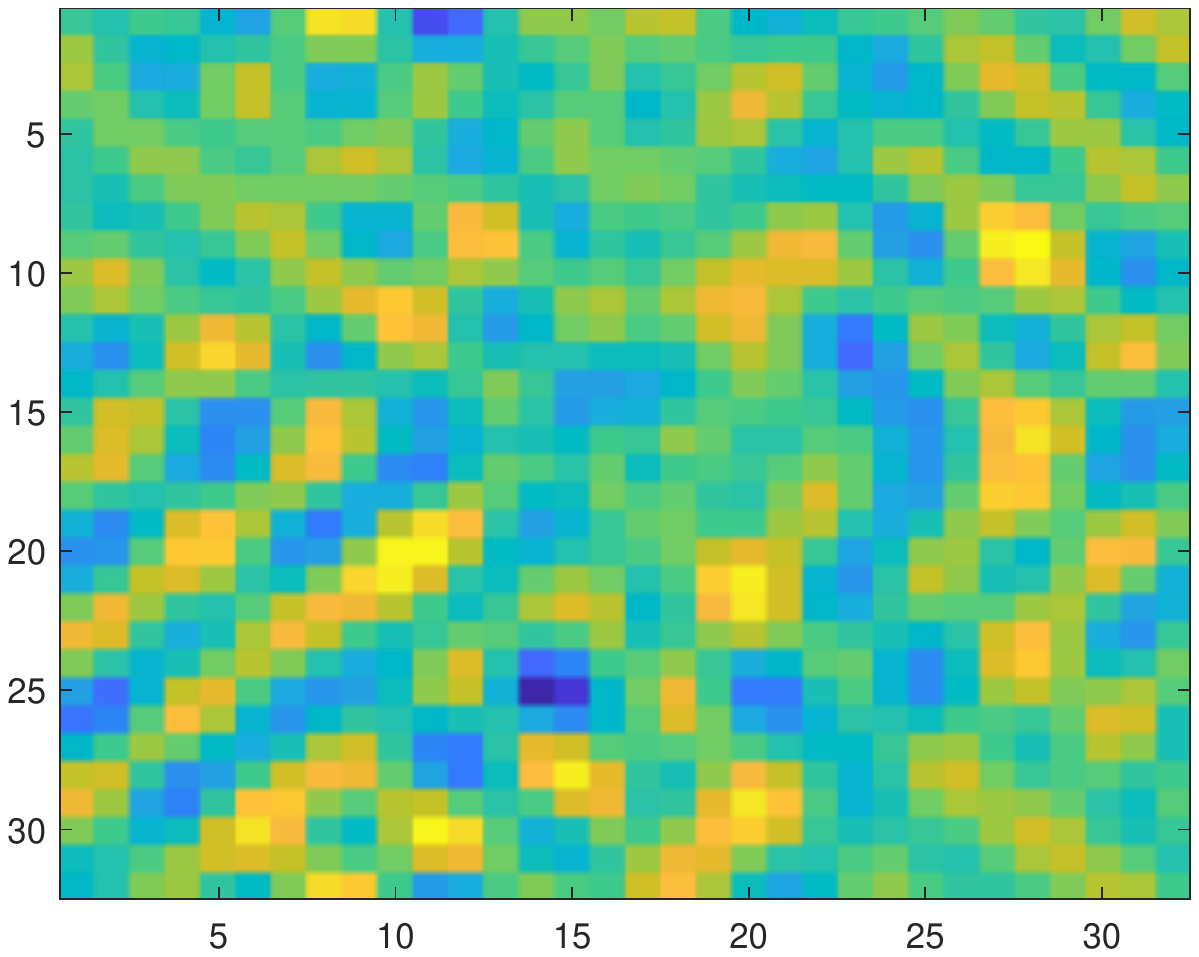}
\hfill
\includegraphics[trim=125pt 250pt 130pt 250pt, clip,scale=0.23]{./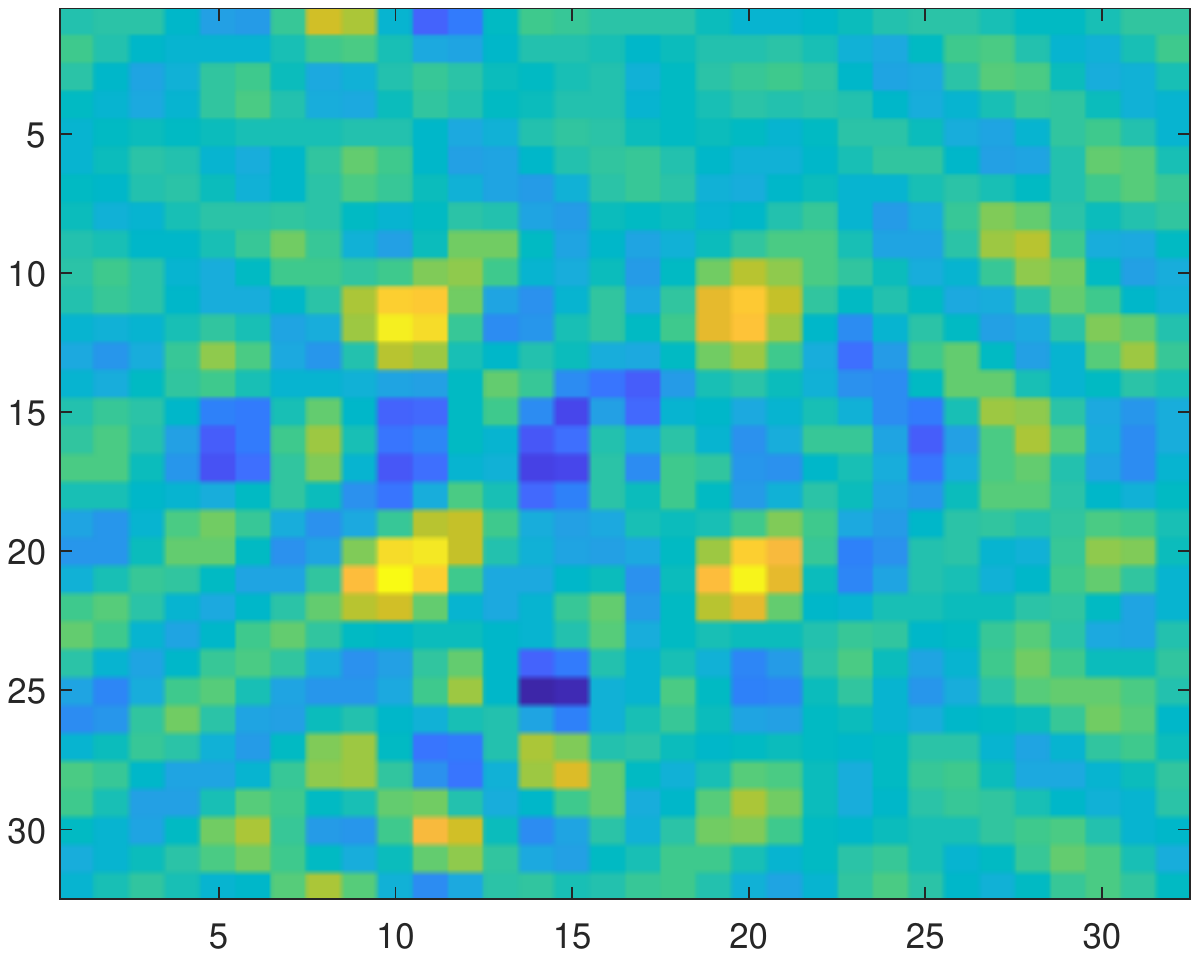}
\hfill
\includegraphics[trim=125pt 250pt 130pt 250pt, clip,scale=0.23]{./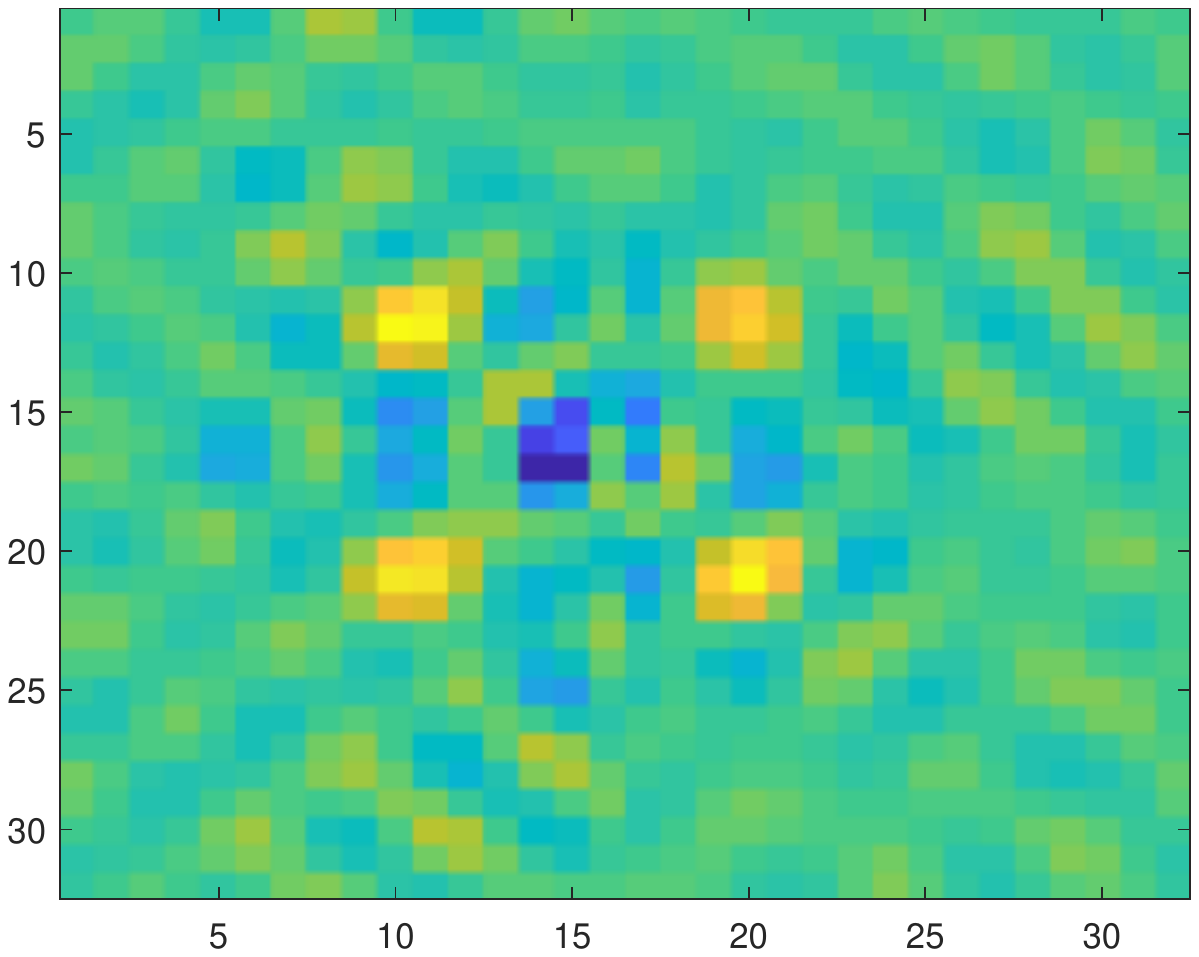}
\hfill
\includegraphics[trim=125pt 250pt 130pt 250pt, clip,scale=0.23]{./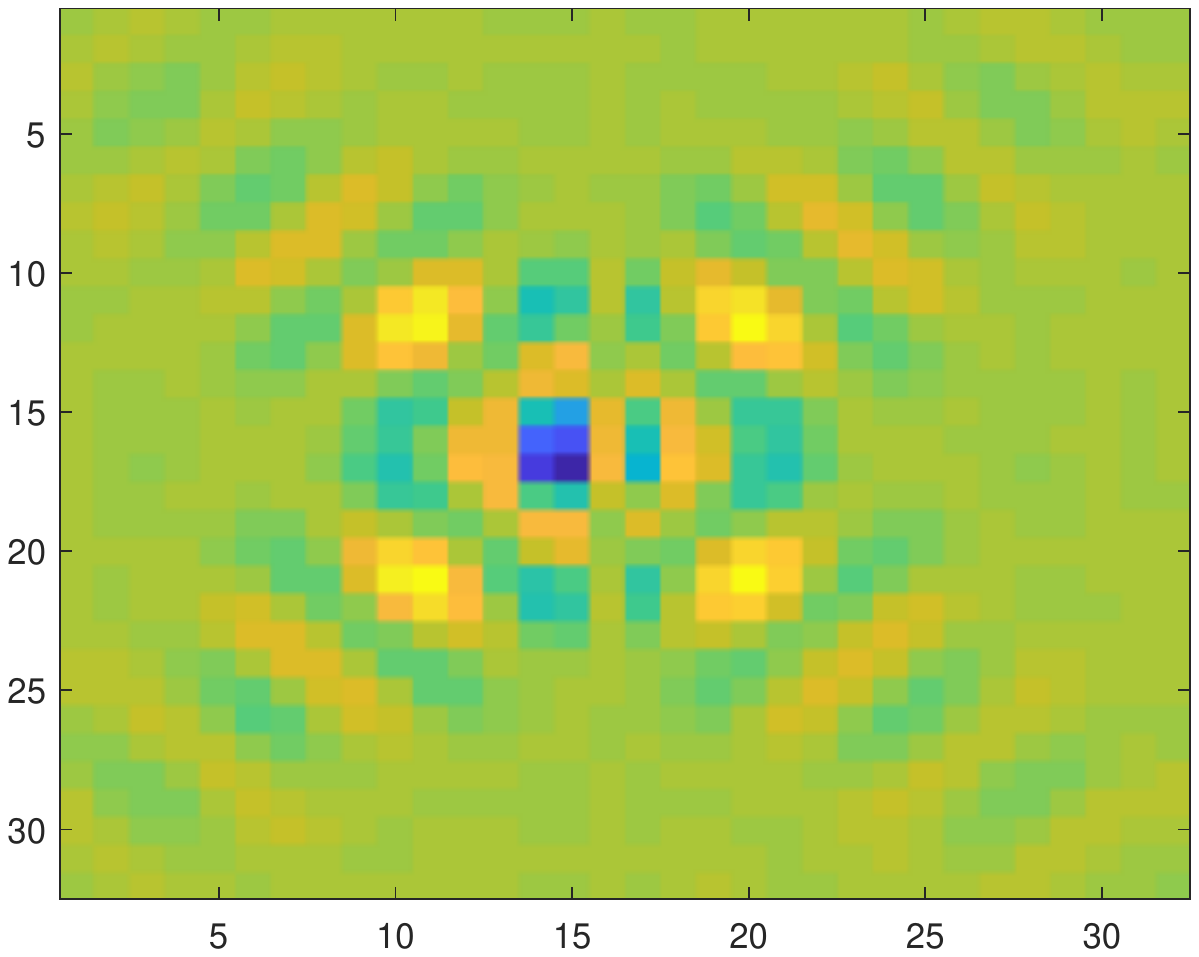}
\caption{Kernel matrices $A$ for \textsc{ADMM-slack} at iterations $k = 0, 10, 30, 50, 100$.}
\end{subfigure}

\bigskip
\centering
\begin{subfigure}[t]{1.0\textwidth}
\includegraphics[trim=125pt 250pt 130pt 250pt, clip,scale=0.23]{./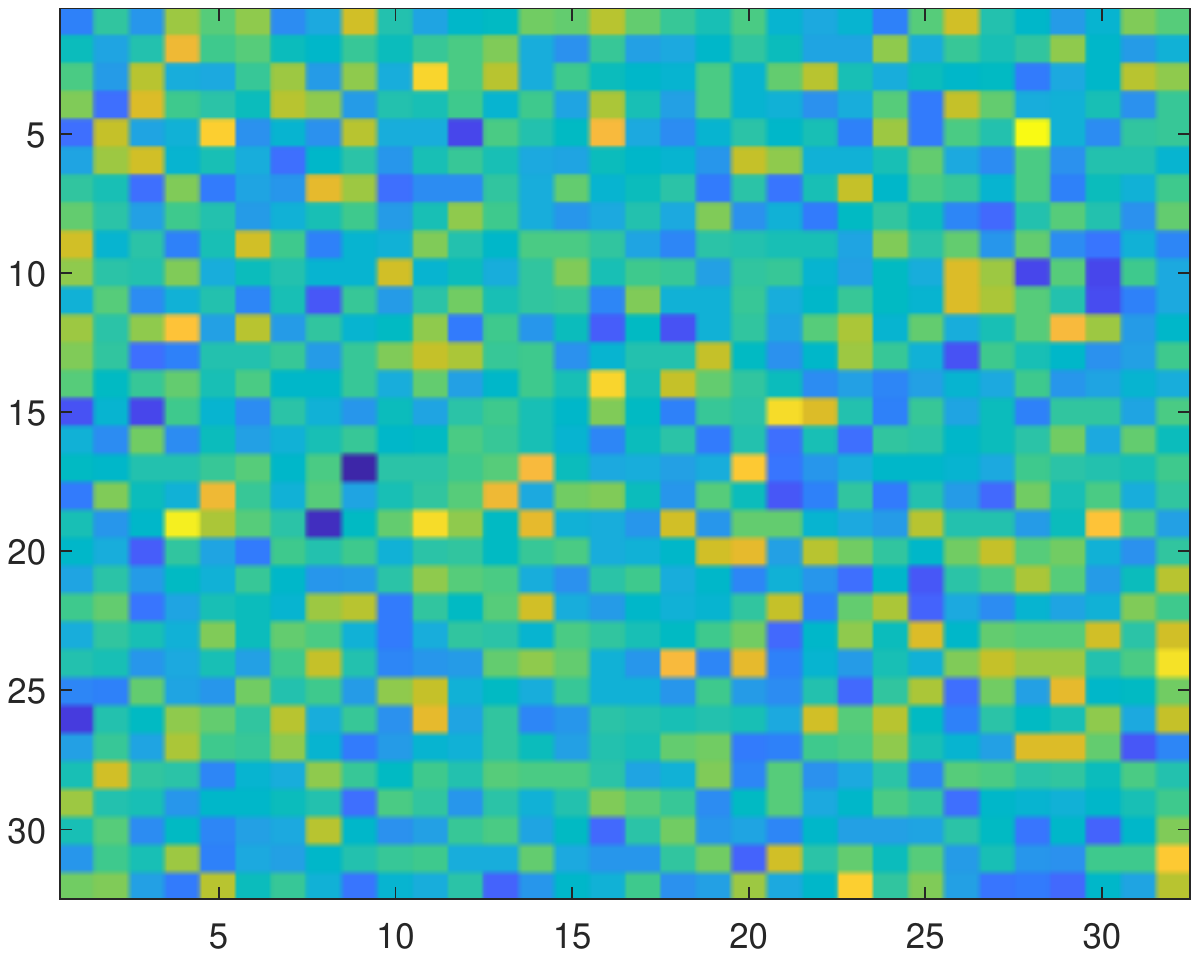}
\hfill
\includegraphics[trim=125pt 250pt 130pt 250pt, clip,scale=0.23]{./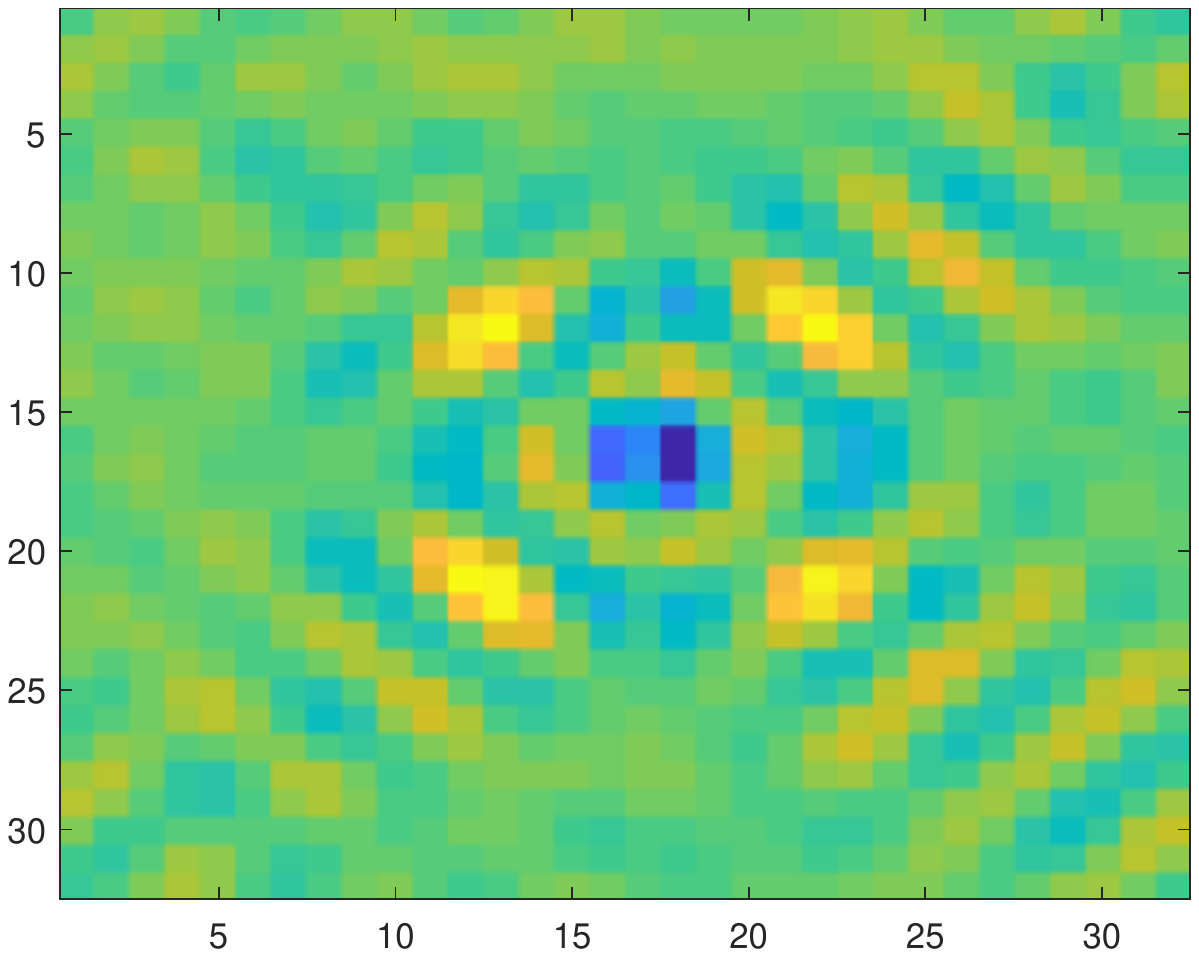}
\hfill
\includegraphics[trim=125pt 250pt 130pt 250pt, clip,scale=0.23]{./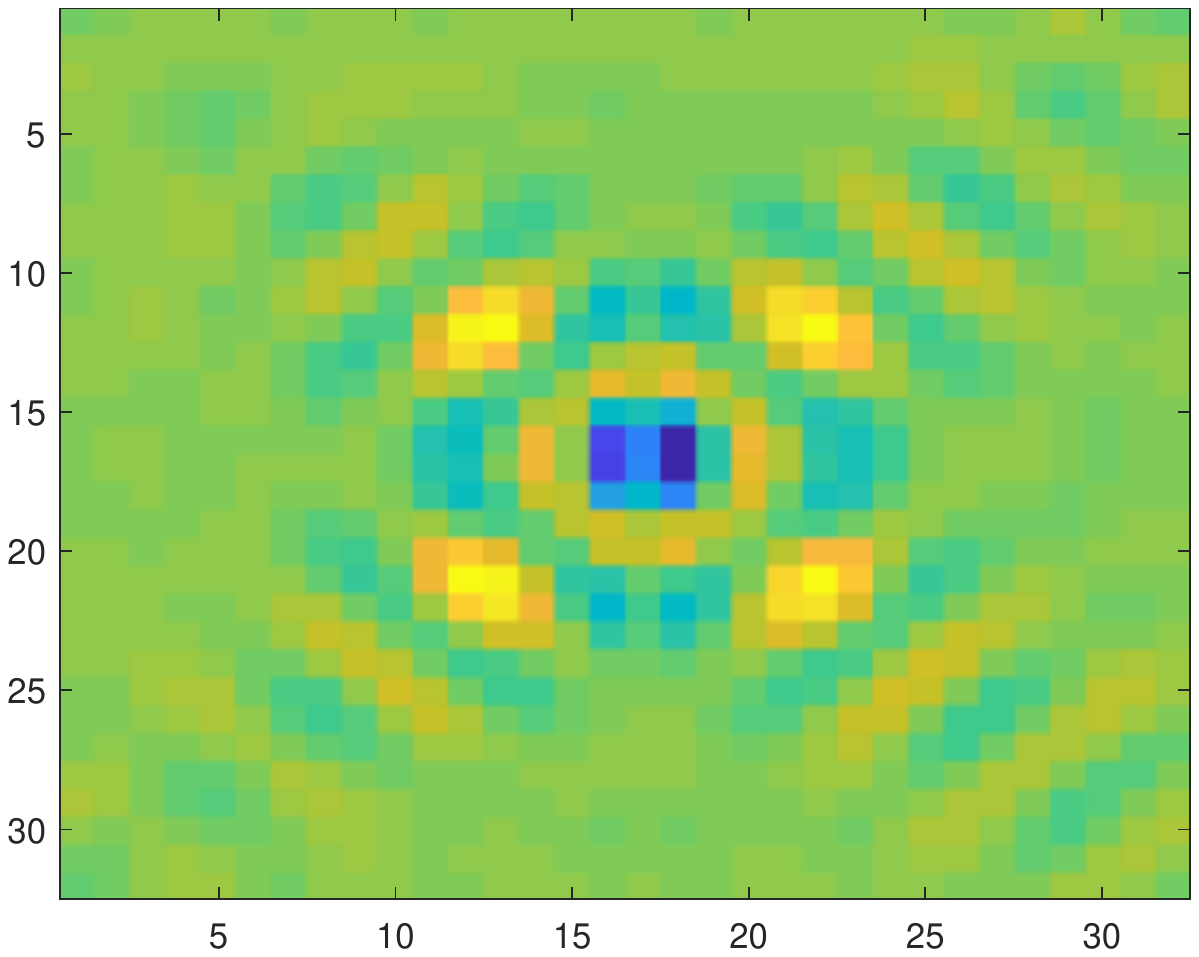}
\hfill
\includegraphics[trim=125pt 250pt 130pt 250pt, clip,scale=0.23]{./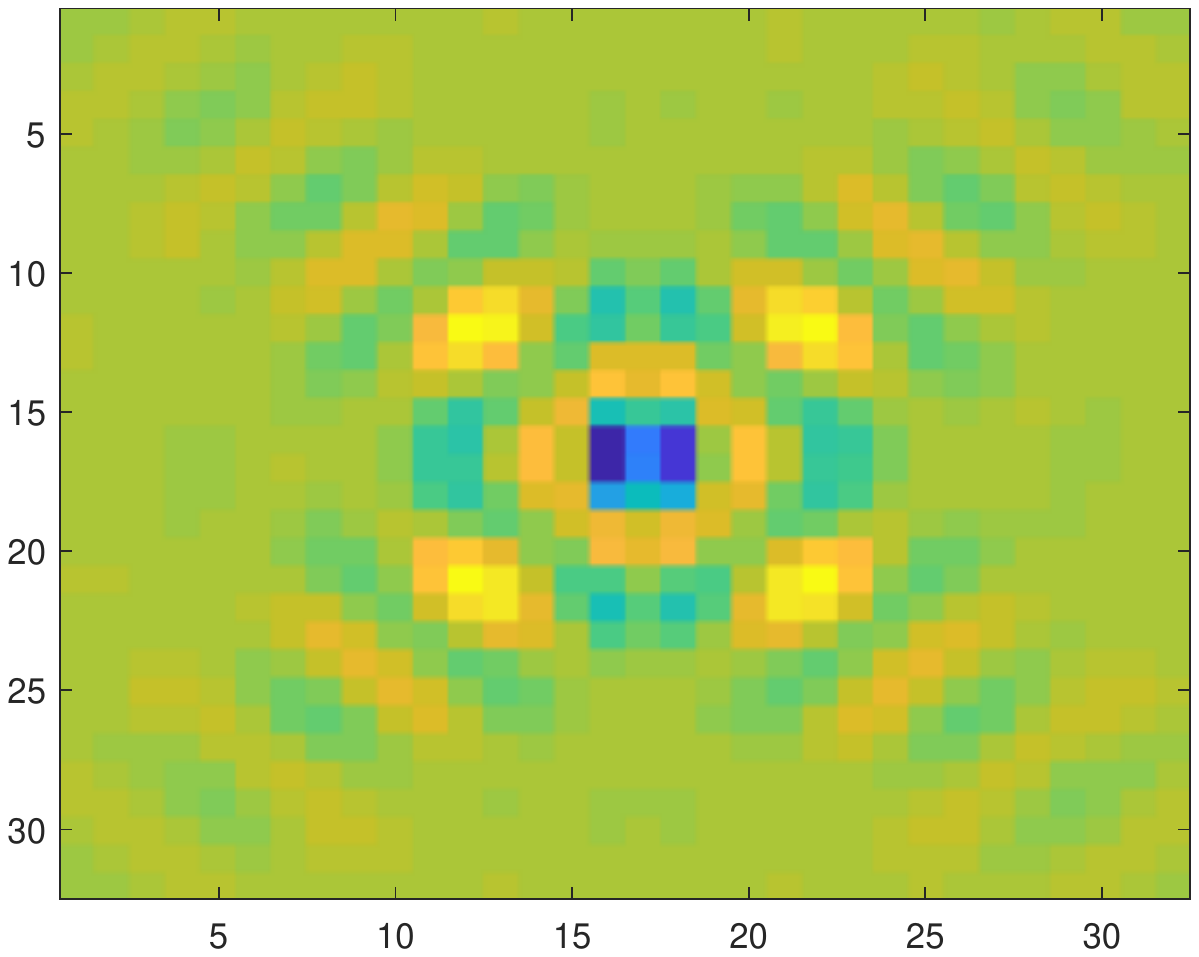}
\hfill
\includegraphics[trim=125pt 250pt 130pt 250pt, clip,scale=0.23]{./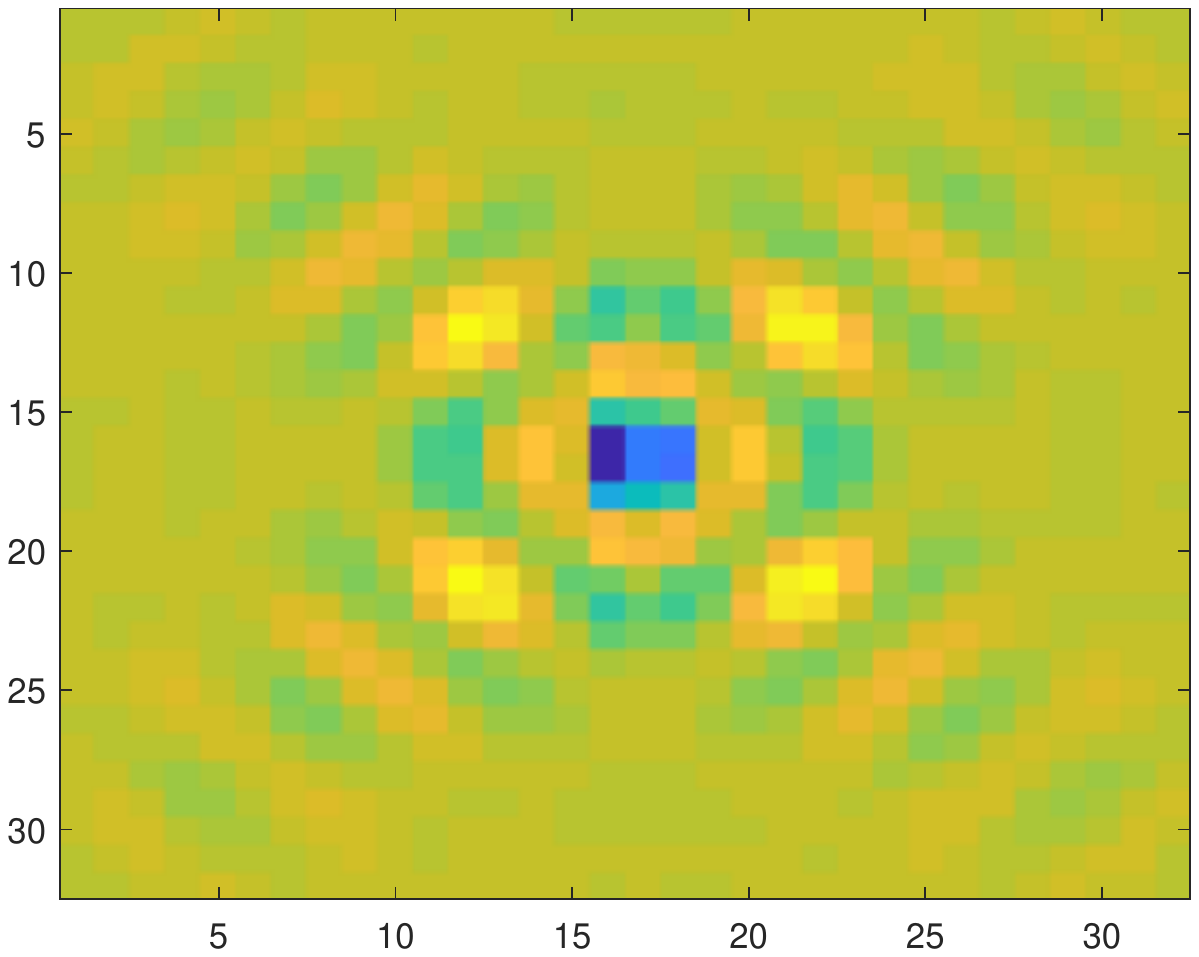}
\caption{Kernel matrices $A$ for \textsc{ADMM-exact} at iterations $k = 0, 10, 20, 50, 100$.}
\end{subfigure}
\caption{\textsc{ADMM-slack} and \textsc{ADMM-exact}, noiseless.}
\label{fig:exp_ker23_noiseless}
\vspace{-50pt}
\end{figure}

\subsection{Noisy Observations}

We next consider the case where the observations $Y$ are corrupted by noise. In this case, the formulation (SBD1) is more natural because $A \ast X + b\mathbf{1} = Y$ cannot be satisfied exactly, even by the true $A,X,b$. The slack variable $Z$ then serves to absorb the noise term $\xi$.

\begin{figure}[h]
\centering
\begin{subfigure}[t]{0.2\textwidth}
\includegraphics[trim=135pt 250pt 130pt 250pt, clip,scale=0.25]{./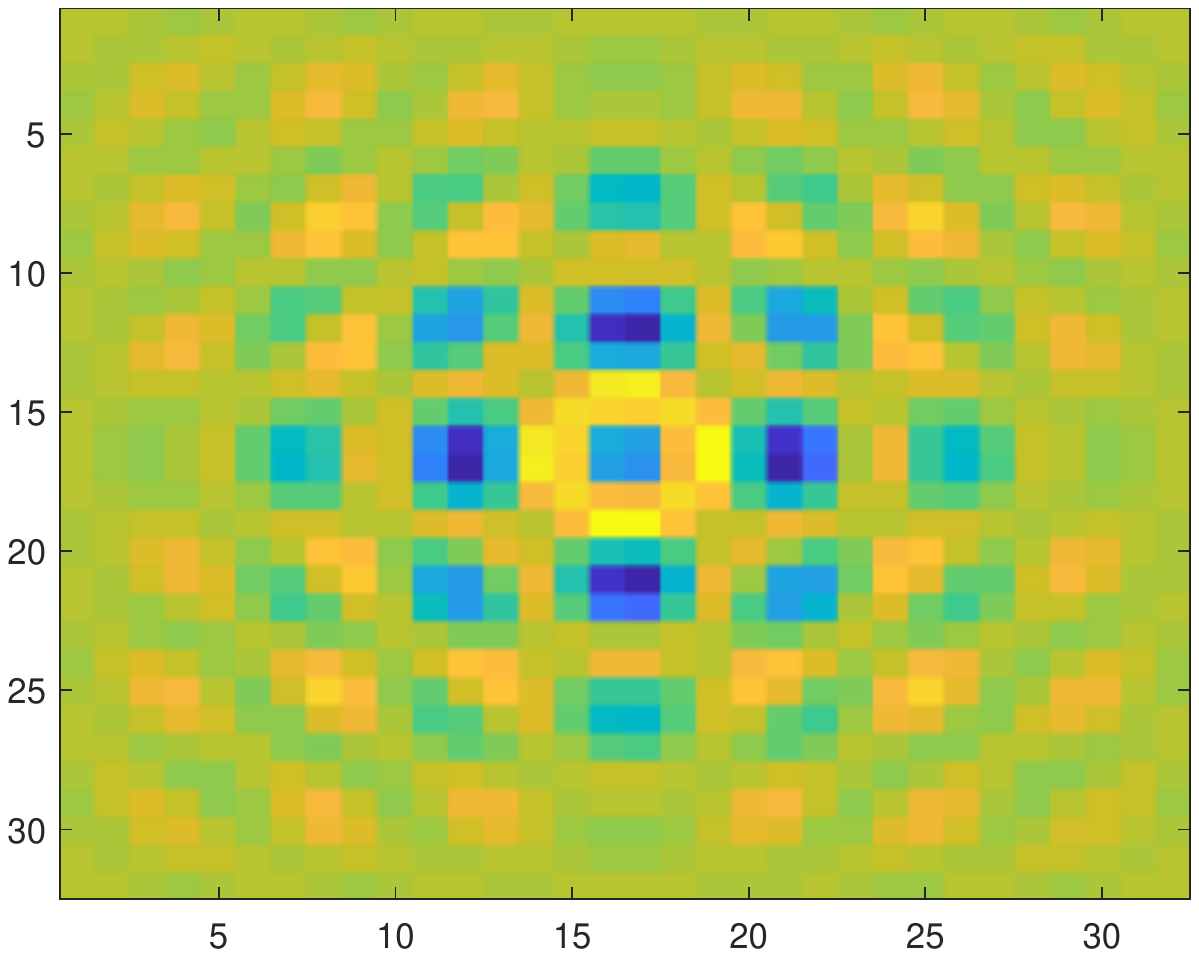}
\caption{True Kernel $A^\ast$}
\end{subfigure}
\begin{subfigure}[t]{0.2\textwidth}
\includegraphics[trim=125pt 250pt 130pt 250pt, clip,scale=0.25]{./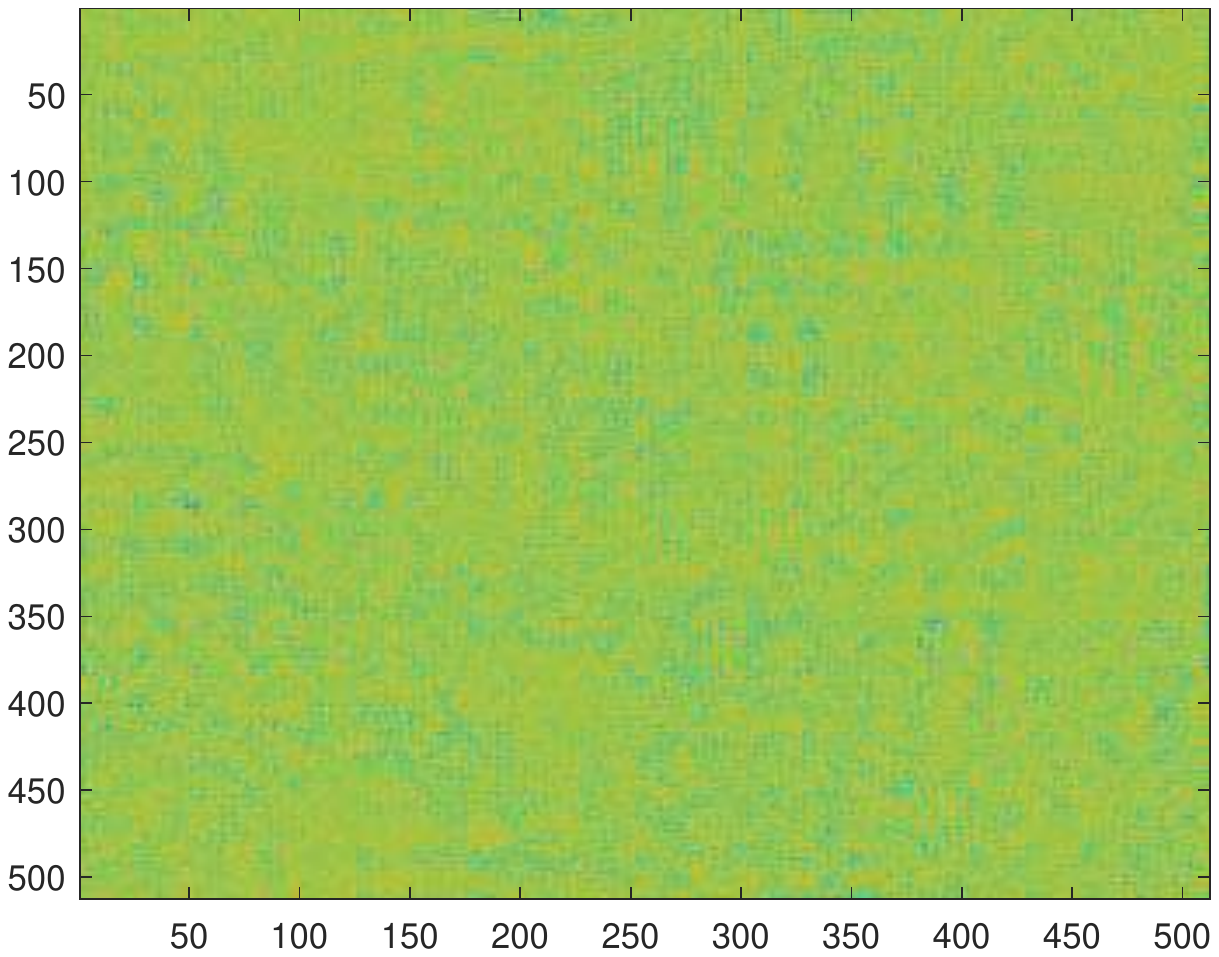}
\caption{Observations $Y$ (with noise)}
\end{subfigure}
\begin{subfigure}[t]{0.5\textwidth}
\includegraphics[trim=125pt 250pt 130pt 250pt, clip,scale=0.3]{./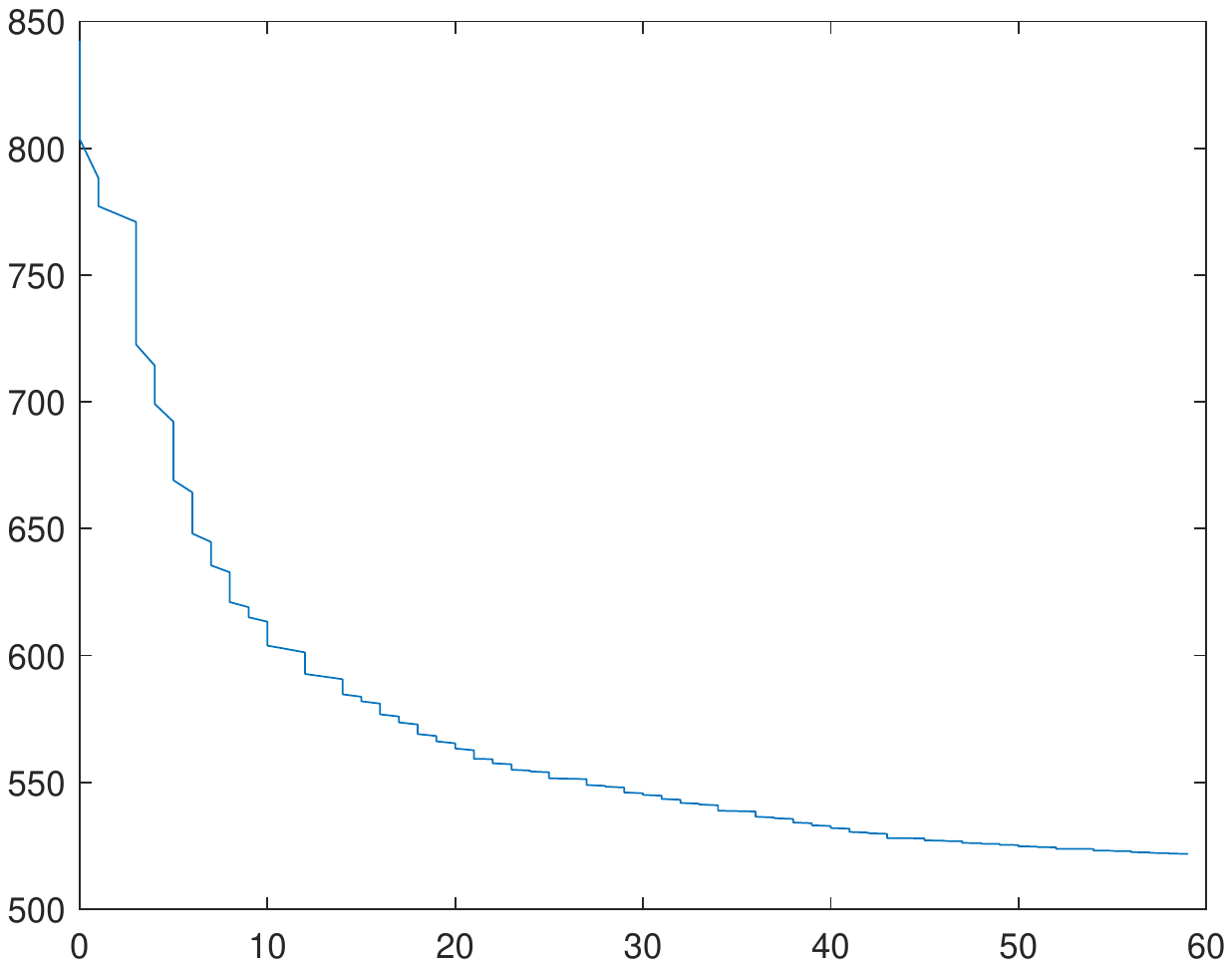}
\hfill
\includegraphics[trim=125pt 250pt 130pt 250pt, clip,scale=0.3]{./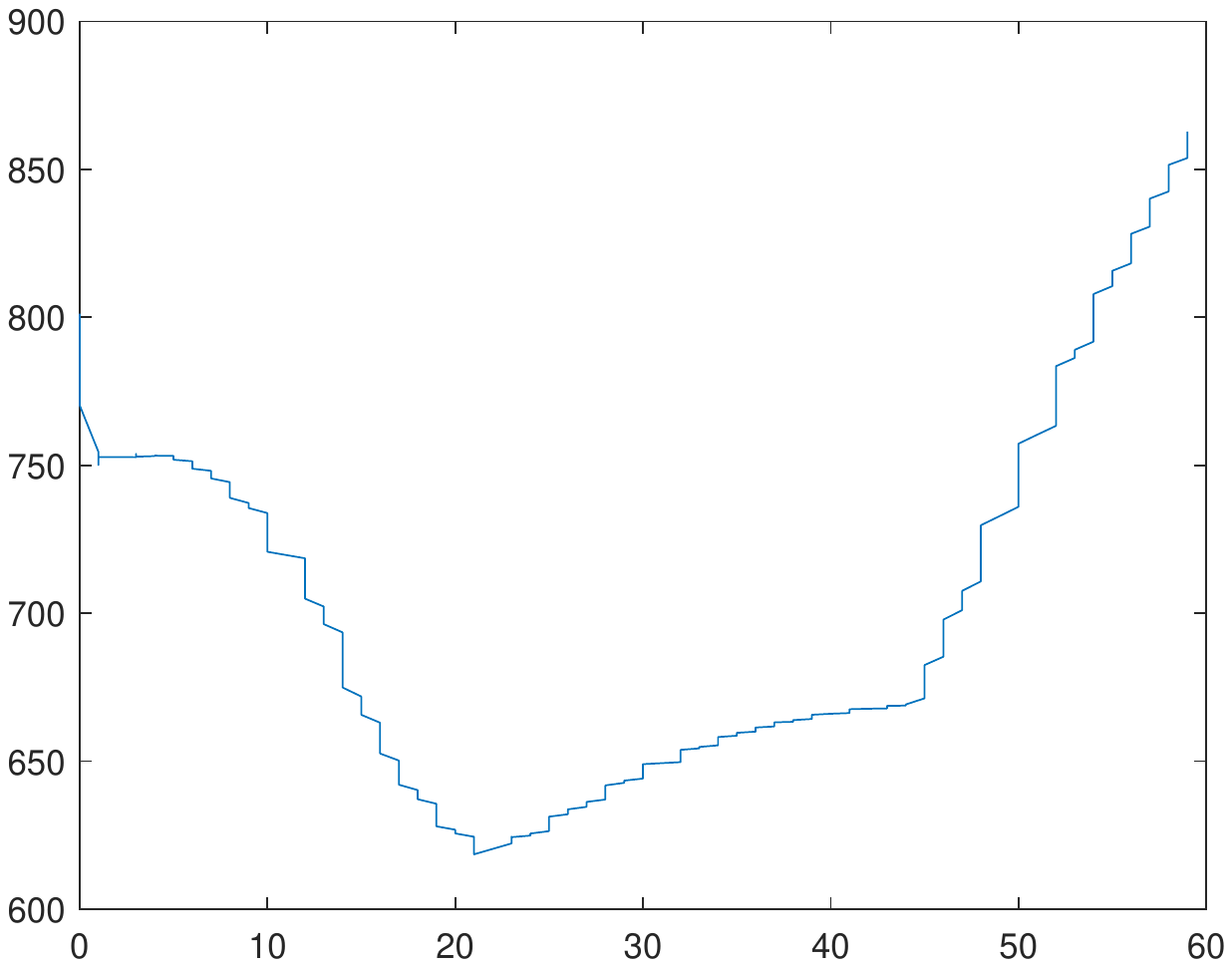}
\caption{Objective value over time (s). The left plot shows \textsc{ADMM-slack}, and the right shows \textsc{ADMM-exact}.}
\end{subfigure}

\bigskip

\centering
\begin{subfigure}[t]{1.0\textwidth}
\includegraphics[trim=125pt 250pt 130pt 250pt, clip,scale=0.23]{./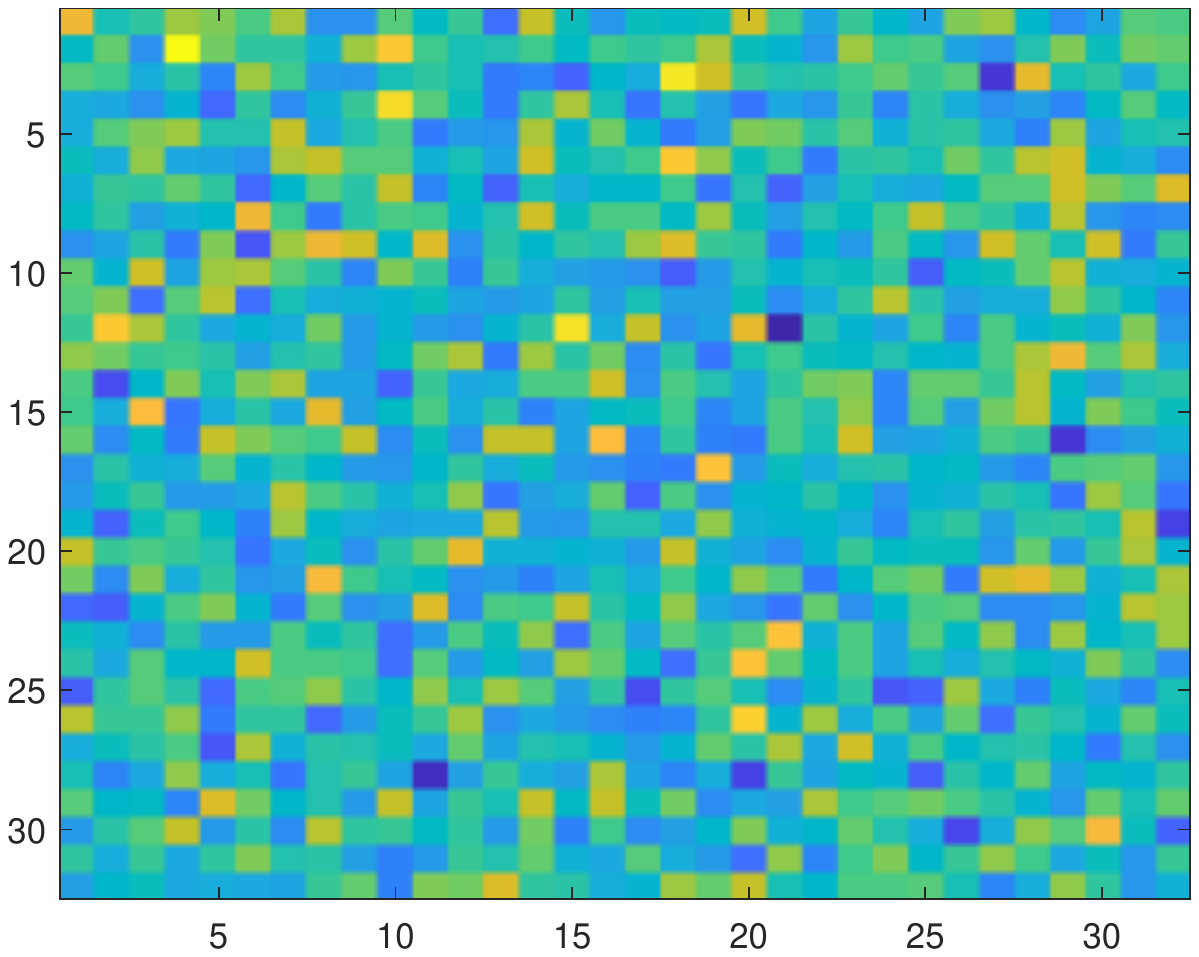}
\hfill
\includegraphics[trim=125pt 250pt 130pt 250pt, clip,scale=0.23]{./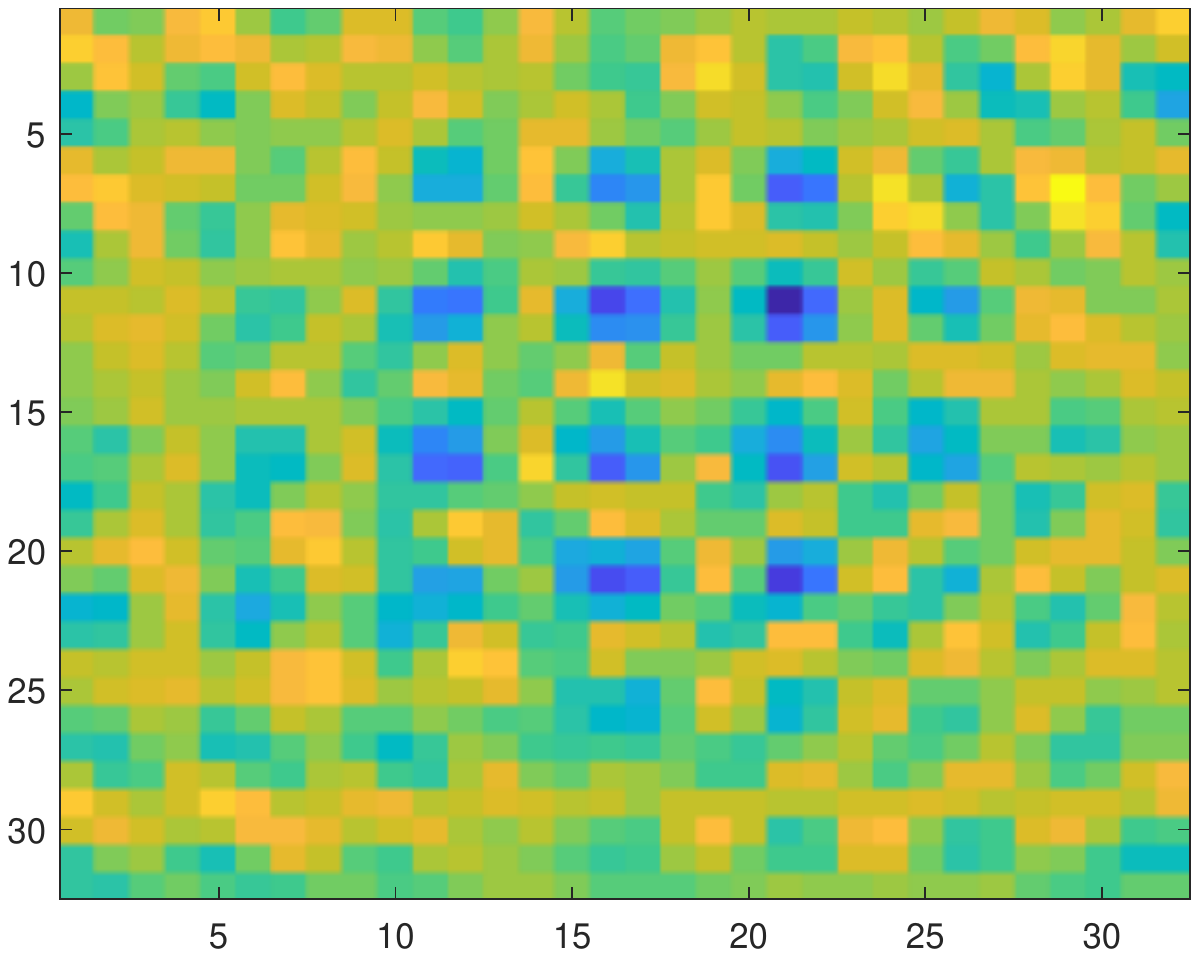}
\hfill
\includegraphics[trim=125pt 250pt 130pt 250pt, clip,scale=0.23]{./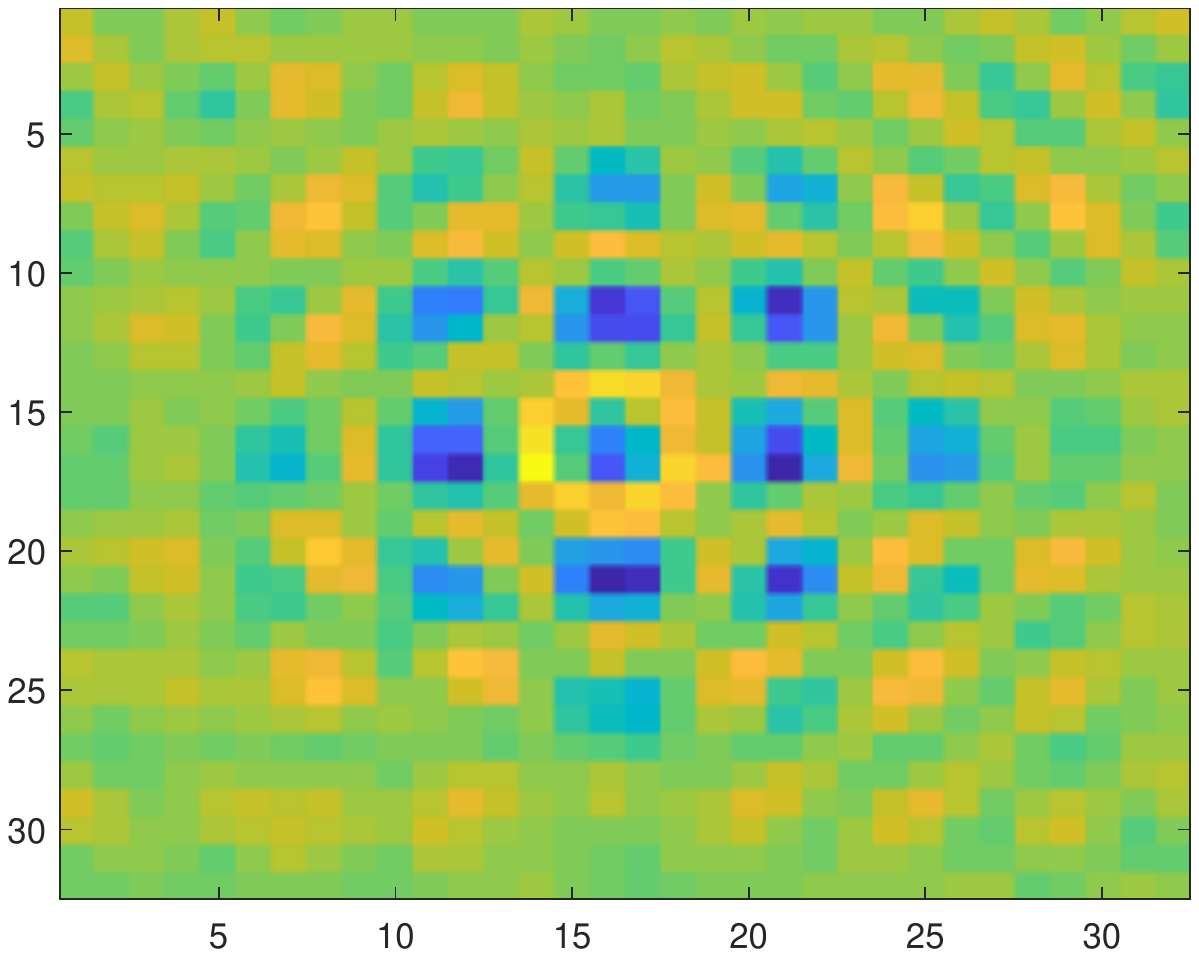}
\hfill
\includegraphics[trim=125pt 250pt 130pt 250pt, clip,scale=0.23]{./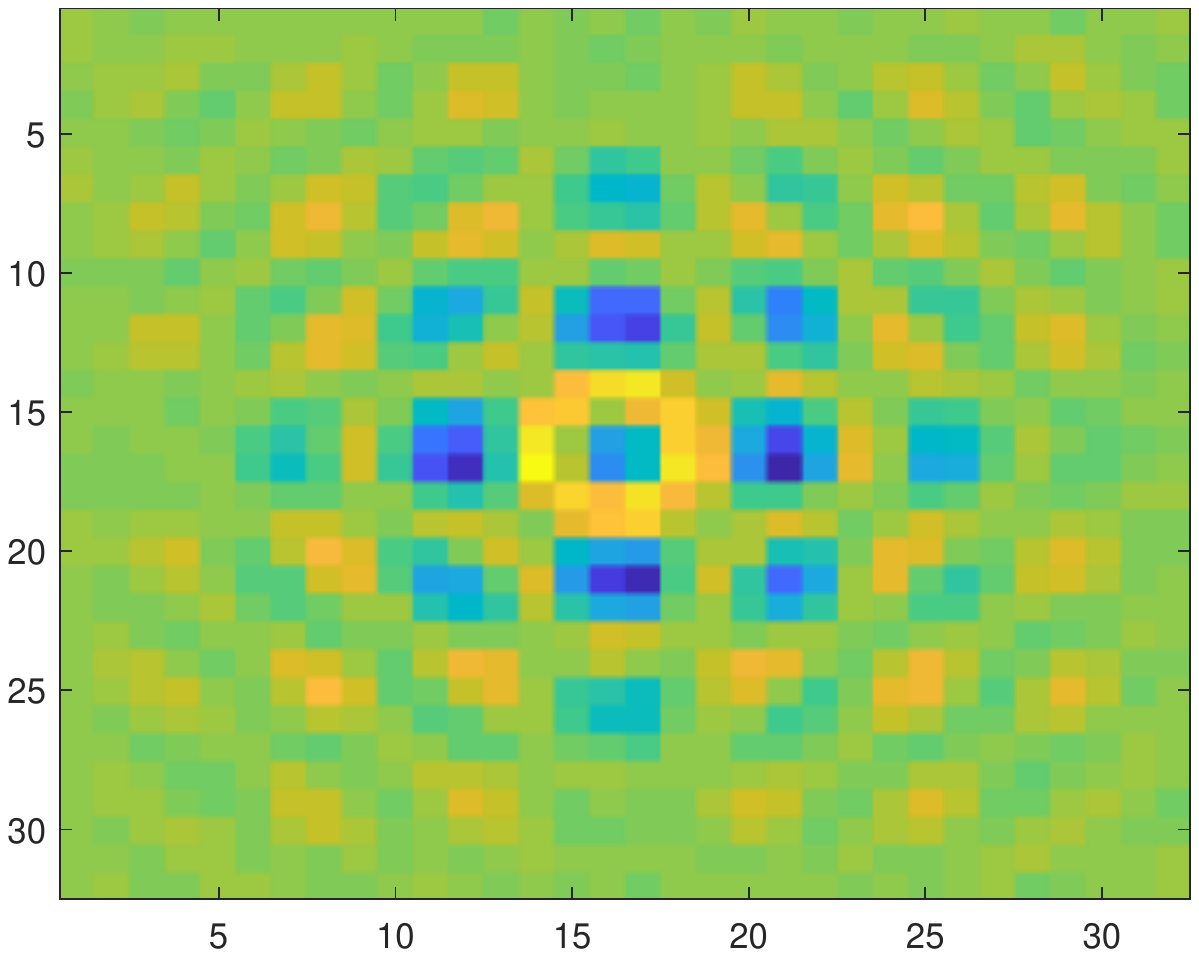}
\hfill
\includegraphics[trim=125pt 250pt 130pt 250pt, clip,scale=0.23]{./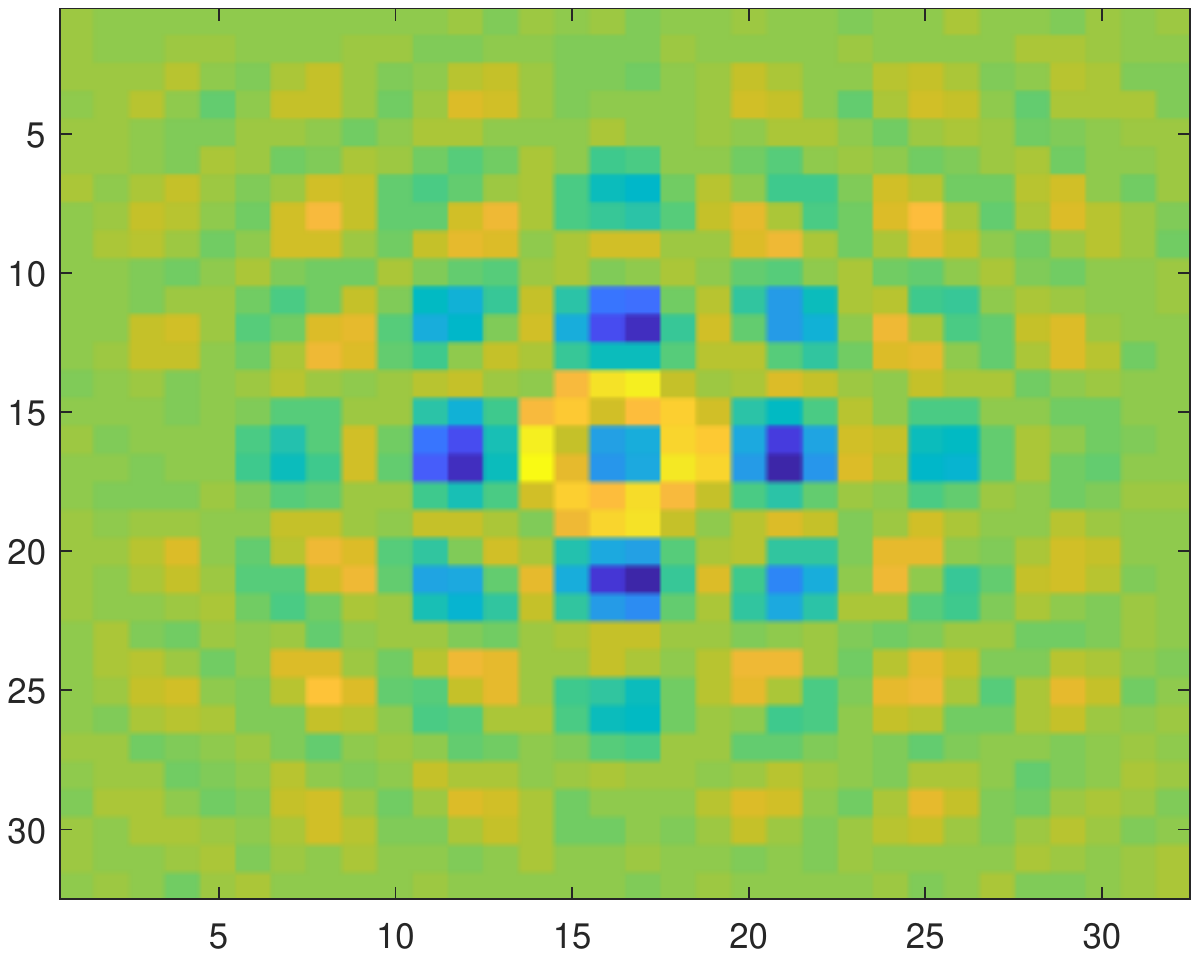}
\caption{Kernel matrices $A$ for \textsc{ADMM-slack} at iterations $k = 0, 10, 20, 50, 100$.}
\end{subfigure}

\bigskip
\centering
\begin{subfigure}[t]{1.0\textwidth}
\includegraphics[trim=125pt 250pt 130pt 250pt, clip,scale=0.23]{./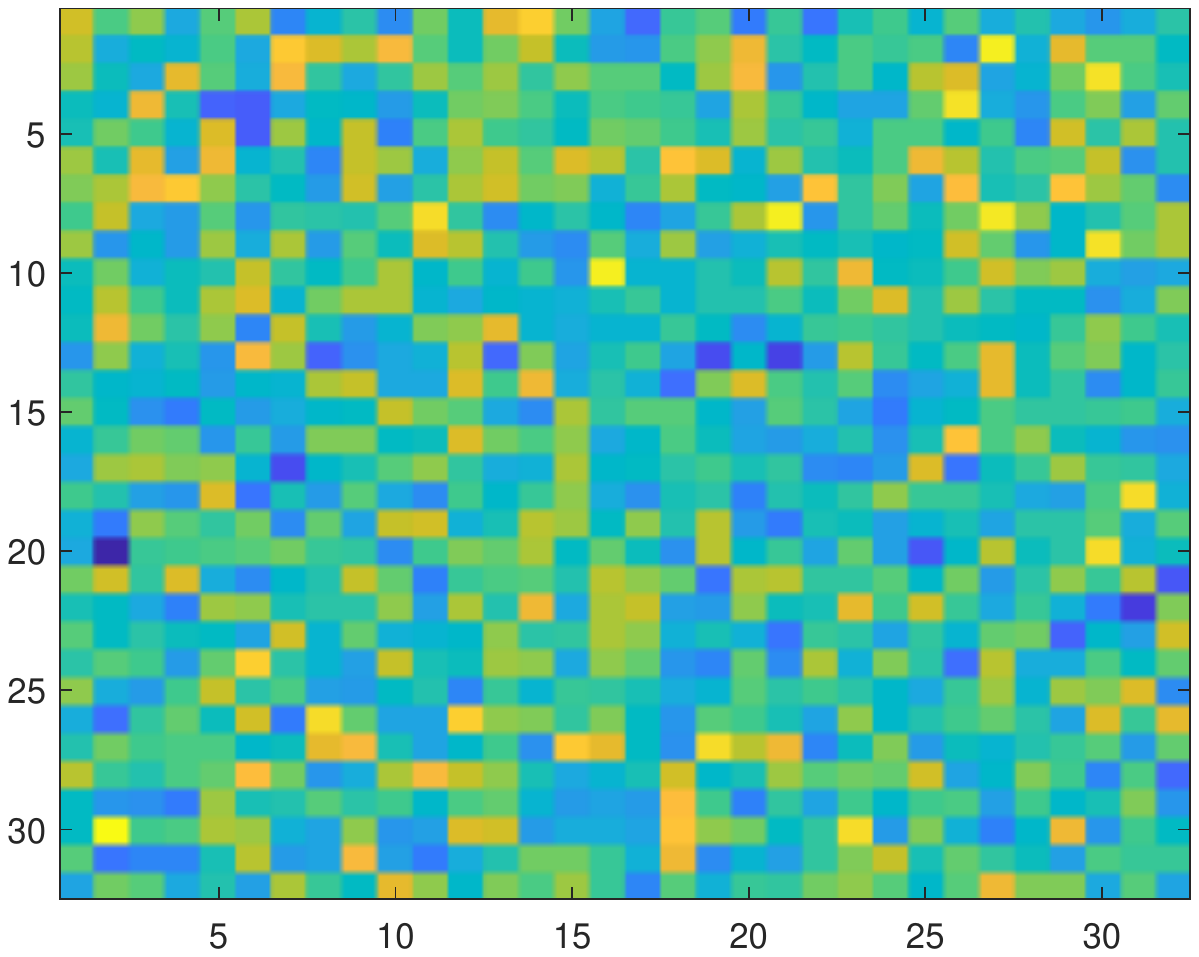}
\hfill
\includegraphics[trim=125pt 250pt 130pt 250pt, clip,scale=0.23]{./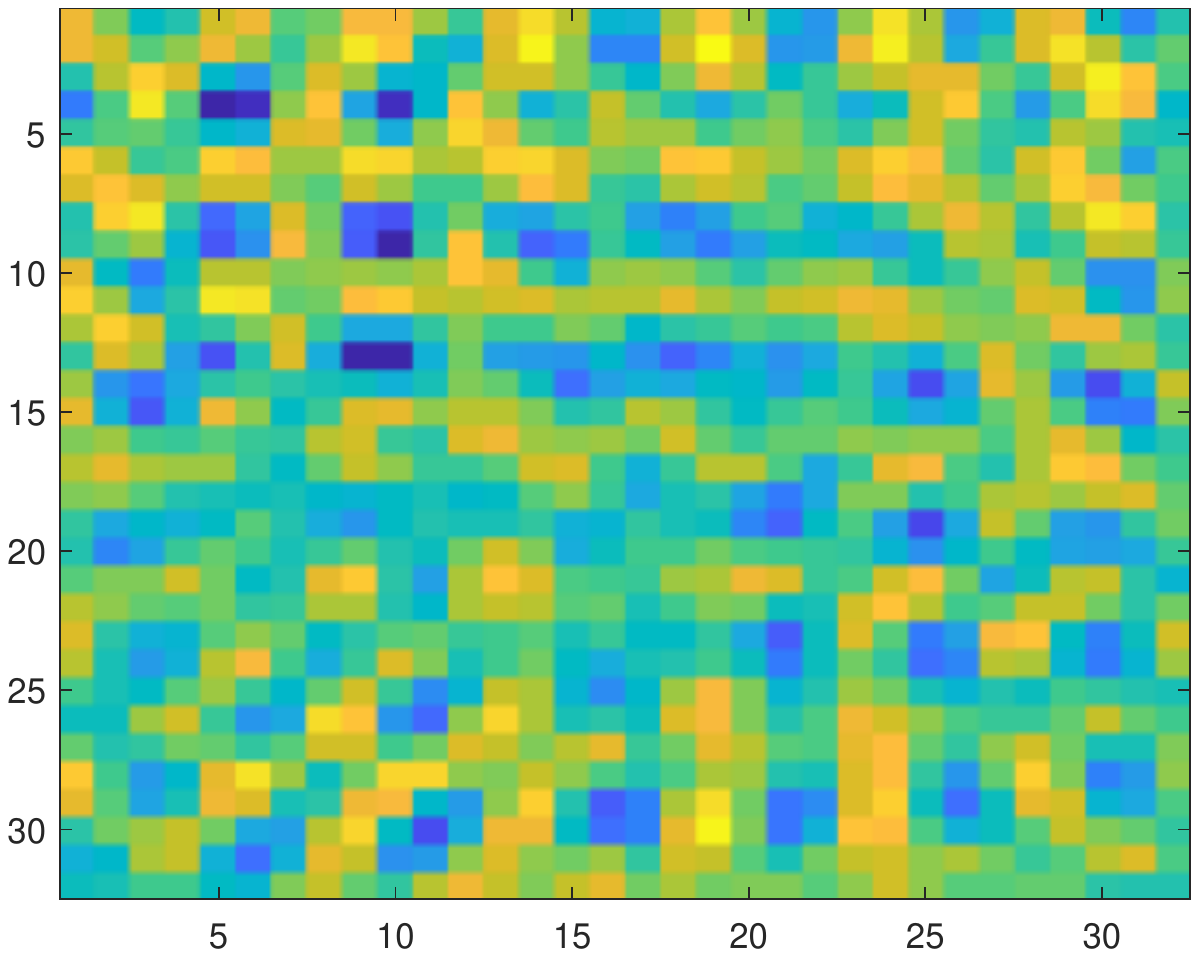}
\hfill
\includegraphics[trim=125pt 250pt 130pt 250pt, clip,scale=0.23]{./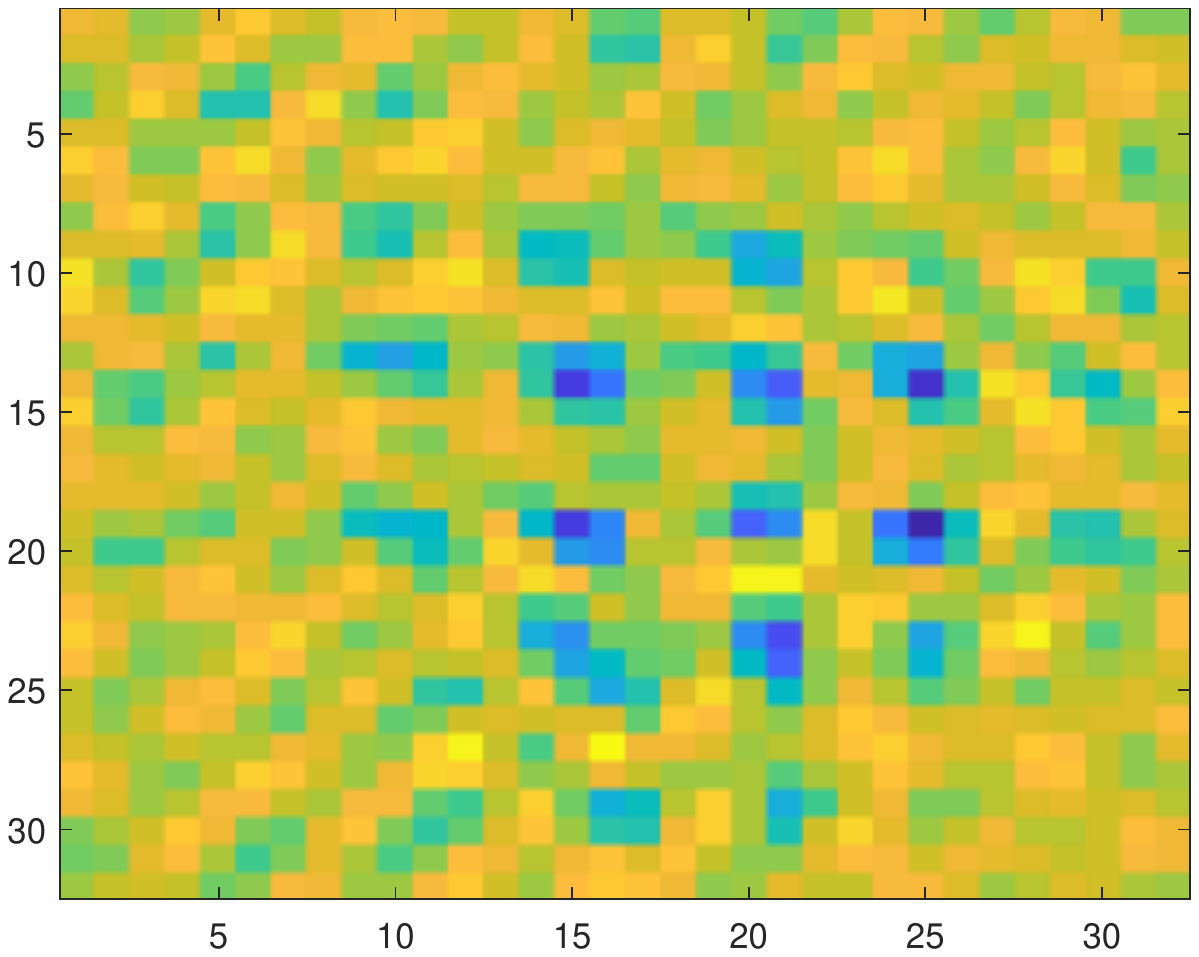}
\hfill
\includegraphics[trim=125pt 250pt 130pt 250pt, clip,scale=0.23]{./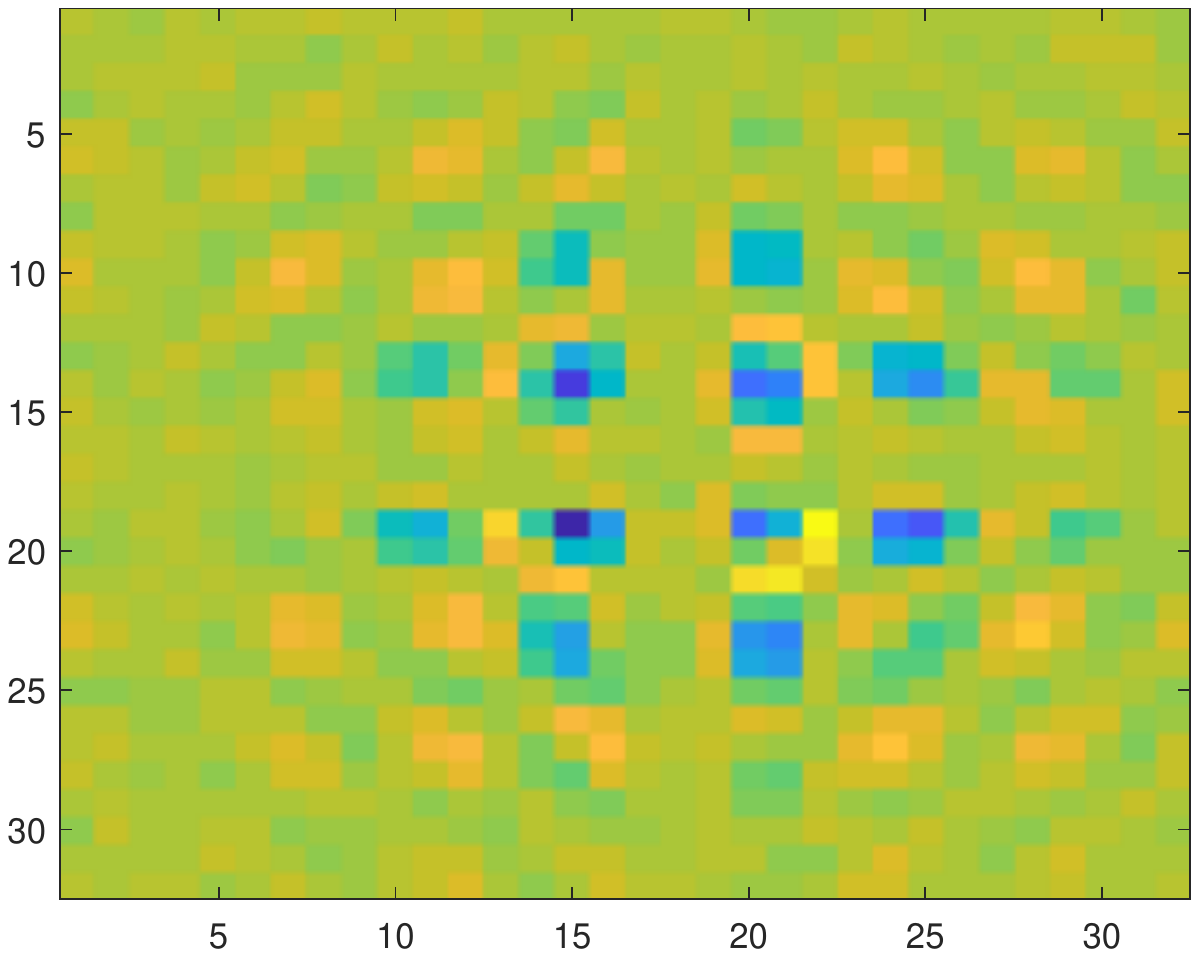}
\hfill
\includegraphics[trim=125pt 250pt 130pt 250pt, clip,scale=0.23]{./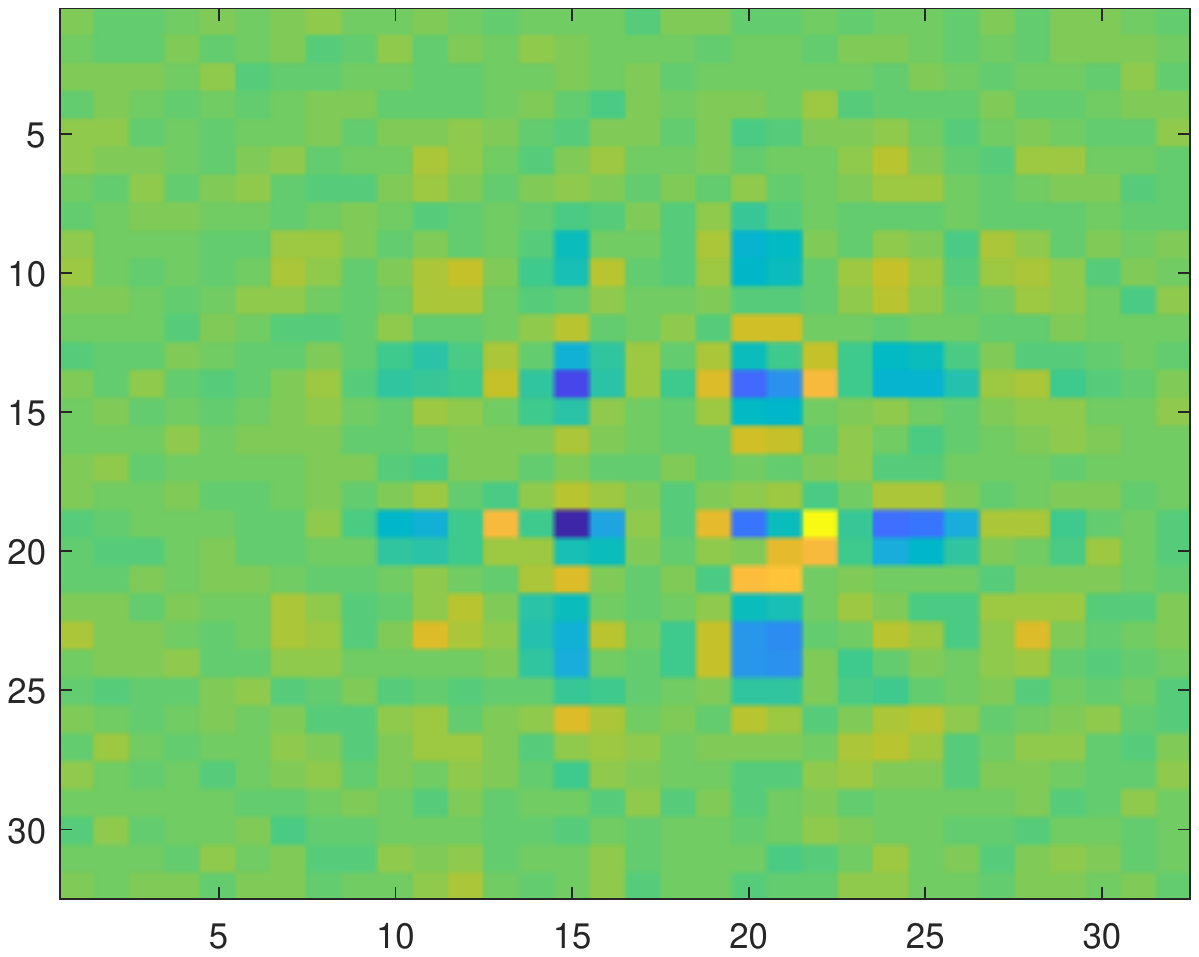}
\caption{Kernel matrices $A$ for \textsc{ADMM-exact} at iterations $k = 0, 10, 50, 100, 200$.}
\end{subfigure}
\caption{\textsc{ADMM-slack} and \textsc{ADMM-exact} on noisy observations.}
\label{fig:exp_ker27_noise}
\end{figure}

We observe in \Cref{fig:exp_ker27_noise} that while \textsc{ADMM-slack} converges even in the presence of noise, \textsc{ADMM-exact} eventually begins to diverge. Since the constraint cannot be satisfied exactly, the Lagrange multipliers grow over time, causing $A, X$ to behave erratically to overfit to the noise and decreasing the relative importance of the sparsity of $X$. This suggests that for practical applications, where a certain amount of noise is unavoidable, the formulation (SBD1) with the slack variable $Z$ is both more practical, as well as being provably convergent within our framework.

\Cref{fig:exp_ker5_noise,fig:exp_ker23_noise} show additional examples in the noisy setting.
 
\begin{figure}
\centering
\begin{subfigure}[t]{0.2\textwidth}
	\includegraphics[trim=135pt 250pt 130pt 250pt, clip,scale=0.25]{./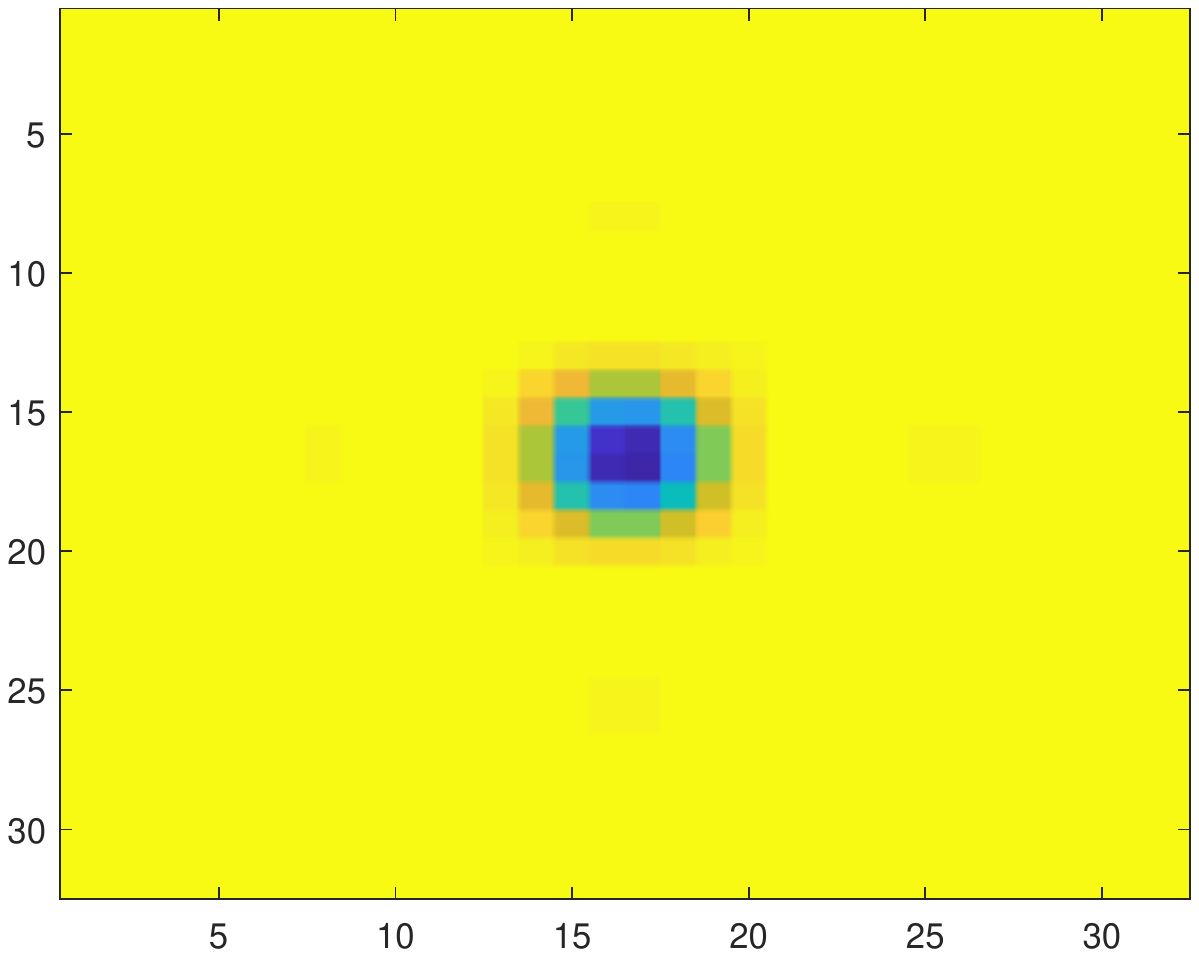}
	\caption{True Kernel $A^\ast$}
\end{subfigure}
\begin{subfigure}[t]{0.2\textwidth}
	\includegraphics[trim=125pt 250pt 130pt 250pt, clip,scale=0.25]{./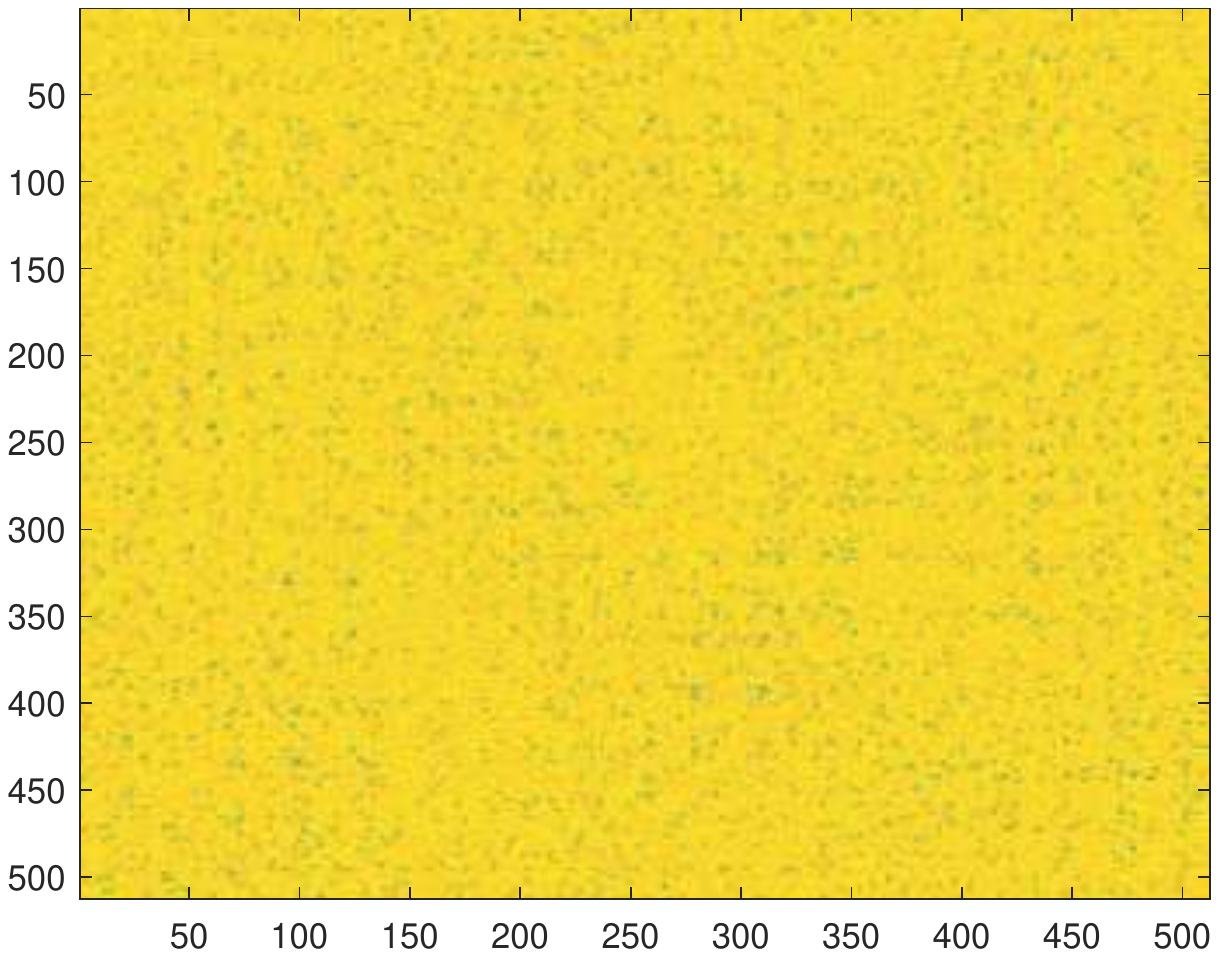}
	\caption{Observations $Y$ (with noise)}
\end{subfigure}
\begin{subfigure}[t]{0.5\textwidth}
	\includegraphics[trim=125pt 250pt 130pt 250pt, clip,scale=0.3]{./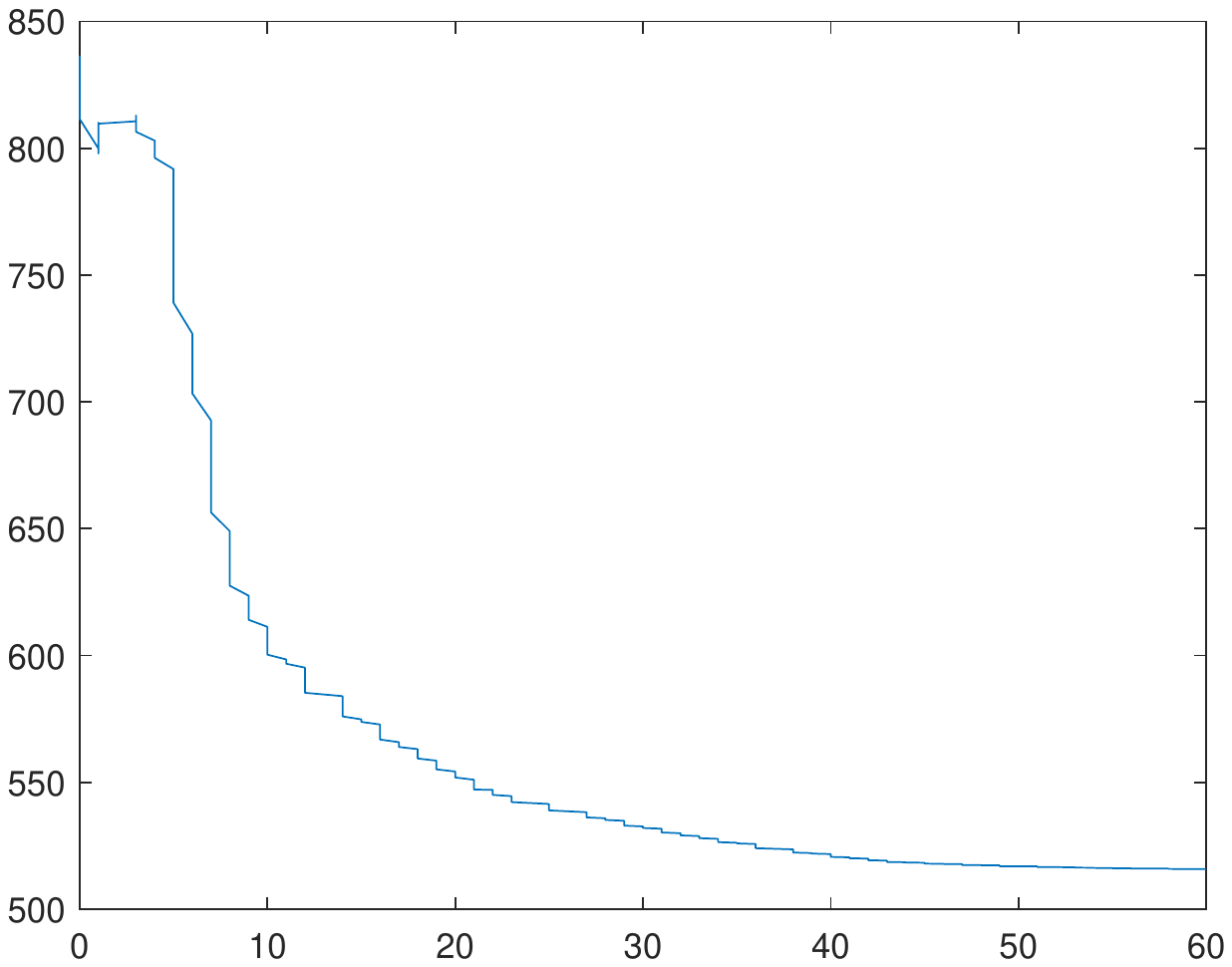}
	\hfill
	\includegraphics[trim=125pt 250pt 130pt 250pt, clip,scale=0.3]{./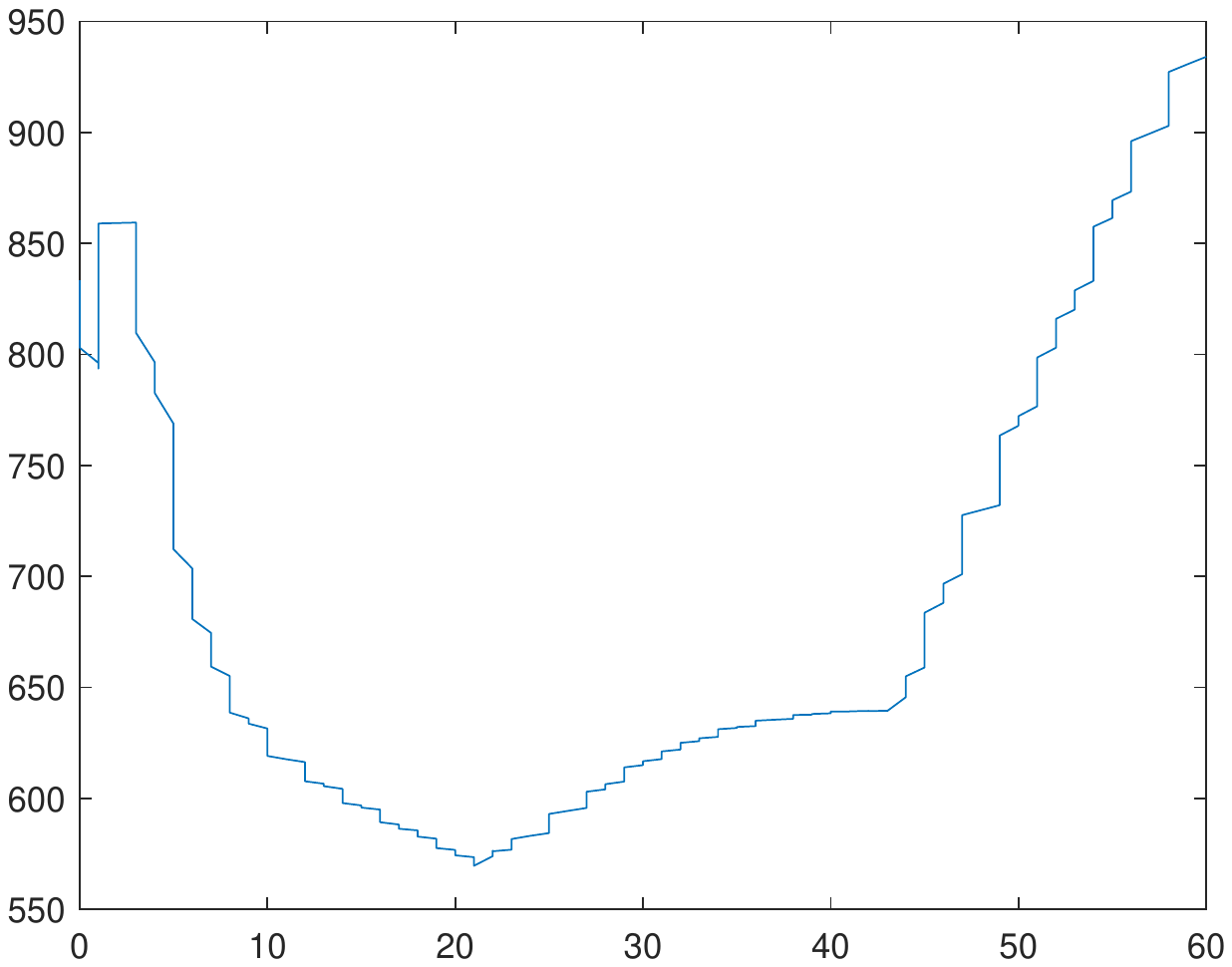}
	\caption{Objective value over time (s). The left plot shows \textsc{ADMM-slack}, and the right shows \textsc{ADMM-exact}.}
\end{subfigure}

\bigskip

\centering
\begin{subfigure}[t]{1.0\textwidth}
	\includegraphics[trim=125pt 250pt 130pt 250pt, clip,scale=0.23]{./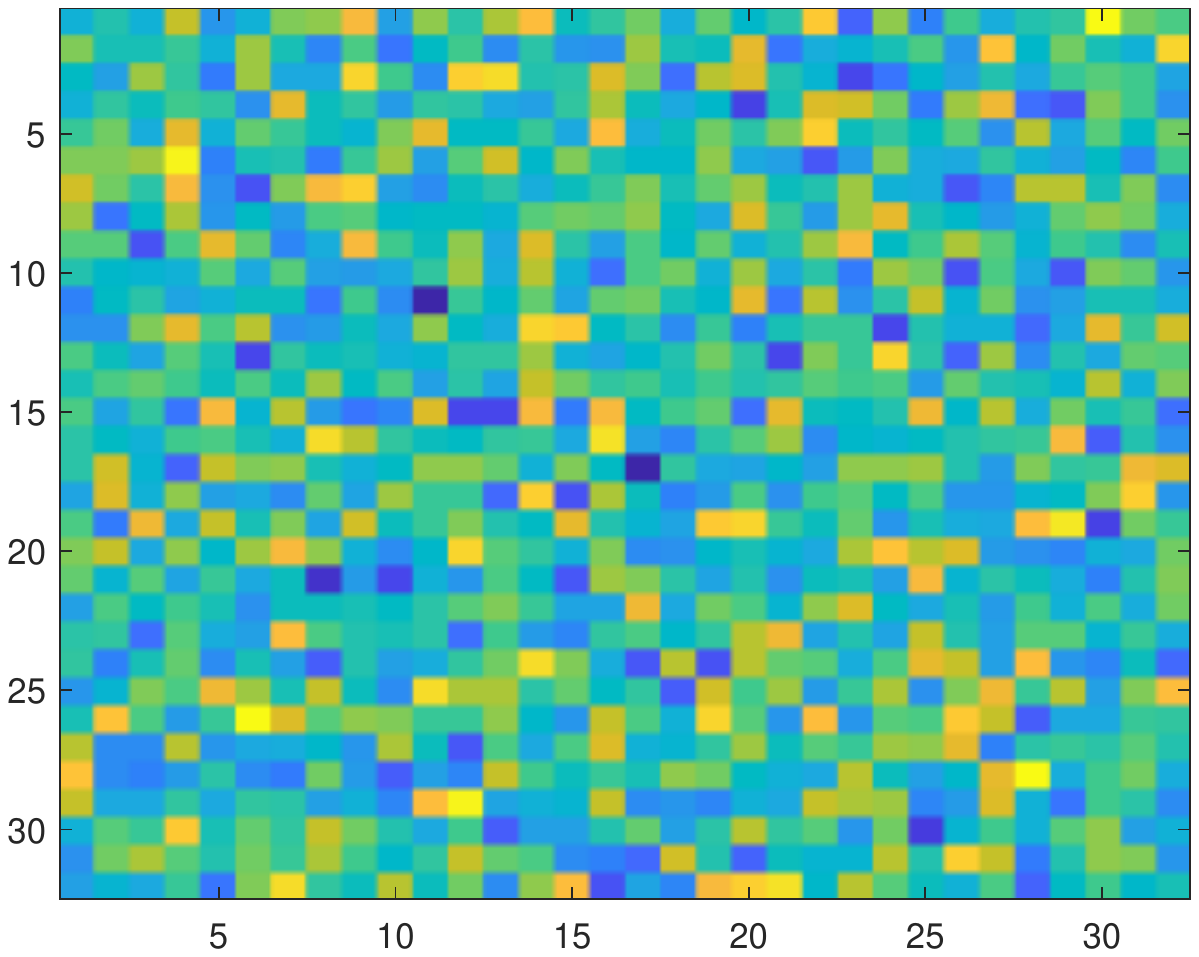}
	\hfill
	\includegraphics[trim=125pt 250pt 130pt 250pt, clip,scale=0.23]{./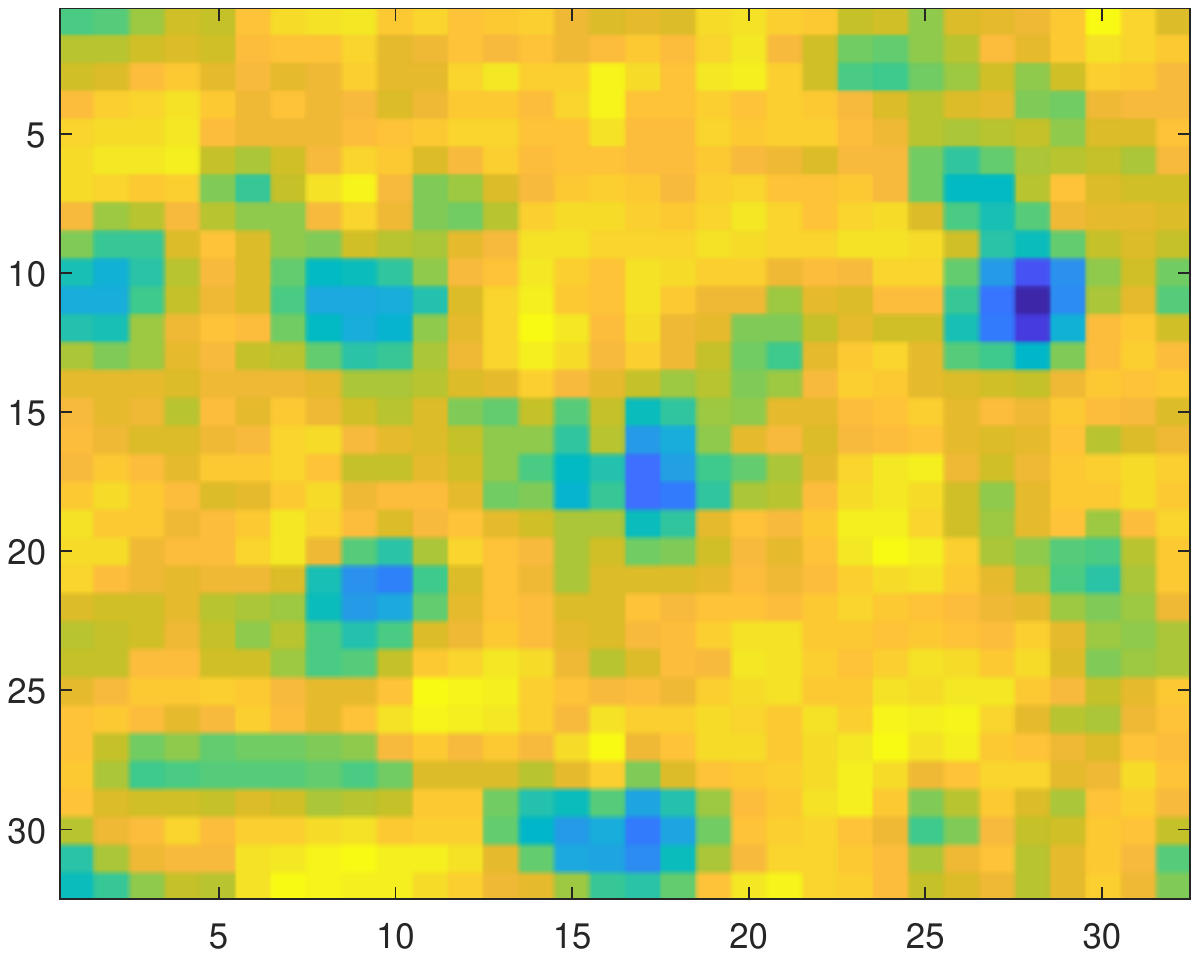}
	\hfill
	\includegraphics[trim=125pt 250pt 130pt 250pt, clip,scale=0.23]{./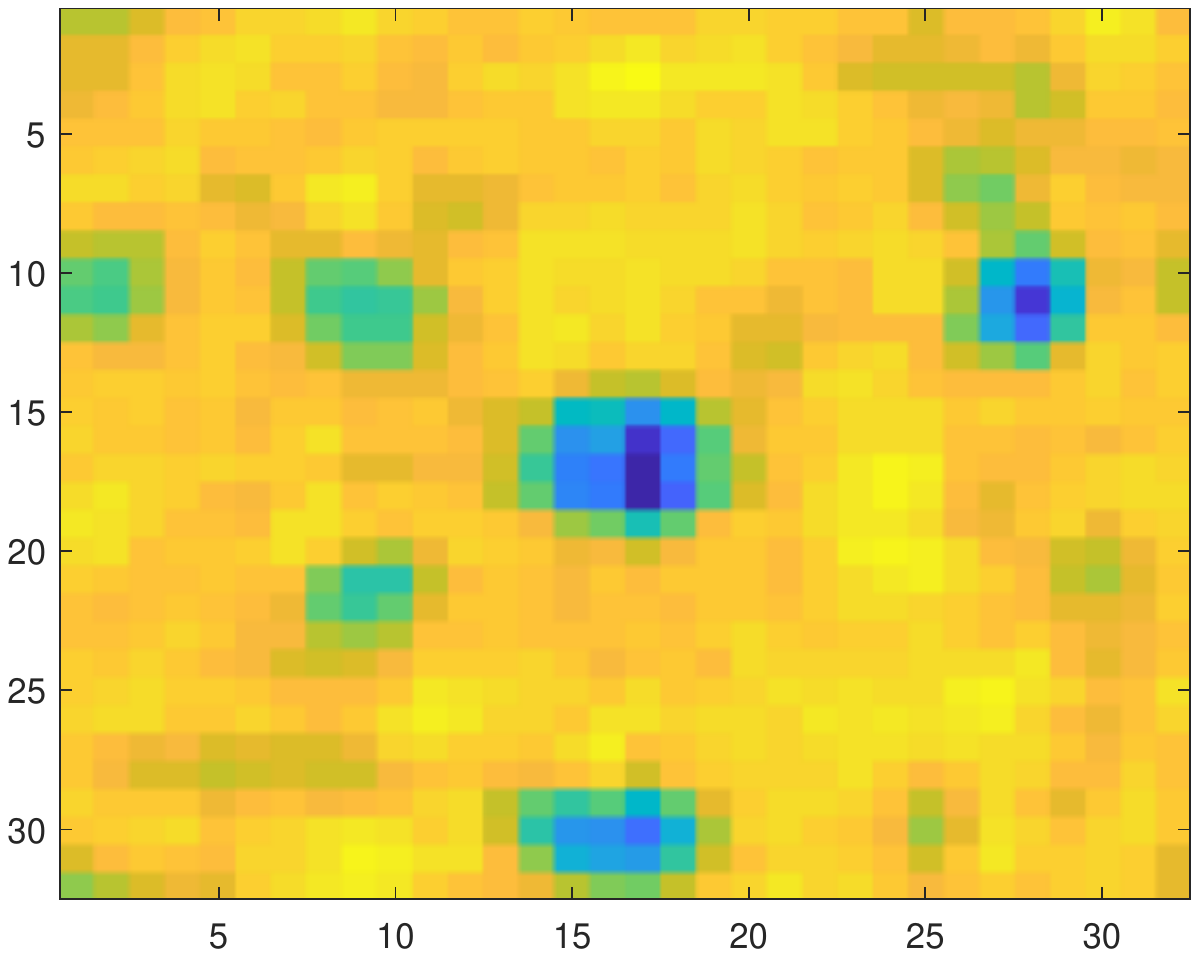}
	\hfill
	\includegraphics[trim=125pt 250pt 130pt 250pt, clip,scale=0.23]{./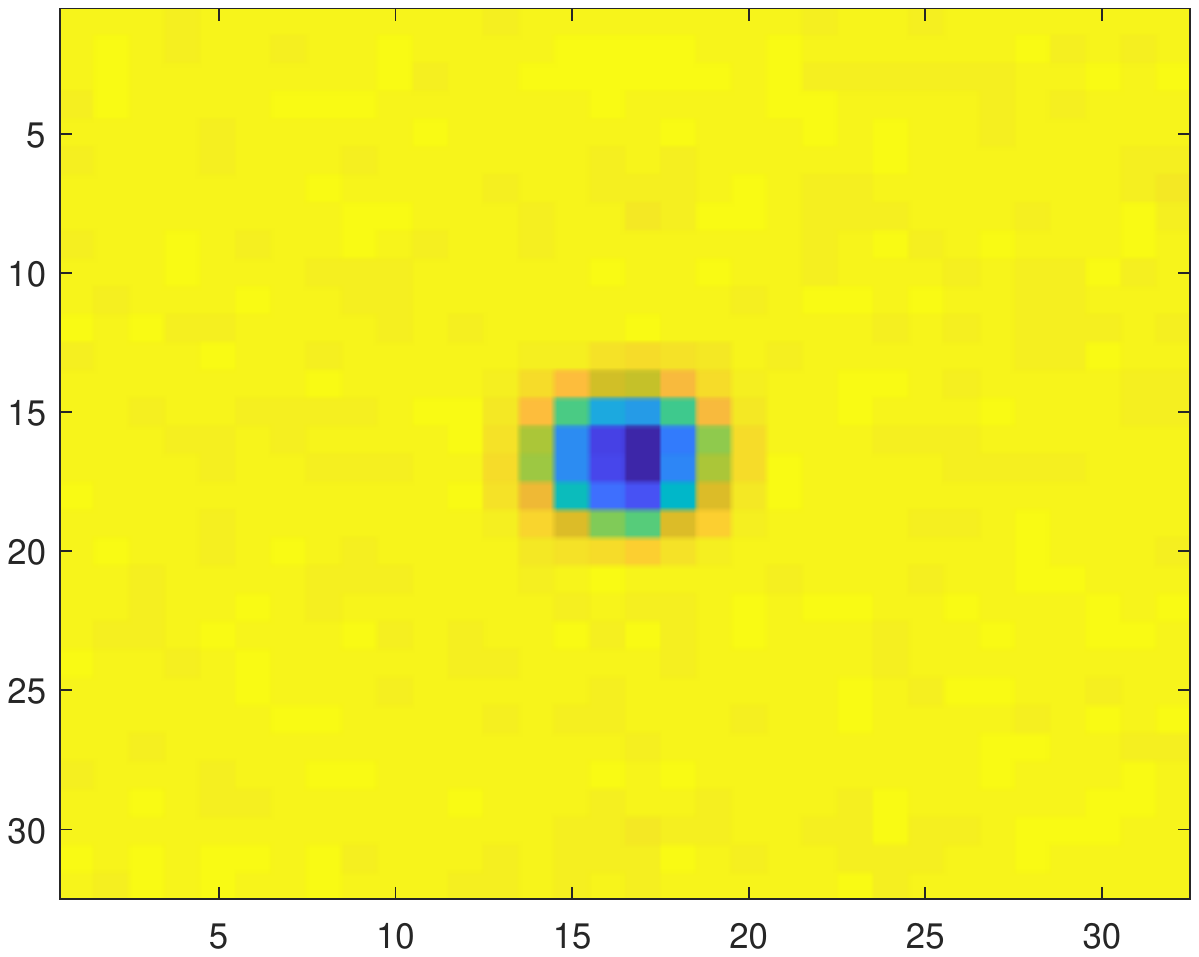}
	\hfill
	\includegraphics[trim=125pt 250pt 130pt 250pt, clip,scale=0.23]{./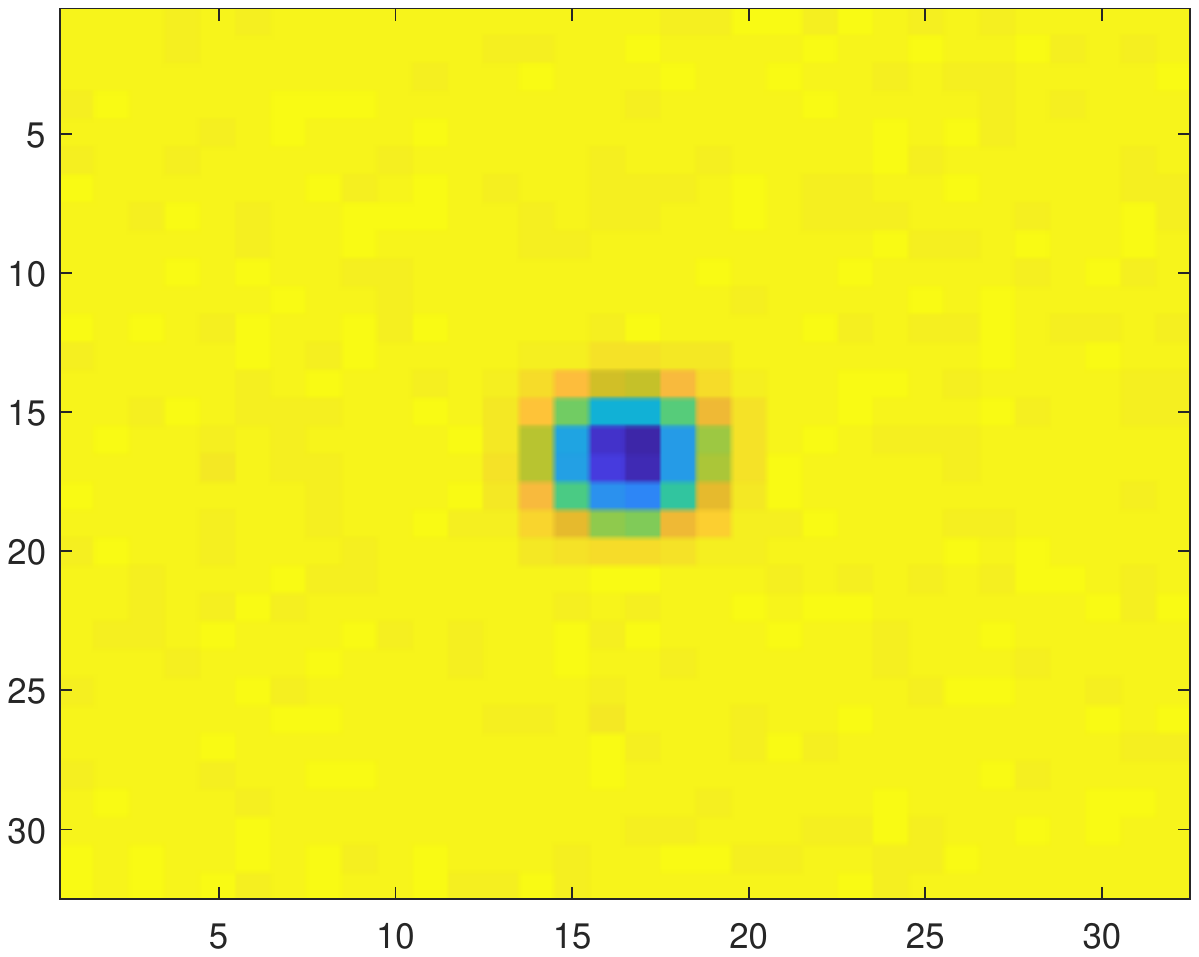}
	\caption{Kernel matrices $A$ for \textsc{ADMM-slack} at iterations $k = 0, 10, 20, 50, 100$.}
\end{subfigure}

\bigskip
\centering
\begin{subfigure}[t]{1.0\textwidth}
	\includegraphics[trim=125pt 250pt 130pt 250pt, clip,scale=0.23]{./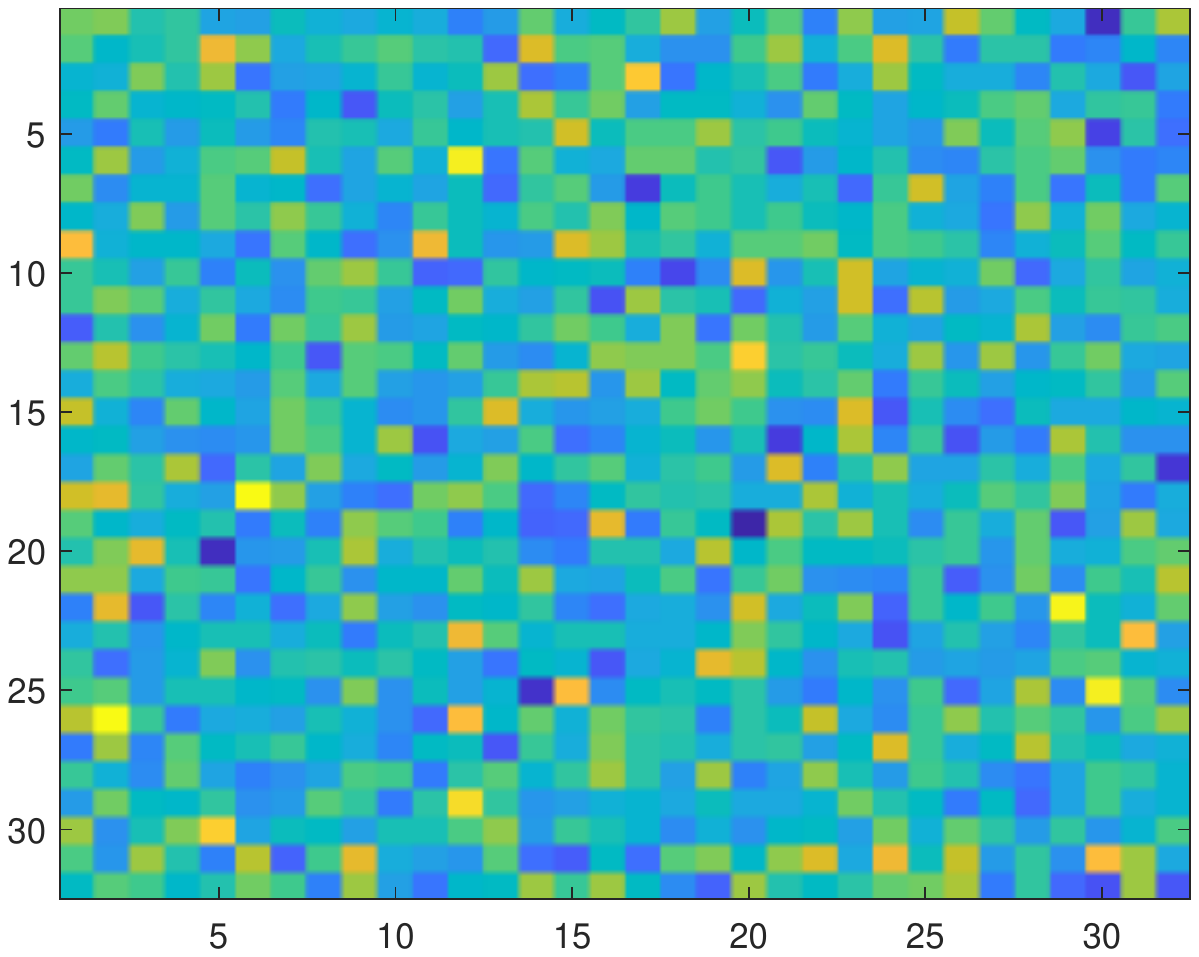}
	\hfill
	\includegraphics[trim=125pt 250pt 130pt 250pt, clip,scale=0.23]{./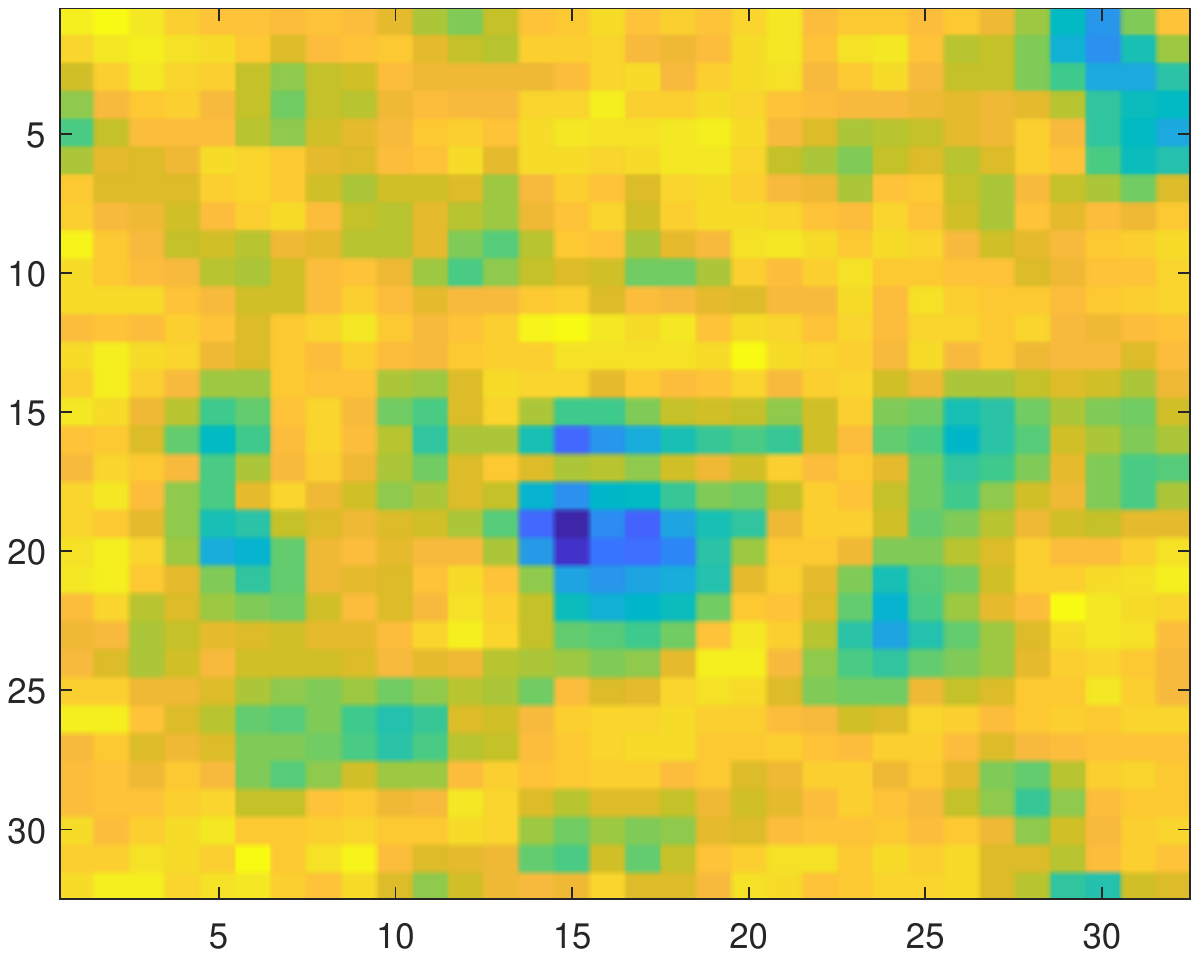}
	\hfill
	\includegraphics[trim=125pt 250pt 130pt 250pt, clip,scale=0.23]{./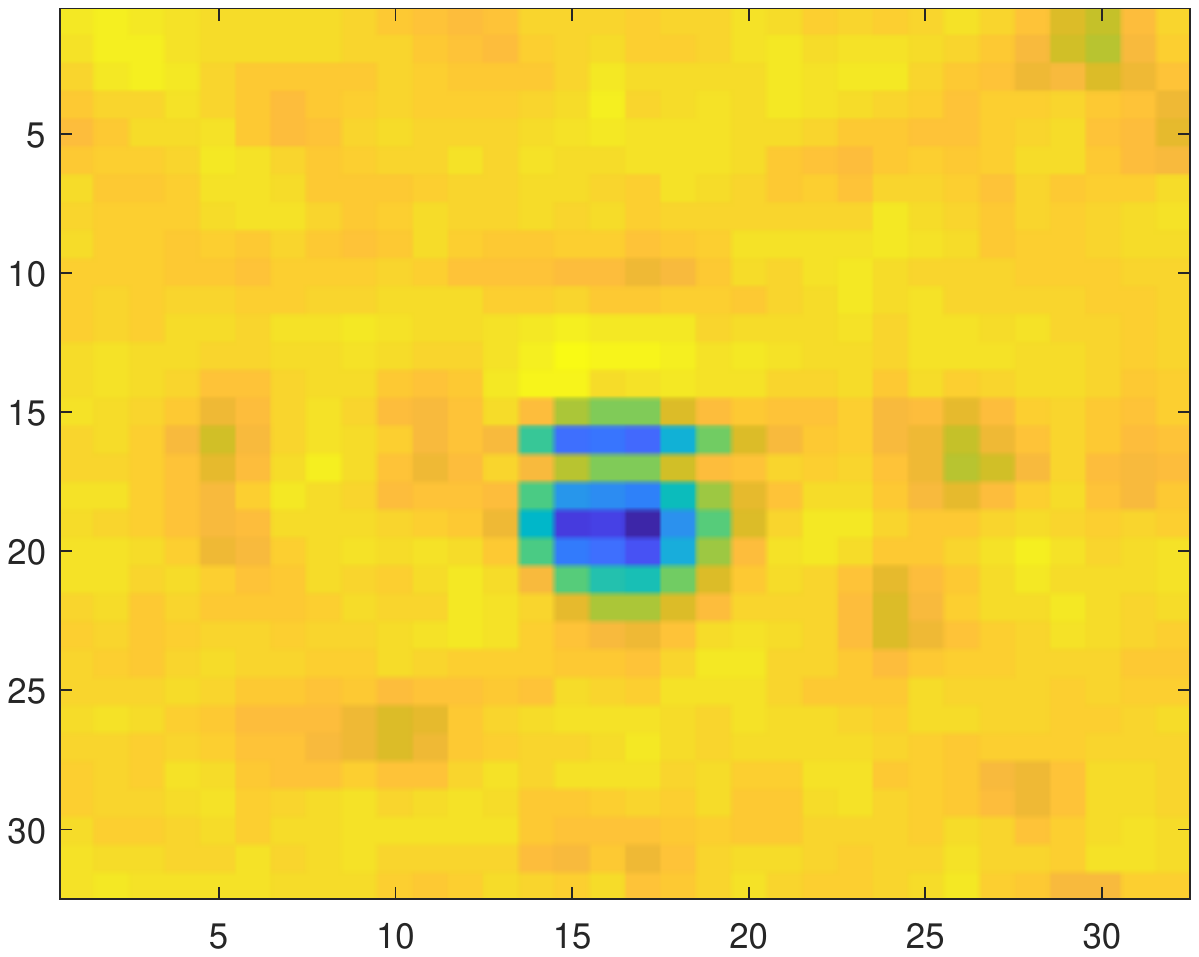}
	\hfill
	\includegraphics[trim=125pt 250pt 130pt 250pt, clip,scale=0.23]{./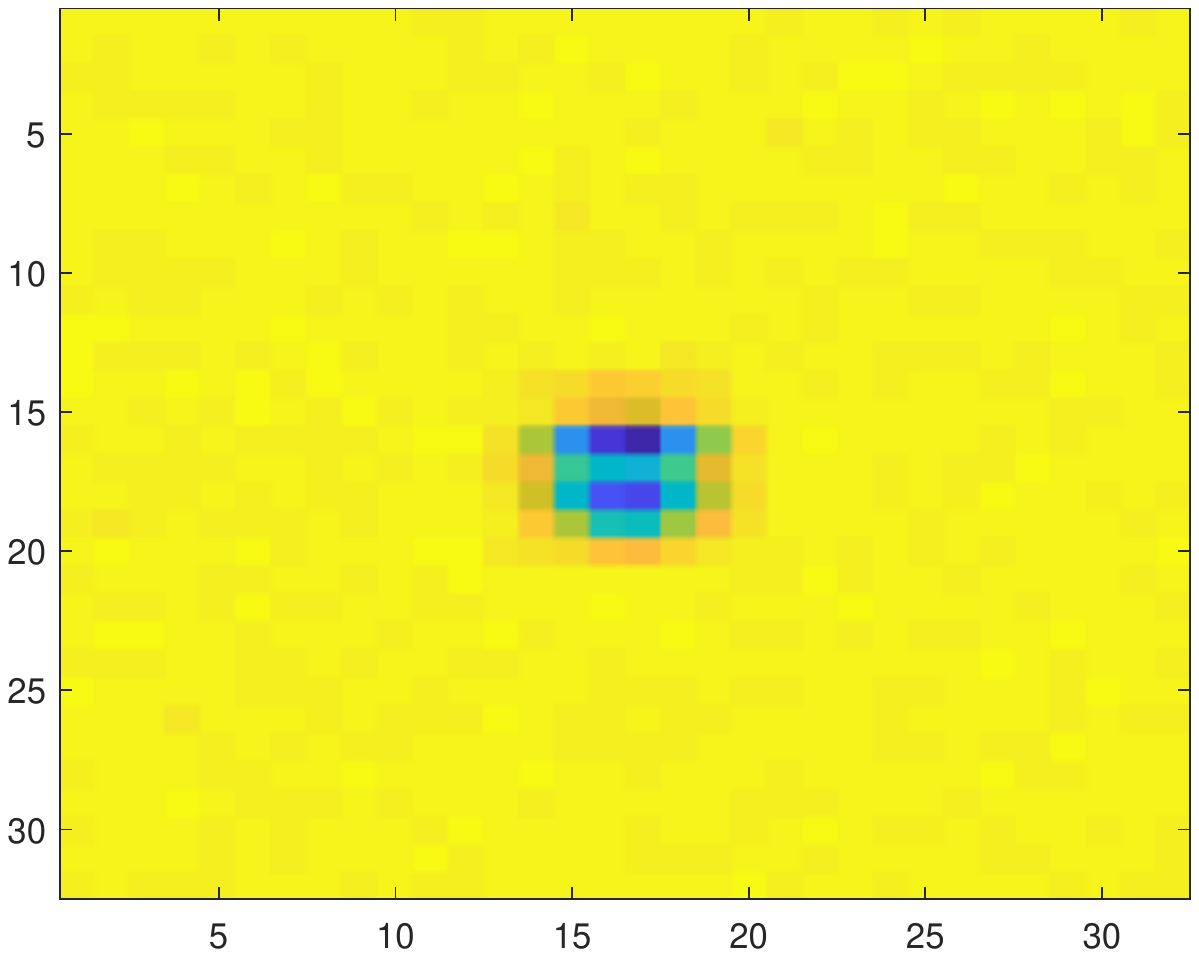}
	\hfill
	\includegraphics[trim=125pt 250pt 130pt 250pt, clip,scale=0.23]{./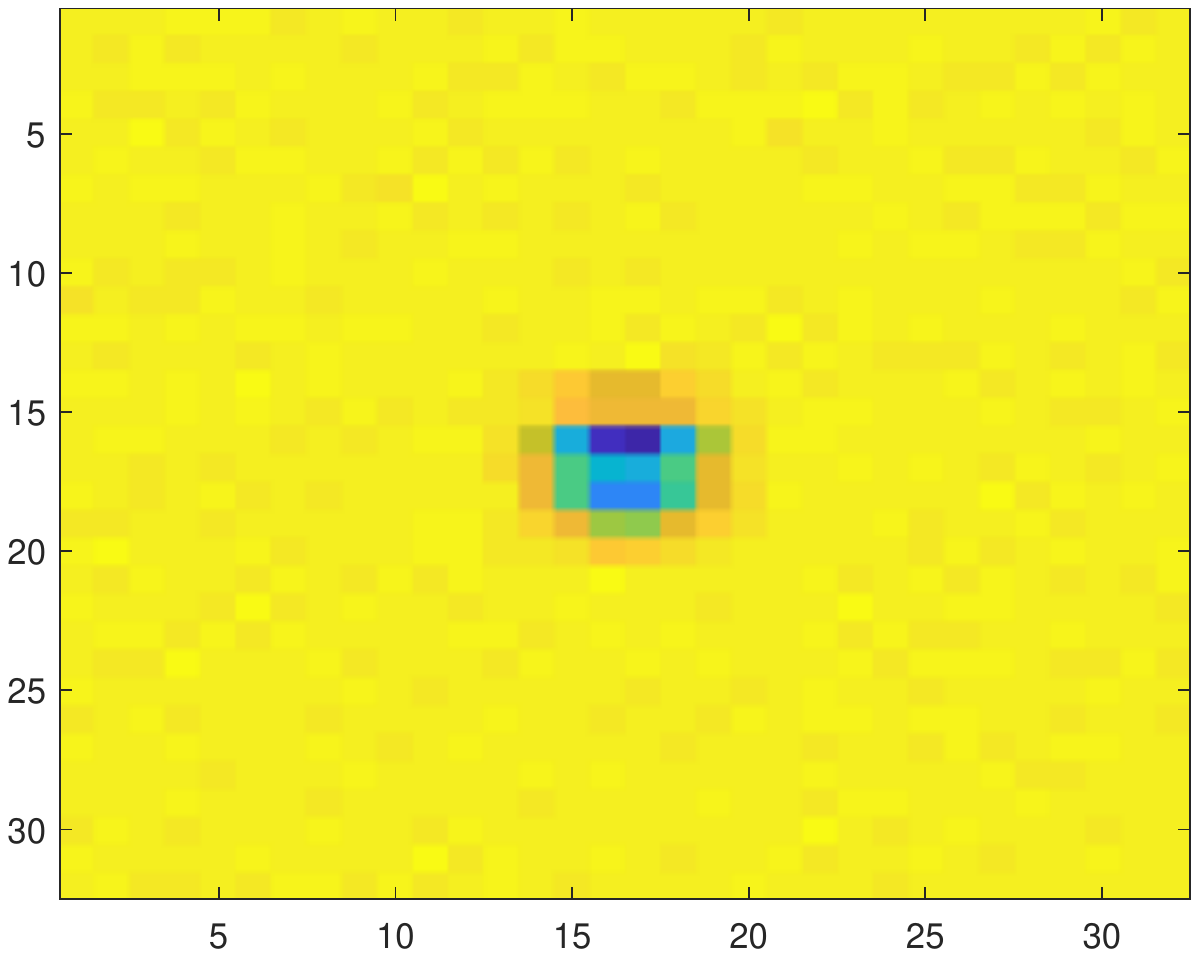}
	\caption{Kernel matrices $A$ for \textsc{ADMM-exact} at iterations $k = 0, 10, 20, 100, 200$.}
\end{subfigure}
\caption{\textsc{ADMM-slack} and \textsc{ADMM-exact} on noisy observations.}
\label{fig:exp_ker5_noise}
\end{figure}

\begin{figure}
\centering
\begin{subfigure}[t]{0.2\textwidth}
\includegraphics[trim=135pt 250pt 130pt 250pt, clip,scale=0.25]{./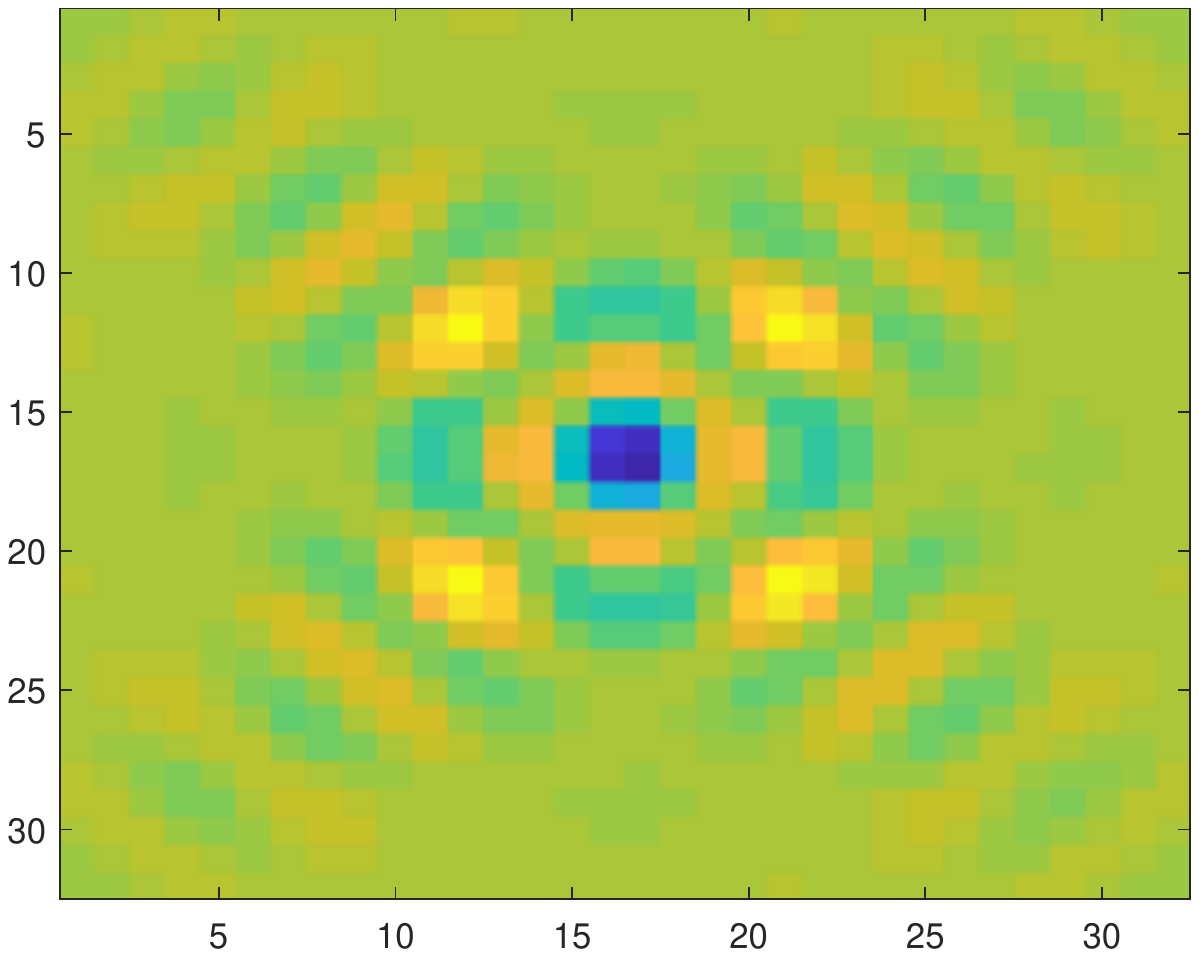}
\caption{True Kernel $A^\ast$}
\end{subfigure}
\begin{subfigure}[t]{0.2\textwidth}
\includegraphics[trim=125pt 250pt 130pt 250pt, clip,scale=0.25]{./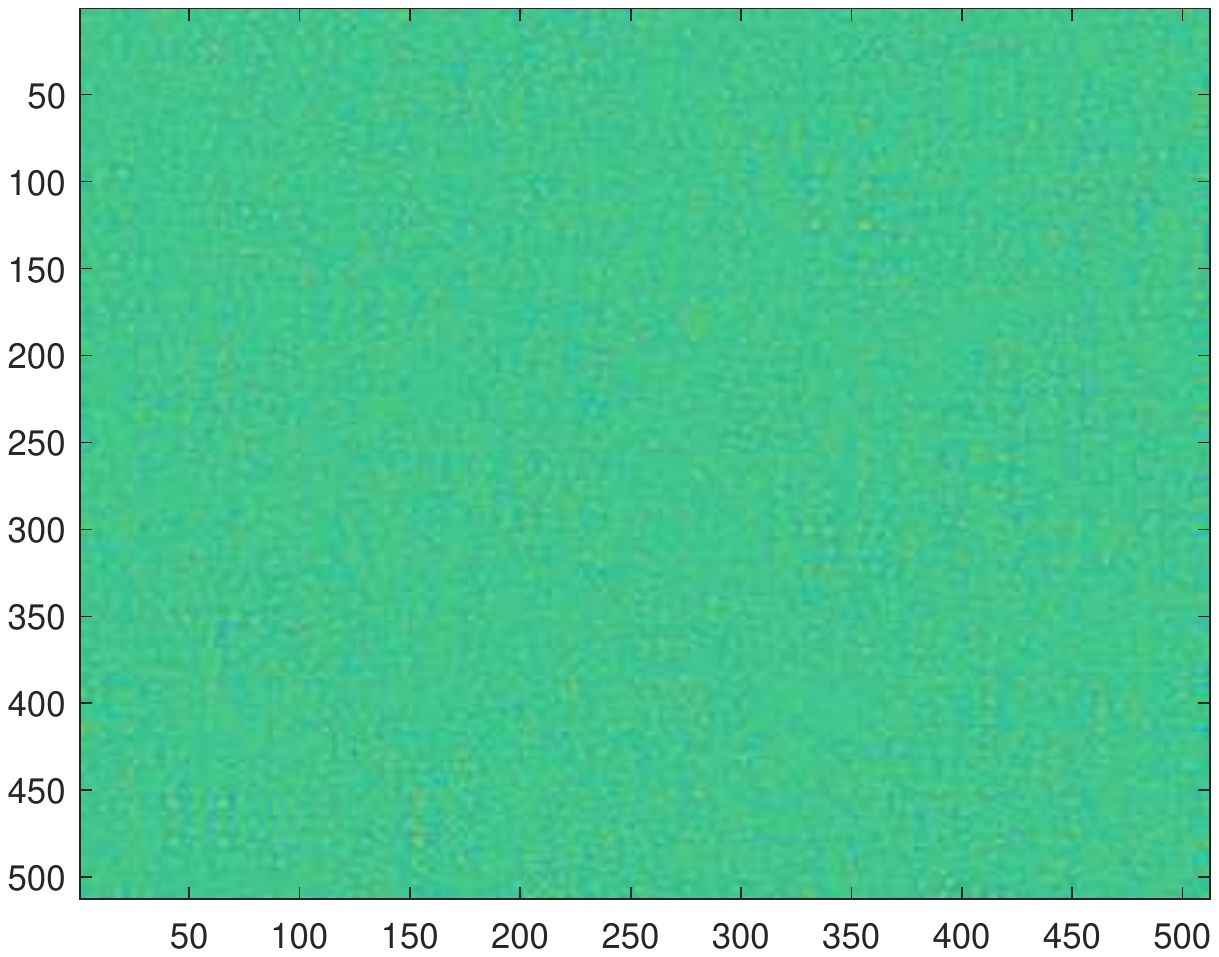}
\caption{Observations $Y$ (with noise)}
\end{subfigure}
\begin{subfigure}[t]{0.5\textwidth}
\includegraphics[trim=125pt 250pt 130pt 250pt, clip,scale=0.3]{./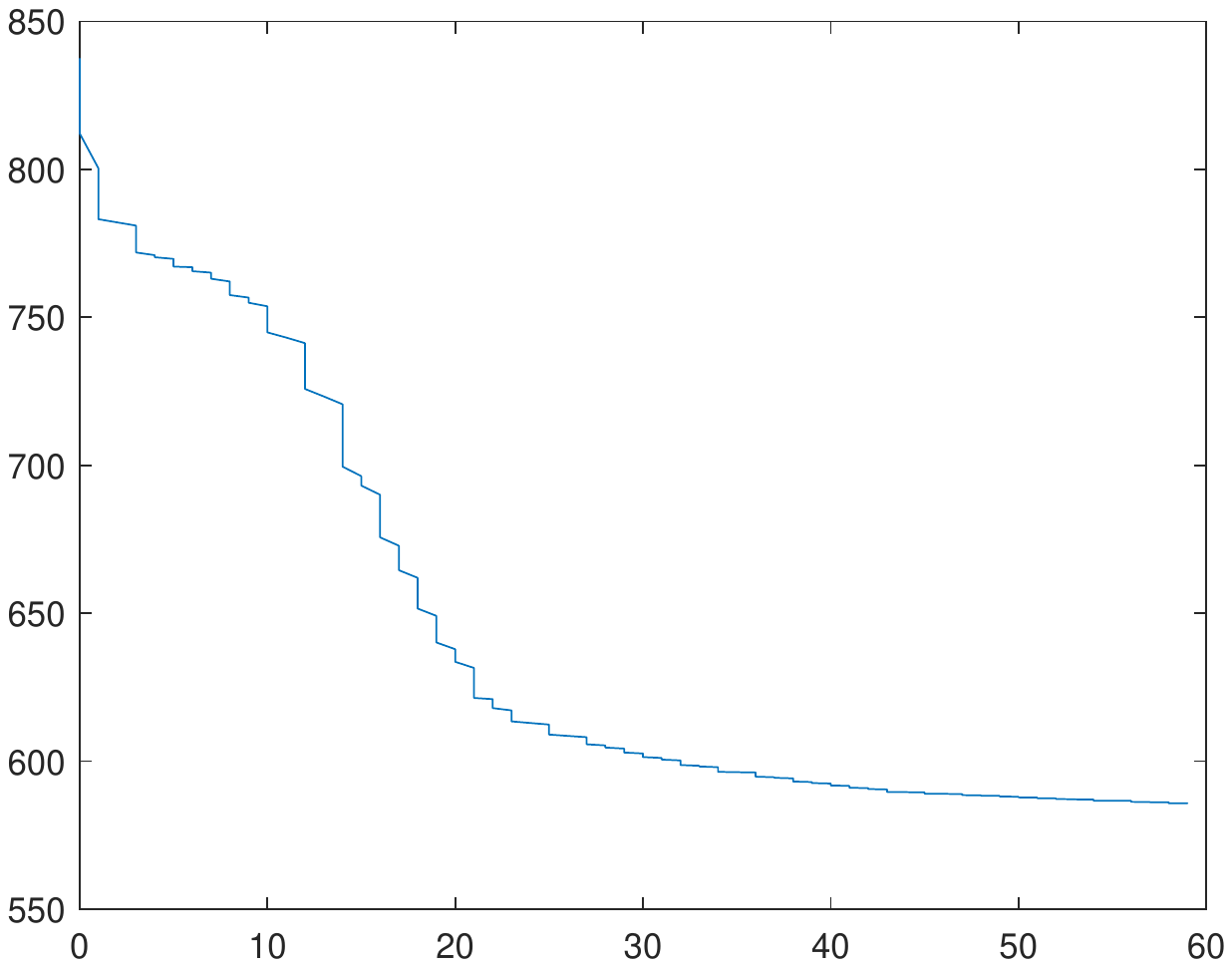}
\hfill
\includegraphics[trim=125pt 250pt 130pt 250pt, clip,scale=0.3]{./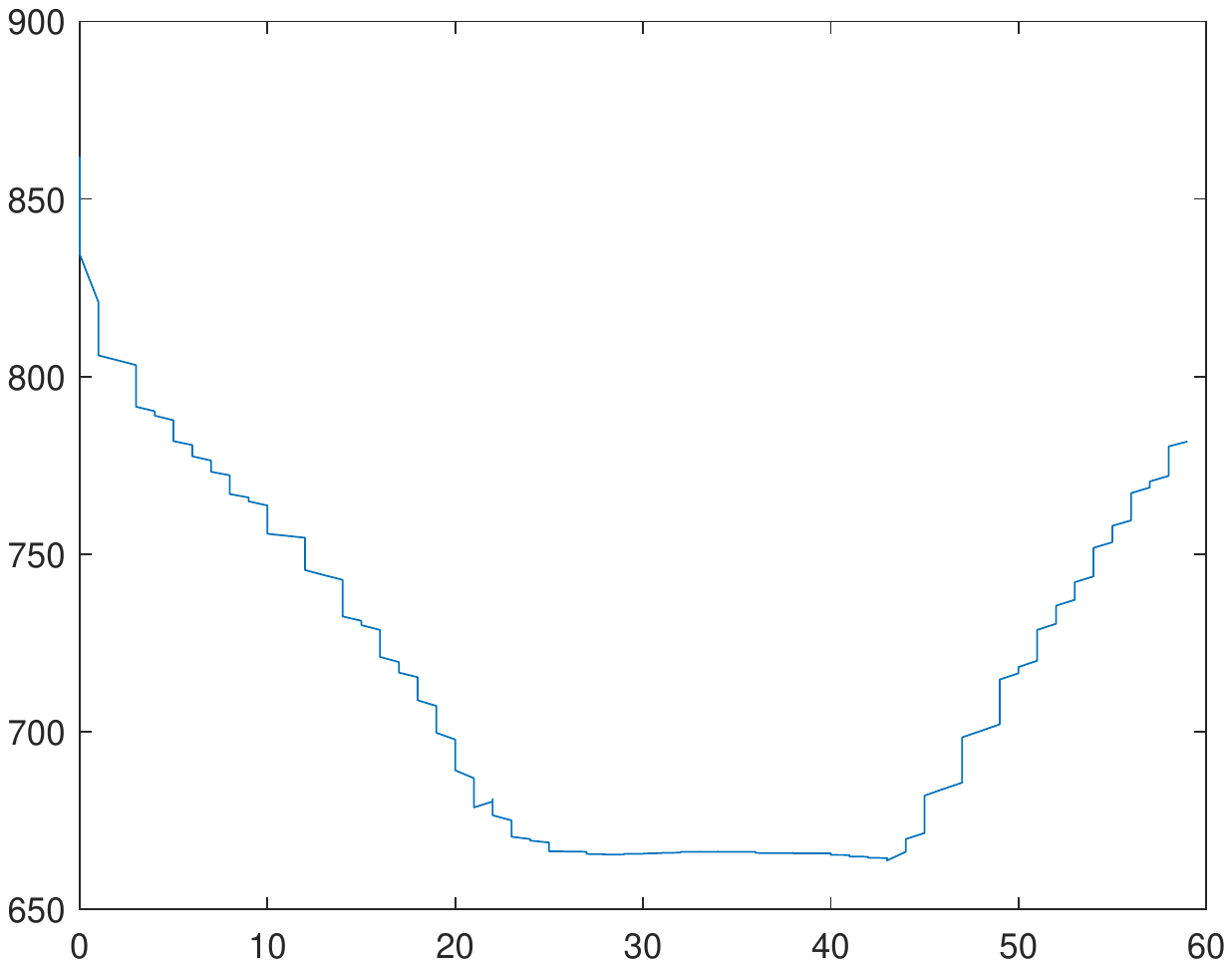}
\caption{Objective value over time (s). The left plot shows \textsc{ADMM-slack}, and the right shows \textsc{ADMM-exact}.}
\end{subfigure}

\bigskip

\centering
\begin{subfigure}[t]{1.0\textwidth}
\includegraphics[trim=125pt 250pt 130pt 250pt, clip,scale=0.23]{./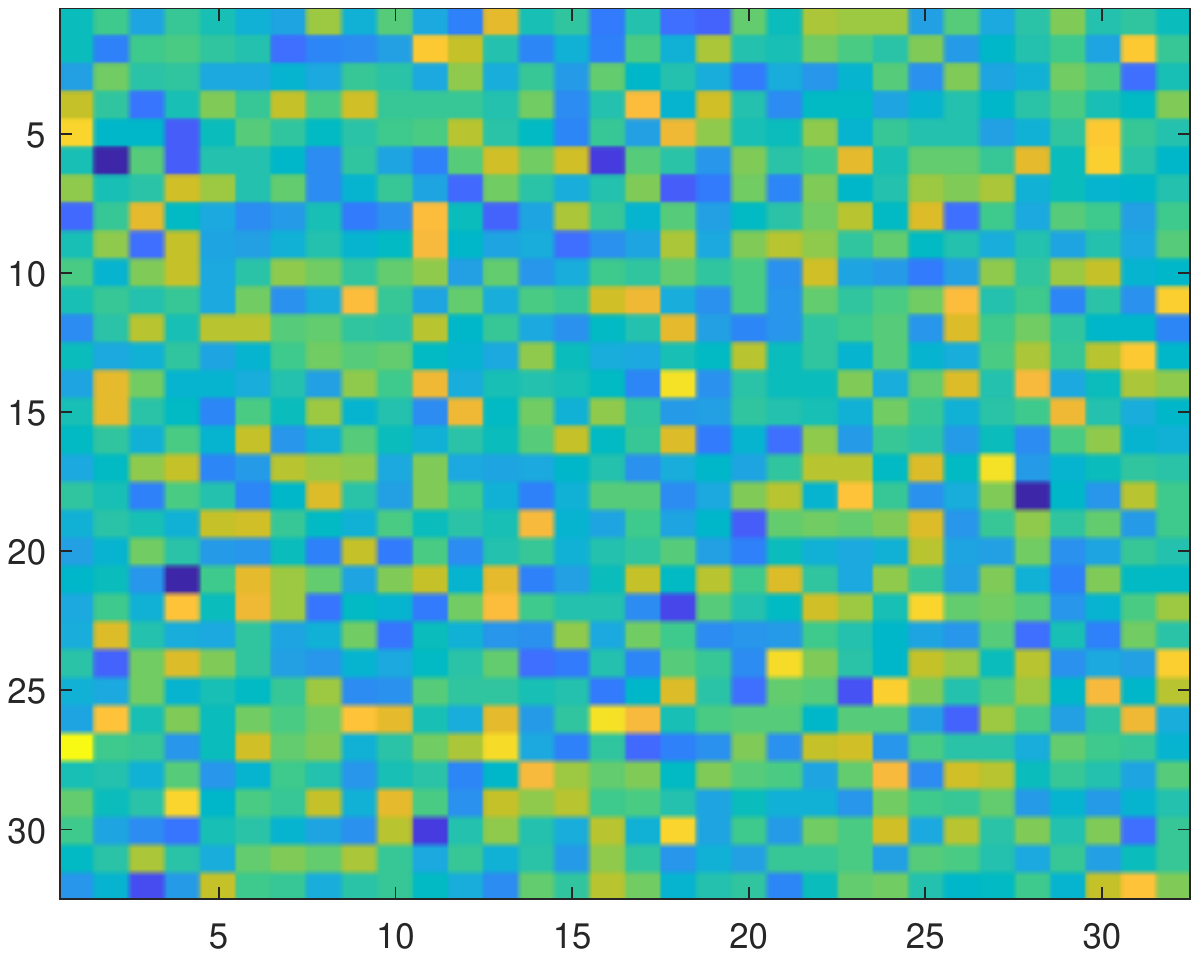}
\hfill
\includegraphics[trim=125pt 250pt 130pt 250pt, clip,scale=0.23]{./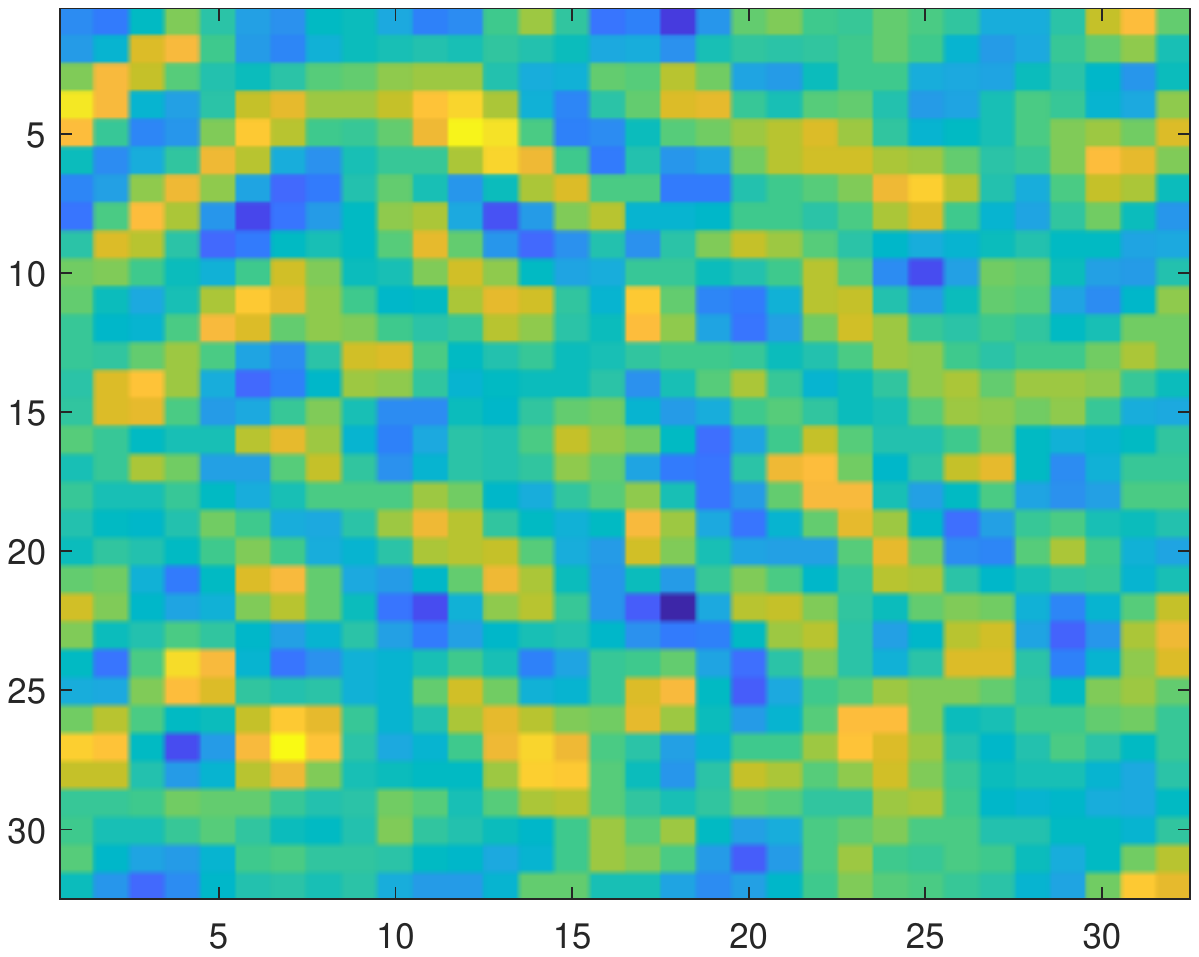}
\hfill
\includegraphics[trim=125pt 250pt 130pt 250pt, clip,scale=0.23]{./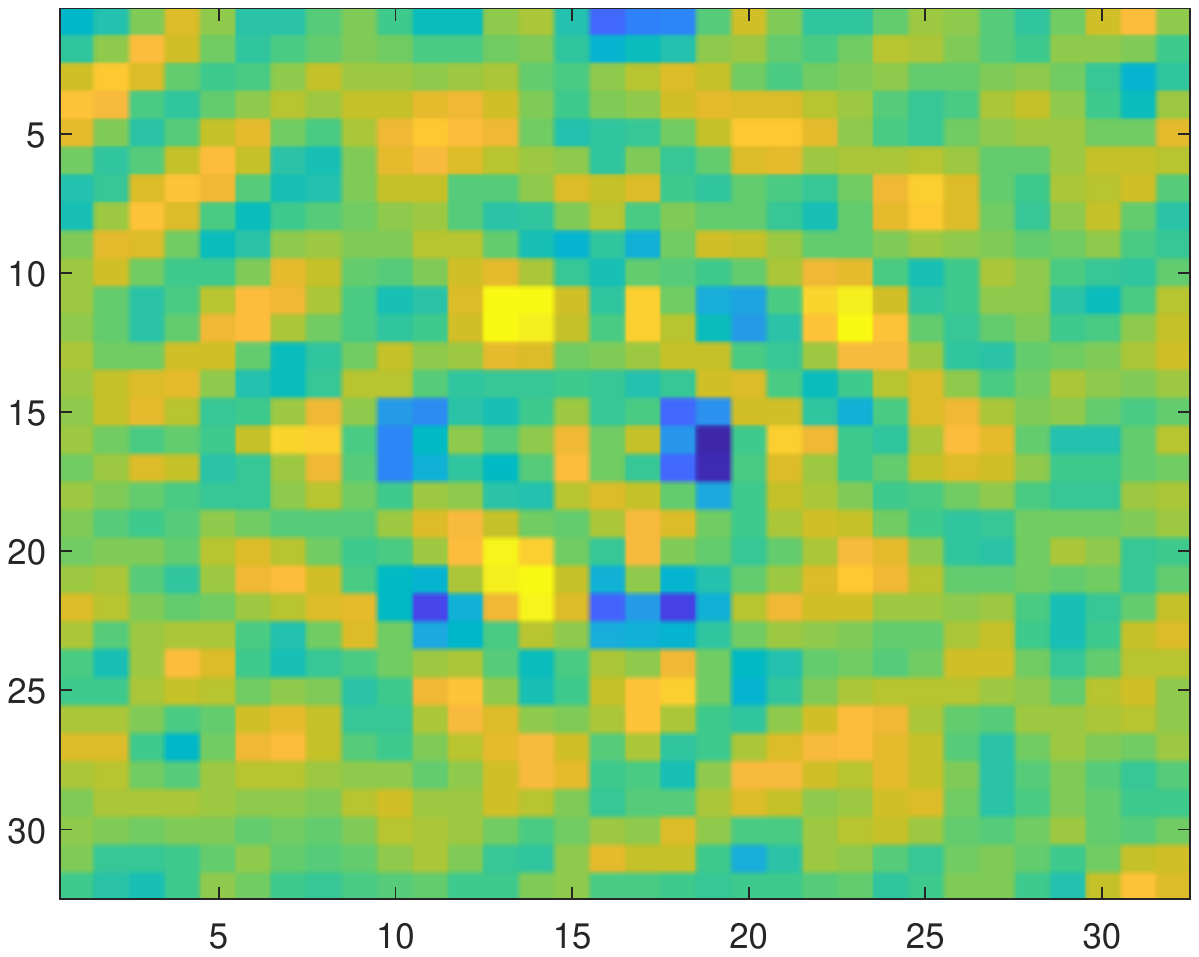}
\hfill
\includegraphics[trim=125pt 250pt 130pt 250pt, clip,scale=0.23]{./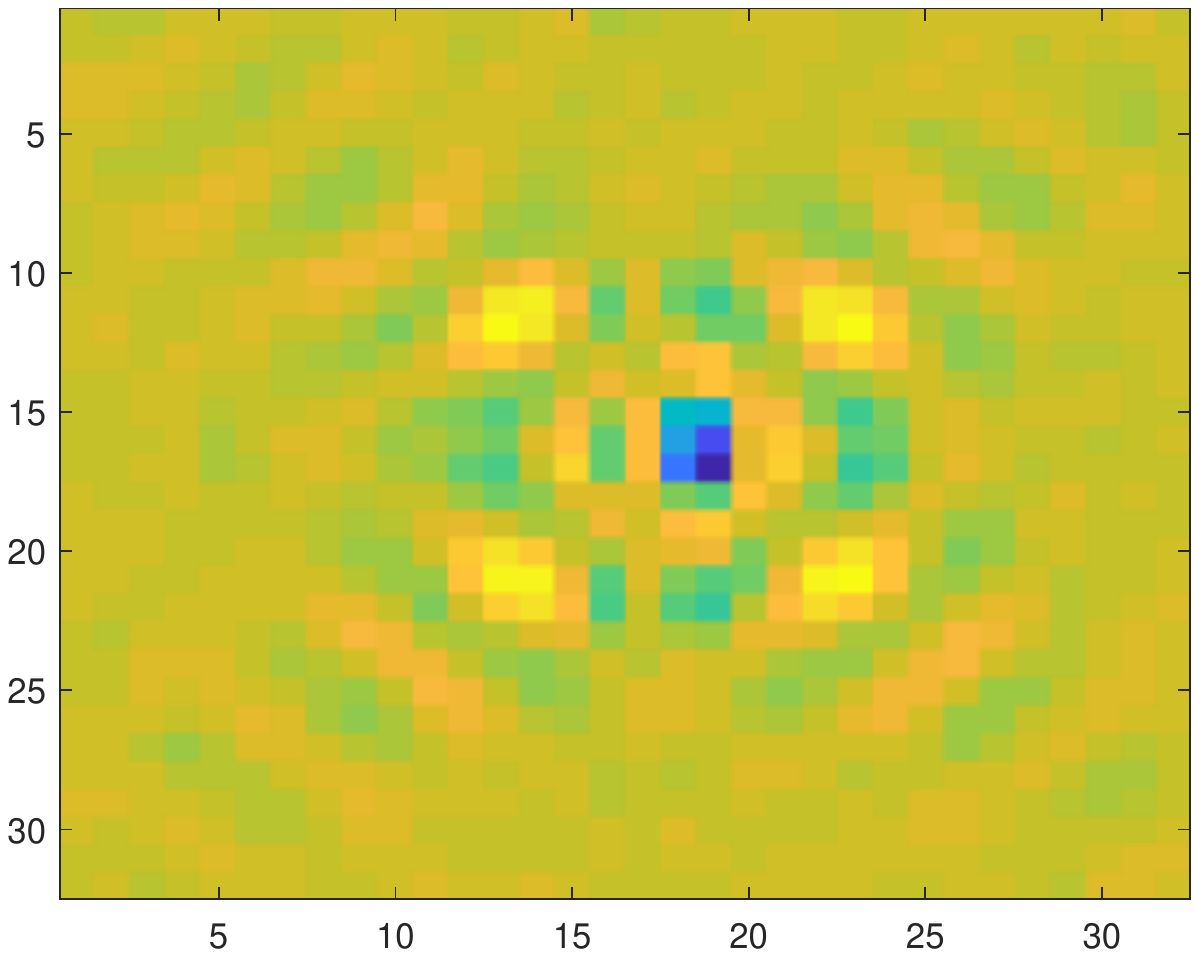}
\hfill
\includegraphics[trim=125pt 250pt 130pt 250pt, clip,scale=0.23]{./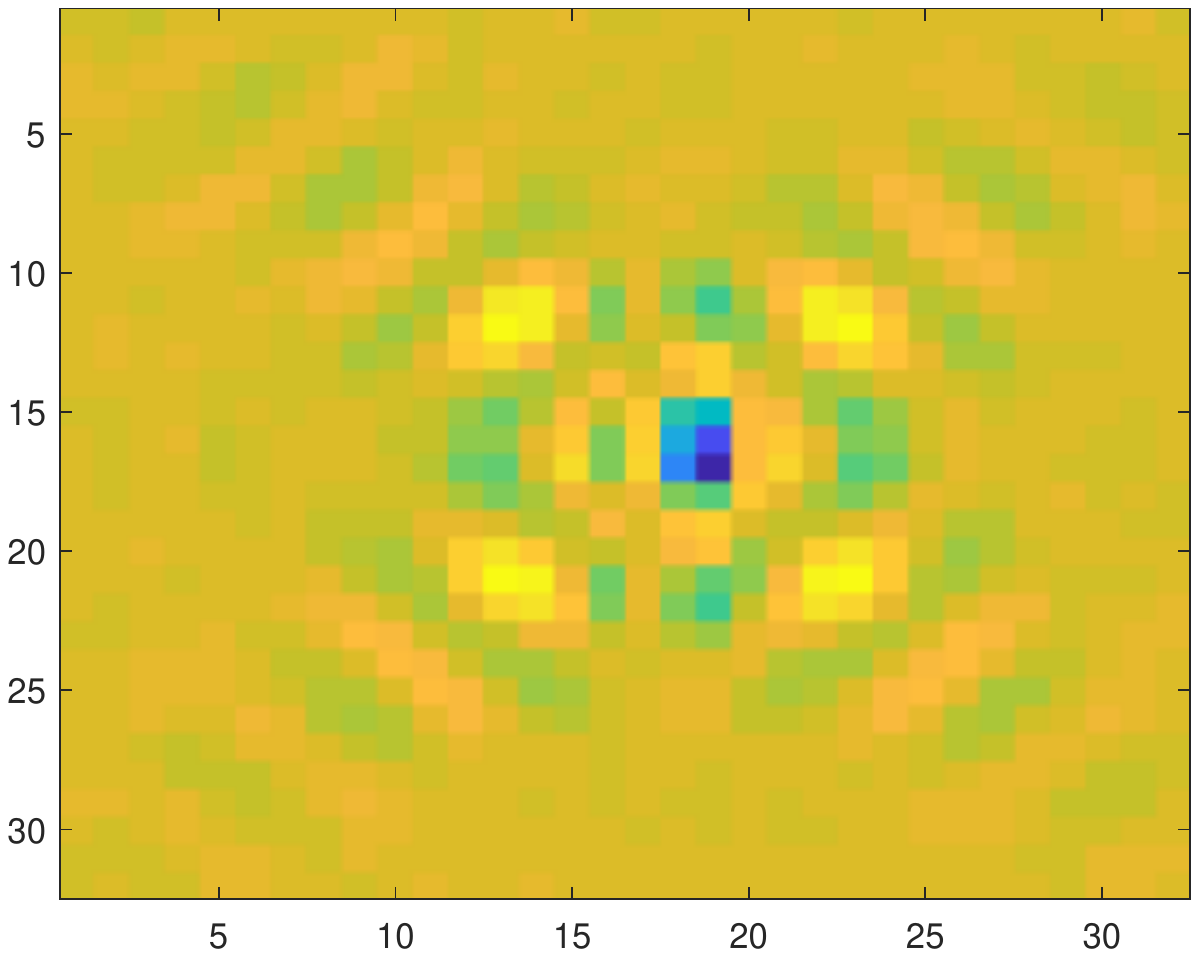}
\caption{Kernel matrices $A$ for \textsc{ADMM-slack} at iterations $k = 0, 10, 50, 100, 150$.}
\end{subfigure}

\bigskip
\centering
\begin{subfigure}[t]{1.0\textwidth}
\includegraphics[trim=125pt 250pt 130pt 250pt, clip,scale=0.23]{./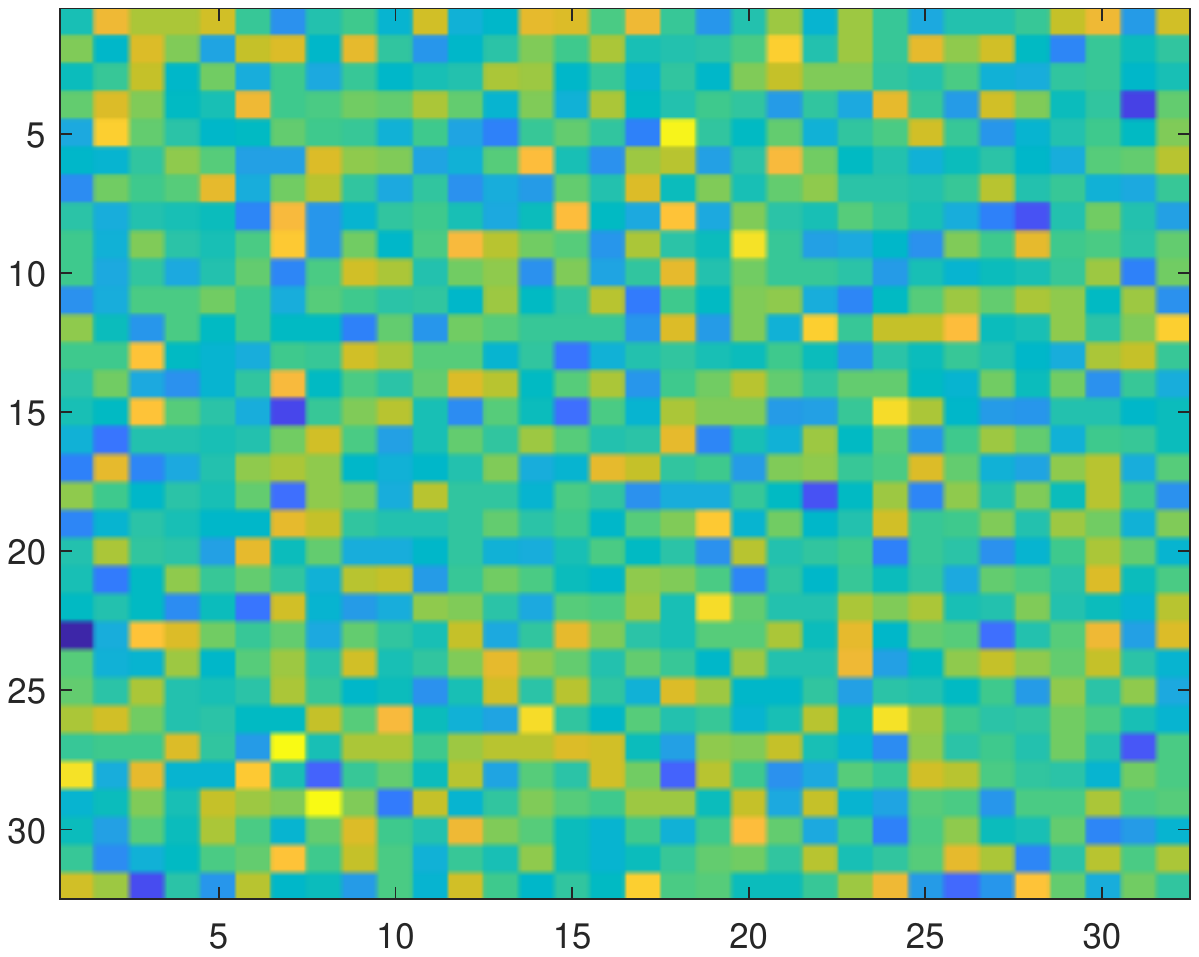}
\hfill
\includegraphics[trim=125pt 250pt 130pt 250pt, clip,scale=0.23]{./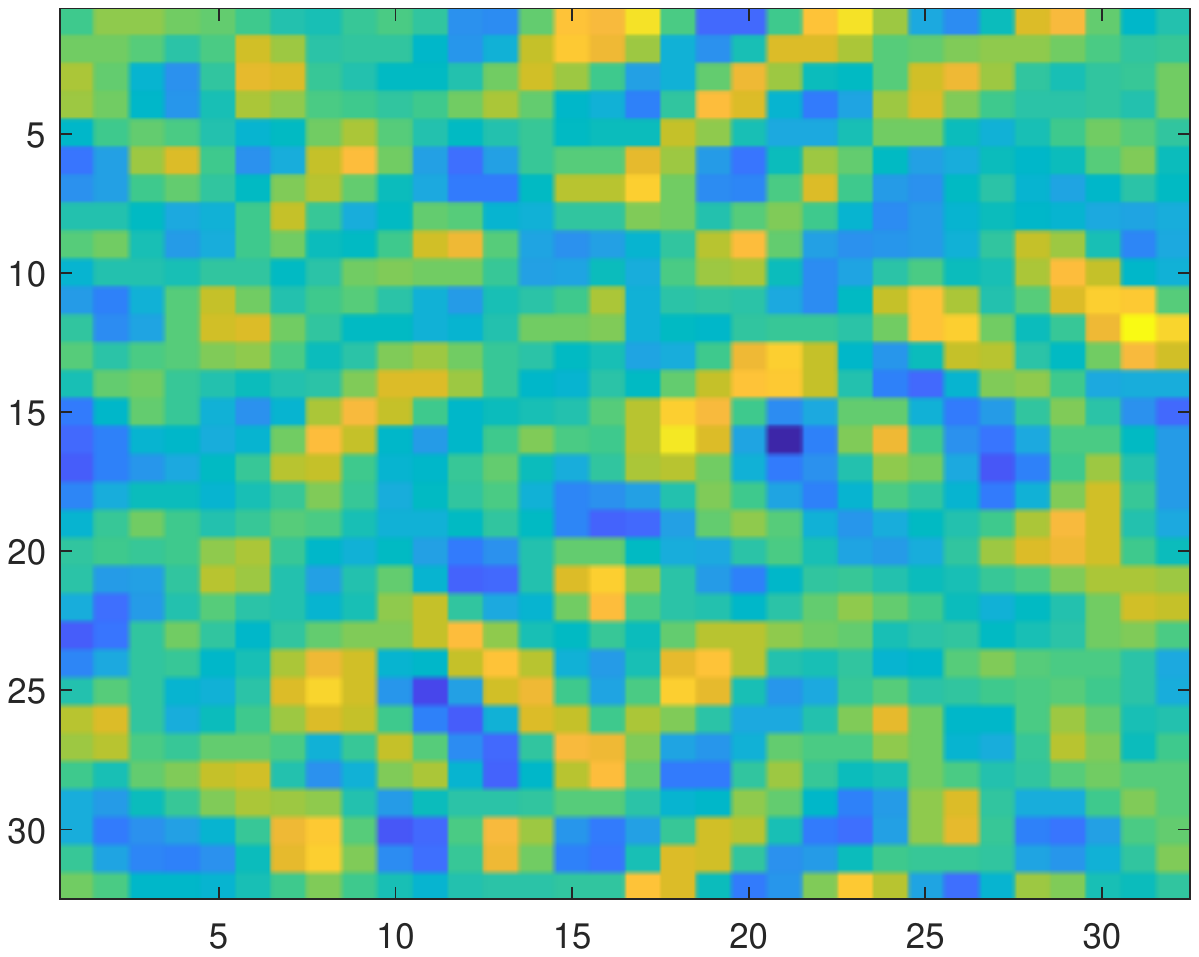}
\hfill
\includegraphics[trim=125pt 250pt 130pt 250pt, clip,scale=0.23]{./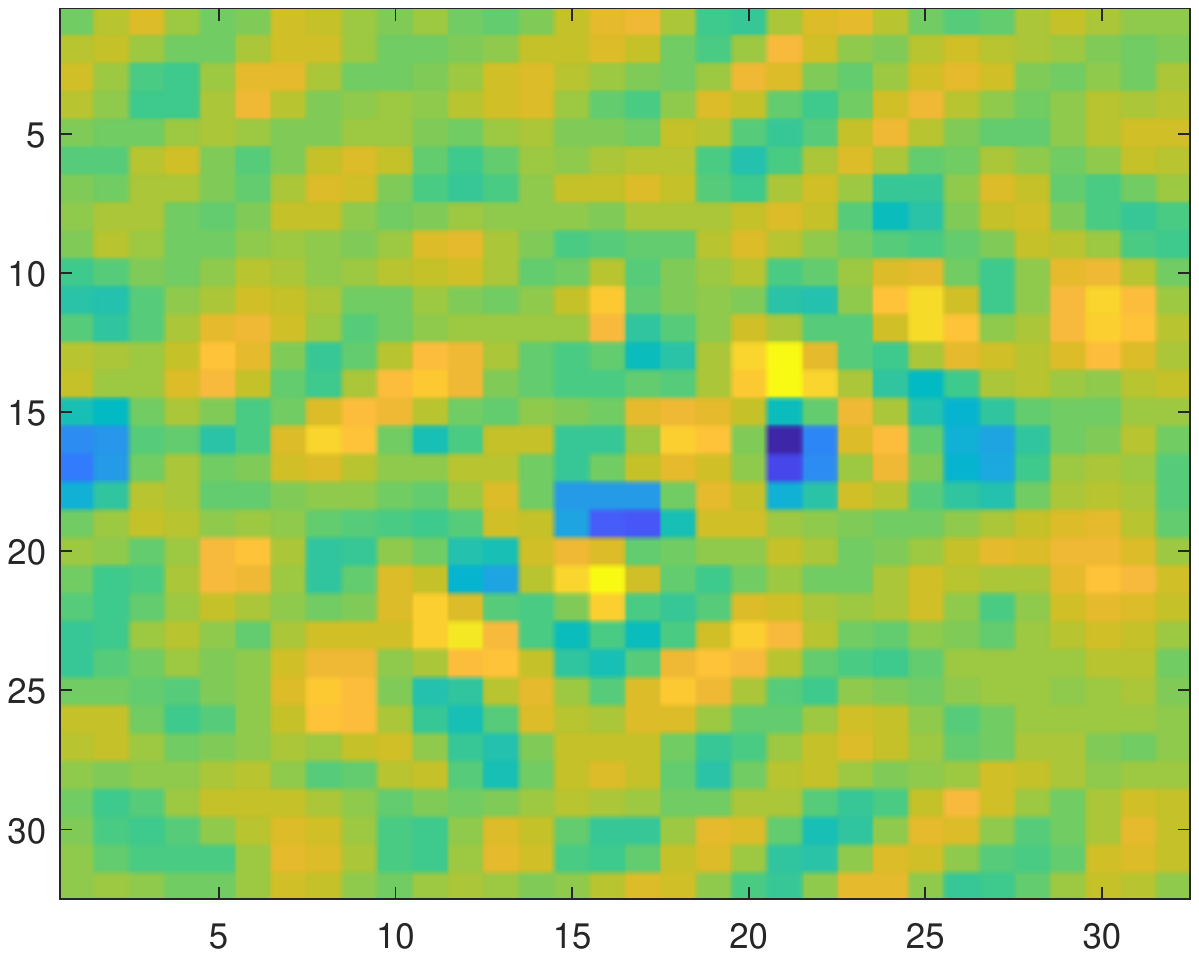}
\hfill
\includegraphics[trim=125pt 250pt 130pt 250pt, clip,scale=0.23]{./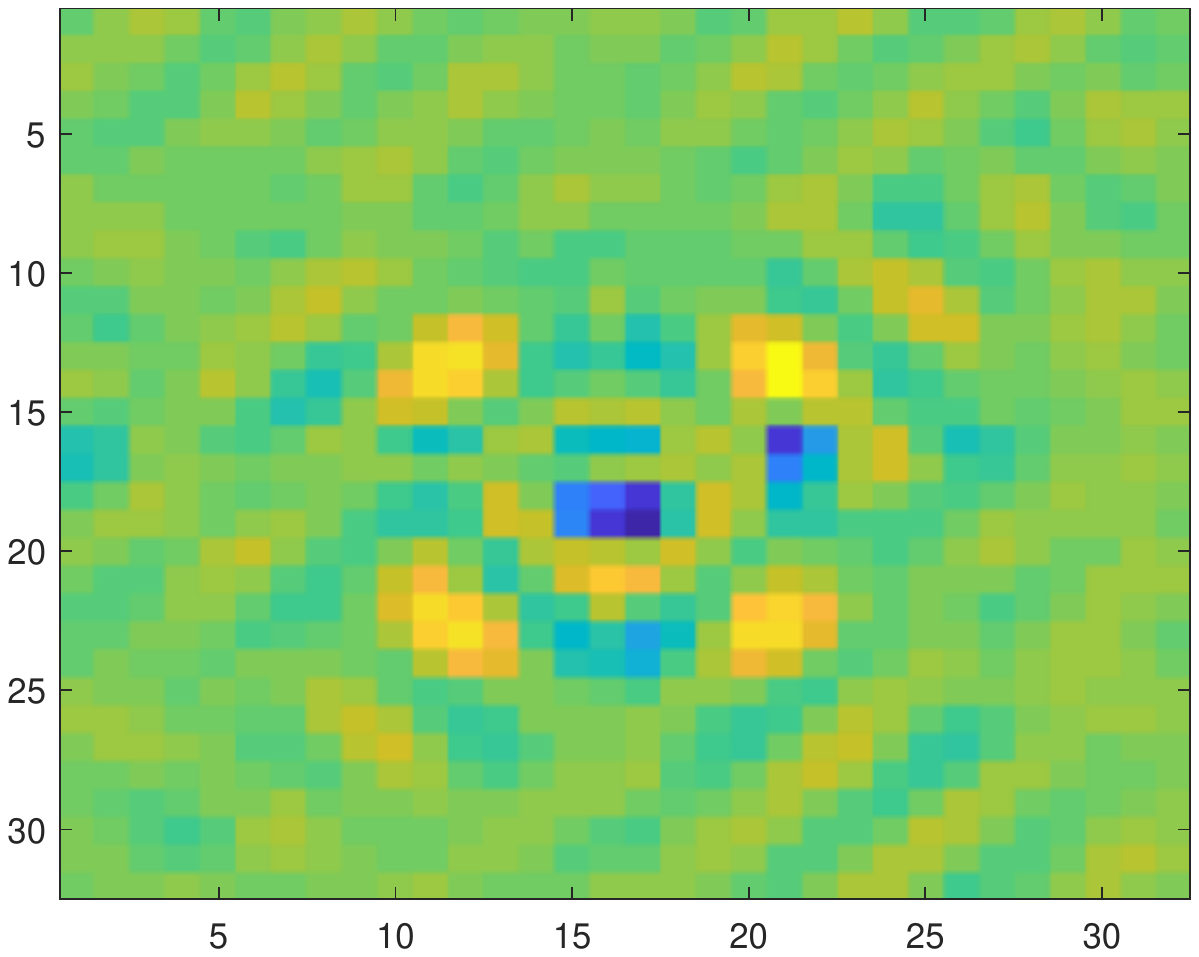}
\hfill
\includegraphics[trim=125pt 250pt 130pt 250pt, clip,scale=0.23]{./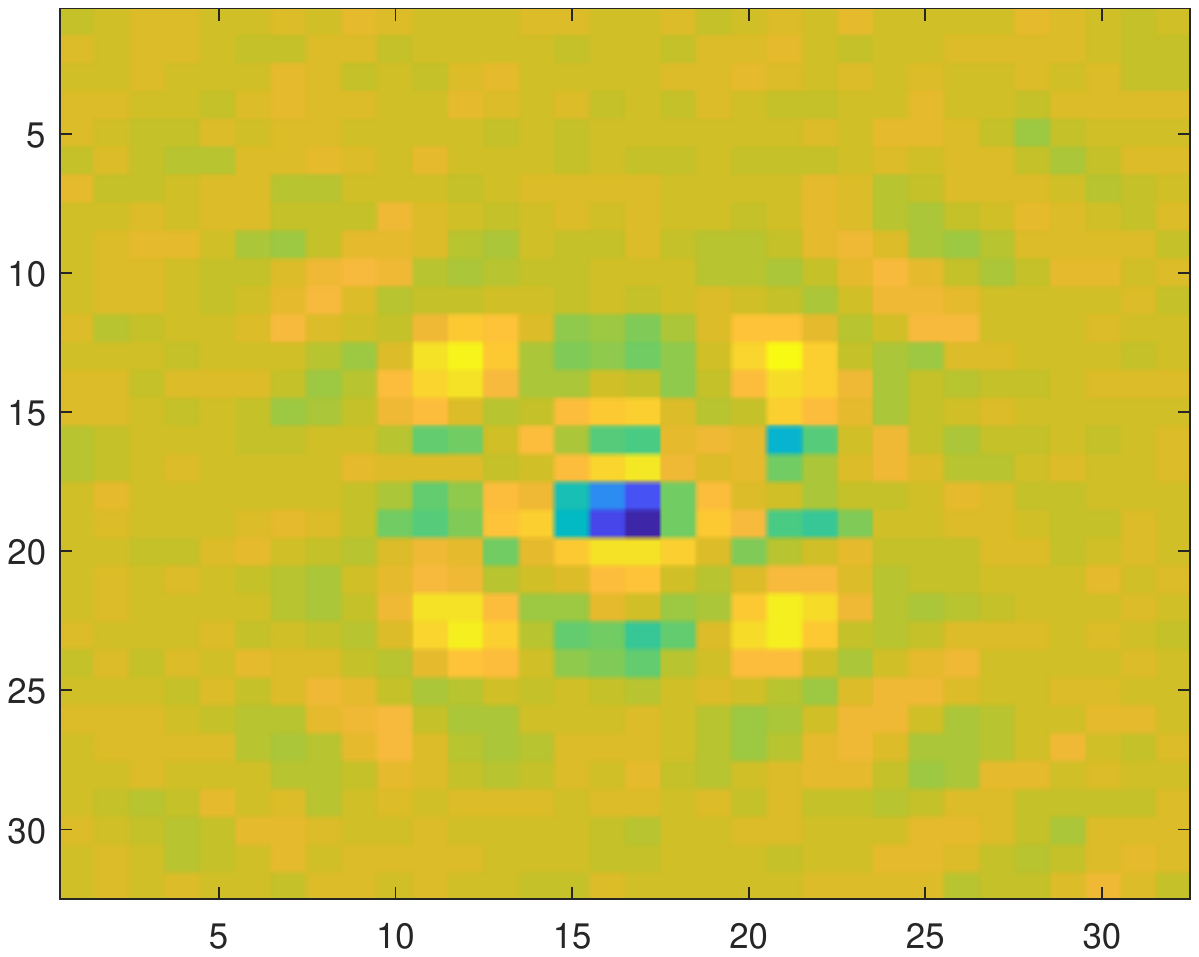}
\caption{Kernel matrices $A$ for \textsc{ADMM-exact} at iterations $k = 0, 10, 50, 100, 200$.}
\end{subfigure}
\caption{\textsc{ADMM-slack} and \textsc{ADMM-exact} on noisy observations.}
\label{fig:exp_ker23_noise}
\vspace{-50pt}
\end{figure}
\end{document}